%% file: Rossi_Schiavoni_Socionovo_interior_singularity_and_branching_geodesics.tex
\documentclass[11pt]{amsart}

\input{preambolo_gg}

\usepackage{hyperref}
\usepackage{tikz}
\usepackage{subfigure}
\usepackage{graphicx}
\usepackage{float}
\usepackage{soul}
\usepackage{xcolor}

\usepackage{bm}

\mathtoolsset{showonlyrefs}

\renewcommand{\k}{\kappa}

\newcommand{\Tan}{\mathrm{Tan}}

\newcommand{\sr}{sub-Riemannian }
\newcommand{\zz}{{\alpha}}
\newcommand{\gse}{{\gamma_{s,\e}}}

\newcommand{\ceq}{\coloneqq}

\usepackage[margin=2.5cm]{geometry}

\newcommand{\mf}{\mathfrak}
\newcommand{\df}{{\mathrm{d}}}

\newcommand{\dsy}{\displaystyle}

\theoremstyle{definition}

\newtheorem{notation}[theorem]{Notation}
\newtheorem*{prob}{Open problem}

\newcommand{\CC}{{\mc C_{s,\e}}}
\newcommand{\CO}{\mathcal C_{s,\e}^{\mr{opt}}}

\newcommand{\AC}{{\rm AC}}
\newcommand{\pr}{\mathrm{pr}}
\newcommand{\lift}{\mathrm{lift}}

\author{T. Rossi}
\address{Università degli Studi dell'Aquila, Via Vetoio, 67100 L'Aquila AQ, IT}
\email{tommaso.rossi1@univaq.it}

\author{A.\,J.\,A. Schiavoni Piazza}
\address{Scuola Internazionale Superiore di Studi Avanzati (SISSA), Via Bonomea, 265, 34136 Trieste TS, IT}
\email{aschiavo@sissa.it}

\author{A. Socionovo}
\address{Unité de Mathématiques Appliquées, ENSTA, Institut Polytechnique de Paris, 91120 Palaiseau, FR}
\email{alessandro.socionovo@ensta.fr}

\begin{document}

\title[Interior singularity and branching of geodesics]{Interior singularity and branching of geodesics in real-analytic sub-Riemannian manifolds}

\begin{abstract}
    We study the regularity and branching of strictly abnormal minimizing geodesics in sub-Riemannian geometry. We construct examples of real-analytic sub-Riemannian manifolds admitting minimizing geodesics that lose regularity at an interior point of their domain and exhibit branching, thereby resolving longstanding open questions. Moreover, using a lifting procedure, we provide the existence of non-smooth and branching minimizing geodesics also in Carnot groups.
\end{abstract}

\maketitle

\setcounter{tocdepth}{1}
\tableofcontents

\section{Introduction}
A sub-Riemannian manifold is a triplet $(M,\De,g)$, where $M$ is a smooth and connected manifold, $\De\subset TM$ is a smooth bracket-generating distribution (i.e., a distribution satisfying the H\"ormander condition), and $g$ is a smooth Riemannian metric on $\De$. A {\em horizontal} (or, {\em admissible}) curve is an absolutely continuous trajectory in $M$ which is tangent to $\De$ almost everywhere. When $\De=TM$, the pair $(M,g)$ is a Riemannian manifold and every absolutely continuous curve is admissible. The length of a horizontal curve $\eta:[0,T]\to M$ is defined as
\begin{equation}
    \label{eq:LSR}
    L_{\rm SR}(\eta)\ceq \int_0^T \sqrt{g(\dot\eta(t),\dot\eta(t))}\,dt.
\end{equation}
The Chow-Rashewskii Theorem ensures that $M$ is horizontally path-connected, and thus the sub-Riemannian distance between two points $p,q\in M$, given by
\begin{equation}
    d_{\rm SR}(p,q)\ceq \inf\{L_{SR}(\eta) \mid \eta:[0,T]\to M \text{ horizontal, } \eta(0)=p, \ \eta(T)=q\},
\end{equation}
is well-defined, continuous, and compatible with the manifold topology. Given $p,q
\in M$, a horizontal curve $\eta:[0,T]\to M$ joining them is a \emph{length-minimizer} if
\begin{equation}
    d_{\rm SR}(p,q)=d_{\rm SR}(\eta(0),\eta(T))=L_{\rm SR}(\eta).
\end{equation}
If, in addition, $\eta$ is parametrized by arc-length, it is referred to as a \emph{minimizing geodesic}. An important class of \sr manifolds is given by Carnot groups, i.e., nilpotent Lie groups with stratified Lie algebra, which naturally arise as the metric tangents of \sr manifolds. 

The study of length-minimizing curves is one of the most important and difficult topics in sub-Riemannian geometry. The Pontryagin Maximum Principle (cf. \cite[Ch.\ 4]{ABB20}) determines first-order necessary conditions for a horizontal curve to be a minimizing geodesic. Every horizontal curve satisfying those conditions is called an {\em extremal curve}, and thus, every minimizing geodesic is an extremal curve. Extremal curves are divided into two non-disjoint classes, called {\em normal} and {\em abnormal} (or {\em singular}) curves. Normal curves are solutions of a geodesic equation, and thus they are smooth and locally minimizing. These are the only types of extremal curves appearing in Riemannian geometry. Instead, abnormal curves are a priori no more regular than absolutely continuous, and their length-minimality properties are unknown. Thus, the difficulty in studying of minimizing geodesics lies in the possible presence of {\em strictly abnormal} extremals, i.e., those curves that are abnormal but not normal. We refer the reader to~\cite{ABB20,J14,libro_Enrico,Mon02,Rif14} for an exhaustive introduction to sub-Riemannian geometry and to the problems related to minimizing geodesics.

In this paper, we deal with two central problems concerning strictly abnormal minimizing geodes-ics: regularity and branching.

\subsection{The regularity problem}

The problem of the regularity of sub-Riemannian geodesics originated in the 1980s with the works of Strichartz in~\cite{Str86, Str89}, where he incorrectly claimed that every sub-Riemannian minimizing geodesic was smooth. Later, in 1994, the first example of a strictly abnormal curve that is length-minimizing was discovered by Montgomery~\cite{Mon94}. After that, many other examples have been found, see~\cite{AS96, LS95}, but all these curves are $C^\oo$-smooth, and they remained the only known instances of strictly abnormal minimizing geodesics for almost 30 years. Beyond these examples, a few general results on strictly abnormal geodesics have been obtained in recent years. On the one hand, building upon \cite{LM08}, it was proved in \cite{HL16} that abnormal minimizing geodesics cannot have corners, and later in \cite{HL23, MPV18}, that they must have a tangent line. In some special cases, including real-analytic three-dimensional or metabelian (cf.\ \cite[Def.\ 3.2]{LPS24}) sub-Riemannian manifolds, minimizing geodesics are of class $C^1$, see \cite{BCJPS20, BFPR22, LPS24}, but in general, even in Carnot groups, the $C^1$ regularity remains an open question. On the other hand, necessary conditions for the minimality of abnormal curves can be derived by the differential analysis of the end-point map (see \cite[Ch.\ 8]{ABB20} for a detailed introduction to the topic). This theory is well understood up to the second order, see \cite{AgrSac, Goh66}, and it has been partially extended to the $n$-th order in \cite{BMS24}. A recent and significant advance in the theory has been made in \cite{notall}, where the authors find sub-Riemannian structures exhibiting minimizing geodesics which are not $C^\oo$ at a boundary point. In particular, the lowest regularity achieved with these examples is that of a minimizing geodesic of class $C^2\setminus C^3$. Nevertheless, a main question is still open: can a sub-Riemannian minimizing geodesic lose regularity at an interior point of its domain?

\subsection{The branching problem}

In the study of metric spaces, detecting whether minimizing geodesics can branch has become increasingly relevant, particularly in connection with the Monge problem of optimal transport (see e.g.\ \cite{MR3425266,MR2984123,MR3027581}). A minimizing geodesic $\eta:[0,T]\to M$ in a metric space $(M,d)$ is \emph{branching} at time $t\in (0,T)$ if there exists a minimizing geodesic $\eta':[0,T]\to M$ such that 
\begin{equation}
    \eta|_{[0,t]} \equiv \eta'|_{[0,t]}\qquad \text{and} \qquad \eta|_{[0,\tau]} \neq \eta'|_{[0,\tau]} \quad \text{for all } \tau\in (t,T].
\end{equation}
Typical examples of branching metric spaces (i.e., metric spaces admitting branching geodesics) are graphs, locally finite CW-complexes, or (sub-)Finsler manifolds equipped with non-strictly convex norms (see \cite{magnabosco2023failure,kensiu,branching-polyhedral} for examples of branching in sub-Finsler geometry). On the contrary, non-branching spaces include Riemannian manifolds, uniformly convex Banach spaces and Alexandrov spaces with curvature bounded from below. Only in recent years have the first examples of branching sub-Riemannian manifolds been discovered in \cite{MR4153911}. In the aforementioned contribution, the authors study branching of normal geodesics, showing that this is equivalent to a jump in the rank of the differential of the end-point map along the curve. Exploiting this characterization, they are able to produce examples of families of strictly normal minimizing geodesics by gluing together the Heisenberg and the Martinet structures. Note that this gluing is not real-analytic; indeed, normal geodesics cannot branch in a real-analytic sub-Riemannian manifold. The work \cite{MR4153911} left a few questions open: firstly, does an example of a strictly abnormal branching minimizing geodesic exist? Secondly, can minimizing geodesics branch in a real-analytic sub-Riemannian manifold?

\subsection{The main results} In this paper, we give an affirmative answer to the regularity and branching questions raised above. On $\R^3$, with coordinates $(x_1,x_2,x_3)$, let $X_1$ and $X_2$ be the vector fields defined by
\begin{equation}
	\label{eq:horvf}
	X_1(x)\ceq \partial_{x_1}, \qquad 
	X_2(x)\ceq \partial_{x_2} + P(x)^2\partial_{x_3},
\end{equation}
where, for $b\in\N$, $b\geq5$ and odd, $P$ is the polynomial defined by
\begin{equation}
	\label{eq:P}
	P(x)\ceq x_1^2-x_2^b,\qquad\forall\, x=(x_1,x_2,x_3)\in\R.
\end{equation}
We denote by $\mc M\ceq (\R^3,\De,g)$ the sub-Riemannian manifold where $\De=\vspan\{X_1,X_2\}$ and $g$ is the metric obtained by declaring $\{X_1,X_2\}$ to be an orthonormal frame. Let us also consider the curves $\g,\bar\g:\R\to\R^3$ defined by
\begin{equation}
	\label{eq:gammaintro} 
	\g(t)\ceq 
	\begin{cases}
	 	\Big(0,t,\frac{t^{2b+1}}{2b+1}\Big), \quad &t<0,
        \\
        (t^q,t,0), \quad &t\geq0,
	\end{cases}
    \quad 
    \text{and}
    \quad
    \bar\g(t)\ceq 
	\begin{cases}
	 	\Big(0,t,\frac{t^{2b+1}}{2b+1}\Big), \quad &t<0,
        \\
        (-t^q,t,0), \quad &t\geq0,
	\end{cases}
    \qquad
    \text{where } q\ceq \frac b2.
\end{equation}
For $s,\e>0$, we denote by $\g_{s,\e}$ and $\bar\g_{s,\e}$ the restrictions of $\g$ and $\bar\g$ to the interval $[-s,\e]$, respectively. When $s=0$, we omit the subscript $s=0$ and write $\g_\e$ and $\bar\g_\e$. In \cite{notall}, it is proved that $\g_\e,\bar\g_\e$ are length-minimizing for sufficiently small $\e>0$. Here, we significantly improve that result.

\begin{theorem}
	\label{thm:main}
	There exist $\e,s>0$ such that the curves $\g_{s,\e},\bar\g_{s,\e}$ are length-minimizing in $\mc M$.
\end{theorem}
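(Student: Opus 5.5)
The plan is to exploit the rigid structure of horizontal curves for the frame \eqref{eq:horvf}. Any horizontal curve $\eta=(x_1,x_2,x_3)$ satisfies $\dot x_3=P(x)^2\dot x_2$, so
\[
x_3(T)-x_3(0)=\int_0^T P(\eta(t))^2\,\dot x_2(t)\,dt .
\]
Moreover its length is at least $\int_0^T\sqrt{\dot x_1^2+\dot x_2^2}\,dt$. Along $\g$ we have $P\equiv 0$ for $t\ge0$, and $P(\g(t))^2=t^{2b}$ for $t<0$. Hence the endpoint constraint for a competitor $\eta$ joining $\g(-s)$ to $\g(\e)$ reads
\[
\int P(\eta)^2\,\dot x_2 = \int_{-s}^0 t^{2b}\,dt = \frac{s^{2b+1}}{2b+1}.
\]
By the symmetry $x_1\mapsto -x_1$ of $\mc M$, it suffices to treat $\g_{s,\e}$.

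\emph{Step 1: monotone competitors.} Suppose $x_2$ is monotone along $\eta$, so $\eta$ is a graph $x_1=f(x_2)$ over $[-s,\e]$. The constraint becomes
\[
\int_{-s}^0\Big[(f^2+|x_2|^b)^2-x_2^{2b}\Big]dx_2+\int_0^\e (f^2-x_2^b)^2\,dx_2=0 .
\]
Both integrands are nonnegative, so $f\equiv0$ on $[-s,0]$ and $f(x_2)^2=x_2^b$ on $[0,\e]$. Since the endpoint has $x_1=\e^q>0$ and $f$ is continuous, $f=x_2^q$ on $[0,\e]$; a sign switch would force $f$ through a zero of $x_2^b$, i.e. $x_2=0$, so this is consistent. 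Thus $\eta=\g_{s,\e}$. This argument uses only that $b$ is odd.

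\emph{Step 2: general competitors.} Let $\eta$ be a competitor with $L(\eta)<L(\g_{s,\e})$, and restrict to a small ball where $|P|\le C\delta^{b}$, with $\delta\sim\max(s,\e^q)$. Split the time interval into a ``forward'' part, where $x_2$ attains each level for the last time, and a ``backtracking'' part of total $x_2$-variation $B$. Then:
\begin{itemize}
\item the length satisfies $L(\eta)\ge \int_{-s}^{\e}\sqrt{1+f'(x_2)^2}\,dx_2+2B$, where $f$ is the forward graph;
\item the backtracking alters the $x_3$-balance by at most $C\delta^{2b}B$.
\end{itemize}
So the forward graph must satisfy the constraint above up to an error $O(\delta^{2b}B)$. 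On $[-s,0]$ this forces $\int f^2|x_2|^b\lesssim \delta^{2b}B$, and on $[0,\e]$ it forces $\int (f^2-x_2^b)^2\lesssim\delta^{2b}B$. The length saving obtainable from a forward graph obeying these constraints, compared to $\g$, must then be shown to be $<2B$. This gives a contradiction whenever $B>0$, and Step 1 handles $B=0$.

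\emph{Step 3: the main obstacle.} The hard part is the quantitative estimate in Step 2 on the positive side. One must bound the length gain of straightening $x_1=x_2^q$ on $[0,\e]$ (the map $x_2\mapsto x_2^q$ is only $C^{2}$-ish at $0$ for $q=b/2$, with $b\ge5$) in terms of the excess $\int_0^\e (f^2-x_2^b)^2$. This is precisely the estimate established in \cite{notall} for the case $s=0$, and I would invoke it, or reprove it, in the form
\[
\text{gain}\le C\,\e^{\alpha}\,(\text{excess})^{\beta}.
\]
The new feature is the interaction with the negative side. There, any $x_1\neq0$ only increases $\int P^2\,dx_2$, so it can only make the constraint harder to satisfy, while a deviation of the endpoint slope can yield only a tiny length gain. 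I would therefore first show that the negative-side contribution can be absorbed: replacing $f$ by $0$ on $[-s,0]$ decreases length and shifts the excess in the favorable direction. Then I would choose $s,\e$ small enough that $C\delta^{2b}$ beats the constants in the \cite{notall} estimate, so that $2B>\text{gain}$.
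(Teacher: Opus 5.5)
Your Step 1 is correct: when $x_2$ is monotone, the constraint splits into two nonnegative integrals and forces the competitor to be $\pr(\gamma_{s,\varepsilon})$. The gap is in Steps 2--3, where all the difficulty of the theorem lies. As set up, your chain of inequalities cannot produce a contradiction.

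You bound the $x_3$-contribution of the backtracking linearly, by $C\varepsilon^{2b}B$. You then want an estimate $\mathrm{gain}\le C\varepsilon^{a}E^{p}$ in terms of the excess $E$ of the forward graph. For $C\varepsilon^{a}(C\varepsilon^{2b}B)^{p}<2B$ to hold for all small $B>0$ you need $p\ge 1$, and no such estimate exists. Replace $x_2^q$ on $[0,y_0]$ by the chord from $(0,0)$ to $(y_0^q,y_0)$. This gives
\[
\mathrm{gain}=\frac{(q-1)^2}{2(2q-1)}\,y_0^{b-1}\big(1+o(1)\big),\qquad E=\int_0^{y_0}\big(f^2-x_2^b\big)^2\,dx_2\asymp y_0^{2b+1},
\]
so the gain is of order $E^{(b-1)/(2b+1)}$, which is strictly sublinear. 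Choosing $B\asymp y_0^{2b+1}\varepsilon^{-2b}$ is compatible with every constraint you extracted, yet $2B\ll\mathrm{gain}$ as soon as $y_0^{b+2}\ll\varepsilon^{2b}$.

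A sharper power does not rescue this. Replacing the linear bound by the isoperimetric one $|A|\lesssim \sup|Q|\,L^2$ with the crude $\sup|Q|\lesssim\varepsilon^{q+b}$ still fails for $y_0\ll\varepsilon^{q}$. There are two further problems:
\begin{itemize}
\item \cite{notall} contains no gain-versus-excess estimate of the form you want to invoke. Its $s=0$ proof is itself a contradiction argument on normal competitors.
\item Your absorption step on $[-s,0]$ is not valid as stated. In general $f(0)>0$, since optimal competitors leave $\{x_1=0\}$ immediately (cf.\ Lemma \ref{lem:w1>0}). Setting $f\equiv 0$ there breaks continuity at $x_2=0$, and reconnecting by a horizontal segment can increase the length.
\end{itemize}

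What is missing is a bound on how much $x_3$ a backtracking piece can generate in terms of where it sits relative to $\{P=0\}$. This must be coupled with how far the whole curve strays from $\{\widetilde P=0\}$. A loop hugging the Martinet curve encloses weighted area controlled by its own $\beta_\ell$, not by $\varepsilon^{b}$. Reaching the region where $|P|\sim\varepsilon^b$ instead costs length at least of order $\varepsilon^q$.

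The paper obtains this coupling only through the structure of minimizers:
\begin{itemize}
\item By the PMP, optimal competitors are normal, with curvature $\lambda Q(\omega)$.
\item A compactness argument using uniqueness of $\gamma_\varepsilon$ gives $\widetilde\beta_\omega<\varepsilon^M$ once $s$ is small depending on $\varepsilon$ (Corollary \ref{cor:newLS}).
\item A long analysis shows there is a unique loop, localized in $\{x_1\ge C\varepsilon^q,\ x_2\ge C\varepsilon\}$. It also shows $\widetilde P\circ\omega>0$ and that $|\lambda|\beta_\ell^2$ is bounded above and below.
\item The contradiction comes from comparing $L(\ell)\ge C\widetilde\beta\,\varepsilon^{-q}$ (from the curvature equation, Proposition \ref{prop:L(1stloop)>dist2}) with $L(\ell)\le C\widetilde\beta\,\varepsilon^{-1}$ (from the level-set estimate, Proposition \ref{prop:sublevel_length_estimate}).
\end{itemize}
Your direct variational scheme would need a substitute for all of this, and as written it has none.
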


Both curves $\gse$ and $\bar\g_{s,\e}$ are of class $C^{\lfloor q \rfloor}\setminus C^{\lfloor q+1 \rfloor}$ at 0, which is an interior point of their domain. The examples with the lowest regularity are obtained for $b=5$, for which $\gse,\bar\g_{s,\e}$ are length-minimizers of class $C^2\setminus C^3$. This regularity is the lowest obtainable through this construction, as shown in \cite{S25}. Since $\bar\g(t)=\g(t)$ for all $t\leq0$, the curves $\gse$ and $\bar \g_{s,\e}$ provide the first example of strictly abnormal branching length-minimizers in a real-analytic (even polynomial) sub-Riemannian manifold. This constitutes our second main result.

\begin{theorem}
\label{thm:john_cena}
    There exist real-analytic sub-Riemannian manifolds admitting branching strictly abnormal minimizing geodesics.
\end{theorem}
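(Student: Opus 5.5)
Theorem~\ref{thm:john_cena} follows from Theorem~\ref{thm:main} applied to $\mc M$, which is polynomial and hence real-analytic. Fix $s,\e>0$ given by Theorem~\ref{thm:main}, so that both $\gse$ and $\bar\g_{s,\e}$ are length-minimizing. Shift the time variable so that the domain is $[0,s+\e]$ and the splitting point is $t_0=s$. Both curves are parametrized by arc-length, hence they are minimizing geodesics. To see this, note that for $t<0$ we have $\dot\g=(0,1,t^{2b})$, and on this branch $P^2=(0-t^b)^2=t^{2b}$, so $\dot\g=X_2(\g)$. For $t\ge0$ we have $x_1=\pm t^q$ and $x_2=t$, so $P=t^{2q}-t^b=0$ and $\dot\g=\pm qt^{q-1}X_1+X_2$. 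The speed is therefore $\sqrt{1+q^2t^{2q-2}}$, which is not $1$. So I would reparametrize by arc-length. Both curves share the same length function, since the speed depends only on $|x_1'|$. The reparametrizations therefore use the same time change and still agree up to the arc-length value $\sigma_0$ corresponding to $t=0$.

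The branching property is then immediate. The two curves coincide on $[-s,0]$. For every $\tau>0$ their first coordinates are $t^q$ and $-t^q$, which differ, so the curves differ on every interval $[-s,\tau]$ with $\tau\in(0,\e]$. Hence $\gse$ branches at the interior time corresponding to $t=0$.

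It remains to show strict abnormality, that is, abnormal but not normal. The abnormal part is the standard characterization of abnormal curves in rank-2 distributions on $\R^3$: they are horizontal curves lying in the Martinet surface $\{\det(X_1,X_2,[X_1,X_2])=0\}$. Here $[X_1,X_2]=2P\,\partial_{x_1}P\,\partial_{x_3}=4x_1P\,\partial_{x_3}$, so this surface is $\{x_1P=0\}$. Both curves lie in it: on $t<0$ we have $x_1=0$, and on $t\ge0$ we have $P=0$. The covector $\lambda=dx_3$, suitably transported, annihilates $\De$ along the curve. More robustly, I would cite the analysis of \cite{notall}, where these curves are shown to be abnormal.

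Non-normality is simplest to argue through regularity. Normal geodesics are solutions of a Hamiltonian ODE with analytic data, so they are real-analytic, and in particular $C^\infty$. The curve $\gse$ is not even $C^{\lfloor q+1\rfloor}$ at $0$, because $t^q$ with $q$ half-integer fails this while the $t<0$ branch gives $x_1\equiv0$. The arc-length reparametrization has a $C^1$ inverse of the same regularity class, so it does not restore smoothness. Hence the curve is not normal on any subinterval containing $0$.

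The main obstacle is purely bookkeeping. One must make sure the arc-length reparametrization does not break the coincidence or the regularity argument, and make precise which abnormal lift is used. A further point is that strict abnormality should hold for the whole curve, not just near $0$. This is fine, because a normal curve is normal on every subinterval, so a single subinterval on which the curve is not smooth suffices.
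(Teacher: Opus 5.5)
Your proposal is correct and follows the paper's route. The paper obtains Theorem~\ref{thm:john_cena} directly from Theorem~\ref{thm:main} on the polynomial manifold $\mc M$, using that $\bar\g=\g$ on $(-\infty,0]$ while $\bar\g_1=-\g_1\neq\g_1$ for $t>0$. The checks you add (the common arc-length reparametrization, abnormality via the Martinet surface $\{Q=0\}$, and non-normality via failure of analyticity) are exactly what the paper leaves implicit.

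Two small precisions. The transported abnormal covector is explicitly $p_3\,(dx_3-P^2\,dx_2)$ with $p_3\neq 0$ constant. Non-normality can also be seen more quickly: the first coordinate of the arc-length reparametrization vanishes on $[0,s]$ without vanishing identically, which no real-analytic curve can do.
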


We observe that our examples are distributions of Martinet-type in $\mathbb{R}^3$, with the Martinet region that itself branches. In this framework, it is easy to see that branching strictly abnormal curves do exist, while it is far more demanding to prove that they are length-minimizing. Furthermore, our example of branching length-minimizers is ``discrete'', as opposed to \cite{MR4153911} where the authors find a one-parameter family of strictly normal branching geodesics.

Our next main result shows that the minimizing geodesics obtained in Theorem \ref{thm:main} can be lifted to minimizing geodesics in a Carnot group preserving both the regularity and the branching. This gives the first example of its kind.  

\begin{theorem}
    \label{thm:lift}
    There exist Carnot groups admitting minimizing geodesics that are non-smooth at an interior point of their domain and that exhibit branching.
\end{theorem}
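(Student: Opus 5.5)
We plan to obtain Theorem~\ref{thm:lift} from Theorem~\ref{thm:main} by lifting, using the standard fact that the sub-Riemannian length does not increase under submetries. The structure $\mc M$ is polynomial: the frame $X_1=\partial_{x_1}$, $X_2=\partial_{x_2}+P(x)^2\partial_{x_3}$ has coefficients that are polynomials in $(x_1,x_2)$ of degree $2b$. By the Rothschild--Stein and Folland-type lifting for polynomial vector fields, we can build a free (or suitably truncated) nilpotent Lie algebra. Concretely, take the free Carnot group $\mathbb G$ of rank $2$ and step $N\ge 2b+1$, with left-invariant horizontal frame $Y_1,Y_2$. We then look for a smooth map $\pi:\mathbb G\to\R^3$ with $d\pi(Y_i)=X_i\circ\pi$ for $i=1,2$.

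To construct $\pi$ explicitly, we use exponential coordinates of the second kind, or equivalently a model in which the coordinates of $\mathbb G$ include all iterated integrals of $(u_1,u_2)$ up to order $N$. The coordinates $x_1,x_2$ are the first two, $\int u_1$ and $\int u_2$. The third coordinate is $x_3=\int P(x_1,x_2)^2u_2\,dt$, where $P^2$ is a polynomial in $x_1,x_2$. This integral is a fixed linear combination of iterated integrals of the controls of order at most $2b+1$, so it is a polynomial function $\pi_3$ on $\mathbb G$. Hence any control $u\in L^1$ driving $\mathbb G$ from $e$ projects to the trajectory of $\mc M$ with the same control starting from $0$.

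Because $\pi$ intertwines the frames, it is a submetry-like map. Horizontal curves of $\mathbb G$ project to horizontal curves of $\mc M$ with the same length, and every horizontal curve of $\mc M$ starting at $\pi(p)$ lifts uniquely to a horizontal curve from $p$ with the same length. Consequently $d_{\mathbb G}(p,p')\ge d_{\rm SR}(\pi(p),\pi(p'))$.

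Set $\Gamma_{s,\e}$ and $\bar\Gamma_{s,\e}$ to be the horizontal lifts of $\gse$ and $\bar\g_{s,\e}$, starting from the same point. Suppose $\eta$ is any horizontal curve in $\mathbb G$ with the same endpoints as $\Gamma_{s,\e}$. Then $\pi\circ\eta$ joins the endpoints of $\gse$, so by Theorem~\ref{thm:main}
\begin{equation}
L(\eta)=L(\pi\circ\eta)\ge L(\gse)=L(\Gamma_{s,\e}).
\end{equation}
Thus $\Gamma_{s,\e}$ is minimizing, and likewise $\bar\Gamma_{s,\e}$.

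The lifts share the same control on $[-s,0]$, so they coincide there. Their projections differ on $(0,\tau]$ for every $\tau>0$, so the lifts differ as well, and hence they branch at $0$.

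For regularity, the lift has components $x_1,x_2$ equal to those of $\g$. The first component is $0$ for $t<0$ and $t^q$ for $t\ge0$, which is not $C^{\lfloor q+1\rfloor}$ at $0$, so $\Gamma_{s,\e}$ is non-smooth at the interior point $0$. We then reparametrize by arc length, which is harmless since $|\dot\g|$ is bounded away from zero and smooth on each side.

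The main point to check carefully is the existence of the polynomial intertwining map $\pi$ with $\pi(e)=0$, i.e.\ that $x_3$ is expressible through iterated integrals of the controls. We would verify this directly. Since $x_1,x_2$ are integrals of $u_1,u_2$, each monomial $x_1^ix_2^ju_2$ integrates to a shuffle-product combination of iterated integrals of length $i+j+1\le 2b+1$, and these are coordinate functions on the free step-$(2b+1)$ group. With that, the argument closes without any further analysis of abnormal extremals.
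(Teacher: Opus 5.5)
Your proof is correct and follows the same strategy as the paper. You lift $\gamma_{s,\e}$ and $\bar\gamma_{s,\e}$ through a smooth map from a Carnot group that intertwines the horizontal frames (a submetry), get minimality from $L(\eta)=L(\pi\circ\eta)\ge d_{\rm SR}(\gamma(-s),\gamma(\e))$, and get branching because the lifts have the same control on $[-s,0]$. The differences are only in implementation: you use the free rank-$2$, step-$(2b+1)$ group, verify the intertwining through shuffle identities for iterated integrals, and prove minimality by direct length comparison. The paper instead first conjugates by $F$ to remove the $x_2^{2b}$ term, then writes down a much smaller explicit metabelian group $G$ with $\bar\pi(y)=(y_1,y_2,y_{(3,0)}+y_{(1,b)})$, and cites the general submetry-lifting results.
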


Carnot groups are a special class of \sr manifolds, in the sense they are equiregular, self-similar and isometrically homogeneous. This theorem shows that neither equiregularity nor the group structure is enough to rule out non-smooth or branching geodesics.

\subsection{Further directions}
The sub-Riemannian manifold $\mc M$ is real-analytic, of dimension three and metabelian, cf.\ \cite[Def.\ 3.2]{LPS24}. As previously mentioned, minimizing geodesics in three-dimensional or metabelian real-analytic sub-Riemannian manifolds must be of class $C^1$, cf.\ \cite{BFPR22,LPS24}. Thus, it remains an open question whether this result is sharp. More precisely, we have the following. 

\begin{prob}
    Find a minimizing geodesic in a three-dimensional or metabelian real-analytic sub-Riemannian manifold of class $C^1$ but not $C^2$, with a singularity at an interior point of its domain. 
\end{prob}

\subsection{Structure of the paper}
A large part of the paper is devoted to the proof of Theorem \ref{thm:main}. Note that, in light of Proposition \ref{prop:branching} below, it is enough to show that $\gse$ is a length-minimizer, for small $s,\e>0$. The proof of this fact is by contradiction, assuming that there exists a shorter (normal) curve joining the endpoints of $\gse$, and it is divided into several steps, identified by the sections of the paper. 

In Section \ref{sec:prelim}, we reduce the geodesic problem in $\mc M$ to the problem of minimizing the Euclidean length $L(\cdot)$ among plane curves $\omega\in \AC([0,T],\R^2)$, parametrized by arc-length and subject to the constraints $\omega(0) = (0,-s)$, $ \omega(T) = 
(\e^q,\e)$, for some $s,\e>0$, and 
\begin{equation}\label{COS}
	\int_0^{T} \dot\omega_2(t) P(\omega(t))^2  dt=-\frac{s^{2b+1}}{2b+1}.
\end{equation}
By the Pontryagin Maximum Principle, the set of (plane) optimal competitors, denoted by $\CO$, contains only (projections of) normal curves and, thus, every $\w\in\CO$ is real-analytic. Moreover, since $\w\in\CO$ is parametrized by arc-length, there is a real-analytic function $\theta:[0,T]\to \R$ such that $\dot\w_1(t)=\cos\0(t)$ and $\dot\w_2(t)=\sin\0(t)$, and that satisfies
\begin{equation}
	\label{eq:normalpolar-intro}
	\dot\0(t)=\la_\w Q(\w(t)),
\end{equation}
where $\la_\w\in\R$ is a multiplier, and $Q(x)\ceq \partial_{x_1}(P(x)^2)=4x_1P(x)$. Note that the Martinet region $\mathscr S$ is the zero-locus of $Q$, i.e.,
\begin{equation}
    \mathscr S=\{Q=0\}=\{P=0\}\cup \{x_1=0\}.
\end{equation}

Next, in Section \ref{sec:liu_suss}, we crucially improve the Liu--Sussmann-type estimate of \cite[Prop.\ 2.2(ii)]{notall}. This result finds its origin in \cite{LS95} and provides an estimate on the distance of an optimal competitor from $\mathscr S$ in terms of $\e>0$. More precisely, define 
\begin{equation}
    \wt\beta_\w \ceq \max_{[0,L(\w)]}|\wt P(\w(t))|,
\end{equation}
where $\wt P:\R^2\to\R^2$ is the function
    \begin{equation}
    \label{eq:Ptilde}
        \wt P(x_1,x_2)\ceq 
        \begin{cases}
            P(x), \quad &x_2\geq0,
            \\
            x_1^2, \quad &x_2<0.
        \end{cases}
    \end{equation}
Then, $\wt\beta_\w=0$ if and only if $\w$ is the projection of a curve in $\mathscr S$ and, in Corollary \ref{cor:newLS}, we prove that $\wt \beta_\w=o(\e^{3q-1})$ as $\e\to 0$. Compared to \cite{LS95,notall}, where the latter estimate is obtained using that $P=0$ along the \emph{whole} abnormal curve, our strategy relies instead on the minimality of $\gamma_\e$. 

Using the improved Liu--Sussmann-type estimate, we provide a precise description of optimal competitors, with the final aim of comparing their length with the length of $\gse$. Even though this study is similar to the one in \cite{notall}, there are some non-trivial difficulties to be accounted for. Indeed, the ODE \eqref{eq:normalpolar-intro} for an optimal competitor naturally involves $P$. Instead, since $\gse$, with $s>0$, is not supported in $\{P=0\}$, the right quantity to consider for length estimates is $\wt \beta_\w$, which is defined using $\wt P$. This requires an ad hoc adaptation of most of the arguments in \cite{notall}, together with new crucial ideas to complete the qualitative analysis of $\w$. The goal of this analysis, carried out in Sections \ref{sec:level_sets} to \ref{sec:length_estimates}, is to show that:
\begin{enumerate}[i)]
	\item the multiplier $\lambda_\w$ in \eqref{eq:normalpolar-intro} is negative;
	
	\item the competitor is supported in the positive region of $\wt P$, i.e., $\wt P(\w(t))>0$ for all $t\in (0,L(\w))$;
	
	\item the competitor has one single loop, which is quantitatively far away from the starting point;

    \item the function $t\mapsto \w_1(t)$ is strictly increasing up to the $x_1$-axis;
    
	\item the maximum of the function $t\mapsto \wt P(\w(t))$ is achieved inside the loop.
\end{enumerate}

More in detail, in Section \ref{sec:level_sets}, we deduce length estimates of the level sets of $\wt P$. Here, a new argument tailored to $\wt P$ is needed. In Section \ref{sec:geom_obs}, we report some geometric obstructions to optimality taken from \cite{notall}. These obstructions are local in nature and do not depend on the starting point. In Sections \ref{sec:looooooooops} and \ref{sec:looooooop}, we study loops of optimal competitors. The main challenge here is to localize the loops to the region $\{x_1\geq C\e^q,x_2\geq C\e\}\cap \{P>0\}$, cf.\ Proposition \ref{prop:loops>eps/2} and Lemma \ref{lem:svolta}. This localization procedure is crucial to translate estimates for $P$ into estimates for $\wt P$, and to show that there is a unique loop. Additionally, in Corollary \ref{cor:unique_intersection_x1axis}, we show that the first component of any optimal competitor is strictly increasing up to the intersection with the positive $x_1$-axis, which is unique and transversal. In Section \ref{sec:length_estimates}, we exploit Corollary \ref{cor:unique_intersection_x1axis} to relate the maxima of $P$ and of $\wt P$ along a competitor, see Lemma \ref{lem:loc_max_P}. This allows us to prove that $\wt P\circ\w>0$ and that the maximum of $\wt P\circ\w$ is achieved inside the loop. 

We conclude the proof of Theorem \ref{thm:main} in Section \ref{sec:proof}. The final contradiction argument is based on length estimates for the unique loop of an optimal competitor, which can be obtained similarly to those in \cite{notall}, once all the intermediate ingredients have been established for $\wt \beta_\w$ and $\wt P$.

Finally, the last section of the paper, Section \ref{sec:lift}, is devoted to the proof of Theorem \ref{thm:lift}, where we explicitly construct a Carnot group $G$, together with a submetry from $G$ to $\mc M$. This submetry allows then to lift the curves in $\mc M$ to curves in $G$ while preserving minimality.

\subsection*{Acknowledgments} The authors are grateful to E. Le Donne, N. Paddeu, and D. Vittone for fruitful discussions on the lift of geodesics in Carnot groups. The second author was partially supported by the European Research Council (ERC) under the European Union’s Horizon 2020 research and innovation programme (grant agreement GEOSUB, No.~945655). The first and second authors are members of GNAMPA, INdAM.

\section{Preliminaries and reduction to \texorpdfstring{$\mathbb{R}^2$}{R2}}
\label{sec:prelim}

We reduce the study of horizontal curves in $\mc M$ and their length to the study of curves in $\R^2$. An absolutely continuous curve $\eta:[0,T]\to\R^3$ is horizontal in $\mc M$ if and only if
\begin{equation}
    \label{eq:horizontal_curve}
    \dot\eta(t)=\dot\eta_1X_1(\eta(t))+\dot\eta_2X_2(\eta(t)), \quad \text{for a.e. } t\in[0,T].
\end{equation}
Consider the projection onto the first two coordinates $\pi:\R^3\to\mathbb{R}^2$, i.e., $\pi(x_1,x_2,x_3)\ceq (x_1,x_2)$. For any $T>0$, define a projection map $\pr:\AC([0,T],\R^3)\to\AC([0,T],\mathbb{R}^2)$, as
\begin{equation}
\label{eq:projection}
    \pr(\eta)\ceq \pi\circ\eta,\qquad\forall\eta\in\AC([0,T],\R^3).
\end{equation}
Denote by $L(\cdot)$ the Euclidean length of a plane curve. Since $\{X_1,X_2\}$ is an orthonormal frame for $\Delta$, with respect to $g$, from \eqref{eq:LSR} and \eqref{eq:horizontal_curve} it follows that 
\begin{equation}
    \label{eq:LSR=L}
    L_{\rm SR}(\eta)=\int_0^T \sqrt{\dot\eta_1(t)^2+\dot\eta_2(t)^2}dt=L(\pr(\eta)),
\end{equation}
for every horizontal path $\eta$. In particular, for all $s>0$ and as $\e\to 0$, the length of the curve $\gse=\gamma|_{[-s,\e]}$, see \eqref{eq:gammaintro}, is
\begin{align}
    \label{eq:length_abnormal_curve}
    L_{\rm SR}(\g_{s,\e})&=s+\int_{0}^{\e} \sqrt{1 + q^ 2 t^{2q-2}  } dt=s+\e+\frac{q^2}{2(2q-1)}\e^{2q-1}+o(\e^{2q-1}).
\end{align}
If we restrict to horizontal curves, the projection map admits a right inverse.
Indeed, given a plane curve $\w=(\w_1,\w_2)\in \AC([0,T],\R^2)$, set 
\begin{equation}
	\label{eq:dotomega3}
	\omega_3(\w,t) \ceq  \int_0^t\dot \w_2(\tau) P(\w(\tau))^2d\tau,\quad t\in [0,T].
\end{equation}
Then, the function $\lift:\AC([0,T],\mathbb{R}^2)\to\AC([0,T],\R^3)$, defined as
\begin{equation*}
    \w\mapsto\lift(\omega)\ceq (\omega_1,\omega_2,\omega_3(\w,\cdot)),
\end{equation*}
takes values in the set of horizontal curves and trivially satisfies $\pr\circ\lift=\rm{Id}$. Note that curves lifted in this way will always satisfy $\omega_3(0)=0$. Adding a constant to the third component gives a different right inverse for the projection function.

\begin{proposition}
    \label{prop:branching}
    Let $\phi:\R^3\to\R^3$ given by $\phi(x)=(-x_1,x_2,x_3)$. A horizontal curve $\eta:[0,1]\to \R^3$ is length-minimizing in $\mc M$ if and only if the curve $\bar\eta\ceq \phi\circ\eta_3$ is. 
\end{proposition}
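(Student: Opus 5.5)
The statement contains a typo: it should read $\bar\eta\ceq\phi\circ\eta$. Indeed, $\phi\circ\eta_3$ is meaningless, since $\eta_3$ is scalar. The plan is to show that $\phi$ is an isometry of $\mc M$. More precisely, $\phi$ is a diffeomorphism of $\R^3$ whose differential maps the orthonormal frame $\{X_1,X_2\}$ onto $\{-X_1,X_2\}$. Consequently, $\phi$ preserves $\De$ and $g$.

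First, I will compute $d\phi=\mathrm{diag}(-1,1,1)$. Then $d\phi(X_1(x))=-\partial_{x_1}=-X_1(\phi(x))$. Likewise,
\[
d\phi(X_2(x))=\partial_{x_2}+P(x)^2\partial_{x_3}.
\]
Since $P(-x_1,x_2)=x_1^2-x_2^b=P(x_1,x_2)$, this equals $X_2(\phi(x))$. Thus $d\phi_x(\De_x)=\De_{\phi(x)}$, and $d\phi_x$ sends the $g$-orthonormal basis $\{X_1,X_2\}$ to the $g$-orthonormal basis $\{-X_1,X_2\}$ at $\phi(x)$. Hence $d\phi_x$ restricted to $\De_x$ is a linear isometry.

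Next, let $\eta$ be horizontal, with $\dot\eta=u_1X_1(\eta)+u_2X_2(\eta)$ a.e. Then $\bar\eta=\phi\circ\eta$ is absolutely continuous and satisfies
\[
\dot{\bar\eta}=-u_1X_1(\bar\eta)+u_2X_2(\bar\eta).
\]
So $\bar\eta$ is horizontal with $L_{\rm SR}(\bar\eta)=L_{\rm SR}(\eta)$. Equivalently, by \eqref{eq:LSR=L}, $\pr(\bar\eta)$ is the reflection of $\pr(\eta)$ in the $x_2$-axis, and reflection preserves Euclidean length. Because $\phi$ is an involution, the same applies to $\phi^{-1}=\phi$. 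Therefore $\phi$ induces a length-preserving bijection between the horizontal curves joining $p,q$ and those joining $\phi(p),\phi(q)$, which gives $d_{\rm SR}(\phi(p),\phi(q))=d_{\rm SR}(p,q)$.

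Finally, if $\eta$ is length-minimizing, then
\[
L_{\rm SR}(\bar\eta)=L_{\rm SR}(\eta)=d_{\rm SR}(\eta(0),\eta(1))=d_{\rm SR}(\bar\eta(0),\bar\eta(1)),
\]
so $\bar\eta$ is length-minimizing. The converse follows by applying the same argument to $\bar\eta$, since $\phi\circ\bar\eta=\eta$. None of these steps is a real obstacle; the only point needing attention is the symmetry $P(-x_1,x_2)=P(x_1,x_2)$, which makes $X_2$ invariant under $\phi$. Since $\bar\g=\phi\circ\g$ by \eqref{eq:gammaintro}, this also reduces Theorem \ref{thm:main} to the curve $\gse$ alone.
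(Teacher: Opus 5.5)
Your proof is correct and follows the same route as the paper. Both arguments rest on the symmetry $P(-x_1,x_2)=P(x_1,x_2)$, which makes $\phi$ preserve horizontality (the paper writes this as $\dot{\bar\eta}_3=\dot{\bar\eta}_2P(\bar\eta_1,\bar\eta_2)^2$), and then obtain length preservation, hence that $\phi$ is an isometry, via \eqref{eq:LSR=L}; you are also right that $\phi\circ\eta_3$ is a typo for $\phi\circ\eta$.
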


\begin{proof}
    We first prove that $\phi$ preserves horizontal curves in $\mc M$, as well as the variation on the third coordinate. Indeed, from \eqref{eq:horizontal_curve}, we have
    \begin{equation}
        \dot\eta_3=\dot\eta_2P(\eta_1,\eta_2)^2 =\dot\eta_2 P(-\eta_1,\eta_2)^2=\dot{\bar\eta}_2P(\bar\eta_1,\bar\eta_2)^2=\dot{\bar\eta}_3.
    \end{equation}
    Then, the fact that $\phi$ is an isometry of $\mc M$ follows from \eqref{eq:LSR=L}.
\end{proof}

As a consequence, we may focus on proving Theorem \ref{thm:main} for $\gse$. Moreover, according to the discussion above, the problem of finding length-minimizing curves between two points in $\mc M$ is equivalent to the problem of minimizing the Euclidean length of curves between their projections in $\mathbb{R}^2$, under the additional constraint given by the variation of the third coordinate.

\begin{definition}
	\label{def:competitor} Given $s,\e>0$, we say that  $\w\in \AC([0,T],\R^2)$ is a {\em competitor} for $\gamma_{s,\e}$, and we write $\w\in\CC$, if the following conditions hold:
\begin{enumerate}[(i)]
\item $\w(0)=\pi(\g(-s))$, $\w(T)=\pi(\g(\e))$;
\item\label{item:def_competitor_ii} $\omega_3(\omega,0)=-\frac{s^{2b+1}}{2b+1}$;
\item $T= L(\w)$ and $\w$ is parametrized by arc-length;
\item $\w$ is different from the arc-length reparametrization of $\pr(\gse)$.
\end{enumerate}
If, additionally, $L(\omega)=d_{\rm SR}(\gamma(-s),\gamma(\e))$, we say that $\omega$ is an \emph{optimal competitor} for $\gamma_{s,\e}$ and we write $\w\in\CO$.  
\end{definition}

We recall that the Pontryagin Maximum Principle (cf. \cite[Ch.\ 4]{ABB20}) implies that the lift of any $\w\in\CO$ must be a normal or an abnormal curve, joining the points $\g(-s)$ and $\g(\e)$. 

\begin{remark}
    \label{rem:study_only_normal}
    Denote by $\mc N_{s,\e}$ the set of normal curves, parametrized by arc-length, and joining $\gamma(-s)$ and $\gamma(\e)$. Denote by $\mc N_{s,\e}^{\mathrm{opt}}$ the set of curves in $\mc N_{s,\e}$ which are length-minimizing. Since $\gamma_{s,\e}$ is the only abnormal trajectory joining its endpoints (up to reparametrization),  condition (iv) above implies that $\pr(\mc N_{s,\e}^{\rm{opt}})=\mathcal C_{s,\e}^{\rm{opt}}$.
\end{remark}

For any competitor $\w\in \CC$, we denote $I_\w\ceq [0,L(\w)]$. For any normal competitor $\w\in \pr(\mc N_{s,\e})$, we define the {\em angle map} $\0=\0_\w:I_\omega\to\R$ as the unique $C^\oo$ function  such that $\0(0)\in[-\pi,\pi)$ and
    \begin{equation}
	   \label{eq:teta}
	   \dot\w_1(t)=\cos(\0(t)), \quad \dot\w_2(t)=\sin(\0(t)), \quad \text{for all } t\in I_\omega.
    \end{equation}

\begin{lemma}	
	\label{lem_polarnormal}
	Fix $\e,s>0$. For all $\w\in\CO$ there exists  $\la=\lambda_\w\in\R$ such that 
	\begin{equation}
		\label{eq:normalpolar}
		\dot\0(t)=\la Q(\w(t)), \quad  t\in I_\omega.
	\end{equation}
	The quantity $\la Q(\w(t))$ is the curvature of $\w$ at the point $\w(t)$. 
\end{lemma}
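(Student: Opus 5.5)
By Remark \ref{rem:study_only_normal}, every $\w\in\CO$ is the projection of a normal extremal $\eta=\lift(\w)+c$, up to a constant in the third component. So I plan to write down the normal Hamiltonian system for the frame $\{X_1,X_2\}$ and project it to the plane. Let $h_i(\xi,x)\ceq\langle\xi,X_i(x)\rangle$, so that
\[
h_1=\xi_1,\qquad h_2=\xi_2+P(x)^2\xi_3,\qquad H=\tfrac12(h_1^2+h_2^2).
\]
Along a normal extremal parametrized by arc length, $\dot\eta=h_1X_1+h_2X_2$ with $h_1^2+h_2^2\equiv1$. Comparing with \eqref{eq:horizontal_curve} gives $\dot\w_1=h_1$ and $\dot\w_2=h_2$.

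Next I will compute the adjoint equations. Since $H$ does not depend on $x_3$, the covector component $\xi_3\ceq\la$ is constant. Then:
\begin{itemize}
\item $\dot h_1=\{H,h_1\}=h_2\{h_2,h_1\}$;
\item $\dot h_2=h_1\{h_1,h_2\}$;
\item $\{h_1,h_2\}=\langle\xi,[X_1,X_2]\rangle$, with $[X_1,X_2]=\partial_{x_1}(P^2)\partial_{x_3}=Q\,\partial_{x_3}$.
\end{itemize}
Hence $\{h_1,h_2\}=\la Q(x)$, which gives
\[
\dot h_1=-\la Q(\w)\,h_2,\qquad \dot h_2=\la Q(\w)\,h_1 .
\]
The sign in front of $\la$ depends only on the bracket convention, and it can be absorbed into $\la$.

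Substituting $h_1=\cos\0$ and $h_2=\sin\0$ from \eqref{eq:teta} gives $-\sin\0\,\dot\0=-\la Q\sin\0$ and $\cos\0\,\dot\0=\la Q\cos\0$. Since $\cos\0$ and $\sin\0$ never vanish simultaneously, $\dot\0=\la Q(\w(t))$ on all of $I_\w$. This is \eqref{eq:normalpolar}.

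The curvature statement is then immediate. For a unit-speed plane curve, the signed curvature is $\dot\w_1\ddot\w_2-\dot\w_2\ddot\w_1=\dot\0$.

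The main point to check is the reduction to normal extremals, namely that no optimal competitor is abnormal. Remark \ref{rem:study_only_normal} gives this, since $\gse$ is the unique abnormal curve joining the endpoints and condition (iv) excludes it. I also need the minimizer to be a genuinely normal lift, and not merely a curve whose projection agrees with one. This holds because the Pontryagin Maximum Principle is applied directly to the lifted curve in $\mc M$.

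The rest is a routine computation. The only care needed is the sign convention for Poisson brackets, which just flips the sign of $\la_\w$.
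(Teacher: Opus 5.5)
Your proposal is correct and follows essentially the same route as the paper: you apply the Pontryagin Maximum Principle to the normal lift of $\omega$, use that $p_3=\lambda$ is constant because $H$ does not depend on $x_3$, and obtain $(\ddot\omega_1,\ddot\omega_2)=\lambda Q(\omega)(-\dot\omega_2,\dot\omega_1)$, which identifies the signed curvature. The only cosmetic difference is that you compute $\dot h_1,\dot h_2$ via the bracket identity $\{h_1,h_2\}=\langle\xi,[X_1,X_2]\rangle=\lambda Q$, whereas the paper writes out the adjoint equations for $p_1,p_2$ in coordinates.
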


\begin{proof}
    On the one hand, by \eqref{eq:teta}, $\dot\theta$ is the signed curvature of $\w$. On the other hand, since $\pr(\mc N_{s,\e}^{\rm{opt}})=\mathcal C_{s,\e}^{\rm{opt}}$, see Remark \ref{rem:study_only_normal}, we can apply the Pontryagin Maximum Principle to any lift of $\w$. Hence, recalling that the \sr Hamiltonian is
    \begin{equation}
        H(\lambda)=\frac12(h_1^2(\lambda)+h_2^2(\lambda))=\frac12\left(p_1^2+\left(p_2+P(x_1,x_2)^2p_3\right)^2\right),\qquad\forall\, \lambda=(p_1,p_2,p_3)\in T^*M, 
    \end{equation}
    where $h_i(\lambda)=\langle \lambda,X_i\rangle$, we have that any lift $(\omega_1,\omega_2,\omega_3)$ of $\w$  must satisfy  
    \begin{equation}
        \left\{\begin{array}{rlcrl}
             \dot p_1& \!\!= -\partial_1(P^2)(\omega(t)) h_2(t) p_3(t), & \qquad &\dot \w_1& \!\!= h_1(t),  \\
             \dot p_2& \!\!= -\partial_2(P^2)(\omega(t)) h_2(t) p_3(t), & \qquad &\dot \w_2& \!\!= h_2(t),\\
             \dot p_3& \!\!=0, & \qquad &\dot \w_3& \!\!= P(\omega(t))^2 \dot\w_2(t).
        \end{array}\right.
    \end{equation}
    Therefore, we easily see that, for every $t\in I_\w$, 
    \begin{equation}
    \begin{split}
        \left(\ddot\w_1(t),\ddot\w_2(t)\right)= \left(-Q(\w(t))h_2(t)p_3(t),Q(\w(t))h_1(t)p_3(t)\right)= p_3(t)Q(\w(t))\left(-\dot \w_2(t),\dot\w_1(t)\right).
    \end{split}
    \end{equation}
    As $\w$ is parametrized by arc-length, this means that its signed curvature at time $t$ is given by $Q(\w(t))p_3(t)$. Since $p_3(t)\equiv p_3(0)$, we conclude the proof. 
\end{proof}

For the next lemma, recall that normal curves in $\mc M$ are real-analytic, see for instance \cite[Section 4.3.1]{ABB20} or \cite[Section 2.2]{Rif14}. Hence, by Remark \ref{rem:study_only_normal}, any $\omega\in\mathcal{C}_{s,\e}^{\rm{opt}}$ is real-analytic.   
\begin{lemma}\label{lem:w1>0} 
    For every $s,\e>0$, $\omega\in \CO$, and $t \in(0,L(\w)]$, it holds $\w_1(t)>0$. In addition, $\0(0)\in(-\frac \pi 2,\frac \pi 2)$ and there exists $r=r(\w)>0$ such that $\widetilde{P}(\w(t))>0$ for $t\in(0,r)$.
\end{lemma}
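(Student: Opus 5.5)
The plan is to run a reflection argument in $x_1$, using that $P$ depends on $x_1$ only through $x_1^2$, and then to use real-analyticity and ODE uniqueness for \eqref{eq:normalpolar} to rule out zeros of $\w_1$. Fix $\w\in\CO$ with $T=L(\w)$.

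\emph{Step 1: $\w_1\ge 0$.} Consider $\w'\ceq(|\w_1|,\w_2)$. It is absolutely continuous and has the same speed as $\w$, hence the same length and arc-length parametrization. Its endpoints are unchanged, since $\w_1(0)=0$ and $\w_1(T)=\e^q>0$. Because $P(x_1,x_2)^2=P(-x_1,x_2)^2$, the integrand in \eqref{eq:dotomega3} is unchanged, so $\w'$ satisfies the constraint of Definition \ref{def:competitor}(ii); this is the same computation as in Proposition \ref{prop:branching}. We also have $\w'\ne\pr(\gse)$: if they coincided, then $|\w_1|\equiv 0$ on the part where $\w_2<0$ and $|\w_1|=\w_2^q$ otherwise, so $\w$ would equal $\pr(\gse)$ up to a sign change of the first coordinate on the part where $\w_2\ge0$. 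Such a sign change creates a corner, contradicting analyticity, unless $\w=\pr(\gse)$. Hence $\w'\in\CO$, and by Remark \ref{rem:study_only_normal} both $\w$ and $\w'$ are real-analytic. They agree on a neighbourhood of $T$, where $\w_1>0$, so by analyticity $\w\equiv\w'$, i.e.\ $\w_1\ge0$ on $I_\w$.

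\emph{Step 2: no tangential contact with $\{x_1=0\}$.} Suppose $\w_1(t_0)=0$ for some $t_0\in(0,T)$. Since $t_0$ is an interior minimum of $\w_1\ge0$, we get $\dot\w_1(t_0)=0$, i.e.\ $\0(t_0)\in\{\pm\pi/2\}+2\pi\Z$. Moreover $Q=4x_1P$ vanishes identically on $\{x_1=0\}$. Hence the vertical line $t\mapsto(0,\w_2(t_0)\pm(t-t_0))$ with constant angle $\0(t_0)$ solves the system $\dot\w=(\cos\0,\sin\0)$, $\dot\0=\la_\w Q(\w)$ of Lemma \ref{lem_polarnormal} with the same initial data at $t_0$. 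The right-hand side is smooth (indeed polynomial/trigonometric), so uniqueness forces $\w_1\equiv0$, contradicting $\w_1(T)=\e^q>0$. The same argument at $t_0=0$ gives the angle statement: $\w_1(0)=0$ and $\w_1\ge0$ imply $\cos\0(0)\ge0$, and $\cos\0(0)=0$ is excluded by uniqueness. Since $\0(0)\in[-\pi,\pi)$, this yields $\0(0)\in(-\frac\pi2,\frac\pi2)$. The endpoint $t=T$ needs no argument.

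\emph{Step 3: positivity of $\wt P$.} Since $\w_2(0)=-s<0$, there is $r>0$ with $\w_2<0$ on $[0,r)$. There $\wt P(\w(t))=\w_1(t)^2$, which is positive for $t\in(0,r)$ by Step 2.

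The main delicate point is Step 1. One must check that the reflected curve is still an admissible, distinct competitor, so that its analyticity can be invoked. Then the identity theorem, applied on the open set near $T$, excludes sign changes. Steps 2 and 3 are routine once uniqueness for the normal ODE is available.
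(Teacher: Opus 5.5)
Your proof is correct and follows the paper's route. The paper also reflects in $x_1$ to preserve length and the constraint, uses analyticity of optimal competitors, and then uses ODE uniqueness to force the vertical line. Your variations are that you reflect globally via $(|\omega_1|,\omega_2)$ and conclude by the identity theorem, where the paper reflects a single negative excursion to produce a non-analytic optimal curve, and that you apply uniqueness to the closed $(\omega,\theta)$ system rather than the Hamiltonian one. One small correction in your check of condition (iv): because $q>1$ the sign flip creates no corner, but your conclusion still holds, since any such curve has $\omega_1\equiv 0$ on $[0,s]$, which by itself contradicts analyticity of $\omega$ together with $\omega_1(L(\omega))=\e^q>0$.
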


\begin{proof}
	We first prove that $\w_1\geq0$ on $I_\omega$. Assume by contradiction that there exists $t^*\in I_\omega$ such that $\w_1(t^*)<0$. Set, for the sake of notation, $\tau\ceq L(\w)$ and define
	\[
	t_0\ceq \max\{ t\in [0,t^*] \mid \omega_1(t)=0\} \quad\textrm{and}\quad
	t_1\ceq \min\{ t\in [t^*,\tau] \mid \omega_1(t)=0\},
	\]
	and let $\eta\in \AC([0,\tau],\R^2)$ be the curve 
	\begin{equation}
		\eta (t)=
		\begin{cases}
			\w(t), \quad &t\in[0,\tau]\setminus[t_0,t_1],\\
			(-\w_1(t),\w_2(t)), \quad &t\in[t_0,t_1].
		\end{cases}
	\end{equation}
	By construction, $L(\eta)=L(\w)$, $\eta(0)=\w(0)$, and $\eta(\tau)=\w(\tau)$. Moreover, $P\circ\eta=P\circ\w$ and thus 
	\begin{equation}
		\omega_3(\eta,\tau)=	\int_0^\tau \dot\eta_2(t)P(\eta (t))^2dt=	\int_0^\tau \dot\w_2(t)P(\w(t))^2dt =\omega_3(\w,\tau).
    \end{equation}
	Therefore, $\lift(\eta)$ is an optimal curve which is not contained in the Martinet surface and is not a real-analytic extremal, which is a contradiction. This proves that $\w_1\geq0$. In addition, since $\w_1(0)=0$ and $\w_1\geq 0$, then $\dot\w_1(0)=\cos(\theta(0))\geq 0$, showing that $\0(0)\in\left[-\frac\pi2,\frac\pi2\right]$.
	
	We now prove that $\w_1|_{(0,L(\w))}>0$ (note that $\w_1(L(\w))>0$ by definition). By contradiction, assume that there exists $t_0\in (0,L(\w))$ such that $\w_1(t_0)=0$. Then, since $\w_1$ has minimum in the interior point $t_0$, $\dot\w_1(t_0)=\cos(\theta(t_0))=0$. Combining this with the PMP, we see that there exists a curve $p=(p_1,p_2):I_\w\to\R^2$ such that $(p,\w)$ is the unique solution of the problem 
    \begin{equation}
    \label{eq:proj_ham_sys}
        \left\{\begin{array}{rlcrl}
             \dot p_1& \!\!= -\partial_1(P^2)(\omega(t)) h_2(t) p_3, & \qquad &\dot \w_1& \!\!= h_1(t),  \\
             \dot p_2& \!\!= -\partial_2(P^2)(\omega(t)) h_2(t) p_3, & \qquad &\dot \w_2& \!\!= h_2(t),
        \end{array}\right.
    \end{equation}
    with initial conditions at time $t_0$ given by $(0,p_2^{t_0};0,\w_2^{t_0})$. One can check that the (projection of the) unique solution to \eqref{eq:proj_ham_sys} is the straight line $[t_0,L(\w)]\ni t\mapsto (0,\w_2^{t_0}+\sin(\theta(t_0))(t-t_0))$. This gives a contradiction, as $\w_1(L(\w))>0$. An analogous argument shows that at time $t=0$, one cannot have $\cos(\theta(0))=0$. In conclusion, we must have $\w_1(t)>0$, for every $t\in (0,L(\w)]$, and $\0(0)\in(-\frac\pi2,\frac\pi2)$. 

    Finally, since the initial point of $\w\in\mathcal{C}_{s,\e}^{\rm{opt}}$ is $\w(0)=\pi(\gamma(-s))=(0,-s)$, with $\0(0)\in(-\frac\pi2,\frac\pi2)$, it follows that there exists $r>0$ such that $\omega_1(t)>0$ and $\omega_2(t)<0$, for $t\in(0,r)$. As a consequence,  $\widetilde P(\w(t))>0$, for $t\in(0,r)$. 
\end{proof}

\section{A new Liu--Sussmann Lemma}
\label{sec:liu_suss}

A key ingredient in the proof of optimality of the abnormal curve in \cite{notall,LS95} is an estimate on the maximum value of $P$ along an optimal competitor. We study the analogous quantities in our case.

\begin{definition}
    Let $\e,s>0$ and let $\w\in\CO$. We define the two quantities
\begin{equation}
\begin{split}
    &\beta=\beta_\w\ceq \max_{t\in I_\w}|P(\w(t))|,\\
    &\wt\beta=\wt\beta_\w\ceq \max_{t\in I_\w}|\wt P(\w(t))|,
\end{split}
\end{equation}
where $P$ and $\wt P$ are defined in \eqref{eq:P} and \eqref{eq:Ptilde}, respectively.
\end{definition}

We recover the asymptotic estimate on $\beta$ and $\wt\beta$, which generalizes \cite[Lem.\ 3.5(v)]{notall}. 

\begin{lemma}
    \label{lem:quelloforte}
    There exists $\e_\star>0$, such that the following holds. For every $0<\e\leq\e_\star$ and $\delta>0$ there exists $s(\delta,\e)>0$ such that, for every $0<s<s(\delta,\e)$ and every $\omega\in\CO$, it holds
    \begin{equation}\label{Haus_conv_curves}
        {\rm dist}_{\rm Haus}\left({\rm spt}(\omega),{\rm spt}(\pr(\gamma_{\e}))\right)\leq\delta,
    \end{equation}
    where $ {\rm dist}_{\rm Haus}(A,B)$ is the Hausdorff distance between $A,B\subset \R^n$, with respect to the Euclidean distance.    
\end{lemma}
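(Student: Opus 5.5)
We argue by contradiction and compactness, letting $s\to0$ with $\e$ fixed. Take $\e_\star>0$ so small that, by \cite{notall}, for every $0<\e\le\e_\star$ the curve $\g_\e$ is the unique length-minimizer (up to reparametrization) between $\g(0)=0$ and $\g(\e)$. Uniqueness is the key input. It should follow from the strict inequality $L(\w)>L(\pr(\g_\e))$ for all competitors, which is what the contradiction argument of \cite{notall} actually shows.

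Fix $\e\le\e_\star$ and $\delta>0$, and suppose the statement fails. Then there are $s_n\to0$ and $\w^n\in\mathcal C^{\rm opt}_{s_n,\e}$ with
\[
{\rm dist}_{\rm Haus}\bigl({\rm spt}(\w^n),{\rm spt}(\pr(\g_\e))\bigr)>\delta .
\]

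\textbf{Step 1 (uniform bounds).} We have $L(\w^n)=d_{\rm SR}(\g(-s_n),\g(\e))\le L_{\rm SR}(\g_{s_n,\e})=s_n+L_{\rm SR}(\g_\e)$. So the lengths are bounded, and the curves lie in a fixed compact set, since $\w^n(0)=(0,-s_n)$.

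\textbf{Step 2 (extracting a limit).} Reparametrize each $\w^n$ proportionally to arc-length on $[0,1]$. The curves are then uniformly Lipschitz, and Arzel\`a--Ascoli gives a subsequence converging uniformly to some $\w^\infty$ with $\w^\infty(0)=(0,0)$ and $\w^\infty(1)=(\e^q,\e)$. The lifts $\eta^n$ of $\w^n$ with $\eta^n(0)=\g(-s_n)$ have third component
\[
\eta^n_3(t)=-\frac{s_n^{2b+1}}{2b+1}+\omega_3(\w^n,t).
\]
Because $P^2$ is smooth, the velocities $\dot\eta^n$ are uniformly bounded, so the $\eta^n$ also converge uniformly to a horizontal curve $\eta^\infty=\lift(\w^\infty)$. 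Its endpoints are $\lim\g(-s_n)=0$ and $\g(\e)$.

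Horizontality passes to the limit. One way to see this: $\dot\w^n\rightharpoonup\dot\w^\infty$ weakly-* in $L^\infty$, while $P(\w^n)^2\to P(\w^\infty)^2$ uniformly, so $\int\dot\w^n_2P(\w^n)^2\to\int\dot\w^\infty_2P(\w^\infty)^2$.

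\textbf{Step 3 (minimality of the limit).} By lower semicontinuity of length,
\[
L_{\rm SR}(\eta^\infty)\le\liminf_n L(\w^n)\le\liminf_n\bigl(s_n+L_{\rm SR}(\g_\e)\bigr)=L_{\rm SR}(\g_\e)=d_{\rm SR}(0,\g(\e)).
\]
So $\eta^\infty$ is length-minimizing between $0$ and $\g(\e)$. By uniqueness it is a reparametrization of $\g_\e$, and hence ${\rm spt}(\w^\infty)={\rm spt}(\pr(\g_\e))$.

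\textbf{Step 4 (contradiction).} Uniform convergence of parametrized curves implies Hausdorff convergence of their supports. Hence ${\rm dist}_{\rm Haus}({\rm spt}(\w^n),{\rm spt}(\pr(\g_\e)))\to0$, contradicting the choice of the sequence.

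\textbf{Main obstacle.} The delicate point is the uniqueness of $\g_\e$ among minimizers between its endpoints. Minimality alone is not enough: a second minimizer would break Step 3. We would extract uniqueness from the proof in \cite{notall}. That argument shows that any normal competitor is strictly longer, and the Maximum Principle then excludes other abnormal minimizers, since $\g_\e$ is the only abnormal curve joining its endpoints.

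Two smaller technical checks remain. The limit of the proportional-to-arc-length parametrizations may fail to be arc-length, but only supports and lengths matter here. The limit could also contain a constant piece, which does not affect supports.
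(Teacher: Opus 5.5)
Your proof is correct and follows essentially the same route as the paper. Both use compactness (Arzel\`a--Ascoli plus lower semicontinuity of length) as $s\to0$, identify the limit through the uniqueness of $\gamma_\e$ as a minimizer from \cite{notall}, and then pass from uniform convergence to Hausdorff convergence of supports. The only difference is cosmetic: the paper works with the compact family of minimizing geodesics in $\mathcal M$ with respect to $d_{\rm SR}$ and projects afterwards, whereas you extract the limit in the plane and check by hand that its lift is horizontal with the right endpoints.
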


\begin{proof}
    There exists $R>0$ such that the sub-Riemannian ball $\overline{B}_r(\g(0))$ is compact for every $r\leq R$. Fix $r<R$, and note that any horizontal curve $\eta$ that joins a point $p\in \overline{B}_r(\g(0))$ with a point $q\notin\overline{B}_R(\g(0))$ satisfies $L_{SR}(\eta)\geq R-r$. 
    
    Fix $\e_\star,s_\star>0$ such that $L_{SR}(\g_{s_\star,\e_\star})<R-r$, $\spt(\g_{s_\star,\e_\star})\subset\overline{B}_r(\g(0))$, and $\g_{\e_\star}$ is the unique geodesic between its endpoints, up to reparametrization (\cite[Thm.\ 1.1]{notall} ensures that such $\e_\star$ exists). Now, pick $\e<\e_\star$ and consider the compact sets $K\ceq \{\gamma(\e)\}$ and
    \begin{equation}
        K_s\ceq \left\{\gamma(\tau)\,\vert\, \tau\in[-s,0]\right\},\qquad \text{for } 0\leq s\leq s_\star. 
    \end{equation}
    Define the set $\Gamma_{K_s\to K}\subset\AC([0,1], \R^3)$ of minimizing geodesics, parametrized by constant speed, starting from $K_s$ and ending in $K$. Note that a curve $\eta\in\Gamma_{K_s\to K}$ has speed $d_{SR}(\eta(0),\eta(1))$, therefore the family $\Gamma_{K_s\to K}$ is equi-Lipschitz (with respect to $d_{SR}$), with equi-Lipschitz constant bounded by
    \begin{equation}
    L\ceq \max_{p\in K_s,\, q\in K}d_{SR}(p,q)\leq L_{SR}(\g_{s,\e})\leq L_{SR}(\g_{s_\star,\e_\star})<R-r.
    \end{equation}
    Moreover, any curve of the family is supported in $\{p\in {\mc M}\,|\,d_{SR}(p,K)\leq L\}\subset \overline{B}_R(\g(0))$.
    Since $\left(\overline{B}_R(\g(0)), d_{SR}\right)$ is a compact metric space, by the Ascoli-Arzel\`a Theorem, $\Gamma_{K_s\to K}$ is relatively compact with respect to the uniform topology (induced by $d_{SR}$). In addition, by the lower semi-continuity of the length, the set $\Gamma_{K_s\to K}$ is closed, hence compact. 
    
    Let $\tilde{\gamma}_{\e}$ be the reparametrization of $\gamma_{\e}$ with constant speed equal to $L_{SR}(\gamma_{\e})$. Then, for any $0\leq s\leq s_{\star}$, we have $\tilde{\gamma}_{\e}\in\Gamma_{K_s\to K}$ and there exists a curve $\eta_s\in\Gamma_{K_s\to K}$ such that
    $ \|\eta_s-\tilde{\gamma}_{\e}\|_\infty=\max_{\eta\in\Gamma_{K_s\to K}}\|\eta-\tilde{\gamma}_{\e}\|_\infty$, where the uniform norm is computed with respect to $d_{SR}$. We claim that 
    \begin{equation}
    \label{eq:claim_millemila}
        \|\eta_s-\tilde{\gamma}_{\e}\|_\infty\to0, \quad s\to0.
    \end{equation}
    By contradiction, assume that there exist $\delta_0>0$ and a sequence $s_n\to 0$ such that
    \begin{equation}
    \label{eq:AA}
        \|\eta_{s_n}-\tilde{\gamma}_{\e}\|_\infty\geq\delta_0.
    \end{equation}
    Reasoning similarly as before, the sequence $\{\eta_{s_n}\}$ is uniformly bounded and equi-Lipschitz. By Ascoli-Arzel\`a Theorem, there is a converging sub-sequence. By lower semi-continuity of the distance and since $K_s\to\{\g(0)\}$, as $s\to0$, the limit is a minimizing geodesic parametrized by constant speed and joining $\g(0)$ and $\g(\e)$. Thanks to \cite[Thm. 1.1]{notall}, the limit must be $\tilde\g_\e$, contradicting \eqref{eq:AA}. 
    
    In addition, recall that the set of optimal competitors $\mc C_{s,\e}^{\rm opt}$ contains curves parametrized by arc-length, which are projections onto $\mathbb{R}^2$ of minimizing geodesics joining $\pi(\g(-s))$ and $\pi(\gamma(\e))$. The projection $\pr$, see \eqref{eq:projection}, is continuous, hence uniformly continuous on the compact set $\overline{B}_R(\g(0))$. As a consequence, it preserves uniform convergence of bounded family of curves, thus from \eqref{eq:claim_millemila}, it follows that
    \begin{equation}
        \max_{\eta\in\Gamma_{K_s\to K}}\|\pr(\eta)-\pr(\tilde{\gamma}_{\e})\|_\infty\to 0,\quad\text{as }s\to 0,
    \end{equation}
    where now the uniform norm is computed with respect to the Euclidean distance on $\mathbb{R}^2$.
    Since convergence in the uniform topology implies Hausdorff convergence of supports, we get
    \begin{equation}\label{unif_convergence_geodesics}
        \max_{\eta\in\Gamma_{K_s\to K}}\mathrm{dist}_{\mathrm{Haus}}(\spt(\pr(\eta)),\spt(\gamma_{\e}))\to 0, \quad\text{as } s\to 0,
    \end{equation}
    where we used that $\spt(\pr(\tilde{\gamma}_{\e}))=\spt(\tilde{\gamma}_{\e})=\spt(\g_\e)$. We conclude the proof as \eqref{unif_convergence_geodesics} is equivalent to the thesis.   
\end{proof}

\begin{corollary}
\label{cor:newLS}
Let $\e_\star>0$ be the parameter given by Lemma \ref{lem:quelloforte}. For every $0<\e\leq\e_\star$ and $M>0$, there exists $\bar s=\bar s(\e,M)>0$ such that, for all $0<s<\bar s$ and $\w\in\CO$, we have
    \begin{equation}
        \beta_\w<\e^M \quad \text{and} \quad \wt\beta_\w<\e^M.
    \end{equation}
\end{corollary}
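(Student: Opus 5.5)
The plan is to read the bound off the Hausdorff convergence in Lemma \ref{lem:quelloforte}, using only the continuity of $P$ and $\wt P$ and the fact that both vanish on the support of $\pr(\g_\e)$. First check that $P$ and $\wt P$ vanish on $\spt(\pr(\g_\e))=\{(t^q,t)\mid t\in[0,\e]\}$: there $x_2=t\geq0$, so $\wt P=P$, and $P(t^q,t)=t^{2q}-t^b=0$ since $2q=b$.

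Fix $0<\e\leq\e_\star$ and $M>0$. The set $K\ceq\{x\in\R^2\mid \mathrm{dist}(x,\spt(\pr(\g_\e)))\leq 1\}$ is compact. The polynomial $P$ is uniformly continuous on $K$. The function $\wt P$ is continuous on all of $\R^2$, because at $x_2=0$ both branches equal $x_1^2$; hence it is also uniformly continuous on $K$. So there is $\delta\in(0,1)$ such that $|P(x)-P(y)|<\e^M$ and $|\wt P(x)-\wt P(y)|<\e^M$ whenever $x,y\in K$ and $|x-y|\leq\delta$.

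Now set $\bar s(\e,M)\ceq s(\delta,\e)$, the value given by Lemma \ref{lem:quelloforte}. Take $0<s<\bar s$, $\w\in\CO$ and $t\in I_\w$. By \eqref{Haus_conv_curves} there is a point $y\in\spt(\pr(\g_\e))$ with $|\w(t)-y|\leq\delta<1$, so $\w(t)\in K$. Since $P(y)=\wt P(y)=0$, we get $|P(\w(t))|<\e^M$ and $|\wt P(\w(t))|<\e^M$. Because $I_\w$ is compact, the maxima $\beta_\w$ and $\wt\beta_\w$ are attained, so both are strictly smaller than $\e^M$.

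There is no real obstacle here; the only points to check are the continuity of $\wt P$ across $\{x_2=0\}$ and the strictness of the bound, which follows from attainment of the maximum on the compact interval $I_\w$.
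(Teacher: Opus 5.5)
Your proof is correct and follows the paper's route: you combine the uniform Hausdorff closeness from Lemma \ref{lem:quelloforte} with the vanishing of $P$ and $\wt P$ on $\spt(\pr(\g_\e))$, plus a modulus of continuity near that set. The only difference is cosmetic: the paper uses a Lipschitz bound for $P$ on a bounded set containing all competitors and treats $\wt P$ as ``analogous'', whereas you use uniform continuity on the compact $1$-neighbourhood of $\spt(\pr(\g_\e))$ and check explicitly that $\wt P$ is continuous across $\{x_2=0\}$.
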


\begin{proof}
    We prove the claim only for $\beta_\w$, as the other estimates can be obtained analogously. Firstly, observe that $P\circ\gamma_{\e}\equiv 0$ and that $P$ is locally Lipschitz. In particular, up to restricting to $s\leq 1$, $P$ is Lipschitz on a bounded set $A\subset \R^2$ that contains all the projections onto $\mathbb{R}^2$ of minimizing geodesics in $\Gamma_{K_1\to K}$.  For every $\w\in\CO$, we have  
    \begin{equation}
    \label{dis_Haus_P}
        |P(\w(t))| \leq {\rm Lip}\big(P|_{\bar A}\big){\rm dist}\left(\w(t),{\rm spt}(\gamma_{\e})\right)\leq {\rm Lip}\big(P|_{\bar A}\big)\mathrm{dist}_{\mathrm{Haus}}(\spt(\w),\spt(\gamma_{\e})).
    \end{equation}
    Once $\e$ is fixed, by Lemma \ref{lem:quelloforte}, the quantity in the right-hand side of \eqref{dis_Haus_P} goes to $0$ with $s$, uniformly with respect to the choice of $\w\in\CO$. Therefore, choosing $s_0$ small enough, the thesis follows.
\end{proof}

\begin{notation}
\label{notation:the}
    From now on, we denote by $\e_\star>0$ the parameter given by Lemma \ref{lem:quelloforte} such that, for all $0<\e<\e_\star$, the curve $\g_\e$ is the unique length-minimizer between $(0,0,0)$ and $(\e^q,\e,0)$. We also fix
    \begin{equation}
        M>3q-1>b. 
    \end{equation}
    Given $\e_0>0$, we set\footnote{The requirement $s<\e^2$ is justified by Lemma \ref{lem:geometric_w2>e2} and simplifies the notation in the sequel.} 
    \begin{equation}
        \label{eq:notation}
        \mc I_{\e_0} \ceq \left\{(\e,s)\in\R_+^2 \mid \e<\min\{\e_\star,\e_0\}, \ s < \min\{\bar s(\e,M),\e^2\} \right\},
    \end{equation}
    where $\bar s(\e,M)>0$ is given by Corollary \ref{cor:newLS}.
    By construction, we have $\mc I_{\e_0}\neq \emptyset$. Moreover, if $(\e,s)\in \mc I_{\e_0}$ and $\CO\neq\emptyset$, then we have
    \begin{enumerate}
        \item\label{alex} the curve $\g_\e$ is the unique length-minimizer between $(0,0,0)$ and $(\e^q,\e,0)$;

        \item\label{sandro} for every $\w\in\CO$, we have $\beta_\w,\widetilde{\beta}_\w<\e^M$ and $L(\w)<L_{\mathrm{SR}}(\g_{s,\e})$;
    \end{enumerate}
\end{notation}

We will see in the sequel that optimal competitors must have exactly one point of self-intersection. For this reason, we need to define loops of optimal competitors.

\begin{definition}
    \label{def:loops}
    Fix $\e,s>0$ and $\w \in \CC$. A {\em loop} $\ell$ of $\omega$ is a restriction of $\w$ to some non-trivial interval $J_\ll=[s_\ell^{-},s_{\ell}^{+}]$ such that $\omega(s_\ell^{-})=\omega(s_\ell^{+})$. In addition, $\ell$ is a simple loop if $\omega|_{[s_\ell^{-},s_\ell^{+})}$ is injective.
\end{definition}

Note that if $\omega\in\mathcal{C}_{s,\e}^{\rm{opt}}$, then $\w$ is real-analytic, and thus it has at most a finite number of self-intersections (and thus of loops).

\section{Length estimates on the level sets of \texorpdfstring{$\protect\widetilde{P}$}{\~P}}
\label{sec:level_sets}

We study the level sets of $\wt P$ to deduce some length estimates. Let $\z>0$, and define 
 \begin{equation}
	\mc P_\z\ceq \{x\in\R^2\mid x_1\geq 0,\,\widetilde{P}(x)=\z\} \quad \textrm{and} \quad D_\z\ceq \{x\in\R^2\mid x_1\geq 0,\, \widetilde{P}(x)\leq \z\}.
\end{equation} 
Set $f_{\z}(t)\ceq \sqrt{t^b+\z}$ for all $t\geq 0$. Then, the curve $\G_{\z}:\R_+\to\R^2$, $\G_\z(t)\ceq (f_\z(t),t)$ parametrizes $\mc P_{\z}\cap\{x\in\R^2\mid x_2\geq0\}$. For $\e,s>0$ and $\z>0$, consider the minimization problem:
\begin{equation}\label{MIN}
\min \big\{L(\nu) \mid \nu\in \AC([0,1], \R^2),\,\nu(0)=\pi(\g(-s)),\, \nu (1) =\pi(\gamma(\e)), \, \mathrm{spt}(\nu)\subset D_\z\big\}, 
\end{equation} 
where  $\mathrm{spt}(\nu)=\nu([0,1])$ is the support of $\nu$. By lower semi-continuity of the length and Ascoli-Arzelà Theorem, a minimizer exists. Its uniqueness follows from its explicit construction. Let $\nu_{s,\e}$ be a minimizer for \eqref{MIN}. For $\z>0$ small enough, $\nu_{s,\e}$ is not the segment joining $\pi(\g(-s))$ and $\pi(\g(\e))$ as the latter is not contained in $D_\z$. Then, $\nu_{s,\e}$ can be determined by the following observations: 

\begin{enumerate}[(i)]
\item In the interior of $D_\z$, $\nu_{s,\e}$ is locally a line segment;

\item If $\nu_{s,\e}(t_1),\nu_{s,\e}(t_2)\in\mathrm{spt}(\Gamma_\z)$, for some $t_1\neq t_2$, then the sub-arc of $\Gamma_\z$ connecting them is in $\mathrm{spt}(\nu_{s,\e})$. This follows from the convexity of the function $f_\z$;

\item The curve $\nu_{s,\e}$ must cross $\spt(\Gamma_{\varrho})$ and each maximal line segment in $\mathrm{spt}(\nu_{s,\e})$ meets $\mc P_\z$ tangentially. This follows by a simple deformation argument that is omitted.
\end{enumerate}
We now construct $\nu_{s,\e}$. Firstly, $\nu_{s,\e}|_{[0,t]}$ is a line segment for sufficiently small $t>0$, hence we define
\begin{equation}
    t_0 = t_0(s,\e,\varrho) \ceq  \max\left\{t>0\mid \nu_{s,\e}|_{[0,t]} \text{ is a line segment}\right\}\in (0,1).
\end{equation}
Observe that, by construction, $\nu_{s,\e}(t_0)\in \mc P_{\varrho}$. Moreover, since $\mc P_\z=V_\varrho\cup\spt(\G_\varrho)$, where $V_\z\ceq \left\{(\sqrt\z,x_2)\,\vert\,x_2\leq0\right\}$, and $\nu_{s,\e}(0)=\g(-s)\not\in V_\z$, the tangency point $\nu_{s,\e}(t_0)$ must belong to $\mathrm{spt}(\Gamma_\z)$. In particular, it holds $\nu_{s,\e}(t_0)=(f_\z(y_0),y_0)$ for some $y_0=y_0(s,\e,\z)>0$ and
	\begin{equation}
		\label{eq:t_0}
			\sqrt{y_0^b+\z}=m_0(y_0+s) \qquad \mr{with} \qquad
			m_0\ceq qy_0^{b-1}(y_0^b+\z)^{-\frac12}.
	\end{equation}
Note that $y_0$ is monotone decreasing in $s$ and $y_0\to 0$ as $s\to-\oo$. When $s=0$, \eqref{eq:t_0} reduces to
\begin{equation} \label{zeta_t_0}
 \z=(q-1) y_0(0,\e,\varrho)^b.
\end{equation}
Similarly, since $\pi(\g(\e))\notin\mc P_\varrho$, $\nu_{s,\e}|_{[t,1]}$ is a line segment as $t\to 1$. Hence, we define
\begin{equation}
    t_1 = t_1(s,\e,\varrho) \ceq  \min\left\{t>0\mid \nu_{s,\e}|_{[t,1]} \text{ is a line segment}\right\}\in [t_0,1).
\end{equation}
It holds $\nu_{s,\e}(t_1)\in\mathrm{spt}(\Gamma_\z)$, yielding $\nu_{s,\e}(t_1)=(f_\z(y_1),y_1)$ for some $y_1=y_1(s,\e,\varrho)>0$ and 
	\begin{equation}
		\label{eq:t1}
  \sqrt{y_1^b+\z}=m_1(y_1-\e) + \e^q \qquad \mr{with} \qquad
			m_1\ceq qy_1^{b-1}(y_1^b+\z)^{-\frac12}.
	\end{equation} 
Finally, for $\z$ small enough, we have $0<t_0<t_1<1$ and we set
\[
g_\z(t) \ceq  \left\{
\begin{array}{ll}
m_0(t+s) & \textrm{for } t\in[-s,y_0],
\\
f_\z(t)  &\textrm{for } t \in[y_0,y_1],
\\
m_1 (t-\e)+ \e^q &\textrm{for } t\in [y_1,\e].
\end{array}
\right.
\]
The curve $\tilde{\nu}_{s,\e}(\cdot,\z):[-s,\e]\to\mathbb{R}^2$ defined by $\tilde{\nu}_{s,\e}(t)\ceq (g_\z(t),t)$ is a reparametrization of $\nu_{s,\e}(\cdot,\z)$, which is then the unique solution to \eqref{MIN}.

\begin{proposition} \label{prop:sublevel_length_estimate} Let $C>0$ be a fixed constant. There exist $K,\e_0 >0$ such that for all $\e<\e_0$, $0<\z\leq C \e^{3q-1}$, and $s\geq0$, we have
	\begin{equation}\label{STIMA}
		L(\nu_{s,\e}(\cdot,\z) )\geq L(\g_{s,\e}) - K\z^{1 - \frac 1b},
	\end{equation}    
	where $\nu_{s,\e}(\cdot,\z)$ is the solution to \eqref{MIN}.
\end{proposition}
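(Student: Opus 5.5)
We compare $L(\nu_{s,\e}(\cdot,\z))$ with $L(\g_{s,\e})=s+\int_0^\e\sqrt{1+q^2t^{2q-2}}\,dt$ directly, using the graph representation $\tilde\nu_{s,\e}(t)=(g_\z(t),t)$, $t\in[-s,\e]$. We write
\begin{equation}
L(\nu_{s,\e})=\int_{-s}^{y_0}\sqrt{1+m_0^2}\,dt+\int_{y_0}^{y_1}\sqrt{1+f_\z'(t)^2}\,dt+\int_{y_1}^{\e}\sqrt{1+m_1^2}\,dt,
\end{equation}
and split $L(\g_{s,\e})$ at the same heights $0$, $y_0$, $y_1$. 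The two lengths then differ only on three pieces, and we aim for a loss of at most $K\z^{1-1/b}$ on each.

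On $[-s,y_0]$ we use the trivial bound $\sqrt{1+m_0^2}\geq 1$, so the first segment has length at least $s+y_0$. On the same range, $\g_{s,\e}$ has length $s+\int_0^{y_0}\sqrt{1+q^2t^{2q-2}}\,dt\le s+y_0+Cy_0^{2q-1}$. The loss on this piece is therefore at most $Cy_0^{2q-1}=Cy_0^{b-1}$, and we need $y_0\lesssim\z^{1/b}$.

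This is the key step. From \eqref{eq:t_0}, the tangency condition reads $y_0^b+\z=qy_0^{b-1}(y_0+s)$, i.e.
\begin{equation}
(q-1)y_0^b+qsy_0^{b-1}=\z .
\end{equation}
Since $s\ge0$, this gives $y_0\le(\z/(q-1))^{1/b}$, uniformly in $s$, and the loss is $O(\z^{(b-1)/b})$. (This is where $s\geq 0$ only helps; it is consistent with the monotonicity of $y_0$ in $s$ noted before the statement.)

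On $[y_0,y_1]$ we compare $f_\z'$ with $(t^q)'=qt^{q-1}$. We have $f_\z'(t)=qt^{b-1}/\sqrt{t^b+\z}\le qt^{q-1}$, so the curve can be shorter here. The deficit is controlled by
\begin{equation}
\int_{y_0}^{\e}\bigl(\sqrt{1+q^2t^{2q-2}}-\sqrt{1+f_\z'^2}\bigr)dt\le\int_{y_0}^\e q^2t^{2q-2}\Bigl(1-\tfrac{t^b}{t^b+\z}\Bigr)dt\le q^2\z\int_{y_0}^\e t^{-2}\,dt\le q^2\z/y_0 .
\end{equation}
Here $y_0$ could be very small when $s$ is large, so this estimate is not enough by itself. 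We split instead at $t=\z^{1/b}$: below that height the integrand is at most $q^2t^{b-2}$, which integrates to $O(\z^{(b-1)/b})$; above it, the bound $\z t^{-2}$ integrates to $O(\z^{1-1/b})$. This piece is therefore fine uniformly.

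On $[y_1,\e]$ the final segment joins the point $(f_\z(y_1),y_1)$ to $(\e^q,\e)$. We need that it is not much shorter than the arc of $\g$ between heights $y_1$ and $\e$, and we also need $\e-y_1$ to be small. From \eqref{eq:t1}, the segment is tangent to $\G_\z$, and $\G_\z$ lies at horizontal distance $f_\z(t)-t^q\le\z/(2t^q)$ from $\pr(\g)$. Since $t\mapsto t^q$ is convex with curvature of order $\e^{q-2}$ near $t=\e$, elementary geometry gives $\e-y_1\approx(\z\e^{-q}/\e^{q-2})^{1/2}=\z^{1/2}\e^{1-q}$. The loss from replacing an arc by its chord is of the order of (curvature)$^2\times$(length)$^3$, i.e. $\e^{2q-4}(\z\e^{2-2q})^{3/2}=\z^{3/2}\e^{-q-1}$. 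Under $\z\le C\e^{3q-1}=C\e^{(3b-2)/2}$, checking that this is $\le K\z^{1-1/b}$ amounts to $\z^{1/2+1/b}\lesssim\e^{q+1}$, and this holds since $(3q-1)(1/2+1/b)\ge q+1$ for $b\ge5$.

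I expect this third piece to be the main obstacle. It requires making the asymptotics of $y_1$ from \eqref{eq:t1} rigorous, including showing $t_0<t_1$ for small $\e$, which should follow from $y_0\lesssim\z^{1/b}\ll\e$. Collecting the three pieces then gives \eqref{STIMA}.
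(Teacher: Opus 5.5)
\textbf{Verdict.} Your first two pieces are correct, but the third piece, which you leave heuristic, is a genuine gap: the heuristic you give for it gets the leading term wrong.

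\textbf{Pieces 1 and 2.} The middle piece is handled more carefully than in the paper. Since $f_\varrho'(t)^2=q^2t^{2b-2}/(t^b+\varrho)<q^2t^{b-2}$, the arc of $\Gamma_\varrho$ is \emph{shorter} than the arc of $\pr(\gamma)$ over the same heights. So the inequality displayed in \eqref{L_2_prel} actually goes the other way, and a deficit bound like yours is what is needed there. With the paper's split at $\hat y_0=(\varrho/(q-1))^{1/b}$, that bound is simply $\tfrac{q^2}{2}\varrho/\hat y_0=O(\varrho^{1-1/b})$. Your first piece, via $(q-1)y_0^b\le\varrho$, is a fine substitute for the paper's triangle-inequality bound $L_1\ge s+\hat y_0$.

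\textbf{Why the third-piece heuristic fails.} Your size estimate $\varepsilon-y_1\approx\varrho^{1/2}\varepsilon^{1-q}$ is right. The problem is that the final segment $[\Gamma_\varrho(y_1),\pr(\gamma)(\varepsilon)]$ is not a chord of $\pr(\gamma)$. Its initial point lies to the right of $\pr(\gamma)(y_1)$ by $f_\varrho(y_1)-y_1^q\approx\varrho/(2\varepsilon^q)$.

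Compared with the true chord at the same heights, its horizontal extent is reduced by this amount. Since the slope is $\approx q\varepsilon^{q-1}$, the segment is shorter by about $q\varepsilon^{q-1}\cdot\varrho/(2\varepsilon^q)=\tfrac q2\varrho\varepsilon^{-1}$. This exceeds your arc-versus-chord term $\varrho^{3/2}\varepsilon^{-q-1}$ by the factor $\tfrac q2\varepsilon^{q}\varrho^{-1/2}\to\infty$.

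The paper's expansion confirms this. With $\xi=1-y_1/\varepsilon$ and $L_3$ the length of the final segment, it gives
\[
L(\gamma|_{[y_1,\varepsilon]})-L_3\approx\tfrac{q^2(q-1)}{2}\varepsilon^{b-1}\xi^2\approx\tfrac q2\varrho\varepsilon^{-1}.
\]
So your exponent check concerns the wrong quantity. What must be verified is $\varrho\varepsilon^{-1}\le K\varrho^{1-1/b}$, i.e.\ $\varrho\lesssim\varepsilon^b$. This holds because $3q-1>b$ (it is \eqref{LAST}), so the statement survives, but the third piece still has to be proved.

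\textbf{One rigorous route.} We have $L_3=(\varepsilon-y_1)\sqrt{1+m_1^2}$ and $L(\gamma|_{[y_1,\varepsilon]})\le(\varepsilon-y_1)\sqrt{1+q^2\varepsilon^{b-2}}$. Hence the loss is at most
\begin{equation*}
\tfrac12(\varepsilon-y_1)\bigl(q^2\varepsilon^{b-2}-m_1^2\bigr)\le\tfrac{q^2}{2}\Bigl[(b-2)\varepsilon^{b-3}(\varepsilon-y_1)^2+\varrho\,(\varepsilon-y_1)\,y_1^{-2}\Bigr].
\end{equation*}
It therefore suffices to know $y_1\ge\varepsilon/2$ and $\varepsilon-y_1\lesssim\varrho^{1/2}\varepsilon^{1-q}$; both terms are then $O(\varrho\varepsilon^{-1})$.

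The paper extracts these two facts from the implicit-function expansion in Lemma \ref{prop:estimate_gc_1}. Alternatively, \eqref{eq:t1} and Taylor's formula with integral remainder give
\begin{equation*}
\int_{y_1}^{\varepsilon}f_\varrho''(t)(\varepsilon-t)\,dt=f_\varrho(\varepsilon)-\varepsilon^q\le\frac{\varrho}{2\varepsilon^q},
\end{equation*}
while $f_\varrho''(t)\ge c\,t^{q-2}$ whenever $\varrho\le t^b$. This yields $y_1\ge\varepsilon/2$ by contradiction: otherwise the left-hand side would be $\gtrsim\varepsilon^q$, whereas the right-hand side is $\lesssim\varepsilon^{2q-1}\ll\varepsilon^q$. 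It then gives $(\varepsilon-y_1)^2\lesssim\varrho\varepsilon^{2-2q}$.

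Combined with $y_0\le(\varrho/(q-1))^{1/b}=o(\varepsilon)$, this also settles $y_0<y_1$, as required for the three-piece structure.
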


The proof of Proposition \ref{prop:sublevel_length_estimate} requires a preliminary lemma. Let us introduce the parameters
\begin{equation}
\label{alpha_xi}
\alpha=\alpha(\e,\varrho) \ceq \frac{\z}{\e^b}\qquad\textrm{and}\qquad
\xi=\xi(s,\e,\varrho)\ceq 1-\frac {y_1}{\e}.
\end{equation}

\begin{lemma}
	\label{prop:estimate_gc_1} 
	Fix $C>0$ and let $\e,\varrho$ be such that $0<\z\leq C \e^{3q-1}$. For $\e\to 0$, we have:
	\begin{enumerate}[(i)]
		\item\label{est_gc_item_1} $y_0=o(\e)$; 
            \item\label{est_gc_item_2}  $\xi=o(1)$, that is $y_1=\e+o(\e)$; 
		\item\label{est_gc_item_3} $\displaystyle \alpha = \frac{b(q-1)}{2} \xi^2 +o(\xi^2)$.
	\end{enumerate}
\end{lemma}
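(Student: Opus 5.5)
The plan is to work directly with the tangency equations \eqref{eq:t_0} and \eqref{eq:t1}, squared so that the square roots become polynomial identities. Throughout I use that $3q-1>b$ (since $b\geq 5$). Hence $\z\le C\e^{3q-1}$ gives $\alpha=\z/\e^b\le C\e^{q-1}\to0$ and $\z^{1/b}=O(\e^{(3q-1)/b})=o(\e)$.

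For \eqref{est_gc_item_1}, I will not go through the monotonicity in $s$ mentioned after \eqref{eq:t_0}. Instead, multiplying \eqref{eq:t_0} by $\sqrt{y_0^b+\z}$ gives the identity
\begin{equation}
y_0^b+\z = q y_0^{b-1}(y_0+s), \quad\text{that is,}\quad \z=(q-1)y_0^b+q\,s\,y_0^{b-1}.
\end{equation}
Since $s\ge0$, this yields $(q-1)y_0^b\le\z$, so $y_0\le(\z/(q-1))^{1/b}=o(\e)$ uniformly in $s\geq 0$.

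For \eqref{est_gc_item_2} and \eqref{est_gc_item_3}, I rescale by setting $u\ceq y_1/\e=1-\xi$ and $\z=\alpha\e^b$. Multiplying \eqref{eq:t1} by $\sqrt{y_1^b+\z}$ and dividing by $\e^b$ (recall $q=b/2$) turns it into $\varphi(u,\alpha)=0$, where
\begin{equation}
\varphi(u,\alpha)\ceq u^b+\alpha-q u^{b-1}(u-1)-\sqrt{u^b+\alpha}.
\end{equation}
At $\alpha=0$ this reduces to $g(u)\ceq(1-q)u^b+qu^{b-1}-u^q$. A direct computation gives $g(1)=0$ and $g'(1)=b-2q=0$. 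It also gives
\begin{equation}
g''(1)=(b-1)(b-2q)-q(q-1)=-q(q-1)<0,
\end{equation}
so that $g(1-\xi)=-\tfrac{q(q-1)}2\xi^2+O(\xi^3)$. Moreover $\partial_\alpha\varphi(1,0)=1-\tfrac12=\tfrac12$. Expanding $\varphi(1-\xi,\alpha)=0$ to leading order then gives $\tfrac12\alpha\approx\tfrac{q(q-1)}2\xi^2$, that is,
\begin{equation}
\alpha=q(q-1)\xi^2+o(\xi^2)=\tfrac{b(q-1)}2\xi^2+o(\xi^2).
\end{equation}
This is \eqref{est_gc_item_3}, provided we already know that $\xi\to0$. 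To justify it rigorously I will write $\varphi(1-\xi,\alpha)=g(1-\xi)+\tfrac12\alpha+O(\alpha\xi)+O(\alpha^2)$, with remainders uniform for $u$ in a compact subset of $(0,1]$. Combined with $\alpha\to0$ and $\xi\to0$, this allows solving for $\alpha$.

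The main obstacle is \eqref{est_gc_item_2}: the root $u$ of $\varphi(\cdot,\alpha)$ selected by the geometry must converge to $1$ as $\alpha\to0$. The points are as follows.
\begin{enumerate}[(a)]
\item Geometrically $y_0\le y_1\le\e$, since the second tangency point lies before the endpoint. By \eqref{est_gc_item_1} we have $y_0/\e\to0$, so $u$ stays in a range $[\eta,1]$ up to a vanishing lower end.
\item On $(0,1)$ we have $g<0$. Indeed $g(u)=u^q\big(u^{q-1}((1-q)u+q)-1\big)$, and $h(u)\ceq u^{q-1}((1-q)u+q)$ satisfies $h'(u)=q(q-1)u^{q-2}(1-u)>0$ with $h(1)=1$.
\item By uniform convergence $\varphi(\cdot,\alpha)\to g$ on $[\eta,1]$, any root there must approach $1$.
\end{enumerate}
Near $u\to0$ the square root is not smooth. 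There I will argue separately: if $u\le\eta$, then $y_1=O(\eta\e)$, and the slope $m_1\approx q y_1^{q-1}$ is far too small for the tangent line at $y_1$ to reach the endpoint $(\e^q,\e)$. This contradiction excludes small roots and yields $\xi=o(1)$.
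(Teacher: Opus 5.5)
Your proof is correct. For item (iii) it is the paper's argument done by hand. The paper rewrites \eqref{eq:t1} as \eqref{eq:1idt1}, which is your equation $\varphi=0$ divided by $-\sqrt{u^b+\alpha}$, and applies Dini's implicit function theorem at $(\xi,\alpha)=(0,0)$. That application rests on exactly the facts you compute: $\partial_\alpha\neq0$, no first-order term in $\xi$, and a second-order coefficient giving $b(q-1)/2$.

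You differ from the paper in (i) and (ii):
\begin{itemize}
\item For (i), the paper uses the $s=0$ relation \eqref{zeta_t_0}. So it really bounds $\hat y_0=y_0(0,\varepsilon,\varrho)$, which is the quantity used later, and needs the monotonicity of $y_0$ in $s$ to cover general $s$. Your identity $\varrho=(q-1)y_0^b+qsy_0^{b-1}$ handles all $s\ge0$ at once.
\item For (ii), the paper only says that the implicit-function argument ``also shows'' $\xi\to0$. That theorem is local, so by itself it does not rule out a tangency parameter staying away from $\xi=0$. Your global sign analysis of $g$ is what actually proves (ii).
\end{itemize}

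Two small corrections. First, in step (a) the bound $u\ge y_0/\varepsilon$ gives you nothing, since $y_0/\varepsilon\to0$; your separate small-$u$ argument is what excludes roots near $0$. Second, in that small-$u$ argument you should use the inequality $m_1\le qy_1^{q-1}$, which follows from $(y_1^b+\varrho)^{-1/2}\le y_1^{-q}$. The approximation $m_1\approx qy_1^{q-1}$ fails when $\varrho\gg y_1^b$.

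You can also avoid splitting $[0,1]$ altogether. Use the paper's normalization $\sqrt{u^b+\alpha}+qu^{b-1}(1-u)(u^b+\alpha)^{-1/2}=1$. The bounds $\sqrt{u^b+\alpha}\le u^q+\sqrt{\alpha}$ and $u^{b-1}(u^b+\alpha)^{-1/2}\le u^{q-1}$ give $h(u)\ge1-\sqrt{\alpha}$ for every root $u\in(0,1]$. Since $h$ increases from $h(0)=0$ to $h(1)=1$, this forces $u\to1$ directly.
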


\begin{proof}
	Using \eqref{zeta_t_0} and $\z\leq C \e^{3q-1}$, it holds  
	\[
	0<\frac{y_0}{\e} = \frac {\z^{1/b} }{\e (q-1)^{1/b}}\leq \frac{C^{\frac1b}}{(q-1)^{\frac1b}}\e^{\frac12-\frac1b}.
	\]
	Then, item \ref{est_gc_item_1} follows because $b>2$. With the notation introduced in \eqref{alpha_xi},
	we rewrite \eqref{eq:t1} as
 	\begin{equation}
		\label{eq:1idt1}
		1-((1-\xi)^b+\a)^{1/2}=q(1-\xi)^{b-1}((1-\xi)^b+\a)^{-1/2}\xi.
	\end{equation}
	By Dini's Theorem, the equation above implicitly defines a function 
	$\alpha=\phi(\xi)$ in a neighborhood of $(\alpha,\xi)=(0,0)$, that 
	satisfies $\phi(0)=\phi'(0)=0$ and $\phi''(0)\neq 0$. An explicit 
	computation of the second derivative proves item \ref{est_gc_item_3}. 
	This argument also shows that $\xi\to0$ as $\e\to0$, which in turn implies item \ref{est_gc_item_2}.
\end{proof}

\begin{proof}[Proof of Proposition \ref{prop:sublevel_length_estimate}]
	Set $\hat y_0\ceq y_0(0,\e,\z)$, i.e., $\hat y_0$ is the $y$-coordinate of the tangency point between $\mathrm{spt}(\Gamma_\z)$ and the line segment starting from 0. Note that, by construction, $y_0=y_0(s,\e,\varrho)<\hat y_0<y_1(s,\e,\varrho)=y_1$. We decompose the length of $\nu_{s,\e}$ in the following sum:
	\[
	L(\nu_{s,\e}) = \underbrace{\left|\pi(\g(-s))-\Gamma_\z(y_0)\right| 
	+ L(\Gamma_\z |_{[y_0,\hat y_0]})}_{\ceq L_1} + \underbrace{L(\Gamma_\z |_{[\hat y_0,y_1]})}_{\ceq L_2}
	+ \underbrace{\left|\Gamma_\z(y_1)-\pi(\gamma(\e))\right|}_{\ceq L_3}.
	\]
    Firstly, we estimate $L_1$ by
    \begin{equation}
        \label{L1}
        L_1\geq \left|\pi(\g(-s))-\Gamma_\z(\hat y_0))\right| \geq s+ \hat y_0.
    \end{equation}
	Secondly, we estimate $L_2$.  
    On the one hand, thanks to the estimate \eqref{eq:length_abnormal_curve} on the length of $\g_{s,\e}$, we have
	\begin{equation} \label{L_2_prel}
		L_2 = \int_{\hat y_0}^{y_1} \sqrt{1 + q^ 2 t^{2b-2} (t^b+\z)^{-1}} dt
		\geq \int_{\hat y_0}^{y_1} \sqrt{1 + q^ 2 t^{b-2}  } dt =L(\g|_{[\hat y_0,y_1]}) .
	\end{equation}
    On the other hand, with the shorthand $\chi(\e) \ceq   \sqrt{1+q^2 \e^{b-2}} $, we have
	\begin{equation}\label{L2_ter} 
		L(\g|_{[y_1,\e]})  
		=\int_{y_1}^\e \sqrt{1+q^2 t^{b-2}}dt 
		= \e \int_{1-\xi}^1  \sqrt{1 +q^2 \e^{b-2} \tau^{b-2}} d\tau \leq  \e\chi(\e)\xi .
	\end{equation}
	Pairing \eqref{L_2_prel} and \eqref{L2_ter},
	we obtain that
	\begin{equation}\label{L_2}
		L_2\geq  L(\g|_{[\hat y_0,\e]}) -\e  \chi(\e)  \xi.
	\end{equation}
	Finally, we estimate $L_3$. We claim that there exists $C_1>0$ such that
	\begin{equation}\label{L_3}
		L_3\geq \e  \chi(\e)  \xi - C_1\e^{b-1}\xi^2 .
	\end{equation} 
	Recalling \eqref{eq:t1} and \eqref{alpha_xi}, we have that
	\begin{equation}\label{prop:pitagora_1}
	    L_3=\left|\Gamma_\z(y_1)-\pi(\gamma(\e))\right|=\sqrt{(\e-y_1)^2+\left(\e^q-(y_1^b+\z)^{1/2}\right)^2} = \e\xi \sqrt{1+m_1^2}.
	\end{equation}
	Then, using that $y_1 = \e(1-\xi)$ and $\alpha=\z/\e^b$, the quantity $m_1^2$ satisfies
	\begin{equation}\label{slope}
		\begin{split}
			m_1^2  &= q^ 2 t_1^{2b-2} \big( y_1^b+\z\big) ^{-\frac12}
			= q^2 \e^{2b-2} (1-\xi) ^{2b-2} \big( \e^b (1-\xi)^b +\z\big)^{-\frac12}
			\\
            &= q^ 2 \e^{b-2} (1-\xi)^{2b-2} \big( (1-\xi)^b+\a)^{-\frac12}
			\\&
			= q^ 2 \e^{b-2}\big ( 1 -2(q-1) \xi +o(\xi)\big),
		\end{split}
	\end{equation}
	where in the last line we used Lemma \ref{prop:estimate_gc_1}\ref{est_gc_item_3}, for $\alpha=\a(\xi)$. Plugging \eqref{slope} in the expression \eqref{prop:pitagora_1} and recalling that $\xi=o(\e)$ as $\e\to 0$, by Lemma \ref{prop:estimate_gc_1}\ref{est_gc_item_2}, it follows that
	\[
	L_3 = \e\xi\chi(\e)+ \e\xi \frac{ m_1^ 2 - q^ 2 \e^{b-2}}{\sqrt{1+m_1^2}+ \chi(\e) }=\e\xi\chi(\e)- q^ 2 \e^{b-1}\xi^2 \frac{ 2(q-1)  +o(1)}{\sqrt{1+m_1^2}+ \chi(\e) }\geq \e\xi\chi(\e)- C_1\e^{b-1}\xi^2,
	\]
    for some constant $C_1>0$, provided that $\e_0$ is sufficiently small. This proves \eqref{L_3}.

	All in all, putting together the inequalities 
	\eqref{L1}, \eqref{L_2}, and \eqref{L_3}, we obtain
	\begin{equation}
	    \begin{split}
	        L(\nu_{s,\e}(\cdot,\z))&\geq s+\hat y_0 + L(\g|_{[\hat y_0,\e]}) - C_1\e^{b-1}\xi^2=L(\gse) + \hat y_0 - L(\g|_{[0,\hat y_0]}) - C_1\e^{b-1}\xi^2.
	    \end{split}
	\end{equation}
    By \eqref{eq:length_abnormal_curve},  $L(\g|_{[0,\e]})=\e + \frac{q^2}{2(b-1)}\e^{b-1} + o(\e^{b-1})$, hence, by Lemma \ref{prop:estimate_gc_1}\ref{est_gc_item_1}, up to shrinking $\e_0$, we find a constant $C_2>0$ such that $\hat y_0- L(\g|_{[0,\hat y_0]})\geq -C_2\hat y_0^{b-1}$. Thus, we have 
    \begin{equation}
        \label{L123}
        L(\nu_{s,\e}(\cdot,\z))\geq L(\gse) -C_2 \hat y_0^{b-1} - C_1\e^{b-1}\xi^2 = L(\gse) -C_2 C_3 \z^{1-\frac1b} - C_1\e^{b-1}\xi^2,
    \end{equation}
    where, in the last equality, we used that $\hat y_0^{b-1} = C_3 \z^{1-\frac1b}$ for some $C_3>0$, by \eqref{zeta_t_0}. We claim that 
	\begin{equation}\label{LAST}
		\e^{b-1}\xi^2
		\leq\z^{1-\frac1b}.
	\end{equation}
    This concludes the proof as inequality \eqref{L123} implies \eqref{STIMA}, with $K=C_2C_3+C_1$. To prove \eqref{LAST}, observe that, by Lemma \ref{prop:estimate_gc_1}\ref{est_gc_item_3}, $\e^{b-1}\xi^2\leq C_4 \e^{b-1}\a = C_4 \z \e^{-1}$ for some constant $C_4>0$. Using the assumption on $\varrho$, we see that $\varrho\leq C\e^{3q-1}\leq \e^b$ since $b>2$, or equivalently, $\z\e^{-1}\leq \z^{1-\frac1b}$. Therefore, $\e^{b-1}\xi^2\leq C_4 \z \e^{-1} \leq \z^{1-\frac1b}$, proving \eqref{LAST} and concluding the proof. 
\end{proof}

A direct consequence of Proposition \ref{prop:sublevel_length_estimate} is a rough upper bound on the length of loops of optimal competitors.

\begin{corollary}
    \label{cor:rough_bound_loops}
    There exist $K,\e_0>0$ such that, for all $(\e,s)\in\mc I_{\e_0}$, all $\w\in \CO$, and every loop $\ell$ of $\w$, it holds
    \begin{equation}
        L(\ell)\leq K\widetilde\beta ^{1-\frac1b}.
    \end{equation}
\end{corollary}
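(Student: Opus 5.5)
The plan is to cut out the loop and compare the resulting curve with the constrained minimizer of \eqref{MIN}, taking $\z=\wt\beta$. Fix $(\e,s)\in\mc I_{\e_0}$, $\w\in\CO$ and a loop $\ell=\w|_{[s_\ell^-,s_\ell^+]}$. Let $\eta$ be the curve obtained by removing the loop, i.e.\ the concatenation of $\w|_{[0,s_\ell^-]}$ and $\w|_{[s_\ell^+,L(\w)]}$, reparametrized on $[0,1]$. Then $\eta$ joins $\pi(\g(-s))$ to $\pi(\g(\e))$ and $L(\eta)=L(\w)-L(\ell)$.

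We check that $\spt(\eta)\subset D_{\wt\beta}$. By Lemma \ref{lem:w1>0}, $\w_1\geq0$ on $I_\w$. By definition of $\wt\beta$, $\wt P(\w(t))\leq|\wt P(\w(t))|\leq\wt\beta$ for all $t$. Hence $\spt(\eta)\subset\spt(\w)\subset D_{\wt\beta}$, and $\eta$ is admissible for \eqref{MIN} with $\z=\wt\beta$. This gives
\[
L(\w)-L(\ell)=L(\eta)\geq L(\nu_{s,\e}(\cdot,\wt\beta)).
\]

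Next we apply Proposition \ref{prop:sublevel_length_estimate} with some fixed $C\geq1$. This requires $0<\wt\beta\leq C\e^{3q-1}$. The upper bound comes from Notation \ref{notation:the}: $\wt\beta<\e^M\leq\e^{3q-1}$, since $M>3q-1$ and $\e<1$. For positivity, Lemma \ref{lem:w1>0} gives $\wt P(\w(t))>0$ for small $t>0$, so $\wt\beta>0$. We then get
\[
L(\nu_{s,\e}(\cdot,\wt\beta))\geq L(\g_{s,\e})-K\wt\beta^{1-\frac1b}.
\]
Combining this with $L(\w)<L_{\rm SR}(\g_{s,\e})=L(\g_{s,\e})$, which is item (2) of Notation \ref{notation:the}, yields
\[
L(\ell)\leq L(\w)-L(\g_{s,\e})+K\wt\beta^{1-\frac1b}<K\wt\beta^{1-\frac1b}.
\]

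The only delicate point is matching the hypotheses of Proposition \ref{prop:sublevel_length_estimate}. It assumes its solution is the explicit piecewise curve, which holds for $\z$ small. The bound $\z<\e^M$ guarantees this once $\e_0$ is shrunk. So we take $\e_0$ to be the minimum of the $\e_0$ from Proposition \ref{prop:sublevel_length_estimate} and the one needed here. For this, note that the proposition's proof implicitly uses $\z$ small relative to $\e^b$ to ensure $0<t_0<t_1<1$, and $\wt\beta<\e^M$ with $M>b$ provides exactly that.
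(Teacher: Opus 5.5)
Your proposal is correct and follows the paper's proof essentially verbatim. You excise the loop and observe that the remaining curve is admissible for \eqref{MIN} with $\varrho=\wt\beta$. You then apply Proposition \ref{prop:sublevel_length_estimate} (the paper takes $C=1$) using $\wt\beta<\e^M\le\e^{3q-1}$, and conclude via $L(\w)\le L(\g_{s,\e})$. The extra checks you include are points the paper leaves implicit, and you handle them correctly: $\w_1\ge 0$ for the support condition, $\wt\beta>0$, and $\e_0\le 1$.
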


\begin{proof}
    Let $K,\e_0>0$ be given by Proposition \ref{prop:sublevel_length_estimate} with $C=1$. Up to shrinking $\e_0$ if needed, we may assume $\e_0<1$. Fix $(\e,s)\in\I_{\e_0}$, $\w\in\CO$, and assume that $\w$ has a loop $\ell$. Let $J_\ell=[s_\ell^-,s_\ell^+]$ be loop interval of $\ell$ and define the curve $\nu:[0,1]\to\R^2$ as the constant speed reparametrization of $\omega|_{[0,s_\ell^-]}\ast \omega|_{[s_\ell^+,L(\omega)]}$. By construction, we have
    \begin{equation}
        \nu(0)=\pi(\gamma(-s)),\quad \nu(1)=\pi(\gamma(\e))\quad\text{and}\quad \spt(\nu)\subset D_{\wt\beta} 
    \end{equation}
    If $\nu_{s,\e}$ is the solution to the minimization problem \eqref{MIN} with $\z=\wt\beta$, then we have $L(\nu)\geq L(\nu_{s,\e})$. Since $\wt\beta<\e^M<\e^{3q-1}$ by Corollary \ref{cor:newLS} (recall that $M>3q-1$ fixed), we can now apply Proposition \ref{prop:sublevel_length_estimate} with $C=1$, and we get
    \begin{equation}
        L(\nu)\geq L(\g_{s,\e})-K\widetilde\beta ^{1-\frac1b}.
    \end{equation}
    Moreover, by length-minimality of $\w$, we have $L(\w)=L(\ell)+L(\nu)\leq L(\gamma_{s,\e})$. Then, we deduce 
    \begin{equation}
        L(\ell)\leq L(\gamma_{s,\e})-L(\nu)\leq K\widetilde \beta^{1-\frac1b}. \qedhere
    \end{equation}
\end{proof}

\section{Geometric obstructions to optimality}
\label{sec:geom_obs}

In this section, for the reader's convenience, we report some results on the geometry of plane curves and some obstructions to optimality. 

\subsection{Winding number of plane curves and weighted area}

Let $\eta:[0,\tau]\rightarrow \R^2$ be a closed continuous curve. For any $y\not\in\mathrm{spt}(\eta)$, the winding number of $\eta$ around $y\in\mathbb{R}^2$ is defined as
\begin{equation*}
\mathrm{ind}(\eta,y)\ceq \frac{1}{2\pi i}\oint_{\eta}\frac{d\zeta}{\zeta-y},
\end{equation*}
under the usual identification $\mathbb{R}^2\cong\mathbb{C}$. The function $y\mapsto\mathrm{ind}(\eta,y)$ is holomorphic outside $\spt(\eta)=\eta([0,\tau])$, and integer-valued. For every $k\in\mathbb{Z}$, we define
\begin{equation*}
    \E_k(\eta)\ceq \{y\not\in\mathrm{spt}(\eta)\,|\,\mathrm{ind}(\eta,y)=k\}\qquad\text{and}\qquad \E(\eta)\ceq \bigcup_{k\in\Z\setminus\{0\}}\E_k(\eta),
\end{equation*}
 so that the sets $\E_k(\eta),\,\E(\eta)\subset\mathbb{R}^2$ are open and $\E(\eta)$ is bounded. In addition, if $\eta$ is simple, by the Jordan Curve Theorem, it divides the plane into two disjoint open connected components. Between those two components, we define $\D(\eta)$ to be the bounded one. In this case, we have
\begin{equation}\label{eq:jordan_oriented}
\mathcal{D}(\eta) = \mathcal{E}_{\sigma}(\eta) = \mathcal{E}(\eta),
\end{equation}
where $\sigma\in\{-1,1\}$. We say that the curve $\eta$ is positively (resp. negatively) oriented if $\sigma=1$ (resp. $\sigma=-1$) and $\sigma$ is called the orientation of the curve.

We define the weighted area of a closed continuous curve $\eta:[0,\tau]\to\R^2$ as
\[
A(\eta) \ceq  \sum_{k\in \Z} k \, \mathcal{L}_Q\left(\mathcal{E}_k(\eta)\right),
\]
where
\[
\mathcal{L}_Q\left(\mathcal{E}_k(\eta)\right) \ceq  \int_{\mathcal{E}_k(\eta)} Q(x) \, dx \qquad \forall k \in \Z.
\]
A straightforward application of Stokes' Theorem shows that, for a closed curve $\eta$,
\begin{equation}
    A(\eta)= \int_\eta P(x)^2dx_2 = \int_{I_\eta} \dot\eta_2(t) P(\eta(t))^2dt.
\end{equation}

\begin{remark}
\label{rmk:stoked}
    For every $s,\e>0$ and $\w\in\CC$, we may consider the closed curve $\eta\ceq \w\ast \pr(\check{\gamma}_{s,\e})$, where $\check{\gamma}_{s,\e}$ is the inverse reparametrization of $\gse$. Then, by Definition \ref{def:competitor}\ref{item:def_competitor_ii} and the Stokes' theorem, we see that $A(\eta)=0$.       
\end{remark}

Finally, we recall a useful inequality on the weighted area of a closed continuous curve $\eta$, which is a consequence of Rad\'o's isoperimetric inequality, cf.\ \cite{Rado}, namely 
    \begin{equation}
    \label{eq:rado}
        |A(\eta)| \leq \frac{L(\eta)^2}{4\pi}\sup_{x\in \mc E(\eta)} |Q(x)|. 
    \end{equation}

\subsection{Geometry of plane curves}
For any pair of linearly independent vectors $v,w\in\R^2$, denote by $\angle(v,w)\in (-\pi,\pi)$ the oriented angle between them. If $v=\mu w$, for some $\mu>0$, we set $\angle(v,w)\ceq 0$. Note that, with this definition, $\angle(v,w) = - \angle(w,v)$. Moreover, we set
\begin{equation}
    \label{eq:tangenti}
    \begin{split}
        \mbox{Tan}^{1}(v,w)&\ceq \big\{u\in\R^2 \mid \angle(v,u)\in \big(\angle(v,w),\pi\big)\big\},
    \\
    \mbox{Tan}^{-1}(v,w)&\ceq \big\{u\in\R^2 \mid \angle(v,u)\in \big(-\pi,\angle(v,w)\big)\big\}.
    \end{split}
\end{equation}
Let $\eta\in \AC([0,\tau],\R^2)$ be a closed, simple curve, parametrized by arc-length. We say that $\eta$ is piecewise smooth if there are $0=t_0 < t_1 < \ldots <t_N = \tau$ such that $\eta|_{ [t_i,t_{i+1}]}$ is smooth for all $i=0,\ldots,N-1$. For every $i$, we denote by $\dot{\eta}(t_i^-)$ (resp.\ $\dot{\eta}(t_i^+)$) the left (resp.\ right) derivative of $\eta$ at $t_i$, with the convention that $\dot{\eta}(t_0^-)\ceq \dot{\eta}(t_N^-)$ and $\dot{\eta}(t_N^+)\ceq \dot{\eta}(t_0^+)$. Moreover, we set $\delta_i \ceq  \angle(\dot{\eta}(t_i^-),\dot{\eta}(t_i^+))$.

\begin{remark}
    \label{rem:tangenti_estremi}
    We adopt the following conventions for angles and tangents when the curve $\eta$, with orientation $\s$, has a cusp at $t_i$, i.e., when $\dot{\eta}(t_i^-) = -\dot{\eta}(t_i^+)$:
    \begin{enumerate}[(i)]
        \item if the cusp points outside $D(\eta)$, i.e., if $r\dot\eta(t_i^-)\notin\D(\eta)$ for all $r>0$, then we set
        \begin{equation}
            \delta_i\ceq \s\pi, \quad  \Tan^\s(\dot\eta(t_i^-),\dot\eta(t_i^+))\ceq \emptyset, 
            \quad \text{and} \quad
            \Tan^{-\s}(\dot\eta(t_i^-),\dot\eta(t_i^+))\ceq \R^2\setminus \{r\dot\eta(t_i^+)\mid r>0\}.
        \end{equation}

        \item if the cusp points inside $D(\eta)$, i.e., if $r\dot\eta(t_i^-)\in\D(\eta)$ for small $r>0$, then we set
        \begin{equation}
            \delta_i\ceq -\s\pi, \quad  \Tan^\s(\dot\eta(t_i^-),\dot\eta(t_i^+))\ceq \R^2\setminus \{r\dot\eta(t_i^+)\mid r>0\}, 
            \quad \text{and} \quad
            \Tan^{-\s}(\dot\eta(t_i^-),\dot\eta(t_i^+))\ceq \emptyset.
        \end{equation}
    \end{enumerate}  
    Note that, in case (i) we have $\s\de_i>0$, while in case (ii) we have $\s\de_i<0$, independently of $\s$.
\end{remark}

Finally, since $\eta:[0,\tau]\to\R^2$ is parametrized by arc-length, there exists a piecewise smooth function $\alpha : [0,\tau] \rightarrow \R$ satisfying $\dot{\eta}(t)=(\cos\alpha(t),\sin \alpha(t))$ for all $t\in [0,\tau]\setminus\{t_0,\ldots,t_N\}$. We can define the signed curvature of $\eta$ as
\begin{equation}\label{eq:signed_curvature}
\kappa(t) = \dot{\alpha}(t) \qquad \forall t \in [0,\tau]\setminus \{t_0,\ldots,t_N\}.
\end{equation}

\begin{remark}[Gauss-Bonnet for plane curves]
\label{rmk:gauss-bonnet}

If $\eta$ has orientation $\sigma\in\{-1,1\}$, then, the Gauss-Bonnet Theorem implies that
    \begin{equation}
        \label{eq:gauss-bonnet-thm}
        \sum_{i=0}^{N-1}\int_{t_i}^{t_{i+1}}\sigma \kappa(t)dt + \sum_{i=0}^{N-1}\sigma\delta_i = 2\pi.
    \end{equation}
    Viceversa, if Gauss-Bonnet Theorem \eqref{eq:gauss-bonnet-thm} holds with $\sigma\in\{-1,1\}$, then $\sigma$ is the orientation of $\eta$.
\end{remark}

We collect now some geometric lemmas on plane curves. We refer to \cite{notall} for the proof of Lemmas \ref{lem:enclosed_directions} and \ref{lem:gauss-bonnet_nerfed}. 

\begin{lemma}
\label{lem:enclosed_directions}
Let $\eta:[0,\tau] \rightarrow \R^2$ be a piecewise smooth, closed, simple, and arc-length parametrized curve, and let $\sigma\in\{-1,1\}$ be its orientation, cf.\ \eqref{eq:jordan_oriented}. Assume that $\angle(\dot\eta(t^-),\dot\eta(t^+))\neq \s\pi$, cf.\ Remark \ref{rem:tangenti_estremi}.
Then, for any $t\in [0,\tau)$, there exists $u\in \mbox{\rm Tan}^\sigma(\dot{\eta}(t^-),\dot{\eta}(t^+))$ such that 
\[
\eta(t) + r u \in  \mathcal{E}(\eta), \quad \forall r\in(0,1). 
\]
\end{lemma}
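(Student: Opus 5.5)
The plan is to reduce the statement to the local structure of a Jordan curve near one of its points, and then use the tangent cone at $\eta(t)$ to single out an interior direction. Fix $t\in[0,\tau)$ and write $v^-\ceq\dot\eta(t^-)$, $v^+\ceq\dot\eta(t^+)$. At a point where $\eta$ is smooth we have $v^-=v^+$, so $\angle(v^-,v^+)=0$ and $\Tan^\s(v^-,v^+)$ is the open half-plane lying to the left (if $\s=1$) or to the right (if $\s=-1$) of $v^+$. At a corner it is the open sector swept from $v^+$ to $-v^-$ in the direction of $\s$. At an inward-pointing cusp it is everything except the ray of $v^+$. The hypothesis $\angle(v^-,v^+)\neq\s\pi$ removes exactly the outward cusp, which is the only case where $\Tan^\s$ is empty. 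So in every remaining case $\Tan^\s(v^-,v^+)$ is a nonempty open cone.

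The key step is to show that, for some $r_0>0$, this cone intersected with the ball $B_{r_0}(\eta(t))$ describes the trace of $\D(\eta)=\mc E(\eta)$ near $\eta(t)$, up to an error that shrinks with the radius.
\begin{itemize}
\item By piecewise smoothness, near $\eta(t)$ the support of $\eta$ is the union of two $C^1$ arcs leaving $\eta(t)$ with directions $-v^-$ and $v^+$.
\item These two arcs split a small disc into two curvilinear sectors. By the Jordan Curve Theorem exactly one of them lies in $\D(\eta)$.
\item Since $\D(\eta)=\mc E_\s(\eta)$, orientation identifies which sector is interior. Concretely, compute $\mathrm{ind}(\eta,\cdot)$ via the jump of the winding number across the arc $\eta|_{[t,t+h]}$: crossing the curve from right to left increases the index by $1$. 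Hence the interior lies on the $\s$-side of $v^+$, i.e.\ in the sector turning from $v^+$ towards $-v^-$ in the sense of $\s$.
\item Pick $u$ strictly inside this angular range, for instance the bisector direction. Then $\eta(t)+ru\in\D(\eta)$ for all $0<r<r_0$, because the arcs are tangent to the boundary rays of the cone and therefore stay away from the direction $u$ for small $r$.
\end{itemize}

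Finally, rescale $u$ by $r_0$; since $\Tan^\s$ is a cone, the rescaled vector still lies in it. This gives $\eta(t)+ru\in\mc E(\eta)$ for all $r\in(0,1)$, which is the required conclusion.

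The main obstacle I expect is the degenerate case of an inward cusp, $v^-=-v^+$. There the two arcs are tangent, and the interior sector is the complement of a horn-shaped region around the ray of $v^+$. For such a $u$, the ray $\eta(t)+ru$ should avoid both arcs for small $r$, because the arcs are tangent to $v^+$ and thus eventually lie within any small angle of it. The remaining task is the careful sign bookkeeping against the conventions of Remark \ref{rem:tangenti_estremi}, which I would check against Gauss--Bonnet \eqref{eq:gauss-bonnet-thm} on a model triangle to make sure the orientation sign is assigned correctly.
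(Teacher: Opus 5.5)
Your proposal is correct. This paper does not prove the lemma itself; it defers to \cite{notall}. Your local argument is the standard way to establish it:
\begin{itemize}
\item use simplicity and compactness to shrink to a disc around $\eta(t)$, where the support reduces to two radially monotone arcs tangent to the rays of $\dot\eta(t^+)$ and $-\dot\eta(t^-)$;
\item use the Jordan curve theorem, together with the fact that every point of $\mathrm{spt}(\eta)$ bounds both complementary components, to see that exactly one of the two curvilinear sectors lies in $\D(\eta)$;
\item identify that sector through the $+1$ jump of the index across the outgoing arc;
\item choose $u$ strictly inside the angular range and rescale.
\end{itemize}

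Two remarks would tighten the write-up.

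First, state explicitly the step that turns ``the $\sigma$-side of the outgoing arc at $\eta(s)$, $s>t$'' into ``the angular sector $\Tan^\sigma(\dot\eta(t^-),\dot\eta(t^+))$ at $\eta(t)$''. It rests on $\dot\eta(s)$ being nearly radial, i.e.\ close to $(\eta(s)-\eta(t))/|\eta(s)-\eta(t)|$, for $s$ close to $t$. The same fact gives that each small circle around $\eta(t)$ meets each arc exactly once.

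Second, the cusp case needs no horn analysis. If $\dot\eta(t^-)=-\dot\eta(t^+)$, orientation alone does not tell you whether the horn or its complement is interior; the hypothesis $\angle(\dot\eta(t^-),\dot\eta(t^+))\neq\sigma\pi$ is exactly what places you in case (ii) of Remark \ref{rem:tangenti_estremi}. By your two-sector picture, the whole complementary sector, and in particular $\eta(t)+r\dot\eta(t^-)$ for all small $r$, then lies in $\D(\eta)$. So $u=r_0\dot\eta(t^-)$, which belongs to $\Tan^\sigma=\R^2\setminus\{r\dot\eta(t^+)\mid r>0\}$, works directly.
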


\begin{lemma}
\label{lem:gauss-bonnet_nerfed}
Let $\eta:[0,\tau] \rightarrow \R^2$ as above and fix $\sigma\in\{-1,1\}$. Then, it holds:
\begin{enumerate}[(i)]
\item\label{item:gb1} if $\sigma\kappa|_{ [t_i,t_{i+1}]}\geq 0$, and $\sigma\delta_i\in  [0,\pi]$, for every $i=0, \ldots, N-1$, then $\mathcal{D}(\eta)$ is convex, $\sigma$ is the orientation of $\eta$, and $\sigma\delta_i \in [0,\pi)$, for every $i=0,\ldots, N-1$;
\item\label{item:gb2} if there are $i_1,i_2\in\{0,\ldots, N-1\}$, with $i_1\neq i_2$, such that
\[
\sum_{i=0}^{N-1} \int_{t_i}^{t_{i+1}} \sigma\kappa(t)\, dt \geq 0 \quad \mbox{and} \qquad \sigma\delta_i \in [0,\pi], \quad \forall i \in \{0,\ldots, N-1\} \setminus \{i_1,i_2\},
\]
then $\sigma$ is the orientation of $\eta$ and, in particular, $\mathcal{D}(\eta) = \mathcal{E}_{\sigma}(\eta)$.
\end{enumerate}
\end{lemma}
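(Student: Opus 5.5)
Both items rest on the Hopf Umlaufsatz for piecewise smooth simple closed curves: if $\eta$ has orientation $\sigma_0\in\{-1,1\}$, then the total turning satisfies
\begin{equation}
\sum_{i=0}^{N-1}\int_{t_i}^{t_{i+1}}\kappa(t)\,dt+\sum_{i=0}^{N-1}\delta_i=2\pi\sigma_0 .
\end{equation}
This is the content of Remark \ref{rmk:gauss-bonnet}, using the cusp conventions of Remark \ref{rem:tangenti_estremi}. Multiplying by a fixed $\sigma\in\{-1,1\}$, the quantity $S_\sigma\ceq\sum_i\int\sigma\kappa+\sum_i\sigma\delta_i$ is therefore $\pm2\pi$. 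It equals $2\pi$ exactly when $\sigma=\sigma_0$. So in both items the plan is to show $S_\sigma>-2\pi$, which forces $S_\sigma=2\pi$ and $\sigma=\sigma_0$. We also use that every $\delta_i$ lies in $[-\pi,\pi]$, with the value $\pm\pi$ occurring only at cusps.

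\emph{Item (i).} Under the hypotheses every term of $S_\sigma$ is nonnegative, so $S_\sigma\ge0>-2\pi$. Hence $\sigma$ is the orientation.

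For convexity, take a continuous lift of the tangent angle that jumps by $\delta_i$ at each $t_i$. Then $\sigma$ times this angle is nondecreasing along $[0,\tau]$ with total increase exactly $2\pi$. A simple closed curve whose tangent angle is monotone with total variation $2\pi$ bounds a convex region. This is the classical argument: if some line met $\eta$ in a non-supporting way, the tangent would have to turn back, contradicting monotonicity. Equivalently, at each point the curve lies on one side of the (one-sided) tangent line, so $\mathcal D(\eta)$ is an intersection of half-planes.

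Finally, $\sigma\delta_i=\pi$ is excluded. By the convention of Remark \ref{rem:tangenti_estremi}, that value is an outward-pointing cusp, so the interior angle of $\mathcal D(\eta)$ at $\eta(t_i)$ would be $0$. This is impossible for a convex set with nonempty interior, whose tangent cone at any boundary point has positive opening. Hence $\sigma\delta_i\in[0,\pi)$ for all $i$.

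\emph{Item (ii).} All terms of $S_\sigma$ except $\sigma\delta_{i_1}$ and $\sigma\delta_{i_2}$ have nonnegative total. Thus
\begin{equation}
S_\sigma\ge\sigma\delta_{i_1}+\sigma\delta_{i_2}\ge-2\pi .
\end{equation}
If the inequality is strict, then $S_\sigma=2\pi$, so $\sigma$ is the orientation and $\mathcal D(\eta)=\mathcal E_\sigma(\eta)$ by \eqref{eq:jordan_oriented}.

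The remaining case is equality, $S_\sigma=-2\pi$, and excluding it is the delicate step. Equality forces three things:
\begin{itemize}
\item[(a)] $\sigma\delta_{i_1}=\sigma\delta_{i_2}=-\pi$, so both corners are cusps;
\item[(b)] $\sigma\delta_i=0$ for every other $i$;
\item[(c)] $\int\sigma\kappa=0$ in total.
\end{itemize}
The obstacle is that (c) alone does not give $\kappa\equiv0$, since the curvature may change sign. I would instead argue with the tangent angle. By (b), it is continuous away from $t_{i_1},t_{i_2}$. Its total change over each of the two smooth arcs between the cusps is constrained by the endpoint tangents, which are opposite at each cusp. Using simplicity, the rotation of the tangent along each arc between consecutive cusps is bounded below by the angle needed to keep the arc embedded, and I expect this bound to give a contradiction with total $-2\pi$. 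Alternatively, one can reduce to the straight case: pushing (c) and the sign data to $\kappa\equiv0$ makes $\eta$ two segments joined at two cusps, i.e.\ one segment traversed back and forth, which is not simple.

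Should a clean general argument fail, I would follow \cite{notall}, where the statement is proved under exactly these conventions, and cite it as the paper does.
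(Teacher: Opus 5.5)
Your Umlaufsatz reduction is correct. It settles item (i), with the convexity step being the classical monotone-turning argument, and it settles the case $S_\sigma>-2\pi$ of item (ii). The paper gives no proof of this lemma; it only refers to \cite{notall}.

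\textbf{The gap.} The genuine gap is the equality case of (ii), and it cannot be closed because it actually occurs. Your own analysis shows what equality means. Let $\sigma_0$ be the orientation. Then $\sigma=-\sigma_0$ and $\sigma_0\delta_{i_1}=\sigma_0\delta_{i_2}=\pi$, i.e.\ two cusps pointing outward (case (i) of Remark \ref{rem:tangenti_estremi}). There are no other corners, and $\sum_i\int\kappa=0$.

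\textbf{A counterexample.} Such curves exist. Take $f(x)=(1-x^2)^2$. Let $\eta$ be the arc-length parametrization of $x\mapsto(x,-f(x))$, $x\in[-1,1]$, followed by $x\mapsto(x,f(x))$ traversed from $x=1$ to $x=-1$.
\begin{itemize}
\item The curve is simple, positively oriented, and smooth away from $(\pm1,0)$.
\item Since $f(\pm1)=f'(\pm1)=0$, both points $(\pm1,0)$ are outward cusps, so $\delta_0=\delta_1=\pi$.
\item On the lower arc the tangent angle stays in $(-\frac\pi2,\frac\pi2)$ and equals $0$ at both ends. On the upper arc it stays in $(\frac\pi2,\frac{3\pi}2)$ and equals $\pi$ at both ends. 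Hence $\sum_i\int\kappa=0$.
\end{itemize}
Take $\sigma=-1$, $N=2$ and $\{i_1,i_2\}=\{0,1\}$. Then every hypothesis of (ii) holds, but $-1$ is not the orientation.

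Consequently, the lower bound you hope for on the rotation of an embedded arc between two cusps does not exist: each arc here has zero net rotation. Your alternative, pushing $\sum_i\int\sigma\kappa=0$ to $\kappa\equiv0$, needs a pointwise sign of $\kappa$, which (ii) does not assume.

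\textbf{What your argument does prove.} It proves (ii) under any one of the following additional hypotheses.
\begin{itemize}
\item $\sigma\kappa\geq0$ pointwise. Then equality forces $\kappa\equiv0$ and $\delta_i=0$ at every corner other than $i_1,i_2$. So $\eta$ is a single segment traversed back and forth, which is not simple.
\item $\sum_i\int\sigma\kappa>0$. Then only your strict case occurs.
\item No outward cusps, as in the standing assumption $\angle(\dot\eta(t^-),\dot\eta(t^+))\neq\sigma\pi$ of Lemma \ref{lem:enclosed_directions}. Then your condition (a) is impossible, since $\sigma\delta_{i_j}=-\pi$ with $\sigma=-\sigma_0$ is exactly an outward cusp.
\end{itemize}

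\textbf{Effect on the paper.} This is harmless for the paper. Every application of (ii) is in Lemma \ref{lem:fortissimo}\ref{item:lem_fortissimo_ii} and Lemma \ref{lem:sign_of_intervals}\ref{item:sign_of_int_ii}. There the curves have curvature of constant sign $\sigma$ that does not vanish identically, so the first two conditions hold. Falling back on \cite{notall} does not rescue the statement as literally written, however. You should state and prove the corrected version.
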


\begin{lemma}
    \label{lem:convexcurvature}
	Let $\eta:[0,\tau]\to\R^2$ be a smooth, simple and closed curve, parametrized by arc-length. Assume that its curvature $\k(t)$ has constant sign $\sigma\in\{-1,1\}$ and that the angle $\delta\ceq \angle(\dot\eta(\tau^-),\dot\eta(0^+))$ satisfies $\sigma\delta\in[0,\pi)$. Then, there exists $t_*\in(0,\tau)$ such that $|\kappa(t_*)|\geq \pi\,\tau^{-1}=\pi L(\eta)^{-1}$.
\end{lemma}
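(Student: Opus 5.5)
The plan is to combine the Gauss--Bonnet identity for the oriented simple closed curve $\eta$ with a pigeonhole bound on the total turning. Let $\alpha$ be the smooth angle function of $\eta$ on $[0,\tau]$, so $\kappa=\dot\alpha$ and $\sigma\kappa\geq0$ everywhere. Since $\kappa$ has constant sign $\sigma$ and $\sigma\delta\in[0,\pi)$, Lemma \ref{lem:gauss-bonnet_nerfed}\ref{item:gb1} applies with $N=1$ (a single break point $t_0=0=\tau$ with angle $\delta_0=\delta$). It tells us that $\sigma$ is the orientation of $\eta$ and that $\mathcal D(\eta)$ is convex. Then Remark \ref{rmk:gauss-bonnet} gives
\begin{equation}
\int_0^\tau \sigma\kappa(t)\,dt = 2\pi-\sigma\delta .
\end{equation}
Because $\sigma\delta<\pi$, the right-hand side is strictly larger than $\pi$.

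Next comes the averaging step. Since $\sigma\kappa=|\kappa|$ is continuous on $[0,\tau]$ and
\begin{equation}
\int_0^\tau|\kappa(t)|\,dt>\pi,
\end{equation}
the mean value theorem for integrals yields a point $t_*\in[0,\tau]$ with $|\kappa(t_*)|=\tau^{-1}\int_0^\tau|\kappa|>\pi\tau^{-1}$. Alternatively, if $|\kappa|<\pi\tau^{-1}$ held on all of $[0,\tau]$, the integral would be strictly less than $\pi$, which is a contradiction. This actually gives a strict inequality, which is stronger than the statement requires.

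It remains to place $t_*$ in the open interval $(0,\tau)$. The set $\{t : |\kappa(t)|>\pi\tau^{-1}\}$ is relatively open in $[0,\tau]$ and, by the argument above, nonempty. Any nonempty relatively open subset of $[0,\tau]$ meets $(0,\tau)$, so we may choose $t_*$ in the interior. Finally, $\tau=L(\eta)$ because $\eta$ is parametrized by arc-length.

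The only delicate step is the legitimacy of applying Lemma \ref{lem:gauss-bonnet_nerfed} and Remark \ref{rmk:gauss-bonnet} to this piecewise-smooth setting with a single corner at the base point. In particular, the hypothesis $\sigma\delta\in[0,\pi)$ rules out the cusp conventions of Remark \ref{rem:tangenti_estremi}, so the Gauss--Bonnet identity holds in the standard form. Everything else is elementary.
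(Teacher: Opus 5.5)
Your proof is correct and follows the same argument as the paper: Gauss--Bonnet gives $\int_0^\tau \sigma\kappa\,dt = 2\pi-\sigma\delta>\pi$, and the mean value theorem then produces the point $t_*$. You also add two valid refinements that the paper's proof leaves implicit: invoking Lemma~\ref{lem:gauss-bonnet_nerfed}\ref{item:gb1} to confirm that $\sigma$ is the orientation, and an openness argument placing $t_*$ in $(0,\tau)$ instead of the paper's $[0,\tau]$.
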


\begin{proof}
    By the Gauss-Bonnet Theorem \eqref{eq:gauss-bonnet-thm} applied to the smooth curve $\eta$, we have
    \begin{equation}
        \int_0^\tau \sigma \kappa(t)dt = 2\pi - \sigma\delta > \pi.  
    \end{equation}
    Hence, by the mean value theorem, there exists $t^*\in [0,\tau]$ such that  
    \begin{equation}
        |\kappa(t^*)|=\sigma\kappa(t^*)=\frac1\tau\int_0^\tau \sigma \kappa(t)dt > \frac\pi\tau.  \qedhere
    \end{equation}
\end{proof}

\subsection{Comparison of curves through cut-and-paste arguments}
In Lemma \ref{lem:w1>0}, we established that normal competitors live in the region $\{x_1\geq 0\}$. Starting from this observation, we can derive some obstructions to optimality. These obstructions are purely local in nature and do not depend on the endpoints of the curve. Thus, the proofs of the following two lemmas can be obtained repeating verbatim the proof of \cite[Lem.\ 3.7]{notall} and are therefore omitted.

\begin{lemma}\label{lem:cut_and_paste1}
    For every $\e,s>0$, and every $\w\in\mc C_{s,\e}$, we have that $\omega\not\in\mathcal{C}_{s,\e}^{\rm{opt}}$ as soon as one of the following conditions is satisfied:
    \begin{enumerate}[(i)]
        \item\label{c_a_p_1_item_i} There exist a loop $\ell$ of $\omega$ and a Lipschitz closed curve $\eta:[0,\tau]\to\mathbb{R}^2$ that intersects the set $\overline{\spt(\w)\setminus\spt(\ell)}$, such that $L(\eta)\leq L(\ell)$ and $|A(\ell)|\leq|A(\eta)|$;
        \item\label{c_a_p_1_item_ii} There exist $0\leq t_1<t_2\leq L(\omega)$ and a Lipschitz closed curve $\eta:[0,\tau]\to\mathbb{R}^2$ that intersects the curve $\omega|_{[0,t_1]}*[\omega(t_1),\omega(t_2)]*\omega|_{[t_2,L(\omega)]}$, such that $L(\eta)\leq L(\omega|_{[t_1,t_2]})-L([\omega(t_1),\omega(t_2)])$ and $|A(\omega|_{[t_1,t_2]}*[\omega(t_1),\omega(t_2)])|<|A(\eta)|$;
        \item\label{c_a_p_1_item_iii} There exist a simple loop $\ell$ of $\omega$, $t^{*}\in I_\omega\setminus{\rm int}(J_\ell)$ and $k\in\{-1,1\}$, such that $k\cdot (P\circ\ell)>0$ and $\{\omega(t^{*})+k(0,s)|s>0\}\cap\rm{spt}(\ell)\neq\emptyset$;
        \item\label{c_a_p_1_item_iv} There exist $0\leq t_1<t_2\leq L(\omega)$ such that $(P\circ\omega)|_{(t_1,t_2)}<0$, $P(\omega(t_1))=P(\omega(t_2))=0$, and $\omega$ is injective on $[t_1,t_2)$;
        \item\label{c_a_p_1_item_v} There exist a simple loop $\ell$ of $\omega$ and $t^{*}\in I_\omega\setminus {\rm int}(J_\ell)$ such that $\max_{t\in J_\ell}|Q(\omega(t))|\leq Q(t^{*})$;
        \item\label{c_a_p_1_item_vi} There exist two loops $\ell_1$ and $\ell_2$ of $\omega$, such that ${\rm int}(J_{\ell_1})\cap{\rm int}(J_{\ell_2})=\emptyset$ and $Q\circ\ell_1,Q\circ\ell_2\geq0$.
    \end{enumerate}
\end{lemma}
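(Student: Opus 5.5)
The plan is to argue each item by contradiction through an explicit cut-and-paste construction. Assuming $\w\in\CO$ satisfies one of the conditions, we build a competitor $\tilde\w$ with the same endpoints, with $L(\tilde\w)\le L(\w)$, and with $\omega_3(\tilde\w,L(\tilde\w))=\omega_3(\w,L(\w))$, so that Definition \ref{def:competitor} (ii) still holds. Then $\tilde\w$ is also optimal, hence (by Remark \ref{rem:study_only_normal} and Lemma \ref{lem_polarnormal}) a real-analytic normal curve with $\w_1>0$ away from $t=0$ (Lemma \ref{lem:w1>0}). The construction will make $\tilde\w$ either strictly shorter, or not real-analytic (a corner or a non-analytic junction), which is a contradiction in both cases. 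The basic bookkeeping tool is Stokes' theorem: the third coordinate equals $\int \dot\w_2 P^2$, and inserting or removing a closed sub-curve $\eta$ changes it by exactly $A(\eta)$ (Remark \ref{rmk:stoked}).

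For (i) and (ii), remove the loop $\ell$, or replace $\w|_{[t_1,t_2]}$ by the segment $[\w(t_1),\w(t_2)]$. This saves length at least $L(\eta)$ and changes the area by $-A(\ell)$, respectively by $-A(\w|_{[t_1,t_2]}*[\w(t_1),\w(t_2)])$. Since $\eta$ meets the remaining curve, we can attach $\eta$ at an intersection point, rescaled or run along a sub-arc as needed, and reversed if necessary to fix the sign. By continuity of the area in the rescaling (weighted area is continuous under homotheties centred at a point of $\eta$ on the curve), we recover exactly the lost area. In case (ii) the strict inequality on areas leaves slack; in case (i) we get equality of length with a corner at the attachment, hence non-analyticity.

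For (iii), since $k\,P>0$ on $\ell$, we translate the loop vertically by $-k\sigma$ toward the point $\w(t^*)$ until it touches the rest of the curve at a point of the vertical ray. The key point is that $Q=\partial_{x_1}(P^2)$ and $\int_\ell P^2dx_2$ is invariant under vertical translation only to first order; the sign of $kP$ is used to show that translating in that direction changes $|A|$ monotonically, and the corner created gives the contradiction. For (iv), reflect or cut the arc where $P<0$: the region enclosed with $\{P=0\}$ carries a definite weighted area, and replacing the arc by a piece of $\{P=0\}$ or its mirror image yields a non-analytic curve of equal data. For (v), relocate the loop to $\w(t^*)$ after shrinking it by a homothety: the area scales like $r^2\sup|Q|$, and $Q(\w(t^*))\ge\max_{J_\ell}|Q|$ lets a loop of length at most $L(\ell)$ produce area at least $|A(\ell)|$, which reduces to (i). For (vi), the two loops with $Q\ge0$ have areas of the same sign. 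Merging them into a single loop of length $L(\ell_1)+L(\ell_2)$ (for instance a circle-like curve in the region $Q\ge 0$, using Radó \eqref{eq:rado} and the fact that area grows quadratically in length) gives at least the combined area, and we conclude again by (i).

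The main obstacle is (iii) and (iv): controlling the sign of the change of weighted area under vertical translation or reflection in terms of $P$ and $Q$, and ensuring that the translated loop actually meets $\overline{\spt(\w)\setminus\spt(\ell)}$. Since these steps are local and independent of the endpoints, I would follow the corresponding steps of \cite[Lem.\ 3.7]{notall} verbatim, checking only that the starting point $(0,-s)$ plays no role.
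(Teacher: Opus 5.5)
Your bottom line, invoking \cite[Lem.~3.7]{notall} after checking that nothing depends on the starting point $(0,-s)$, is exactly the paper's proof. The paper offers that observation and nothing more. Your mechanism for (i) and (ii) is also correct: remove the loop (resp.\ replace $\omega|_{[t_1,t_2]}$ by the chord), then reinsert a suitably oriented homothetic copy $\eta_r$ of $\eta$ centred at a common point, with $r\in(0,1]$ chosen by continuity of $r\mapsto A(\eta_r)$ and $A(\eta_0)=0$. However, your self-contained justifications for (iii)--(vi) do not work as written, so for those items your proof rests on the citation alone.

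In (iii), the weighted area is not invariant to first order under vertical translation. Indeed $\frac{d}{dh}A(\ell+(0,h))\big|_{h=0}=\sigma\int_{\mathcal D(\ell)}\partial_{x_2}Q\,dx\neq0$. The actual key fact is that $\partial_{x_2}Q=-4b\,x_1x_2^{b-1}\le 0$ on $\{x_1\ge0\}$, because $b$ is odd. Since $\ell$ is simple and lies in the simply connected set $\{kP>0,\ x_1>0\}$ (Lemma \ref{lem:w1>0}), one has $kQ>0$ on $\mathcal D(\ell)$. Hence translating $\ell$ by $-k(0,h)$, $h>0$, strictly increases $|A|$ at fixed length; the direction is $-k$, towards $\omega(t^*)$, independently of the orientation. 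Translating until the loop passes through $\omega(t^*)\in\overline{\mathrm{spt}(\omega)\setminus\mathrm{spt}(\ell)}$ reduces (iii) to (i) with a strict area inequality, and no corner is involved.

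In (iv), replacing the arc by the piece of $\{P=0\}$ between $\omega(t_1)$ and $\omega(t_2)$ does shorten it, by convexity of $\{P\ge0,\ x_1\ge 0\}$. But, as you note yourself, it changes $\omega_3$ by the nonzero weighted area of the enclosed region, so the data are \emph{not} preserved. No ``mirror image'' is available either: the only non-trivial Euclidean isometry preserving $P^2$ is $x_1\mapsto-x_1$, which moves the endpoints. So your text contains no argument for (iv).

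In (v), a homothety multiplies the enclosed Lebesgue area by $r^2<1$, and the hypothesis $Q(\omega(t^*))\ge\max_{J_\ell}|Q\circ\omega|$ cannot compensate for that. Moreover, the hypothesis controls $Q$ only at the point $\omega(t^*)$, not on the region enclosed by the relocated curve; obtaining that control is the real content of (v). In (vi), $\ell_1$ and $\ell_2$ may have different orientations, so their areas need not have the same sign. Also, Radó's inequality \eqref{eq:rado} is an \emph{upper} bound, so it cannot give the lower bound you need for the merged curve. A cleaner route reduces (vi) to (v). If, say, $\max_{J_{\ell_1}}Q\circ\omega\le\max_{J_{\ell_2}}Q\circ\omega$, then a maximizer $t^*\in J_{\ell_2}$ lies outside $\mathrm{int}(J_{\ell_1})$, and (v) applies to a simple subloop of $\ell_1$.

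Finally, the equality case of (i) is not settled by ``a corner at the attachment''. Take $\eta=\ell$, which meets $\overline{\mathrm{spt}(\omega)\setminus\mathrm{spt}(\ell)}$ at $\omega(s_\ell^\pm)$: reinserting it there gives back $\omega$ itself. So with both inequalities non-strict, the construction yields no contradiction. What your homothety argument does prove is the version with $L(\eta)<L(\ell)$ or $|A(\eta)|>|A(\ell)|$, which produces a strictly shorter competitor. That is the form in which (i) is used in the paper, e.g.\ with the squares $\partial R^\varepsilon_\alpha$ of length $L(\ell)/2$ in Lemmas \ref{lem:newbound_L(loop)} and \ref{lem:geometric_w2>e2}.
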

In addition, the next lemma provides two more obstructions to optimality, that give bounds on the behavior of an optimal competitor in the region close to the endpoint.
\begin{lemma}\label{lem:cut_and_paste2}
    For every $K>0$, there exists $\e_0>0$ such that for all $0<\e<\e_0$, $s>0$, and $\w\in \CC$, we have that $\w\notin \CO$ as soon as one of the following conditions is satisfied: 
    \begin{enumerate}[(i)]
        \item\label{c_a_p_2_item_i} There exist a loop $\ell$ of $\omega$ and $t^{*}\in I_\omega\setminus{\rm int}(J_{\ell})$ such that $\omega_2(t^{*})>K\e$, $Q(\w(t^*))<0$, and $\max_{t\in J_\ell}|Q(\omega(t))|\leq-Q(\w(t^*))$;
        \item\label{c_a_p_2_item_ii} There are two loops $\ell_1$ and $\ell_2$ of $\omega$ such that $\omega_2(s_{\ell_1}^{-}),\omega_2(s_{\ell_2}^{-})\geq K\e$ and ${\rm int}(J_{\ell_1})\cap{\rm int}(J_{\ell_2})=\emptyset$.
    \end{enumerate}
\end{lemma}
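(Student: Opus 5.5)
The plan for both items is to reduce them to the cut-and-paste criterion of Lemma \ref{lem:cut_and_paste1}\ref{c_a_p_1_item_i}. Concretely, I remove a loop $\ell$ and reinsert, at a point of $\overline{\spt(\w)\setminus\spt(\ell)}$, a closed curve $\eta$ with $L(\eta)\le L(\ell)$ and $|A(\eta)|\ge |A(\ell)|$. The orientation of $\eta$ is chosen so that the total weighted area constraint of Remark \ref{rmk:stoked} can be restored, as in the proof of Lemma \ref{lem:cut_and_paste1}. The quantity $K$ only controls the region where the comparison happens: the hypothesis $\w_2\ge K\e$ keeps us away from $\{x_2=0\}$. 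There, the only critical points of $Q=4x_1(x_1^2-x_2^b)$ in $\{x_1>0\}$ are excluded, because $\partial_{x_2}Q=-4b\,x_1x_2^{b-1}\neq 0$ there. I also want quantitative lower bounds on $|\nabla Q|$ in terms of $\e$.

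For item \ref{c_a_p_2_item_i}, set $p\ceq \w(t^*)$ and $L\ceq L(\ell)$. By Rad\'o's inequality \eqref{eq:rado}, $|A(\ell)|\le \frac{L^2}{4\pi}\max_{J_\ell}|Q\circ\w|\le \frac{L^2}{4\pi}(-Q(p))$. I take $\eta$ to be the circle of length $L$ (radius $r=L/2\pi$) through $p$ whose center is $c=p-r\,\nabla Q(p)/|\nabla Q(p)|$, i.e.\ the disk is placed on the side where $Q$ decreases. Then, by Taylor expansion,
\begin{equation}
\int_{B_r(c)}Q\,dx = \pi r^2 Q(p) - \pi r^3|\nabla Q(p)| + O\big(r^4\|D^2Q\|_{\infty}\big),
\end{equation}
so $|A(\eta)|\ge \pi r^2(-Q(p)) = \frac{L^2}{4\pi}(-Q(p))\ge|A(\ell)|$ as soon as $r\,\|D^2Q\|\lesssim|\nabla Q(p)|$. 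I will bound $|\nabla Q(p)|$ from below on $\{P<0,\ x_2>K\e\}$ by splitting into two cases. If $3x_1^2\le x_2^b/2$, I use $|\partial_{x_1}Q|\gtrsim x_2^b$. Otherwise, I use $|\partial_{x_2}Q|\gtrsim x_2^{q+b-1}$. In both cases $|\nabla Q(p)|\gtrsim (K\e)^{q+b-1}$, while $\|D^2Q\|=O(\e^{b-2})$ on the relevant bounded region. I then need the loop to be short compared to $\e^{q+1}$. I will get this from the a priori length bounds: either Corollary \ref{cor:rough_bound_loops} when $(\e,s)\in\mc I_{\e_0}$, or, for general $s$, a direct comparison $L(\ell)\le L(\w)-|\w(0)-\w(L(\w))|$ together with the fact that a long loop must leave the region where $-Q(p)$ dominates $|Q|$ on $\ell$. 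If the loop is long, I instead place a larger disk entirely in $\{Q<Q(p)\}$, using convexity of the relevant sublevel set near $p$.

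This quantitative step is where I expect the main difficulty: it requires matching the loop length against $|\nabla Q|/\|D^2Q\|$ uniformly for $\w_2>K\e$, and this matching is what forces $\e<\e_0(K)$.

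For item \ref{c_a_p_2_item_ii}, I merge the two loops. I delete $\ell_2$ and enlarge $\ell_1$ (or its comparison circle) to length $L(\ell_1)+L(\ell_2)$. The gain in weighted area is then superadditive, $(L_1+L_2)^2>L_1^2+L_2^2$. The surplus $2L_1L_2\,\frac{|Q|}{4\pi}$ must absorb both the variation of $Q$ between the two loop locations and sign issues. If the two areas have the same sign, this works directly, using that both loops sit in $\{x_2\ge K\e\}$, where the variation of $Q$ over the small scale of the loops is relatively controlled by the same gradient/Hessian bounds as above. If they have opposite signs, I remove both loops and reinsert a single shorter curve carrying the net area, which strictly decreases length. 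In either case Lemma \ref{lem:cut_and_paste1}\ref{c_a_p_1_item_i} gives $\w\notin\CO$. The case $Q\ge0$ on both loops is already covered by Lemma \ref{lem:cut_and_paste1}\ref{c_a_p_1_item_vi}.
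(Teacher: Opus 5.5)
Your mechanism for (i) is reasonable: remove $\ell$, insert through $p=\w(t^*)$ a circle of length $L(\ell)$ shifted towards decreasing $Q$, and invoke Lemma~\ref{lem:cut_and_paste1}\ref{c_a_p_1_item_i}. The gap is in the quantitative step you flag as the crux: it is mis-estimated, and the error hides the real difficulty.

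On $\{P<0,\ K\e<x_2<2\e\}$ one has $\partial_{11}Q=24x_1$ with $x_1$ up to $x_2^{q}$. So $\|D^2Q\|\sim\e^{q}$, not $O(\e^{b-2})$; note $q<b-2$. The worst admissible $p$ is the lowest point of $\{Q\le Q(p)\}$, i.e.\ $\partial_1Q(p)=0$, $p_1=p_2^q/\sqrt3$. There:
\begin{itemize}
\item $|\nabla Q(p)|=\frac{4b}{\sqrt3}p_2^{q+b-1}$;
\item the mean-value expansion of $\int_{B_r(c)}Q$ gives a second-order loss $\approx 3\pi r^4p_1$;
\item so the gain $\pi r^3|\nabla Q(p)|$ wins only for $r\lesssim \frac{4b}{3}p_2^{b-1}$.
\end{itemize}
This is consistent with $\partial\{Q\le Q(p)\}$ having radius of curvature $\frac b6p_2^{b-1}$ at $p$. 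Hence you need $L(\ell)\le c_bK^{b-1}\e^{b-1}$, not merely $L(\ell)\ll\e^{q+1}$.

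The bound you propose for general $s$, $L(\ell)\le L(\gse)-|\w(L(\w))-\w(0)|\le C_b\e^{b-1}$, is of the same order. It therefore closes the argument only for $K$ above a constant depending on $b$, not for every $K>0$. The long-loop fallback does not help either. Such loops need not leave $\{|Q|\le -Q(p)\}$, and by the curvature computation no disk of radius $\gg p_2^{b-1}$ tangent at $p$ fits inside $\{Q\le Q(p)\}$, convexity notwithstanding.

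What is really needed is $L(\ell)=o(\e^{b-1})$, which is also what makes $\e_0$ depend on $K$. This follows from Corollaries~\ref{cor:newLS} and~\ref{cor:rough_bound_loops}: $L(\ell)\lesssim\e^{M(1-\frac1b)}$, and $M(1-\frac1b)>(3q-1)(1-\frac1b)>b-1$. It holds only for $(\e,s)\in\mathcal I_{\e_0}$, which suffices because that is the only regime in which the lemma is invoked later.

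For (ii), the same-sign merge is not justified. The two loops need not be close, and $\max_{J_{\ell_2}}|Q\circ\w|$ may exceed $|Q|$ near $\ell_1$ by an arbitrary factor, for instance if one loop sits near $\{P=0\}$. Then the surplus $2L_1L_2|Q|/(4\pi)$ at the location of $\ell_1$ cannot absorb $\frac{L_2^2}{4\pi}\max_{J_{\ell_2}}|Q\circ\w|$ when $L_2\ll L_1$. The merged curve has to be placed where $|Q|$ is largest, and then you need the first-order argument of (i) anyway. Also, the hypothesis only controls $\w_2(s^-_{\ell_i})$. That the whole loops lie in $\{x_2\ge \frac K2\e\}$ comes from Lemma~\ref{lem:optimal}\ref{lem:optimal_item_i}, applied to $\w$ assumed optimal.

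With this, (ii) reduces to (i) without any merging:
\begin{itemize}
\item choose $t^*$ maximizing $|Q\circ\w|$ on $J_{\ell_1}\cup J_{\ell_2}$, say $t^*\in J_{\ell_1}$;
\item then $t^*\notin{\rm int}(J_{\ell_2})$ and $\max_{J_{\ell_2}}|Q\circ\w|\le|Q(\w(t^*))|$;
\item if $Q(\w(t^*))>0$, apply Lemma~\ref{lem:cut_and_paste1}\ref{c_a_p_1_item_v} to a simple sub-loop of $\ell_2$;
\item if $Q(\w(t^*))<0$, apply (i) with $K/4$ in place of $K$.
\end{itemize}
Either way $\w\notin\CO$. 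Your opposite-sign argument (remove both loops and reinsert a single curve carrying the net area) is correct but not needed.
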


\section{Loops of optimal competitors}
\label{sec:looooooooops}

This section is devoted to an in-depth study of loops of optimal competitors. In particular, we show that any optimal competitor must have self-intersections, and thus, loops, cf.\ Definition \ref{def:loops}. 

Given $s,\e>0$, let $\w\in\mathcal{C}_{s,\e}^{\rm{opt}}$.  Since $\omega$ is real-analytic, the set $\mr{spt}(\w)\cap\mr{spt}(\g)\subset \{\wt P=0\}$ is finite. We index the points of intersection as follows: we set $N=N(\omega)\ceq \# \big(\mr{spt}(\w)\cap\mr{spt}(\g)\big)-1$ and we define the unique times $\tau_i=\tau_i(\w)$, $i=0,\dots,N$, such that
\begin{equation}
	0=\tau_0<\tau_1<\dots<\tau_N=L(\w) \quad \mr{and} \quad \widetilde{P}(\tau_i)=0, \text{ for all } i=0,\dots,N.
\end{equation}

\begin{remark}
\label{rmk:relative_position_first_times}
    By construction, the intervals $(\tau_i,\tau_{i+1})\subset [0,L(\w)]$  are the maximal intervals where the functions $t\mapsto \widetilde{P}(\w(t))$ and $t\mapsto Q(\w(t))$ have constant sign. Moreover, note that by Lemma \ref{lem:w1>0}, for every $1\leq i\leq N$, we have $\w(\tau_i)\in \{\widetilde P=0, x_1>0\}$ and thus, $\w_2(\tau_i)> 0 > \w_2(\tau_0)=-s$.  
\end{remark}

\begin{definition}
	\label{def:tau_i}
    Given $s,\e>0$ and $\w\in\CO$, we set $\I=\I(\w)\ceq \{0,\dots,N(\w)-1\}$ and $I_i=I_i(\w)\ceq [\tau_i,\tau_{i+1}]$, for $ i\in\I$. We say that:
\begin{enumerate}[(i)]
	\item $I_i$ is {\em positive}, and $i$ is referred to as a {\em positive index}, if $\widetilde{P}\circ\w|_{I_i}\geq0$;
	\item $I_i$ is {\em negative}, and $i$ is referred to as a {\em negative index}, if $\widetilde{P}\circ\w|_{I_i}\leq0$.
\end{enumerate}
Finally, we write $\I_+$ (resp. $\I_-$) for the set of positive (resp. negative) indices.
\end{definition}

Observe that the maps $I_i\ni t\mapsto \widetilde{P}(\w(t)), Q(\w(t))$ are positive if $i\in \I_+$ and negative if $i\in \I_-$.

\begin{definition}
	\label{def:1simpleloop}
	Given $s,\e>0$ and $\w\in\CO$, let $\ell$ be a loop of $\w$, with associated interval $J_\ell=[s_\ell^-,s_\ell^+]\subset I_\w$. We say that:
    \begin{itemize}
        \item[(i)] $\ell$ is the \emph{first simple loop} of $\omega$ if $\omega|_{[0,s_\ell^{+})}$ is injective;
        \item[(ii)] $\ll$ is {\em positive} (resp., {\em negative}) if ${J_\ll}\subset I_{i}$ for some $i\in\I_+$ (resp., $i\in\I_-$). 
    \end{itemize}
\end{definition}

\begin{lemma}
\label{lem:preliminary}
There exists $\e_0>0$ such that, for every $0<\e<\e_0$, $s>0$, and $\w\in \CO$, we have
 \begin{enumerate}[(i)]
        \item\label{lem_prelim_item_ii} $\omega$ is not injective;
        \item\label{lem_prelim_item_iii} the covector $\lambda_\w$ associated to $\omega$ via Lemma \ref{lem_polarnormal} satisfies $\lambda_\w\neq 0$;
        \item\label{lem_prelim_item_i} $-2\e-s<\omega_2(t)<2\e$, for every $t\in I_\omega$.

        \item\label{lem_prelim_item_i_bis} if, in addition, $(\e,s)\in \mc I_{\e_0}$, then we have $\omega_1(t)<\big(1+\frac{b}{\sqrt{b-1}}\big)\e^q$, for every $t\in I_\omega$.
\end{enumerate}  
\end{lemma}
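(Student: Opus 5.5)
We prove the four items separately; they are fairly independent. The main tools are the normal equation \eqref{eq:normalpolar}, the area constraint of Remark \ref{rmk:stoked}, and the length bound $L(\w)<L_{\rm SR}(\gse)$.

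\emph{Item \ref{lem_prelim_item_iii}.} Suppose $\la_\w=0$. Then $\dot\0\equiv0$ by Lemma \ref{lem_polarnormal}, so $\w$ is the segment from $(0,-s)$ to $(\e^q,\e)$. Its lift must satisfy the constraint \eqref{COS}, which reads $\int_0^T\dot\w_2P(\w)^2\,dt=-s^{2b+1}/(2b+1)<0$. Along the segment, however, $\dot\w_2>0$ is constant, so this integral is strictly positive. This contradiction gives $\la_\w\neq0$.

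\emph{Item \ref{lem_prelim_item_ii}.} Suppose $\w$ is injective. Then the closed curve $\eta=\w\ast\pr(\check\gamma_{s,\e})$ is a concatenation of two arcs, and we have $A(\eta)=0$. The arc $\pr(\gse)$ consists of the vertical segment $\{x_1=0\}$ followed by the arc $\{P=0\}$, and by Lemma \ref{lem:w1>0} $\w$ lies in $\{x_1>0\}$ except at its start. The plan is to follow \cite{notall}. If $\w$ does not meet $\spt(\g)$ in between ($N=1$), then $\eta$ is simple and encloses a region on one side of $\{P=0\}$ within $x_1>0$, where $Q$ has constant sign. This gives $A(\eta)\neq0$, a contradiction. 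If there are several crossings, decompose $\eta$ into the simple closed curves cut out between consecutive $\tau_i$. Each of these has weighted area of a sign fixed by the side ($Q>0$ on positive pieces, $Q<0$ on negative ones) and by the orientation. Now use Lemma \ref{lem:cut_and_paste1}\ref{c_a_p_1_item_iv} to exclude injective negative excursions with endpoints on $\{P=0\}$. This forces all pieces to be positive. For $x_2<0$ the relevant region $\{x_1>0\}$ is also positive for $Q$, since $P=x_1^2>0$ there. Hence all contributions have the same sign and cannot sum to $0$. I expect this orientation bookkeeping to be the main obstacle, in particular showing that the pieces are consistently oriented. For this I would use the Jordan curve theorem together with the fact that $\w$ is a graph-like arc relative to $\spt(\g)$.

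\emph{Item \ref{lem_prelim_item_i}.} This is a length argument. Since $\w(0)=(0,-s)$ and $\w(T)=(\e^q,\e)$, we have $L(\w)\ge |\w(0)-\w(t)|+|\w(t)-\w(T)|$. If $\w_2(t)\ge2\e$, the right-hand side is at least $(2\e+s)+\e=3\e+s$. If instead $\w_2(t)\le-2\e-s$, it is at least $2\e+(3\e+s)$. Both exceed $L_{\rm SR}(\gse)=s+\e+O(\e^{2q-1})$ by \eqref{eq:length_abnormal_curve} for small $\e$, contradicting optimality.

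\emph{Item \ref{lem_prelim_item_i_bis}.} Suppose $\w_1(t^*)\ge c\e^q$ with $c=1+b/\sqrt{b-1}$. The plan is to compare with the segment-type bound: $L(\w)\ge|\w(0)-\w(t^*)|+|\w(t^*)-\w(T)|$, expanded in the horizontal offset. Since $\w_2(t^*)\in(-s-2\e,2\e)$ by item \ref{lem_prelim_item_i}, and $s<\e^2$, a displacement $h$ in $x_1$ costs roughly $h^2/(2(\w_2(t^*)+s))+h'^2/(2(\e-\w_2(t^*)))$. Optimizing over $\w_2(t^*)$ yields an excess of order $c^2\e^{2q-1}$ with an explicit constant, which must be compared against the excess $\frac{q^2}{2(2q-1)}\e^{2q-1}=\frac{b^2}{8(b-1)}\e^{2q-1}$ of $\gse$. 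I would use $\beta_\w<\e^M$ to control the $x_1$-location only where needed. The point is that the specific constant $1+b/\sqrt{b-1}$ is exactly what makes this comparison strict, so this computation is routine but constant-sensitive.
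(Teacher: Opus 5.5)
\textbf{Gap in item (ii), $\lambda_\omega\neq 0$.} Your contradiction relies on the sign printed in \eqref{COS}, but that sign is a misprint. For every horizontal curve from $\gamma(-s)$ to $\gamma(\varepsilon)$ one has
$\int_0^T\dot\omega_2P(\omega)^2\,dt=\gamma_3(\varepsilon)-\gamma_3(-s)=\frac{s^{2b+1}}{2b+1}>0$.

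You can check this without computing anything. The reference curve $\pr(\gamma_{s,\varepsilon})$ has increasing second coordinate and $P^2\ge 0$. It is also the curve that defines the constraint; Remark \ref{rmk:stoked} uses exactly this. So the reasoning ``$\dot\omega_2>0$ forces the integral to be positive, contradicting the constraint'' would exclude the reference curve itself.

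Hence no sign contradiction is available for the segment, and you need the weighted-area argument. The fix is to prove non-injectivity (item (i)) first and deduce (ii) from it, as the paper does. If $\lambda_\omega=0$, then $\omega$ is the segment $[(0,-s),(\varepsilon^q,\varepsilon)]$, which is injective. Alternatively, argue directly. By convexity of $x_2\mapsto x_2^q$, the segment lies strictly to the right of $\pr(\gamma_{s,\varepsilon})$ except at its endpoints. Together the two curves bound a nonempty region contained in $\{Q>0\}$. The weighted area is therefore nonzero, contradicting Remark \ref{rmk:stoked}.

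\textbf{The other items.} Items (i) and (iii) match the paper. For (i), the paper also excludes negative excursions via Lemma \ref{lem:cut_and_paste1}\ref{c_a_p_1_item_iv} and applies Stokes to $\omega\ast\pr(\check{\gamma}_{s,\varepsilon})$. Your extra bookkeeping for tangential contacts with $\{\widetilde P=0\}$ is a harmless refinement; note that for $x_2<0$ the positivity of $Q$ comes from $P=x_1^2+|x_2|^b$, not $P=x_1^2$.

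Item (iv) follows the paper's idea, but you leave the computation undone, and your small-slope expansion is not uniform in $\omega_2(t^*)$. The paper minimizes exactly. The function $F(z)=\sqrt{c^2\varepsilon^b+(z+s)^2}+\sqrt{(c-1)^2\varepsilon^b+(z-\varepsilon)^2}$ has minimum $\sqrt{(\varepsilon+s)^2+(2c-1)^2\varepsilon^b}$, obtained by reflecting $(\varepsilon^q,\varepsilon)$ across $\{x_1=c\varepsilon^q\}$. With $s<\varepsilon^2$ this gives $L(\omega)\ge\varepsilon+s+\frac{(2c-1)^2}{16}\varepsilon^{b-1}$. This exceeds $L(\gamma_{s,\varepsilon})\le\varepsilon+s+\frac{b^2}{4(b-1)}\varepsilon^{b-1}$ as soon as $c>\frac12+\frac{b}{\sqrt{b-1}}$. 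So the constant $1+b/\sqrt{b-1}$ is not ``exactly'' what makes the comparison strict; there is slack.
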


\begin{proof}
     Proof of \ref{lem_prelim_item_ii}. Note that, thanks to Lemma \ref{lem:w1>0}, $0\in\I_+$. Assume by contradiction that $\w$ is injective. Then, by Lemma \ref{lem:cut_and_paste1}\ref{c_a_p_1_item_iv}, $\w$ cannot cross the curve $\{\widetilde P=0\}$, therefore, we have $\wt P\circ\w\geq0$ and $I_0=[0,L(\w)]$. Consider now the closed simple curve $\eta\ceq \w\ast\pr(\check{\g}_{s,\e})$, where $\check{\g}_{s,\e}$ denotes the inverse reparametrization of $\g_{s,\e}$ and, recall that, by Remark \ref{rmk:stoked}, $A(\eta)=0$. The open set $\D(\eta)$ is non-empty, since $\w$ does not coincide with the arc-length reparametrization of $\pr(\g_{s,\e})$, and it is contained in the region $\{Q>0\}$. 
     Hence, by Stokes' Theorem, it holds 
     \begin{equation}
         A(\eta)=\int_{I_\eta} \dot\eta_2(t)P^2(\eta(t))dt = \sigma\int_{\D(\eta)} Q(x,y) dxdy \neq 0, 
     \end{equation}
    where $\sigma\in \{-1,1\}$ is the orientation of $\eta$, giving a contradiction. 
    
    Proof of \ref{lem_prelim_item_iii}. Observe that, if $\lambda=0$, $\w$ is a straight line by Lemma \ref{lem_polarnormal}, and thus injective. This is not possible by item \ref{lem_prelim_item_ii}. 

     Proof of \ref{lem_prelim_item_i}. Firstly, recall that the two endpoints of $\w$ are $(0,-s)$ and $(\e^q,\e)$. We argue by contradiction, comparing $L(\w)$ with the length of the concatenation of two line segments $[(0,-s),\w(t)]\ast[\w(t),(\e^q,\e)]$, where $t\in I_\w$. Assume that there exists $t\in I_\w$ such that $\w_2(t)=2\e$. Then, it follows
     \begin{align}
         L(\w)&\geq L([(0,-s),\w(t)]\ast[\w(t),(\e^q,\e)])\\
         &=\sqrt{(\w_1(t))^2+(2\e+s)^2}+\sqrt{(\w_1(t)-\e^q)^2+\e^2}\\
         &\geq 2\e + s + \e,
     \end{align}
     which contradicts the minimality of $\w$, for $\e$ small enough, according to \eqref{eq:length_abnormal_curve}. This proves that $\w_2(t)<2\e$, for every $t\in I_\w$.
     Similarly, assuming that $\w_2(t)=-2\e-s$, for some $t\in I_\w$, we get
     \begin{equation}
         L(\w)\geq\sqrt{(\w_1(t))^2+(-2\e)^2}+\sqrt{(\w_1(t)-\e^q)^2+(-2\e-s-\e)^2}\geq 3\e + s,
     \end{equation}
     which is again in contradiction with \eqref{eq:length_abnormal_curve}, thus proving that $\w_2(t)>-2\e-s$, for every $t\in I_\w$.

    Proof of \ref{lem_prelim_item_i_bis}. Let $c\geq 1+\frac{b}{\sqrt{b-1}}$ and assume by contradiction that there exists $t\in I_\w$ such that $\w_1(t)=c\e^q$. Then, as for \ref{lem_prelim_item_i}, it follows that
     \begin{equation}\label{prelim_estimate_1}
            L(\w)\geq \sqrt{c^2\e^b+(\w_2(t)+s)^2}+\sqrt{(c-1)^2\e^b+(\w_2(t)-\e)^2}.
     \end{equation}
     The function $F(z)\ceq \sqrt{c^2\e^b+(z+s)^2}+\sqrt{(c-1)^2\e^b+(z-\e)^2}$ has a global minimum in $z_*=\frac{c\e-(c-1)s}{2c-1}$, and
     \begin{equation}
         F(z_*)=(s+\e)\sqrt{1+(2c-1)^2\frac{\e^b}{(s+\e)^2}}.
     \end{equation}
     Since $(\e,s)\in \mc I_{\e_0}$, we have $s< \e^2<\e$, and thus, also using the smallness of $\e$, it follows that
     \begin{equation}
        \label{eq:stima_nuova_prel_w}
        L(\w)\geq (\e+s)\sqrt{1+\frac{(2c-1)^2}{4}\e^{b-2}}
         \geq \e+s+\frac{(2c-1)^2}{16}\e^{b-1}.
     \end{equation}
     On the other hand, by \eqref{eq:length_abnormal_curve} and small $\e>0$ we have
     \begin{equation}
     \label{eq:stima_nuova_prel_gamma}
         L(\gse)\leq \e + s + \frac{b^2}{4(b-1)}\e^{b-1}.
     \end{equation}
     From \eqref{eq:stima_nuova_prel_w} and \eqref{eq:stima_nuova_prel_gamma} we reach a contradiction.
\end{proof}

\begin{lemma}[Localized estimates]
    \label{lem:optimal}
    Fix $C>0$. Then, we have:
    \begin{enumerate}[(i)]
        \item\label{lem:optimal_item_i} there exists $\e_0>0$ such that for all $0<\e<\e_0$, $s>0$, and $\w\in\CO$, we have
        \begin{equation}
            \w_2(t_0) \geq C\e \quad\implies\quad \w_2(t)\geq \frac C2\e, \quad \text{for all } t \in [t_0,L(\w)].
        \end{equation}
        \item\label{lem:optimal_item_ii} there exists $\e_0>0$ such that for all $(\e,s)\in\I_{\e_0}$ and $\w\in\CO$ we have
        \begin{equation}
            \w_2(t_0) \geq C\e \quad\implies\quad \w_1(t)\geq \left(\frac C4\e\right)^q, \quad \text{for all } t \in [t_0,L(\w)].
        \end{equation}
    \end{enumerate}
\end{lemma}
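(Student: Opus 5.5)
Both items will be proved by contradiction, comparing $L(\w)$ with the length of a two-segment polygonal path through a bad point and using the upper bound on $L(\gse)$ from \eqref{eq:length_abnormal_curve}, exactly as in the proof of Lemma \ref{lem:preliminary}\ref{lem_prelim_item_i} and \ref{lem_prelim_item_i_bis}. The key point is that a sub-arc of a length-minimizer is itself length-minimizing between its endpoints. Equivalently, the full length must beat any path that visits the same points in the same order.

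\emph{Item \ref{lem:optimal_item_i}.} Suppose $\w_2(t_0)\ge C\e$ and $\w_2(t_1)=\frac C2\e$ for some $t_1>t_0$. Since $\w$ is parametrized by arc-length, we get the lower bound
\[
L(\w)\ge |\w(t_0)-\w(0)|+|\w(t_1)-\w(t_0)|+|\w(L(\w))-\w(t_1)|.
\]
Keeping only vertical components, this is at least $(C\e+s)+\frac C2\e+|\e-\frac C2\e|$. We always have $(C\e+s)+\frac C2\e\ge s+\e+\frac C2\e-\e$, which is weak when $C$ is small. The more robust bound is vertical total variation: the path goes from height $-s$ up to at least $C\e$, down to $\frac C2\e$, then to $\e$. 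Its total vertical variation is therefore at least $s+\e+2\cdot\frac C2\e\cdot\min\{1,\dots\}$, and more precisely at least $s+\e+\min\{C\e,\ C\e-\frac C2\e\}\ge s+\e+\frac C2\e$, since the descent $C\e\to\frac C2\e$ has size $\frac C2\e$ and must be compensated. By \eqref{eq:length_abnormal_curve}, $L(\gse)=s+\e+O(\e^{b-1})$, and $\frac C2\e\gg\e^{b-1}$ for small $\e$. This contradicts $L(\w)\le L(\gse)$. By continuity, there is then no time after $t_0$ with $\w_2<\frac C2\e$.

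\emph{Item \ref{lem:optimal_item_ii}.} By item \ref{lem:optimal_item_i}, for $t\ge t_0$ we have $\w_2(t)\ge\frac C2\e$. Suppose $\w_1(t)<(\frac C4\e)^q$ at some such $t$. Using $b=2q$, this means $\w_1(t)^2<(\frac C4\e)^b$, while $\w_2(t)^b\ge(\frac C2\e)^b$. Hence
\[
P(\w(t))=\w_1(t)^2-\w_2(t)^b\le -\Big(\big(\tfrac C2\big)^b-\big(\tfrac C4\big)^b\Big)\e^b=-c\,\e^b,
\]
and since $\w_2>0$ here, $\wt P(\w(t))=P(\w(t))$, so $|\wt P(\w(t))|\ge c\e^b$. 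But $(\e,s)\in\I_{\e_0}$ gives $\wt\beta_\w<\e^M$ with $M>b$ (Notation \ref{notation:the}, Corollary \ref{cor:newLS}). Thus $c\e^b\le\wt\beta_\w<\e^M$, which fails for $\e$ small. This is where the restriction $(\e,s)\in\I_{\e_0}$ is needed.

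The main obstacle is item \ref{lem:optimal_item_i}: the vertical-variation argument has to be set up cleanly, including the case $C<2$, where the final point $\e$ may lie above or below $\frac C2\e$. Splitting into these two cases, the excess vertical travel is at least $\frac C2\e$ in both, by accounting for the drop $C\e\to\frac C2\e$. An alternative is to exploit the Liu--Sussmann bound directly via $P$; that route would again require $\I_{\e_0}$, whereas item \ref{lem:optimal_item_i} is stated for all $s>0$.
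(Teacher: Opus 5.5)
Your proposal is correct and follows the paper's argument: in (i) you bound $L(\w)$ from below by the vertical components of chords and compare with $L(\gse)=s+\e+O(\e^{b-1})$, which is uniform in $s$; in (ii) you combine $P=\wt P$ on $\{x_2>0\}$ with $\wt\beta_\w<\e^M$, $M>b$. Two small remarks: your first displayed bound already gives $L(\w)\geq s+\e+C\e$ for every $C>0$, because $|\e-\frac{C}{2}\e|\geq \e-\frac{C}{2}\e$, so the case analysis for $C<2$ is unnecessary; and in (ii), squaring $\w_1(t)<(\frac{C}{4}\e)^q$ requires $\w_1\geq 0$, which Lemma \ref{lem:w1>0} supplies.
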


\begin{proof}
    Let $C>0$ and let $t_0 \in I_\w$ be such that $\w_2(t_0 )\geq C\e$. 
    
    Proof of \ref{lem:optimal_item_i}. Fix $\e_0>0$ such that, according to \eqref{eq:length_abnormal_curve}, we have
    \begin{equation}
        \label{eq:1.1adattata}
        L(\gse)\leq s+\e+\frac C4\e, \qquad \text{for all } 0<\e<\e_0.
    \end{equation}
    For $0<\e<\e_0$ and $s>0$, fix $\w\in\CO$, and assume by contradiction that $\w_2(t)<\frac C2\e$, for some $t\in(t_0,L(\w)]$. Then, recalling that $\w_2(0)=-s$ and $\w_2(L(\w))=\e$, we deduce that
    \begin{equation}
        L(\w)\geq L(\w|_{[0,t_0]})+L(\w|_{[t,L(\w)]}) \geq |\w_2(t_0)-\w_2(0)| + |\w_2(L(\w))-\w_2(t)| > s+\e+ \frac C2\e, 
    \end{equation}
    which is in contradiction with \eqref{eq:1.1adattata}. 
    
    Proof of \ref{lem:optimal_item_ii}. Let $\e_0>0$ be smaller than the one found in (i) and such that 
    \[
    \left(\frac C2\e\right)^b - \e^M \geq \left(\frac C4\e\right)^b, \qquad \text{for all } 0<\e<\e_0,
    \]
    where $M>b$ is fixed in Notation \ref{notation:the}. Take $(\e,s)\in\I_{\e_0}$, and $\w\in\CO$. By Corollary \ref{cor:newLS}, we have $\wt\beta(\w)<\e^M$. Since $\w_2(t)\geq \frac C2\e$ for all $t\geq t_0$ by item \ref{lem:optimal_item_i},  and $P(x)=\widetilde P(x)$ on $\{x_2\geq 0\}$, we deduce that $P(\w(t))=\wt P(\w(t))$ for all $t\geq t_0$. Therefore, we obtain that, for every $t\geq t_0$: 
    \begin{equation}
        \w_1(t)^2 =  \widetilde P(\w(t)) + \w_2(t)^b \geq -\widetilde\beta  +\left(\frac C2\e\right)^b \geq - \e^M + \left(\frac C2\e\right)^b \geq \left(\frac C4\e\right)^b,
    \end{equation}
    which complete the proof. 
\end{proof}

We conclude this section by studying the winding of optimal competitors. Recall that, for a normal competitor $\omega\in\mathcal{C}_{s,\e}$, the sign of $t\mapsto Q(\w(t))$ is constant on $I_i$, for all $i\in \I$. Thus, by \eqref{eq:normalpolar}, the signed curvature of $\w|_{I_i}$ has constant sign on $I_i$.

\begin{lemma}
	\label{lem:fortissimo} 	
    For every $\e,s>0$ and every $\w\in\CO$ we have
    \begin{enumerate}[(i)]
        \item\label{item:lem_fortissimo_i} if $\w|_{I_i}$ admits a first simple loop $\ell$, and $\sigma\ceq  \sgn(\dot\theta|_{(\tau_i,\tau_{i+1})})$, then we have that $\D(\ell)$ is strictly convex, $\sigma$ is the orientation of $\ell$, $\sigma\cdot \angle(\dot\w(s_\ell^+),\dot\w(s_\ell^-))\in (0,\pi)$, and $\int_{J_\ell}|\dot\theta(t)|d t\in (\pi,2\pi)$;
        \item\label{item:lem_fortissimo_ii} if $i\in\I_+$, then $\w|_{I_i}$ admits at most one simple loop. Moreover, there is at most one index $i\in\I_+$ such that $\w|_{I_i}$ admits a loop; 
    \end{enumerate}
    Moreover, for every $K>0$ there is $\e_0>0$ such that for every $\e<\e_0$, $s>0$, and $\w\in\CO$, we have        
    \begin{enumerate}[(i),resume]
        \item\label{item:lem_fortissimo_iii} if $i\in\I_-$, then $\w|_{I_i}$ admits at least one simple loop $\ell$, with associated interval $J_\ell\subset(\tau_i,\tau_{i+1})$. If, in addition, the first simple loop of $\w|_{I_i}$ satisfies $\w_2(s_\ell^-)\geq K\e$, then $\ell$ is the unique loop of $\w|_{I_i}$.
    \end{enumerate}
\end{lemma}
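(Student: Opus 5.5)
The plan is to follow the strategy of the corresponding lemma in \cite{notall}, replacing $P$ by $\wt P$ where the sign of $Q$ along $I_i$ is used. Throughout, on each interval $I_i$ the function $Q\circ\w$ has constant sign and, by Lemma \ref{lem_polarnormal} and Lemma \ref{lem:preliminary}\ref{lem_prelim_item_iii}, the curvature $\dot\0=\la Q(\w)$ of $\w|_{I_i}$ has constant sign $\sigma$. In particular $\w|_{I_i}$ is a locally convex arc.

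\emph{Item (i).} Let $\ell$ be the first simple loop of $\w|_{I_i}$. It is a smooth closed simple curve with one corner at $s_\ell^\pm$, and its curvature has sign $\sigma$ on $J_\ell$. I will apply Lemma \ref{lem:gauss-bonnet_nerfed}\ref{item:gb2}, with $N=1$ and the two exceptional indices taken to be the same corner. This shows that $\sigma$ is the orientation of $\ell$.

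Gauss--Bonnet \eqref{eq:gauss-bonnet-thm} then gives
\begin{equation}
\int_{J_\ell}|\dot\0|\,dt+\sigma\delta=2\pi ,
\end{equation}
with $\sigma\delta\in[-\pi,\pi]$. The cases $\sigma\delta=\pm\pi$ (cusps) and $\sigma\delta=0$ (tangential return) are excluded by uniqueness for the Hamiltonian ODE. In those cases $\w(s^-_\ell)=\w(s^+_\ell)$ with $\dot\w$ parallel, and the planar ODE of Lemma \ref{lem_polarnormal} is time-reversible/autonomous in the state $(\w,\0)$. For $\sigma\delta=0$ this forces periodicity, contradicting injectivity away from finitely many points and the endpoint conditions. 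For a cusp, reversibility forces $\w$ to retrace itself, again a contradiction.

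The remaining possibility is $\sigma\delta<0$, i.e.\ the loop is entered from the inside. In that case a locally convex arc with total turning greater than $2\pi$ must self-intersect earlier, which contradicts the choice of the \emph{first} simple loop. Hence $\sigma\delta\in(0,\pi)$, and Lemma \ref{lem:gauss-bonnet_nerfed}\ref{item:gb1} gives convexity of $\D(\ell)$. Strict convexity follows since $\dot\0$ vanishes only at isolated points (analyticity). Finally, the integral $\int_{J_\ell}|\dot\0|\,dt=2\pi-\sigma\delta$ lies in $(\pi,2\pi)$, and $\sigma\angle(\dot\w(s^+_\ell),\dot\w(s^-_\ell))$ is the corresponding angle.

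\emph{Item (ii).} If $i\in\I_+$, then $Q\ge0$ on $I_i$. Two loops of $\w|_{I_i}$ with disjoint interior intervals are excluded by Lemma \ref{lem:cut_and_paste1}\ref{c_a_p_1_item_vi}. Nested or overlapping loops contain, after restricting, two interior-disjoint loops, or force the total turning to exceed what convexity from item (i) allows: after the first simple loop the arc keeps turning in the same direction inside $\D(\ell)$ and must produce a second disjoint loop. Loops in two different positive intervals are again interior-disjoint with $Q\ge0$, so Lemma \ref{lem:cut_and_paste1}\ref{c_a_p_1_item_vi} applies as well.

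\emph{Item (iii).} For $i\in\I_-$, we have $\wt P<0$ on $(\tau_i,\tau_{i+1})$ and, by Remark \ref{rmk:relative_position_first_times}, both endpoints of $I_i$ lie on $\{P=0,\ x_1>0\}$. If $\w|_{I_i}$ were injective, Lemma \ref{lem:cut_and_paste1}\ref{c_a_p_1_item_iv} would give a contradiction, so a loop exists. Its interval lies in the open interval, since $\w(\tau_i)\neq\w(\tau_{i+1})$ by the finiteness and ordering of intersection points.

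For uniqueness when $\w_2(s^-_\ell)\ge K\e$, Lemma \ref{lem:optimal}\ref{lem:optimal_item_i} keeps all subsequent points at height at least $\tfrac K2\e$. Any second loop of $\w|_{I_i}$, after extraction, has interior disjoint from $J_\ell$ and starting height at least $\tfrac K2\e$. This contradicts Lemma \ref{lem:cut_and_paste2}\ref{c_a_p_2_item_ii}, applied with $K/2$, which fixes $\e_0$.

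\emph{Main obstacle.} The delicate points are the exclusion of degenerate corners in (i) and the combinatorial reduction of nested or overlapping loops to interior-disjoint ones in (ii) and (iii). For the latter I would take the innermost first-return loops and use the convexity from (i).
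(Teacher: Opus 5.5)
Your proposal has a genuine gap in (ii), and it carries over to the uniqueness part of (iii): nested or overlapping loops cannot be reduced to interior-disjoint ones. Suppose $\omega|_{I_i}$ has exactly two double points, $\omega(s_{\ell_1}^-)=\omega(s_{\ell_1}^+)$ and $\omega(s_{\ell_2}^-)=\omega(s_{\ell_2}^+)$, with $s_{\ell_2}^-<s_{\ell_1}^+<s_{\ell_2}^+$. Then every loop of $\omega|_{I_i}$ (a restriction to an interval with equal endpoint values) has interval $J_{\ell_1}$ or $J_{\ell_2}$. These two intervals are not interior-disjoint, so neither Lemma \ref{lem:cut_and_paste1}\ref{c_a_p_1_item_vi} nor Lemma \ref{lem:cut_and_paste2}\ref{c_a_p_2_item_ii} applies.

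Your picture that after $\ell_1$ the arc keeps turning inside $\mathcal D(\ell_1)$ is also wrong. Since $\sigma\angle(\dot\omega(s_{\ell_1}^+),\dot\omega(s_{\ell_1}^-))\in(0,\pi)$, the arc both arrives at and leaves the corner from the exterior of $\mathcal D(\ell_1)$.

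The missing idea is a containment argument closed off by the vertical-ray obstruction. Let $s_{\ell_2}^+$ be the first return time after $s_{\ell_1}^+$.
\begin{itemize}
\item If $s_{\ell_2}^-\le s_{\ell_1}^-$, excise $\ell_1$: the curve $\omega|_{[s_{\ell_2}^-,s_{\ell_1}^-]}*\omega|_{[s_{\ell_1}^+,s_{\ell_2}^+]}$ is simple with two corners. By Lemma \ref{lem:gauss-bonnet_nerfed}\ref{item:gb2} it has orientation $\sigma$. Lemma \ref{lem:enclosed_directions} at the glued corner then gives that $\mathcal D(\ell_1)$ lies inside its interior.
\item If $s_{\ell_2}^-\in(s_{\ell_1}^-,s_{\ell_1}^+)$, then $\omega(t^*)\in\mathcal D(\ell_2)$ for some $t^*\in(s_{\ell_1}^-,s_{\ell_2}^-)$.
\end{itemize}
In both cases Lemma \ref{lem:cut_and_paste1}\ref{c_a_p_1_item_iii}, with $k=\sigma\,\mathrm{sgn}(\lambda)$ (the sign of $P$ on $I_i$), contradicts optimality. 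In (iii), Lemma \ref{lem:cut_and_paste2}\ref{c_a_p_2_item_ii} (with $K/2$) only gives injectivity of $\omega|_{[s_\ell^+,\tau_{i+1}]}$. Intersections of that arc with $\omega|_{[\tau_i,s_\ell^+)}$ must still be excluded by the same case analysis.

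In (i), your handling of $\sigma\delta=0$ (autonomy, hence periodicity) is fine, but the other two exclusions fail.
\begin{itemize}
\item Reversing time in $\dot\theta=\lambda Q(\omega)$ replaces $\lambda$ by $-\lambda$, because reversed traversal flips the signed curvature. So at a cusp, the forward branch from $s_\ell^+$ and the reversed branch from $s_\ell^-$ solve different equations, and nothing forces retracing.
\item A locally convex closed arc with total turning $>2\pi$ need not self-intersect before closing. The cardioid is simple, has positive curvature and one inward cusp, and turns by $3\pi$. So $\sigma\delta<0$ cannot be excluded by looking at $\ell$ alone.
\end{itemize}
The outward cusp $\sigma\delta=\pi$ is ruled out by Lemma \ref{lem:gauss-bonnet_nerfed}\ref{item:gb1}, whose conclusion forces $\sigma\delta<\pi$. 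For $\sigma\delta<0$ (inward cusp included), you must use $\omega$ outside $J_\ell$, as the paper does. In this case $\omega(s_\ell^--r)\in\mathcal D(\ell)$ for small $r>0$.
\begin{itemize}
\item If $\mathrm{spt}(\ell)$ avoids $\{\widetilde P=0\}$: $\mathcal D(\ell)$ lies in the simply connected part of $\{x_1>0\}$ where $\widetilde P$ has the strict sign of $I_i$. Hence $\omega(\tau_i)\notin\mathcal D(\ell)$, so $\omega|_{(\tau_i,s_\ell^-)}$ must cross $\mathrm{spt}(\ell)$, contradicting injectivity of $\omega|_{[\tau_i,s_\ell^+)}$.
\item If the corner lies on $\{P=0\}$: Lemma \ref{lem:enclosed_directions} and the convexity of $\{P=0\}$ yield points of $\ell$ where $P$ has the wrong sign.
\end{itemize}
Only once $\sigma\delta>0$ is established does Lemma \ref{lem:gauss-bonnet_nerfed}\ref{item:gb1} give convexity, $\sigma\delta\in(0,\pi)$, and the total-curvature bound.
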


\begin{proof}
    Proof of \ref{item:lem_fortissimo_i}. \ Observe that, by Lemma \ref{lem_polarnormal}, we must have $\dot\w(s_\ell^+)\neq\dot\w(s_\ell^-)$. Otherwise, by uniqueness of solution for the normal equation, $\w$ must be a closed integral curve, contradicting the end-point conditions. Hence, denoting by $\delta \ceq  \angle(\dot\w(s_\ell^+),\dot\w(s_\ell^-))$, we have $\delta\neq 0$. 
     
     We claim that $\sigma\delta>0$. By contradiction, assume that $\sigma\delta<0$. Then, by the definition of angle and smoothness of $\w$, 
     it follows that there exist $r_1,r_2>0$ such that 
     \begin{equation}
     \label{eq:conditions_on_Dl}
     \begin{split}
        &\w(s_\ell^--r)\in \D(\ell),\qquad\forall r\in(0,r_1)\quad\text{ and }\quad\w(s_\ell^--r_1)\in \partial\D(\ell);\\
        &\w(s_\ell^++r)\in \D(\ell),\qquad\forall r\in(0,r_2)\quad\text{ and }\quad\w(s_\ell^++r_2)\in \partial\D(\ell).
    \end{split}
    \end{equation}
    There are two cases: either $J_\ell\subsetneq I_i$, or $J_\ell=I_i$. In the first case, we have $s_\ell^--r_1>\tau_i$, and thus $\w$ has a loop $\ell'\neq \ell$ with associated interval $J_{\ell'}=[s_{\ell'}^-,s_{\ell'}^+]$, where $s_{\ell'}^-=s_{\ell}^--r_1$ and $s_{\ell'}^+\leq s_\ell^+$. This contradicts the fact that $\ell$ is the first loop of $\w|_{I_i}$. If instead $J_\ell=I_i$, we must have $\w(s_\ell^-)=\w(s_\ell^+)\in\spt(\pr(\gamma_\e))$. Note that Lemma \ref{lem:enclosed_directions} implies that 
    \begin{equation}
    \label{eq:uffa1}
    \spt(\ell)\cap\{\w(s_\ell^-)+\mu v \mid \mu>0\}\neq\emptyset, \qquad \text{for all } v\in \Tan^\s\big(\dot\w(s_\ell^+),\dot\w(s_\ell^-)\big).
    \end{equation}
    Moreover, by $\s\de<0$ and by the convexity of $\{P=0\}$, we see that 
    \begin{equation}
        \label{eq:uffa2}
        \Tan^\s\big(\dot\w(s_\ell^+),\dot\w(s_\ell^-)\big)\cap\{P>0\}\neq\emptyset \qquad \text{and} \qquad \Tan^\s\big(\dot\w(s_\ell^+),\dot\w(s_\ell^-)\big)\cap\{P<0\}\neq\emptyset.
    \end{equation}
    Equations \eqref{eq:uffa1} and \eqref{eq:uffa2} imply that there is $t\in(s_\ell^-,s_\ell^+)$ such that $P(\w(t))<0$, which is in contradiction with $J_\ell=I_i$. \ The claim $\s\de>0$ is proved, i.e., $\s\de\in(0,\pi]$.

    We now apply Lemma \ref{lem:gauss-bonnet_nerfed}\ref{item:gb1} to $\ell$, which ensures that $\sigma\delta\in (0,\pi)$, $\D(\ell)$ is convex and $\sigma$ is the orientation of $\ell$. Note that the signed curvature of $\ell$ is always non-zero, hence $\D(\ell)$ is strictly convex. By the Gauss-Bonnet Theorem, we also conclude that $\int_{J_\ell}\sigma\dot\theta(t)dt=\int_{J_\ell}|\dot\theta(t)|dt \in (\pi,2\pi)$.

    Proof of \ref{item:lem_fortissimo_ii}. \ First of all, by Lemma \ref{lem:cut_and_paste1}\ref{c_a_p_1_item_vi}, there is at most one index $i\in\I_+$ such that $\w|_{I_i}$ admits a loop. Let $\ell_1$ be the first simple loop of $\w|_{I_i}$ and assume by contradiction that $\w|_{I_i}$ has at least two self-intersections. This means that $\w|_{(s_{\ell_1}^+,\tau_{i+1}]}$ is not injective. Then, we define $s_{\ell_2}^+\ceq \min\{t>s_{\ell_1}^+\mid \w(t)\in \spt(\w|_{I_i})\}$ and  $s_{\ell_2}^-\in[\tau_i,s_{\ell_2}^+)$ to be such that $\w(s_{\ell_2}^-)=\w(s_{\ell_2}^+)$. Thus, $\ell_2\ceq \w|_{[s_{\ell_2}^-,s_{\ell_2}^+]}$ is a distinct loop from $\ell_1$ and, in addition, by Lemma \ref{lem:cut_and_paste1}\ref{c_a_p_1_item_vi}, we have $s_{\ell_2}^- < s_{\ell_1}^+$. Thus, we have the two cases: either $s_{\ell_2}^-\in[\tau_i,s_{\ell_1}^-]$ or $s_{\ell_2}^-\in (s_{\ell_1}^-,s_{\ell_1}^+)$.
    
    \noindent\textsc{Case 1: $s_{\ell_2}^-\in[\tau_i,s_{\ell_1}^-]$.} \ We define the curve $\ell:[s_{\ell_2}^-, s_{\ell_1}^- + s_{\ell_2}^+ - s_{\ell_1}^+]\to \R^2$ as
    \begin{equation}
        \ell(t)\ceq 
        \begin{cases}
            \w(t), \quad &t\in[s_{\ell_2}^-, s_{\ell_1}^-],\\
            \w(t+|J_{\ell_1}|), \quad & t\in[s_{\ell_1}^-, s_{\ell_2}^+ - |J_{\ell_1}|].
        \end{cases}
    \end{equation}
     The curve $\ell$ is closed, simple, piecewise smooth, parametrized by arc-length, and has signed curvature of sign $\sigma$ (as for $\ell_1$). On the one hand, since $\ell$ has at most two singularities, by Lemma \ref{lem:gauss-bonnet_nerfed}\ref{item:gb2}, $\sigma$ is the orientation of $\ell$, and $\D(\ell)=\E_\sigma(\ell)$. On the other hand, note that, by construction, we have
     \begin{equation}
         \label{eq:derivate_improbabili}
         \dot\w(s_{\ell_1}^-)=
         \dot\ell\big((s_{\ell_1}^-)^-\big) 
         \quad \text{and} \quad 
         \dot\w(s_{\ell_1}^+)=
         \dot\ell\big((s_{\ell_1}^-)^+\big),
     \end{equation}
    which implies that 
    \begin{equation}
        \label{eq:tangenti_improbabili}
        \Tan^\s(\dot\w(s_{\ell_1}^+),\dot\w(s_{\ell_1}^-)) \subset \Tan^\s\left(\dot\ell\big((s_{\ell_1}^-)^-\big),\dot\ell\big((s_{\ell_1}^-)^+\big)\right).
    \end{equation}
    Since $\spt(\ell)\cap\spt(\ell_1)=\{\w(s_{\ell_1}^\pm)\}$, combining \eqref{eq:tangenti_improbabili} with Lemma \ref{lem:enclosed_directions}, we deduce that $\D(\ell_1)\subset \mc E_\s(\ell)=\D(\ell)$. By Lemma \ref{lem:cut_and_paste1}\ref{c_a_p_1_item_iii} applied to $\ell_1$, with $k=\sigma\sgn(\lambda)$, we contradict the optimality of $\w$.

    \noindent\textsc{Case 2: $s_{\ell_2}^-\in(s_{\ell_1}^-,s_{\ell_1}^+)$.} \ Since $\ell_2$ is a simple, piecewise smooth loop (with one singularity), parame-trized by arc-length and with curvature of constant sign $\sigma$, we can apply Lemma \ref{lem:gauss-bonnet_nerfed}\ref{item:gb2} and deduce that $\sigma$ is the orientation of $\ell_2$, as for $\ell_1$. Since $\w(s_{\ell_1}^{\pm})$ and $\w(s_{\ell_2}^{\pm})$ lie on $\partial\D(\ell_2)$, then there exists a time $s_{\ell_1}^-<t^*<s_{\ell_2}^-$ such that $\w(t^*)\in\D(\ell_2)$. Then, we are again in position to apply Lemma \ref{lem:cut_and_paste1}\ref{c_a_p_1_item_iii} with $k=\sigma\sgn(\lambda)$, contradicting the optimality of $\w$. 

    Proof of \ref{item:lem_fortissimo_iii}. \ By Lemma \ref{lem:cut_and_paste1}\ref{c_a_p_1_item_iv} applied to the times $\tau_i,\tau_{i+1}$ (and noting that $\w_1(\tau_j)>0$, for all $j>0$, so that $\widetilde{P}(\w(\tau_j))=0$ implies $P(\w(\tau_j))=0$), it follows that $\w|_{I_i}$ must have a loop. In addition, if the first simple loop $\ell$ of $\w|_{I_i}$ is such that $\w_2(s_\ell^{-})>K\e$, then, by Lemma \ref{lem:optimal}\ref{lem:optimal_item_i}, it follows
    \begin{equation}
        \w_2(t)\geq \frac{K}{2}\e,\qquad\forall t\in[s_\ell^-,L(\w)].
    \end{equation}
    As a consequence, applying Lemma \ref{lem:cut_and_paste2}\ref{c_a_p_2_item_ii}, it follows that $\w|_{[s_\ell^+,\tau_{i+1}]}$ must be injective. Therefore, we can argue as in item \ref{item:lem_fortissimo_ii}, to conclude that the curves $\w|_{[\tau_i,s_\ell^+)}$ and $\w|_{[s_\ell^+,\tau_{i+1}]}$ cannot intersect, thus proving that $\ell$ is the unique loop of $\w|_{I_i}$.    
\end{proof}

\begin{lemma}\label{lem:sign_of_intervals}
	 For every $\e,s>0$ and $\w\in\CO$, we have:
	\begin{enumerate}[(i)]
		\item\label{item:sign_of_int_i} if $i\in\I_+$ and $i+1\in\I$, then $ i+1\in\I_-$;
		\item\label{item:sign_of_int_ii} if $\w|_{I_i}$ admits a unique loop, then $\lambda\cdot(\w_2(\tau_{i+1})-\w_2(\tau_i))\leq 0$.
	\end{enumerate} 
\end{lemma}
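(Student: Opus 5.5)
Item (i) is a sign-alternation statement, and I would derive it from the fact that the sign of $\widetilde P\circ\w$ on $I_i$ is pinned to the sign of $Q$ there, together with the transversality of $\w$ at the crossing times $\tau_{i+1}$, $i\ge 0$. By Remark \ref{rmk:relative_position_first_times}, for $j\ge1$ we have $\w_1(\tau_j)>0$ and $\w_2(\tau_j)>0$. Hence near $\w(\tau_j)$ the set $\{\widetilde P=0\}$ coincides with the smooth branch $\{x_1=x_2^{q}\}$ of $\{P=0\}$, and $\nabla P\neq0$ there.

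Suppose $i\in\I_+$ and $i+1\in\I$ but $i+1\notin\I_-$, so that $i+1\in\I_+$. Then $P\circ\w\ge0$ on a neighbourhood of $\tau_{i+1}$ and $P(\w(\tau_{i+1}))=0$, which forces $\w$ to be tangent to $\{P=0\}$ at $\tau_{i+1}$. I would rule this out with the same ODE-uniqueness argument as in the proof of Lemma \ref{lem:w1>0}. Alternatively, I would use real-analyticity: the function $P\circ\w$ is real-analytic and has a zero of even order at $\tau_{i+1}$. On both sides of $\tau_{i+1}$ the curvature $\lambda Q$ has the same sign, and I would compare it with the curvature of the convex curve $\{P=0\}$. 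Since $Q=4x_1P$ vanishes to the same order, the curvature of $\w$ tends to $0$ at $\tau_{i+1}$, whereas $\{x_1=x_2^q\}$ has nonzero curvature $q(q-1)x_2^{q-2}/(\cdots)$. So $\w$ lies strictly on one side locally. The correct side is the one where $P\ge 0$ near a tangency point, namely the convex side $\{x_1> x_2^q\}$, and a curve with small curvature can stay there tangentially. This shows that tangency alone gives no contradiction, so the cleaner route is a cut-and-paste argument. If $\w$ touches $\{P=0\}$ tangentially from $\{P>0\}$ at $\tau_{i+1}$, I would check whether a small perturbation pushing the touching arc slightly into $\{P>0\}$ changes the area while shortening the curve; this is a standard local obstruction but it needs care. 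I therefore expect the main obstacle of (i) to be excluding this tangential touching. My first attempt would be the PMP-uniqueness route: a tangential touching point where both $P$ and $Q$ vanish, combined with the Hamiltonian system \eqref{eq:proj_ham_sys}, should force $\w$ to follow an abnormal-type trajectory. Failing that, I would fall back on Lemma \ref{lem:cut_and_paste1}\ref{c_a_p_1_item_vi} or \ref{c_a_p_1_item_iv}-type arguments.

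For (ii), let $\ell$ be the unique loop of $\w|_{I_i}$ and $\sigma=\sgn(\lambda Q)$ on $I_i$. By Lemma \ref{lem:fortissimo}\ref{item:lem_fortissimo_i}, $\ell$ is strictly convex with orientation $\sigma$. I would form the closed curve obtained from $\w|_{[\tau_i,s_\ell^-]}\ast\w|_{[s_\ell^+,\tau_{i+1}]}$ followed by the arc of $\{\widetilde P=0\}$ from $\w(\tau_{i+1})$ back to $\w(\tau_i)$. This curve is simple by uniqueness of the loop and lies in the closure of $\{\sgn Q=\sgn Q|_{I_i}\}$. Its curvature along the $\w$-part has sign $\sigma$, while at the junction $s_\ell^\pm$ and at the two endpoints there are corners. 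I would apply Lemma \ref{lem:gauss-bonnet_nerfed}\ref{item:gb2} (the total curvature of the boundary arc $\{x_1=x_2^q\}$ is small) to get that its orientation is $\sigma$. The orientation then determines whether the boundary arc is traversed upward or downward in $x_2$. Going counterclockwise around a region lying on the side $\sgn Q$ of the curve $x_1=x_2^q$ fixes the direction, and I would combine this with $\sgn(Q|_{I_i})$ to conclude $\sgn(\w_2(\tau_{i+1})-\w_2(\tau_i))=-\sgn\lambda$. The bookkeeping of this sign matching is routine once the orientation is established.
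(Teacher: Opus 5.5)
Both items have genuine gaps.

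\textbf{Item (i).} You had the decisive computation in hand and then drew the wrong conclusion from it.

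At $\tau_{i+1}$ the curve is tangent to $\{P=0\}$ and $\dot\theta(\tau_{i+1})=\lambda Q(\omega(\tau_{i+1}))=0$. Hence $\omega(t)=\omega(\tau_{i+1})+(t-\tau_{i+1})\dot\omega(\tau_{i+1})+O(|t-\tau_{i+1}|^3)$, i.e.\ to second order $\omega$ is its tangent line.

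The set $\{\widetilde P\geq 0,\ x_1\geq 0\}$ is convex, and its boundary $\{x_1=x_2^q\}$ has nonzero curvature at $\omega(\tau_{i+1})$. So this tangent line is a supporting line, lying in $\{P<0\}$ away from the contact point. A curve can stay tangentially inside a strictly convex set only if it bends at least as much as the boundary. Your claim that ``a curve with small curvature can stay there tangentially'' is therefore false: tangency plus zero curvature is exactly the contradiction.

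Quantitatively, for $v$ tangent to $\{x_1=x_2^q\}$ at a point with $x_2>0$ one gets $v^{T}\,\mathrm{Hess}P\,v=-\frac{b(b-2)}{2}x_2^{b-2}v_2^2<0$. Hence $P(\omega(t))<0$ for $t\neq\tau_{i+1}$ close to $\tau_{i+1}$, contradicting $i,i+1\in\mathcal I_+$. This is the paper's argument.

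The routes you fall back on do not work:
\begin{itemize}
\item The uniqueness argument of Lemma~\ref{lem:w1>0} uses that $\{x_1=0\}$ is a straight line contained in $\{Q=0\}$, hence itself a normal trajectory. By contrast, $\{P=0\}\cap\{x_1>0\}$ is curved and contains no arc of a normal trajectory (the abnormal curve is strictly abnormal).
\item Lemma~\ref{lem:cut_and_paste1}\ref{c_a_p_1_item_iv} concerns a negative arc between two zeros, and \ref{c_a_p_1_item_vi} concerns a pair of loops. Neither situation is present.
\end{itemize}

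\textbf{Item (ii).} The sign bookkeeping you call routine goes the other way.

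For $i\in\mathcal I_+$ the region $\mathcal D(\eta)$ lies in $\{\widetilde P>0\}$. Traversing the arc of $\{\widetilde P=0\}$ upward leaves $\{\widetilde P>0\}$ on the right. Hence $\eta$ is negatively oriented exactly when the closing arc runs upward from $\omega(\tau_{i+1})$ to $\omega(\tau_i)$; that is, the orientation of $\eta$ is $\sgn(\omega_2(\tau_{i+1})-\omega_2(\tau_i))$. Since $\sigma=\sgn\lambda$ here, orientation $\sigma$ gives $\lambda(\omega_2(\tau_{i+1})-\omega_2(\tau_i))>0$, the opposite of (ii). For $i\in\mathcal I_-$ both $\sigma$ and the side of the region flip, with the same outcome.

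In the configuration asserted by (ii), $\eta$ has orientation $-\sigma$. There, when the crossings are transversal, the corners at $\omega(\tau_i)$ and $\omega(\tau_{i+1})$ turn against $\sigma$, just like the one at $\omega(s_\ell^\pm)$. So Lemma~\ref{lem:gauss-bonnet_nerfed}\ref{item:gb2} cannot be invoked with your $\sigma$.

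Both configurations are compatible with all the curvature-sign and Gauss--Bonnet information you use. So (ii) needs optimality, which your plan never brings in.

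The paper argues by contradiction, assuming $\lambda(\omega_2(\tau_{i+1})-\omega_2(\tau_i))>0$:
\begin{enumerate}
\item It shows that $\eta$ has orientation $\sigma$.
\begin{itemize}
\item When $i\in\mathcal I_+$, this follows directly from Lemma~\ref{lem:gauss-bonnet_nerfed}\ref{item:gb2}, since the closing arc then has curvature of sign $\sigma$ and the two extra corners satisfy $\sigma\delta\in[0,\pi]$.
\item When $i\in\mathcal I_-$, it compares $\eta$ with the homotopic curve closed by the segment $[\omega(\tau_{i+1}),\omega(\tau_i)]$ and applies Gauss--Bonnet.
\end{itemize}
\item It deduces $\mathcal E(\ell)\subset\mathcal E(\eta)$ from Lemma~\ref{lem:enclosed_directions} and Lemma~\ref{lem:fortissimo}\ref{item:lem_fortissimo_i}.
\item It then contradicts Lemma~\ref{lem:cut_and_paste1}\ref{c_a_p_1_item_iii}.
\end{enumerate}
That cut-and-paste obstruction is the missing ingredient.
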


\begin{proof}
    Proof of \ref{item:sign_of_int_i}. By contradiction, suppose that $i,i+1\in\I_+$. Hence, the curve $\w$ must remain inside the convex set $\{\widetilde P\geq 0\}$, in a sufficiently small neighborhood of $\tau_{i+1}$ inside $I_\w$, and must be tangent to $\{P=0\}$ at $t=\tau_{i+1}$ (note that $i,i+1\in\I$ implies $\tau_{i+1}\in (0,L(\w))$ and $\w_2(\tau_{i+1})>0$, cf.\ Remark \ref{rmk:relative_position_first_times}). Thanks to Lemma \ref{lem:w1>0}, we know that $\w(\tau_{i+1})\in\{P=0,x_1>0\}$ and, by \eqref{eq:normalpolar}, the curvature of $\w$ is zero at $t=\tau_{i+1}$. However, an explicit computation shows that a curve tangent to $\{P=0\}$ with zero curvature at the tangency point must cross it. This gives a contradiction.
    
    %%%%%%%%%%%%%%%%%%%%%%%%%%%%%%%%%%%%%%
    %%%%%% The explicit computation %%%%%%
    %%%%%%%%%%%%%%%%%%%%%%%%%%%%%%%%%%%%%%
    % Indeed, recall that $P(\g_\e)\equiv 0$ and that, up to reparametrization, $\w(t_0)=\g(t_0)$, $\dot\w(t_0)=\dot\g(t_0)$ and $\ddot\w(t_0)=0$ for some $t_0$. Then, we have
    % \begin{equation}
    %     P(\w(t)) = P(\w(t))-P(\g(t)) = -\frac12 \ddot\g(t_0)^T {\rm Hess} P(\g(t_0))\ddot\g(t_0) (t-t_0)^2 +o(t-t_0)^2,
    % \end{equation}
    % as $t\to t_0$. Computing explicitly the Hessian of $P$, and using the expression of the curve $\g$ we see that the term 
    % \begin{equation}
    %     \ddot\g(t_0)^T {\rm Hess} P(\g(t_0))\ddot\g(t_0) > 0,
    % \end{equation}
    % which implies that, as $t\to t_0$, $P(\w(t))<0$, hence $\w$ has crossed the curve $\{P=0\}$.
    
    Proof of \ref{item:sign_of_int_ii}. \ Let $\ell$ be the unique loop of $\w|_{I_i}$, with corresponding interval $J_\ell=[s_\ell^-,s_\ell^+]$, and let $\s\ceq \sgn(\dot\theta|_{(\tau_i,\tau_{i+1})})$. Assume by contradiction that $\lambda\cdot(\w_2(\tau_{i+1})-\w_2(\tau_i))>0$. Then, there are four cases, according to the sign of $\lambda$ and the sign of $i$. 
    
    \noindent \textsc{Case 1: $\lambda<0$, $i\in \I_+$}. By Lemma \ref{lem_polarnormal}, we have $\s=-1$, i.e., $\w|_{I_i}$ has non-positive curvature. Additionally, $\lambda\cdot(\w_2(\tau_{i+1})-\w_2(\tau_i))>0$ implies $\w_2(\tau_{i+1})<\w_2(\tau_i)$. Consider the curve
    \begin{equation}
    \label{eq:def_eta}
        \eta\ceq \w|_{[\tau_i,s_\ell^-]}\ast \w|_{[s_\ell^+,\tau_{i+1}]}\ast \pr(\g)|_{[\w_{2}(\tau_{i+1}),\w_{2}(\tau_i)]},
    \end{equation}
    Note that $\pr(\g)$ has non-positive signed curvature, as $\w|_{I_i}$. Finally, the angles at the singularities of $\eta$ in $\w(\tau_i)$ and $\w(\tau_{i+1})$ lie in $[-\pi,0]$ by construction. Thus, we can apply Lemma \ref{lem:gauss-bonnet_nerfed}\ref{item:gb2}, with $\sigma=-1$, to deduce that $\D(\eta)=\E_{-1}(\eta)=\E(\eta)$. By Lemmas \ref{lem:enclosed_directions} and \ref{lem:fortissimo}\ref{item:lem_fortissimo_i}, we infer that $\E(\ell)\subset\E(\eta)$. This is in contradiction with Lemma \ref{lem:cut_and_paste1}\ref{c_a_p_1_item_iii}. The case $\lambda>0$ and $i\in \I_+$ is analogous.

    \noindent \textsc{Case 2:} $\lambda<0$, $i\in \I_-$. Note that, once again, $\w_2(\tau_{i+1})<\w_2(\tau_i)$. Moreover, it holds $\s=1$, i.e., the curvature of $\w|_{I_i}$ is non-negative, while the curvature of $\pr(\g)$ is non-positive. Let $\eta$ be the curve defined in \eqref{eq:def_eta} and denote by  $\delta_0\ceq \angle(\dot\w(s_\ell^-),\dot\w(s_\ell^+))$, and by $\delta_1, \delta_2$ the angles at the singularities of $\eta$ in $\w(\tau_i)$ and $\w(\tau_{i+1})$, respectively. By Lemma \ref{lem:fortissimo}\ref{item:lem_fortissimo_i}, $\delta_0\in (-\pi,0)$, while, by construction, $\delta_1,\delta_2\in [0,\pi]$. We claim that $\sigma=1$ is the orientation of $\eta$. We proceed by contradiction, assuming that $\eta$ is negatively oriented. Define the curve
    \begin{equation*}
        \tilde\eta \ceq \w|_{[\tau_i,s_\ell^-]}\ast \w|_{[s_\ell^+,\tau_{i+1}]}\ast [\w(\tau_{i+1}),\w(\tau_i)].
    \end{equation*}
    Observe that $\tilde \eta$ is obtained from $\eta$ by replacing the arc $\pr(\g)|_{[\w_2(\tau_{i+1}),\w_2(\tau_i)]}$ with the segment joining $\w(\tau_{i+1})$ with $\w(\tau_{i})$. In particular, $\tilde\eta$ is homotopic to $\eta$, it has non-negative curvature and it has three singularities at $\w(s_\ell^-)$, $\w(\tau_i)$, and $\w(\tau_{i+1})$ with angles, respectively, $\tilde\delta_0$, $\tilde\delta_1$, and $\tilde\delta_2$. By construction $\tilde\de_0=\de_0$, while the convexity of $\{P=0,x_1\geq0\}$ implies $ \tilde\delta_j\geq \delta_j-\pi/2$, for $j=1,2$. Since $\tilde\eta$ is homotopic to $\eta$, it is negatively oriented as well. Therefore, by Gauss-Bonnet Theorem (cf.\ Remark \ref{rmk:gauss-bonnet}), $\delta_0\in(-\pi,0)$, and $\de_j\in[0,\pi]$ for $j=1,2$, we deduce that
    \begin{equation*}
        2\pi = \int_{\tilde \eta} \kappa_{\tilde\eta} - \sum_{j=0}^2\tilde\delta_j \leq -\sum_{j=0}^2\delta_j +\pi < 2\pi,
    \end{equation*}
    which is a contradiction. Therefore, we deduce that $\eta$ is positively oriented. The conclusion of the proof now follows the same argument as above. The case $\lambda<0$ and $i\in\I_-$ is analogous.
\end{proof}

\begin{lemma}
	\label{lem:1stloop_is_+_or_-}
	There exists $\e_0>0$ such that for every $0<\e<\e_0$, $s>0$, and $\w\in\CO$, the first simple loop $\ell$ of $\w$ satisfies either $J_\ell\subset I_0$ or $J_\ell\subset I_1$. 
\end{lemma}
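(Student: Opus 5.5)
The plan is to first locate the first self-intersection time in terms of the partition $0=\tau_0<\tau_1<\dots<\tau_N$, and then rule out a first simple loop that straddles a crossing time $\tau_j$.

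\emph{Step 1 (where the loop closes).} By Lemma~\ref{lem:preliminary}\ref{lem_prelim_item_ii}, $\omega$ is not injective, so the first simple loop $\ell$, with $J_\ell=[s_\ell^-,s_\ell^+]$, exists; here $s_\ell^+$ is the first time at which $\omega$ revisits its past. By Lemma~\ref{lem:w1>0} we have $0\in\I_+$.
\begin{itemize}
\item If $N=1$, then $I_0=I_\omega$ and trivially $J_\ell\subset I_0$.
\item If $N\ge 2$, Lemma~\ref{lem:sign_of_intervals}\ref{item:sign_of_int_i} gives $1\in\I_-$. Lemma~\ref{lem:fortissimo}\ref{item:lem_fortissimo_iii} (with some fixed $K$, which fixes $\e_0$) then shows that $\omega|_{I_1}$ has a loop with interval contained in $(\tau_1,\tau_2)$. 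Hence $\omega|_{[0,\tau_2]}$ is not injective, and so $s_\ell^+<\tau_2$.
\end{itemize}
Consequently $J_\ell\subset[0,\tau_2]$. If $s_\ell^+\le\tau_1$ we get $J_\ell\subset I_0$. If $s_\ell^-\ge\tau_1$ we get $J_\ell\subset I_1$.

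\emph{Step 2 (the remaining case).} It remains to exclude $s_\ell^-<\tau_1<s_\ell^+<\tau_2$. In this case $\ell$ is a closed simple curve made of two smooth arcs: $a_+\ceq\omega|_{[s_\ell^-,\tau_1]}$, lying in $\{\wt P\ge0\}$, and $a_-\ceq\omega|_{[\tau_1,s_\ell^+]}$, lying in $\{P\le 0\}$. These arcs meet transversally at the point $\omega(\tau_1)\in\{P=0,x_1>0\}$; transversality follows from the argument of Lemma~\ref{lem:sign_of_intervals}\ref{item:sign_of_int_i}, since a tangency with zero curvature would force a crossing. By Lemma~\ref{lem_polarnormal} the curvature $\lambda Q(\omega)$ has constant sign $\sigma$ on $a_+$ and $-\sigma$ on $a_-$. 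So $\ell$ is piecewise smooth with two singular points, at $\omega(s_\ell^\pm)$ and at $\omega(\tau_1)$.

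\emph{Step 3 (deriving a contradiction).} I would first try Gauss--Bonnet (Remark~\ref{rmk:gauss-bonnet}) combined with the convexity of $\{P\ge0,x_1\ge0\}$. Since $a_+$ and $a_-$ lie on opposite sides of the convex curve $\{P=0\}$ and bend in opposite directions, the total turning of $\ell$ should be bounded strictly below $2\pi$ by the following bounds:
\begin{itemize}
\item the turning of each arc, which is controlled because each arc stays on one side of a convex curve between two of its points;
\item the two exterior angles, each in $(-\pi,\pi)$.
\end{itemize}
This would mirror the computation in Case~2 of Lemma~\ref{lem:sign_of_intervals}\ref{item:sign_of_int_ii}: replace the arc through $\{P=0\}$ by a chord and compare angles to get $2\pi<2\pi$.

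If this turns out too crude, the fallback is a cut-and-paste argument. The arc $a_-$ starts on $\{P=0\}$, and I would follow $\omega$ forward to its next crossing $\tau_2$. Using the loop inside $I_1$ from Step~1, together with Lemma~\ref{lem:enclosed_directions}, I would produce a point of $\omega$ outside ${\rm int}(J_{\ell'})$ lying vertically above or below a simple loop $\ell'$ of constant sign, and then contradict optimality via Lemma~\ref{lem:cut_and_paste1}\ref{c_a_p_1_item_iii}. Alternatively, Lemma~\ref{lem:cut_and_paste1}\ref{c_a_p_1_item_iv} could be applied to an injective negative sub-arc between two zeros of $P$.

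I expect Step~3 to be the main obstacle. Steps~1 and~2 are direct bookkeeping from Lemmas~\ref{lem:w1>0}, \ref{lem:sign_of_intervals} and~\ref{lem:fortissimo}, whereas the straddling configuration requires a genuinely geometric argument: either a careful angle count exploiting the convexity of $\{P=0\}$, or the correct choice of auxiliary loop for the cut-and-paste obstruction.
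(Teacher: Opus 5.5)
\textbf{Gap: Step~3 is left open.} Your Step~1 matches the paper's argument. The paper splits on whether $\w|_{I_0}$ is injective rather than on $N$, but the chain is the same: $0\in\I_+$, so $1\in\I_-$ by Lemma~\ref{lem:sign_of_intervals}\ref{item:sign_of_int_i}, so there is a loop inside $(\tau_1,\tau_2)$ by Lemma~\ref{lem:fortissimo}\ref{item:lem_fortissimo_iii}, hence $s_\ell^+<\tau_2$. The problem is Step~3. You leave the straddling configuration $s_\ell^-<\tau_1<s_\ell^+<\tau_2$ as the ``main obstacle'' and only sketch two candidate strategies, neither of which is carried out. As written, the proof is incomplete.

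\textbf{The missing idea.} The straddling case is empty, which makes Steps~2--3 unnecessary. Since $\w(s_\ell^-)=\w(s_\ell^+)$, you have $\wt P(\w(s_\ell^-))=\wt P(\w(s_\ell^+))$. Now use the signs of $\wt P\circ\w$ on the two intervals:
\begin{itemize}
\item $\tau_1$ is the first zero of $\wt P\circ\w$ after $0$ and $0\in\I_+$, so $\wt P\circ\w>0$ on $(0,\tau_1)$;
\item $1\in\I_-$ and there are no zeros in $(\tau_1,\tau_2)$, so $\wt P\circ\w<0$ there.
\end{itemize}
Hence $s_\ell^-\in(0,\tau_1)$ is incompatible with $s_\ell^+\in(\tau_1,\tau_2)$. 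The remaining option $s_\ell^-=0$ is excluded by Lemma~\ref{lem:w1>0}, since $\w_1(s_\ell^+)>0=\w_1(0)$.

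Your own Step~2 picture already contains this contradiction. The arcs $a_+\subset\{\wt P\ge0\}$ and $a_-\subset\{\wt P\le0\}$ are glued not only at $\w(\tau_1)$ but also at $\w(s_\ell^\pm)$. That point would then lie on $\{\wt P=0\}$, forcing $s_\ell^\pm\in\{\tau_i\}$. The separation of $\w|_{I_0}$ and $\w|_{I_1}$ into the disjoint open sets $\{\wt P>0\}$ and $\{\wt P<0\}$ is what the paper uses implicitly when it says the first loop of $\w|_{I_1}$ is the first simple loop of $\w$, ``being $\w|_{I_0}$ injective''. No Gauss--Bonnet angle count or cut-and-paste argument is needed.
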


\begin{proof}
    Recall that $\w$ is not injective by Lemma \ref{lem:preliminary}\ref{lem_prelim_item_ii}. Then, if $\w|_{I_0}$ is not injective, the first simple loop of $\w$ is contained in $I_0$. If, on the contrary, $\w|_{I_0}$ is injective, since $0\in \I_+$ we must have that $1\in \I_-$ by Lemma \ref{lem:sign_of_intervals}\ref{item:sign_of_int_i}. By Lemma \ref{lem:fortissimo}\ref{item:lem_fortissimo_iii}, we deduce that $\w|_{I_1}$ admits a first simple loop, which is in fact the first simple loop of $\w$, being $\w|_{I_0}$ injective.  
\end{proof}

\section{Uniqueness of the loop}
\label{sec:looooooop}

In this section, we study the behavior of the first simple loop of optimal competitors. In particular, we prove that the first loop of an optimal competitor must be its unique loop, see Corollary \ref{cor:uniqueness_of_massa_del_sole}. This result is a consequence of the following proposition.

\begin{proposition}
	\label{prop:loops>eps/2}
	There exist $C,\e_0>0$ such that, for every $(\e,s)\in \mc I_{\e_0}$ and every $\w\in\CO$, the first simple loop $\ell$ of $\w$ satisfies
	\begin{align}
		\label{eq:loops>eps/2} 
		\w_1(t)\geq C\e^q \qquad \text{and} \qquad \w_2(t)\geq C \e, \qquad \text{for all } t\in J_\ell.
	\end{align}
\end{proposition}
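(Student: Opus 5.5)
We prove that the first simple loop must lie in the region where $\w_2\gtrsim\e$; the bound on $\w_1$ then comes for free. Indeed, once we know $\w_2(s_\ell^-)\geq C'\e$ for some $C'>0$, Lemma \ref{lem:optimal}\ref{lem:optimal_item_i} gives $\w_2\geq \frac{C'}{2}\e$ on $[s_\ell^-,L(\w)]\supset J_\ell$. Lemma \ref{lem:optimal}\ref{lem:optimal_item_ii} then gives $\w_1\geq (\frac{C'}{4}\e)^q$ there. So the whole task is to bound $\w_2(s_\ell^-)$ from below by a multiple of $\e$, uniformly over $(\e,s)\in\mc I_{\e_0}$ and $\w\in\CO$.

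By Lemma \ref{lem:1stloop_is_+_or_-}, the loop lies in $I_0$ (positive) or in $I_1$ (negative, with $I_0$ injective). We argue by contradiction, assuming $\w_2(s_\ell^-)<c\e$ for a small $c$ to be chosen. The mechanism is a size/curvature incompatibility.

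First, by Lemma \ref{lem:fortissimo}\ref{item:lem_fortissimo_i}, $\D(\ell)$ is strictly convex and $\int_{J_\ell}|\dot\0|\in(\pi,2\pi)$. Lemma \ref{lem:convexcurvature} then yields $t_*\in J_\ell$ with
\[
|\la_\w|\,|Q(\w(t_*))|\geq \pi/L(\ell).
\]
Second, Corollary \ref{cor:rough_bound_loops} and Corollary \ref{cor:newLS} give $L(\ell)\leq K\wt\beta^{1-1/b}<K\e^{M(1-1/b)}$, which is tiny. Hence the loop stays within distance $\e^{M(1-1/b)}$ of $\w(s_\ell^-)$. Third, we bound $|Q|=4|x_1||P|$ on the loop. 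For $x_2\geq0$ we use $|P|\leq\wt\beta$. Near $x_2<0$ we use $P=x_1^2-x_2^b$ with $|x_2|\lesssim\e$ and $x_1^2\leq\wt\beta$. Together with Lemma \ref{lem:preliminary}\ref{lem_prelim_item_i} and \ref{lem_prelim_item_i_bis}, this gives the following. If the loop sits where $\w_2\lesssim c\e$, then $\w_1\lesssim \max(\wt\beta^{1/2},(c\e)^{q})$ and $|Q|\lesssim (c\e)^{q}\,\max(\wt\beta,(c\e)^b)$. Combining,
\[
|\la_\w|\gtrsim \frac{1}{L(\ell)\,\sup_{\ell}|Q|}.
\]
That is, $|\la_\w|$ is forced to be enormous, and more precisely much larger than what the curvature elsewhere allows.

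Next we exploit the rest of the curve to get the opposite inequality. The portion of $\w$ after the loop must reach $(\e^q,\e)$ with turning angle bounded. Consider the region $\w_2\in[\e/2,\e]$, which $\w$ must cross, and where $\w_1\sim\e^q$ by Lemma \ref{lem:optimal}\ref{lem:optimal_item_ii}. There the quantity $|Q|$ is comparable to $\e^q\,|P|$, and $|P|$ cannot be uniformly much smaller than on the loop. If it were, the cut-and-paste obstruction Lemma \ref{lem:cut_and_paste1}\ref{c_a_p_1_item_v} (or Lemma \ref{lem:cut_and_paste2}\ref{c_a_p_2_item_i} when $Q<0$) applies. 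It says a point $t^*$ outside the loop with $|Q(\w(t^*))|\geq\max_{J_\ell}|Q|$ contradicts optimality. So we aim for the following: if the loop is at height $\leq c\e$, then some point of $\w$ near the endpoint (height $\geq\e/2$) has $|Q|\geq\max_{J_\ell}|Q|$ with the correct sign. This would give a contradiction for $c$ small.

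To produce such $t^*$, we use that $\w$ must end exactly on $\{P=0\}$ at $(\e^q,\e)$ while having huge $|\la_\w|$. Large $|\la_\w|$ with only moderate total turning forces $|Q|$ along the final stretch to be at most about $1/(|\la_\w|\e)$. This stretch must still deviate from the Martinet curve enough to close the area constraint of Remark \ref{rmk:stoked}. Comparing these two requirements is exactly the delicate step, and I expect it to be the main obstacle. In the negative case $I_1$, the sign of $Q$ must be handled via Lemma \ref{lem:sign_of_intervals}, and the $x_2<0$ part of $\wt P$ (where $P\neq\wt P$) must be controlled. I would first try a Rad\'o-type area bound \eqref{eq:rado} on the loop versus the area swept near the endpoint. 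If that fails, I would instead use a direct comparison curve that shortcuts the loop and re-creates its weighted area near height $\e$, via Lemma \ref{lem:cut_and_paste1}\ref{c_a_p_1_item_i}.
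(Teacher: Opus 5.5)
Your reduction to a lower bound on $\omega_2(s_\ell^-)$ (followed by Lemma \ref{lem:optimal}) is fine, and so is the bound $|\lambda|\gtrsim 1/(L(\ell)\max_{J_\ell}|Q\circ\omega|)$ from Lemma \ref{lem:convexcurvature}. But the argument stops exactly where the work lies: you never obtain an \emph{upper} bound on $|\lambda|$, and the mechanism you propose for the contradiction is not available. Nothing forces $|P\circ\omega|$ to be large near $(\varepsilon^q,\varepsilon)$. The weighted-area balance of Remark \ref{rmk:stoked} involves the loop and the deviations of $\omega$ from $\{P=0\}$ anywhere along the curve, so it does not localize anything at the endpoint. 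Your own heuristic in fact points the wrong way. Suppose bounded turning on the final stretch gives $|Q|\lesssim 1/(|\lambda|\varepsilon)$ there. Since $\max_{J_\ell}|Q|\geq \pi/(|\lambda|L(\ell))$ with $L(\ell)\ll\varepsilon$, this makes $|Q|$ on the final stretch \emph{much smaller} than on the loop. That is the opposite of what Lemma \ref{lem:cut_and_paste1}\ref{c_a_p_1_item_v} or Lemma \ref{lem:cut_and_paste2}\ref{c_a_p_2_item_i} require, so no point $t^*$ of the kind you want can be produced this way.

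What is missing are the two ingredients the paper builds in Section \ref{sec:proof_prop_61}.

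\textbf{(a)} You need an upper bound on $|\lambda|$ coming from the loop itself, namely $|\lambda|\beta_\ell^2\le C$ (Lemma \ref{lem:new_lb2_smaller1}). On an interval of length $c\beta_\ell/x_\ell$ ending at $t_\ell$ one has $|P\circ\omega|\geq\beta_\ell/2$ and $\omega_1\gtrsim x_\ell$, while the total curvature there is at most $4\pi$. This requires $x_\ell$ to be comparable to $\max\{|y_\ell|^q,\sqrt{\beta_\ell}\}$, i.e.\ Lemma \ref{lem:svolta}. The proof of that lemma is also where the case $y_\ell<0$, which you leave open, is genuinely dealt with. Together with $\int_{J_\ell}|\dot\theta|>\pi$, this gives $L(\ell)\gtrsim\beta_\ell/x_\ell$ (Proposition \ref{prop:L(1stloop)>dist2}).

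\textbf{(b)} The endpoint enters through a comparison with a \emph{planar region}, not through values of $Q$ along $\omega$. Take the boundary of the square $[\varepsilon^q,\varepsilon^q+\alpha]\times[\varepsilon-\alpha,\varepsilon]$ with $\alpha=L(\ell)/8$. It touches $\omega$ at its endpoint, is shorter than $\ell$, and encloses weighted area at least $2\alpha^3\varepsilon^b$ no matter how closely $\omega$ follows $\{P=0\}$. Lemma \ref{lem:cut_and_paste1}\ref{c_a_p_1_item_i} and the Rad\'o bound \eqref{eq:uppbound_A(l)} then give $L(\ell)\lesssim x_\ell\beta_\ell\varepsilon^{-b}$ (Lemma \ref{lem:newbound_L(loop)}).

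Combining (a) and (b) yields $x_\ell^2\gtrsim\varepsilon^b$. Hence $y_\ell^b\geq x_\ell^2-\beta_\ell\gtrsim\varepsilon^b$, since $\beta_\ell<\varepsilon^M$ and $M>b$; the paper packages this step as Lemma \ref{lem:geometric_w2>e2}. Your second fallback is in the spirit of (b), but (b) alone cannot conclude. A very short loop near the origin with very large $|\lambda|$ satisfies $L(\ell)\lesssim x_\ell\beta_\ell\varepsilon^{-b}$ without any contradiction, and only the upper bound in (a) excludes it.
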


The proof of Proposition \ref{prop:loops>eps/2} is given in the next section. More in details, Proposition \ref{prop:L(1stloop)>dist2} provides a lower bound on the length of the first simple loop of an optimal competitor. This bound implies $\w_2(t_\ell)\geq C\e$, with $C>0$ independent of $\e$ and $s$, cf.\ Lemma \ref{lem:geometric_w2>e2}. Then, one concludes combining Lemma \ref{lem:optimal} and the upper bound on the length of the loop given by Corollary \ref{cor:rough_bound_loops}.

\subsection{Proof of Proposition \ref{prop:loops>eps/2}}

\label{sec:proof_prop_61}

We start with the following definition. 

\begin{definition}
	\label{def:beta_loop}
    For $\e,s>0$ and $\w\in\CC$, let $\ll$ be a loop of $\w$. We set
	\begin{equation}
		\label{eq:beta_loop}
		\beta_\ll\ceq \max_{t\in J_\ell}|P(\w(t))|
		\qquad \text{and}\qquad 
		t_\ll\in\arg\max_{t\in J_\ell}|P(\w(t))|.
	\end{equation}
	Furthermore, define $(x_\ll,y_\ll)\ceq (\w_1(t_\ll),\w_2(t_\ll))$ and $\de_\ell\ceq \beta_\ell \min\{x_\ell^{-1},|y_\ell|^{-q}\}$.    
\end{definition}
 
\begin{remark}
    Before proving some preliminary results, let us report here a general upper bound for the weighted area enclosed by a loop, which follows from \eqref{eq:rado}. If $\ell$ is a loop of an optimal competitor $\w\in\CO$, for some $\e,s>0$, then we have 
\begin{align}
		\label{eq:uppbound_A(l)}
		|A(\ll)| \leq \pi^{-1}
		\beta_\ll L(\ll)^2
		\sup_{x\in \E(\ll)}x_1=	\pi^{-1}
		\beta_\ll L(\ll)^2\max_{t\in J_\ell}\w_1(t)\leq \pi^{-1}
		\beta_\ll L(\ll)^2(x_\ll+L(\ll)),
	\end{align}
where the last inequality follows from the fact that $|\dot \omega_1|\leq 1$.
\end{remark}

The next two lemmas contain some technical estimates involving the length of the first simple loop $\ell$, and the quantities defined in Definition \ref{def:beta_loop}.

\begin{lemma}
    \label{lem:newbound_L(loop)}
    There exists $\e_0>0$ such that, for all $(\e,s)\in \mc I_{\e_0}$ and all $\w\in\CO$, any loop $\ell$ of $\w$ satisfies
    \begin{equation}
        L(\ell)<2^{9}\pi^{-1}x_\ell\beta_\ell\e^{-b}\qquad \text{and}\qquad L(\ell)< x_\ell.
    \end{equation}
\end{lemma}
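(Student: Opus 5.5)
The plan is to compare the loop $\ell$ with a small circle placed at the final endpoint $\pi(\g(\e))=(\e^q,\e)$ of $\w$. The comparison is Lemma \ref{lem:cut_and_paste1}\ref{c_a_p_1_item_i}: for $\w\in\CO$ there is no closed Lipschitz curve $\eta$ meeting $\overline{\spt(\w)\setminus\spt(\ell)}$ with $L(\eta)\le L(\ell)$ and $|A(\eta)|\ge|A(\ell)|$. So it suffices to produce such an $\eta$ whenever either inequality fails. Both inequalities will come from one pair of estimates: an upper bound on $|A(\ell)|$ from \eqref{eq:uppbound_A(l)}, and a lower bound, of order $L^3\e^b$, on the weighted area of a circle of length $L$ attached at $(\e^q,\e)$.

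\textbf{Step 1 (test curve and its area).} The endpoint $\w(L(\w))=(\e^q,\e)$ does not lie in $\spt(\ell)$. Otherwise a loop would pass through the terminal point, which is excluded by the end-point structure; if needed, I would exclude it using that $\w$ is real-analytic and normal. For $0<L\ll\e$, let $\eta_L$ be the circle of length $L$ through $(\e^q,\e)$ whose disk lies in the direction $\nabla P(\e^q,\e)=(2\e^q,-b\e^{b-1})$. This direction points essentially along $x_1$, so the disk lies in $\{P>0\}$ with $x_1\ge\e^q$. Orient $\eta_L$ so that the sign of $A(\eta_L)$ agrees with the sign of $A(\ell)$; only $|A|$ enters the lemma. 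On this disk, $Q=4x_1P\ge 4\e^q P$, and $P$ grows linearly away from the curve $\{P=0\}$ with slope $|\nabla P|\approx 2\e^q$. Integrating over the disk, whose radius is $L/2\pi$, gives
\begin{equation}
|A(\eta_L)|=\int_{\D(\eta_L)}Q\,dx\ \ge\ c_0\,\e^{2q}L^3=c_0\,\e^{b}L^3 ,
\end{equation}
with an explicit absolute constant $c_0$. The Taylor remainders are harmless because $L\le K\wt\beta^{1-1/b}<\e^{M(1-1/b)}\ll\e^{q}$, by Corollary \ref{cor:rough_bound_loops} and Notation \ref{notation:the}. I will keep track of constants so that $c_0\ge 2^{-8}$, which is what produces the $2^9$ in the statement.

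\textbf{Step 2 ($L(\ell)<x_\ell$).} Suppose instead that $L(\ell)\ge x_\ell$. Then \eqref{eq:uppbound_A(l)} gives $|A(\ell)|\le 2\pi^{-1}\beta_\ell L(\ell)^3$. By Corollary \ref{cor:newLS}, $\beta_\ell\le\beta_\w<\e^M$ with $M>b$, so for $\e_0$ small we get $2\pi^{-1}\beta_\ell<c_0\e^b$. Taking $\eta=\eta_{L(\ell)}$ then yields $|A(\ell)|<|A(\eta)|$ and $L(\eta)=L(\ell)$. This contradicts Lemma \ref{lem:cut_and_paste1}\ref{c_a_p_1_item_i}.

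\textbf{Step 3 (the main inequality).} Now that $L(\ell)<x_\ell$, \eqref{eq:uppbound_A(l)} gives $|A(\ell)|\le 2\pi^{-1}\beta_\ell x_\ell L(\ell)^2$. Again take $\eta=\eta_{L(\ell)}$. Optimality forces $c_0\e^bL(\ell)^3\le|A(\eta)|<|A(\ell)|\le 2\pi^{-1}\beta_\ell x_\ell L(\ell)^2$, and therefore $L(\ell)<2c_0^{-1}\pi^{-1}x_\ell\beta_\ell\e^{-b}\le 2^9\pi^{-1}x_\ell\beta_\ell\e^{-b}$.

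The main obstacle is Step 1. It needs a clean, constant-explicit lower bound for $\int_{\D}Q$ on a tiny disk tangent to $\{P=0\}$ at the endpoint, with the disk placed entirely in $\{P>0\}$. I also have to check that attaching the circle at the endpoint is admissible for Lemma \ref{lem:cut_and_paste1}\ref{c_a_p_1_item_i}, which requires the circle to meet $\overline{\spt(\w)\setminus\spt(\ell)}$. My first attempt would be to compute in the frame given by the unit normal to $\{P=0\}$ at $(\e^q,\e)$: write $P(x)=\langle\nabla P,x-(\e^q,\e)\rangle+O(|x-(\e^q,\e)|^2)$ and integrate the linear term exactly over the disk.
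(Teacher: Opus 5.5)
Your proposal is correct and is essentially the paper's proof. The paper also compares $\ell$ with a small test curve placed at the endpoint $(\varepsilon^q,\varepsilon)$, using Lemma \ref{lem:cut_and_paste1}\ref{c_a_p_1_item_i} together with the Rad\'o bound \eqref{eq:uppbound_A(l)}, and then splits on whether $x_\ell\le L(\ell)$. The only difference is the test curve: the paper takes the square $[\varepsilon^q,\varepsilon^q+\alpha]\times[\varepsilon-\alpha,\varepsilon]$ with $\alpha=L(\ell)/8$, whose weighted area $4\alpha^3\varepsilon^b(1+o(1))$ is an exact polynomial integral, whereas your tangent circle gives a better constant (about $\pi^{-2}$) but requires the Taylor-remainder check.

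Your claim that $(\varepsilon^q,\varepsilon)\notin\mathrm{spt}(\ell)$ is not justified, but you do not need it. The lemma only requires the endpoint to lie in $\overline{\mathrm{spt}(\omega)\setminus\mathrm{spt}(\ell)}$. This holds because the endpoint is the terminal point of $\omega|_{[0,s_\ell^-]}\ast\omega|_{[s_\ell^+,L(\omega)]}$ and $\omega$ has only finitely many self-intersections.
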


\begin{proof}
Consider the square $R^\e_{\alpha}\ceq [\e^q,\e^q+\alpha]\times[\e-\alpha,\e]$, for some $\alpha=o(\varepsilon^q)$, as $\e\to0$. Then, the weighted area enclosed by $R^\e_\alpha$ is
\begin{equation}
    \begin{split}
        |A(\partial R^\e_{\alpha})|&=\int_{\e-\alpha}^\e\int_{\e^q}^{\e^q+\alpha}4x_1(x_1^2-x_2^b)dx_1dx_2 \\
        &=\alpha\left((\varepsilon^q+\alpha)^{4}-\varepsilon^{2b}\right)- \frac2{b+1}\left((\varepsilon^q+\alpha)^{2}-\varepsilon^{b}\right)\left(\varepsilon^{b+1}-(\varepsilon-\alpha)^{b+1}\right)\\
        &=\alpha\varepsilon^{2b}\left(\left(1+\frac\alpha{\e^{q}}\right)^{4}-1\right)-2 \frac{\e^{2b+1}}{b+1}\left(\left(1+\frac\alpha{\varepsilon^q}\right)^{2}-1\right)\left(1-\left(1-\frac\alpha{\varepsilon}\right)^{b+1}\right)
    \end{split}
\end{equation}
Since $\alpha=o(\e^q)$, we can compute a Taylor expansion of $|A(\partial R^\e_{\alpha})|$, as $\e\to 0$. The first-order term vanishes, hence we obtain, for $\e>0$ small enough,
\begin{equation}
\label{eq:contorettangolo}
    |A(\partial R^\e_{\zz}) |=4\alpha^3 \e^{b}(1 + o(1))\geq 2\alpha^3 \e^{b}.   
\end{equation}
     Fix $\e_0>0$ such that $2^9\pi^{-1}\e_0^{M-b}<1$ (recall that $M>b$), and such that \eqref{eq:contorettangolo} holds for $\e<\e_0$, and choose $\alpha=\frac{L(\ell)}{8}$. In this way, for all $(\e,s)\in \I_{\e_0}^M$, we have $\a=o(\e^q)$ by Corollaries \ref{cor:newLS} and \ref{cor:rough_bound_loops}. Applying Lemma \ref{lem:cut_and_paste1}\ref{c_a_p_1_item_i} to the curve that bounds the square $R^\e_\zz$, we have that $|A(\partial R^\e_\alpha)|\leq |A(\ell)|$. Thus, combining the latter inequality with \eqref{eq:uppbound_A(l)} and \eqref{eq:contorettangolo}, we obtain
    \begin{equation}
        \label{eq:best_upboubd_L(loop)}
        L(\ell)\leq 2^8\pi^{-1}\beta_\ell \e^{-b}(x_\ell+L(\ell)).
    \end{equation}
    Since $\w\in \CO$, using Corollary \ref{cor:newLS}, we have
    $2^9\pi^{-1}\beta_\ell\e^{-b}<1$. Then, if $x_\ell\leq L(\ell)$, by \eqref{eq:best_upboubd_L(loop)}, we immediately get a contradiction. Therefore, $L(\ell)<x_\ell$, and we get the claim from \eqref{eq:best_upboubd_L(loop)}.   
\end{proof}

\begin{lemma}
    \label{lem:newlambdabeta}
    There exists $\e_0>0$ such that, for all $(\e,s)\in \mc I_{\e_0}^M$ and $\w\in\CO$, the first simple loop $\ell$ of $\w$ satisfies
    \begin{equation}
        |\la|(\beta_\ell x_\ell)^{2}\geq \frac{\pi^2}{2^{12}}\e^{b}.
    \end{equation}
\end{lemma}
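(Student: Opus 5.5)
Two estimates on the first simple loop $\ell$ will be combined: a lower bound on its total curvature, which holds because it is a convex loop, and the upper bound on its length from Lemma \ref{lem:newbound_L(loop)}.

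\textbf{Curvature bound.} By Lemma \ref{lem:1stloop_is_+_or_-}, $J_\ell\subset I_i$ with $i\in\{0,1\}$. On $I_i$ the curvature $\dot\theta=\lambda Q(\w)$ has constant sign. Lemma \ref{lem:fortissimo}\ref{item:lem_fortissimo_i} applies to $\ell$ and gives $\int_{J_\ell}|\dot\theta|\,dt>\pi$. Equivalently, Lemma \ref{lem:convexcurvature} applies, since the corner angle satisfies $\sigma\delta\in(0,\pi)$. Either way we obtain a time $t_*\in J_\ell$ with
\[
|\lambda|\,|Q(\w(t_*))|\geq \frac{\pi}{L(\ell)}.
\]
Then we bound $|Q|$ along the loop. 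Since $Q=4x_1P$, we have $|Q(\w(t_*))|\leq 4\beta_\ell\max_{J_\ell}\w_1$. Using $|\dot\w_1|\leq1$ and $L(\ell)<x_\ell$ from Lemma \ref{lem:newbound_L(loop)}, this gives $\max_{J_\ell}\w_1\leq x_\ell+L(\ell)<2x_\ell$. Hence
\[
|\lambda|\geq \frac{\pi}{8\,\beta_\ell x_\ell L(\ell)}.
\]

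\textbf{Length bound.} Lemma \ref{lem:newbound_L(loop)} gives $L(\ell)<2^9\pi^{-1}x_\ell\beta_\ell\e^{-b}$. Substituting,
\[
|\lambda|>\frac{\pi}{8\beta_\ell x_\ell}\cdot\frac{\pi\,\e^{b}}{2^9 x_\ell\beta_\ell}=\frac{\pi^2}{2^{12}}\,\frac{\e^b}{(\beta_\ell x_\ell)^2},
\]
which is exactly the claim. We take $\e_0$ to be the one from Lemma \ref{lem:newbound_L(loop)}, shrunk if necessary so that Lemma \ref{lem:1stloop_is_+_or_-} also applies.

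\textbf{Main obstacle.} The delicate step is the curvature bound. One must check that the hypotheses of Lemma \ref{lem:fortissimo}\ref{item:lem_fortissimo_i} hold for the first simple loop of $\w$ viewed inside $I_i$. The first simple loop of $\w$ is also the first simple loop of $\w|_{I_i}$, because $\w$ is injective before $s_\ell^+$. The curvature has constant sign there, since $J_\ell$ lies inside a single interval $I_i$ on which $Q$ does not change sign. Once this is verified, everything else is the bookkeeping of constants above.
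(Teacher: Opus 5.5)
Your proposal is correct and follows essentially the same route as the paper. The paper obtains $|\kappa_\ell(t_*)|\geq\pi/L(\ell)$ from Lemma \ref{lem:convexcurvature} via Lemma \ref{lem:fortissimo}\ref{item:lem_fortissimo_i}, bounds $|4\lambda\w_1 P|\leq 8|\lambda|x_\ell\beta_\ell$ on the loop using $L(\ell)<x_\ell$, and then inserts the first inequality of Lemma \ref{lem:newbound_L(loop)}, with the same constants as yours. Your explicit use of Lemma \ref{lem:1stloop_is_+_or_-} to ensure $J_\ell\subset I_i$, so that Lemma \ref{lem:fortissimo}\ref{item:lem_fortissimo_i} applies, is a detail the paper leaves implicit.
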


\begin{proof}
    Let $\e_0>0$ given by Lemma \ref{lem:newbound_L(loop)}, $(\e,s)\in \mc I_{\e_0}^M$ and $\w\in\CO$. By Lemma \ref{lem:fortissimo}\ref{item:lem_fortissimo_i}, the curvature $\k_\ell$ of the first simple loop $\ell$ of $\w$ has constant sign $\sigma\in \{-1,1\}$, and $\sigma\cdot\angle(\dot\omega(s_\ell^+),\dot\omega(s_\ell^-))\in (0,\pi)$. Thus, by Lemma \ref{lem:convexcurvature}, there exists $t_*\in \mathrm{int}(J_\ell)=(s_\ell^-,s_\ell^+)$ such that
    \begin{equation}
    \label{eq:chatgpt}
        |\kappa_\ell(t_*)|\geq \frac{\pi}{L(\ell)}.
    \end{equation}
    On the other hand, using the second inequality of Lemma \ref{lem:newbound_L(loop)}, we can estimate
    \begin{equation}
    \label{eq:skrinking}
        |\kappa_\ell(t_*)|=|4\la \w_1(t_*)P(\w(t_*))|\leq 4|\la|(x_\ell+L(\ell))\beta_\ell \leq 8|\la| x_\ell\beta_\ell.
    \end{equation}
    Combining \eqref{eq:chatgpt} and \eqref{eq:skrinking}, we get
    \begin{equation}
        |\la|L(\ell)x_\ell\beta_\ell\geq \frac{\pi}{8}.
    \end{equation}
    Thus, using the first inequality of Lemma \ref{lem:newbound_L(loop)} to estimate $L(\ell)$ from above, we obtain the result.
\end{proof}

We next show that, if the length of a loop of $\w$ is lower bounded by $\delta_\ell$, where $\de_\ell$ is defined in Definition \ref{def:beta_loop}, then $\w_2(t_\ell)$ lies in the region $\{x\in\R^2\mid x_2\geq C\e\}$, for a suitable $C>0$.

\begin{lemma}
    \label{lem:geometric_w2>e2}
	For every $K>0$ there exist $\e_0,K'>0$ such that, for every $(\e,s)\in\I_{\e_0}$ and any loop $\ell$ of $\w\in\CO$ we have 
	\begin{equation}
		\label{eq:L(loop)>dist2}
		L(\ll)\geq K\de_\ell \qquad\implies\qquad y_\ll \geq K'\e.
	\end{equation}
\end{lemma}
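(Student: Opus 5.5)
The plan is to pair the assumed lower bound $L(\ell)\geq K\de_\ell$ with the upper bound from Lemma \ref{lem:newbound_L(loop)}, and then to use $\beta_\ell\leq\beta_\w<\e^M$ from Corollary \ref{cor:newLS} and Notation \ref{notation:the} to fix the sign and size of $y_\ell$. Take $\e_0$ no larger than the one in Lemma \ref{lem:newbound_L(loop)}, so that for $(\e,s)\in\I_{\e_0}$ every loop satisfies $L(\ell)<2^9\pi^{-1}x_\ell\beta_\ell\e^{-b}$. Note that $\beta_\ell>0$: otherwise $\ell$ would lie in $\{P=0\}$, which contains no loops. Hence the hypothesis gives, after dividing by $\beta_\ell$,
\begin{equation}
  K\min\{x_\ell^{-1},|y_\ell|^{-q}\}<2^9\pi^{-1}x_\ell\e^{-b}.
\end{equation}
The argument then splits according to which term realizes the minimum. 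The case $y_\ell=0$ falls into the first case.

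\emph{Case $x_\ell^{-1}\leq|y_\ell|^{-q}$.} The display gives $x_\ell^2> c\,\e^b$ with $c=K\pi2^{-9}$. First I exclude $y_\ell<0$. In that case $P(x_\ell,y_\ell)=x_\ell^2+|y_\ell|^b\geq c\e^b$. But $P(x_\ell,y_\ell)=\pm\beta_\ell$ and $\beta_\ell<\e^M$ with $M>b$, so this is impossible once $\e_0$ is small enough. Hence $y_\ell\geq0$, and
\begin{equation}
  y_\ell^b=x_\ell^2-P(x_\ell,y_\ell)\geq c\e^b-\e^M\geq \tfrac c2\e^b,
\end{equation}
which yields $y_\ell\geq (c/2)^{1/b}\e$.

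\emph{Case $|y_\ell|^{-q}<x_\ell^{-1}$.} Here $x_\ell<|y_\ell|^q$, so the display gives $K|y_\ell|^{-q}<2^9\pi^{-1}|y_\ell|^q\e^{-b}$, that is, $|y_\ell|^b>c\,\e^b$. As before, a negative $y_\ell$ would force $P(x_\ell,y_\ell)\geq|y_\ell|^b> c\e^b>\e^M$, contradicting $\beta_\ell<\e^M$. Therefore $y_\ell=|y_\ell|\geq c^{1/b}\e$.

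Taking $K'=(c/2)^{1/b}$ covers both cases, and $\e_0$ depends only on $K$, $b$ and $M$. The only delicate point is ruling out $y_\ell<0$. This is exactly where the sign structure of $P$ below the $x_1$-axis matters, since $P=x_1^2+|x_2|^b$ there, together with the fact that the Liu--Sussmann bound of Corollary \ref{cor:newLS} is available for $\beta_\w$ and not only for $\wt\beta_\w$. Everything else is elementary algebra on the two inequalities.
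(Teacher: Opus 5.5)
Your proof is correct. Its skeleton is the paper's: the inequality you get by combining the hypothesis with Lemma \ref{lem:newbound_L(loop)}, namely $x_\ell\max\{x_\ell,|y_\ell|^q\}>2^{-9}\pi K\e^b$, is exactly the inequality the paper obtains. The paper re-derives it by rerunning the comparison with the square $R^\e_\alpha$ and the bound \eqref{eq:uppbound_A(l)}, whereas you cite the lemma that already packages that computation. The subsequent case split on which term realizes the minimum in $\de_\ell$ is also the same.

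The one genuine difference is how you exclude $y_\ell<0$ when $x_\ell<|y_\ell|^q$. The paper uses a length comparison: $L(\gse)\ge L(\w)\ge \e+|y_\ell|$ is incompatible with $L(\gse)=s+\e+O(\e^{b-1})$, and this step needs $s<\e^2$. You instead use that $P=x_1^2+|x_2|^b$ in the lower half-plane, so that $|P(\w(t_\ell))|\ge|y_\ell|^b>c\e^b>\e^M>\beta_\ell$.

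Your mechanism treats both cases uniformly and relies only on the bound $\beta_\w<\e^M$ from Corollary \ref{cor:newLS}, which the paper's first case needs anyway. It also yields the explicit constant $K'=(c/2)^{1/b}$; the paper's write-up has some slips in the powers of $\bar K$, for instance $x_\ell\ge\bar K\e^q$ where $\bar K^{1/2}\e^q$ is meant, and $K'$ written as a maximum.

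Two small simplifications are available. In your first case the separate exclusion of $y_\ell<0$ is redundant, since $y_\ell^b\ge c\e^b-\e^M>0$ already forces $y_\ell>0$. And $\beta_\ell>0$ follows directly from Lemma \ref{lem:newbound_L(loop)} because $L(\ell)>0$, so you do not need the geometric remark about $\{P=0\}$ containing no loops.
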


\begin{proof}
Proceeding with hindsight, define $\bar K\ceq 2^{-9}\pi K$. Fix $\e_0>0$ smaller than the one given by Lemma \ref{lem:newbound_L(loop)} and small enough such that $\e_0<\frac12 \bar K^2$ and, for every $(\e,s)\in \mc I_{\e_0}$,
    \begin{align}
        \label{eq:Lgse}L(\gse)= s+\e+\frac{q^2}{2(b-1)}\e^{b-1}+o(\e^{b-1})\leq  \Big(1+\frac{\bar K^{-q}}{2}\Big)\e, 
    \end{align}
where we used the fact that $s\leq \e^2$. Now, fix $(\e,s)\in\I_{\e_0}$ and $\w\in\CO$ with first simple loop $\ell$. On the one hand, by \eqref{eq:uppbound_A(l)} and the second inequality of Lemma \ref{lem:newbound_L(loop)}, we have
\begin{equation}
\label{eq:uppbound_A(l)_updated}
    |A(\ell)|\leq \pi^{-1}
		\beta_\ll L(\ll)^2(x_\ll+L(\ll)) \leq 2\pi^{-1}
		\beta_\ll x_\ll L(\ll)^2.
\end{equation}
On the other hand, consider the square $R^\e_\zz$ as in the proof of Lemma \ref{lem:newbound_L(loop)} with $\zz=\frac{L(\ell)}{8}$, so that $L(\partial R^\e_\zz)<L(\ell)$. Hence, combining \eqref{eq:contorettangolo}, with the lower bound on $L(\ell)$ of \eqref{eq:L(loop)>dist2}, and with Lemma \ref{lem:cut_and_paste1}\ref{c_a_p_1_item_i}, we obtain
\begin{align}
    \label{eq:lowbound_A(R)}
    |A(\ell)|\geq|A(\partial R^\e_\a)|\geq 2^{-8} L(\ell)^3 \e^b \geq 2^{-8} K \delta_\ell L(\ell)^2 \e^b  = 2^{-8} K \beta_\ell L(\ell)^2\e^{b}\min\{x_\ell^{-1},|y_\ell|^{-q}\}.
\end{align}
Therefore, combining \eqref{eq:uppbound_A(l)_updated} and \eqref{eq:lowbound_A(R)}, we get
	\begin{equation}
		\label{eq:xl+L(l).max>eps^{2(b-q)}}
		x_\ll\max\{x_\ll,|y_\ll|^{q}\}\geq \bar K\e^{b}.
	\end{equation}
If $x_\ll\geq|y_\ll|^{q}$, then $\beta_\ell=x_\ell^2-y_\ell^b$ and \eqref{eq:xl+L(l).max>eps^{2(b-q)}} implies $x_\ll\geq \bar K \e ^{q}$. By Corollary \ref{cor:newLS} we also have $\beta_\ll<\e^M$, with $M>b$, and thus, by our choice of $\e_0$, we get
	\begin{equation}
    \label{eq:lem64_claim1}
	y_\ll^b=
	x_\ll^2-\beta_\ll\ge 
	\bar K^2\e^b - \e^M \geq \bar K^2\e^b\Big(1-\frac{\e^{M-b}}{\bar K^2}\Big)\geq \frac{\bar K^2}{2}\e^b,
	\end{equation}
which proves the claim of \eqref{eq:L(loop)>dist2}. If instead $x_\ll\leq|y_\ll|^{q}$, then \eqref{eq:xl+L(l).max>eps^{2(b-q)}} implies $|y_\ll|^{b}\geq \bar K\e^{b}$. We claim that $y_\ell>0$, which in turn implies that
\begin{equation}
    \label{eq:lem64_claim2}
    y_\ll^{b}\geq \bar K\e^{b}.
\end{equation}
Assume by contradiction that $y_\ell\leq 0$, and thus $y_\ell\leq -\bar K^{-b}\e$. This implies that the optimal competitor $\w\in\CO$ satisfies
\begin{equation}
    L(\gamma_{s,\e})\geq L(\w)\geq L([(x_\ell,y_\ell),(\e^q,\e)])\geq |y_\ell-\e|\geq(1+\bar K^{-b})\e,
\end{equation}
which is in contradiction with \eqref{eq:Lgse}. Hence, \eqref{eq:lem64_claim2} holds, and this, together with \eqref{eq:lem64_claim1} completes the proof of the lemma with $K'=\max\{\bar K^{-b},\frac{\bar K^2}{2}\}$.
\end{proof}	

We next provide two lower bounds that we need to prove Proposition \ref{prop:L(1stloop)>dist2} below: we show that $x_\ell$ can be lower bounded with $\sqrt{\beta_\ell}$, see Lemma \ref{lem:svolta}, and that $|\lambda|\beta_\ell^2$ is lower bounded by a positive constant, see Lemma \ref{lem:new_lb2_smaller1}.

\begin{lemma}
    \label{lem:svolta}
    There exist $\e_0>0$ such that for all $(\e,s)\in \I_{\e_0}$, the first simple loop $\ell$ of every optimal competitor $\w\in\CO$ satisfies
    \begin{equation}
        x_\ell\geq \frac12\sqrt{\beta_\ell}.
    \end{equation}
\end{lemma}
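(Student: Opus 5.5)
The plan is to argue by contradiction: assume $x_\ell<\frac12\sqrt{\beta_\ell}$ and split according to the sign of $P(\w(t_\ell))$ and of $y_\ell$.

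\textbf{Easy case.} If $P(\w(t_\ell))>0$ and $y_\ell\geq 0$, then $x_\ell^2=\beta_\ell+y_\ell^b\geq\beta_\ell$. This already contradicts the assumption, so nothing needs to be done.

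\textbf{Remaining cases.} Here $|P(\w(t_\ell))|=\beta_\ell$ is produced by the term $|y_\ell|^b$. Indeed, $x_\ell^2\leq\beta_\ell/4$ forces $|y_\ell|^b\geq\frac34\beta_\ell$. This holds whether $P<0$ with $y_\ell>0$, or $y_\ell<0$ (where $P=x_1^2+|x_2|^b$). In particular $x_\ell\ll |y_\ell|^q$. By the second inequality of Lemma \ref{lem:newbound_L(loop)}, $L(\ell)<x_\ell$, so on the whole loop we have $0<\w_1\leq 2x_\ell$ and $|\w_2-y_\ell|\leq x_\ell$. Since $x_\ell\ll|y_\ell|$, the factor $|\w_2|^b$ is comparable to $|y_\ell|^b$ along $\ell$. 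Hence along the loop $|P|$ is pinned between constant multiples of $\beta_\ell$ and $|Q|\leq 8x_\ell\beta_\ell$. The loop is therefore a short convex curve (Lemma \ref{lem:fortissimo}\ref{item:lem_fortissimo_i}) sitting in a thin vertical strip $\{0<x_1\leq 2x_\ell\}$ near the $x_2$-axis. In this strip the density $|Q|=4x_1|P|$ grows essentially linearly in $x_1$, with $|P|$ almost constant there, because $x_1^2\leq 4x_\ell^2<\beta_\ell\lesssim|y_\ell|^b$.

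\textbf{Main step: a cut-and-paste competitor.} I would build a competitor loop via Lemma \ref{lem:cut_and_paste1}\ref{c_a_p_1_item_i}. Take $\eta$ to be a translate of $\ell$ in the $x_1$-direction by some amount $a\in(0,2x_\ell]$. Since $L(\eta)=L(\ell)$, it suffices to show $|A(\eta)|>|A(\ell)|$ and that $\eta$ meets $\overline{\spt(\w)\setminus\spt(\ell)}$.

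For the area: $A(\ell)=\pm\int_{\D(\ell)}Q$ because $Q$ has constant sign on the enclosed convex region. After translating, each point's $x_1$-weight increases, for instance doubling when $a\approx x_\ell$. Meanwhile $|P|$ changes by at most $O(x_\ell^2)\leq\beta_\ell/4$, which is small compared with $|P|\gtrsim\beta_\ell$. So the weight $4x_1|P|$ increases by a definite factor. One must also check that the translated domain does not cross $\{P=0\}$ or $\{x_2=0\}$; this follows since $|\w_2|\gtrsim\beta_\ell^{1/b}\gg x_\ell$ on $\ell$.

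For the intersection: the rest of $\w$ must eventually reach $x_1=\e^q\gg x_\ell$, since by Lemma \ref{lem:optimal} and Corollary \ref{cor:newLS} we get $\beta_\ell<\e^M$. The part of $\w$ leaving the loop therefore sweeps horizontally across the strip at heights near $y_\ell$. Choosing $a$ as the first translation amount at which $\eta$ touches $\spt(\w)\setminus\spt(\ell)$ gives the required intersection. If no such $a\leq 2x_\ell$ exists, I would instead use Lemma \ref{lem:cut_and_paste1}\ref{c_a_p_1_item_v}: exhibit a point $t^*$ outside the loop with $Q(\w(t^*))\geq 8x_\ell\beta_\ell\geq\max_{J_\ell}|Q|$, taken on the arc leaving the strip at $x_1\approx 2x_\ell$ with $|P|\approx\beta_\ell$ and the right sign. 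As a fallback on sign issues, Lemma \ref{lem:cut_and_paste1}\ref{c_a_p_1_item_iii} handles vertical rays from $\w(t^*)$ hitting $\ell$.

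\textbf{Expected obstacle.} The delicate part is making the intersection and sign bookkeeping rigorous. One must guarantee that the translate actually touches the remaining curve before $a$ exceeds $2x_\ell$, and that in the case $y_\ell<0$, where $\wt P$ rather than $P$ controls the geometry, the weighted area comparison has the correct sign. I would try the translation argument first and use the curvature bound $\pi/L(\ell)\leq 8|\la|x_\ell\beta_\ell$ from the proof of Lemma \ref{lem:newlambdabeta} to control the loop's diameter if more quantitative room is needed.
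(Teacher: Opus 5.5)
\textbf{There is a genuine gap: the translation cut-and-paste step does not work as described, and the key idea of the paper's proof is missing.}

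Your reduction is fine. The case $P(\omega(t_\ell))>0$, $y_\ell\geq 0$ is immediate. Otherwise $|y_\ell|^b\geq\frac34\beta_\ell$, so $x_\ell\ll|y_\ell|$. The problems are in the main step.

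\emph{Intersection.} For $a=0$ the translate already touches $\overline{\spt(\omega)\setminus\spt(\ell)}$ at $\omega(s_\ell^\pm)$. So ``the first $a$'' is $a=0$, i.e.\ $\eta=\ell$, which yields nothing. For $a>0$, nothing forces $\ell+ae_1$ to meet the rest of $\omega$. If the self-intersection point lies on the left side of the convex loop, both remaining arcs leave towards smaller $x_1$. They can then bypass the horizontal strip swept by the translates, since the endpoint $(\varepsilon^q,\varepsilon)$ is at height $\varepsilon$, not near $y_\ell$. Your fallback via Lemma~\ref{lem:cut_and_paste1}\ref{c_a_p_1_item_v} needs a point of $\omega$ off the loop with $Q\geq\max_{J_\ell}|Q|$. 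Nothing forces $\omega$ to reach $x_1\geq 2x_\ell$ while $|P|$ is still of size $\beta_\ell$ with the right sign: $|P|$ is governed by $|x_2|^b$, and the curve may first move vertically.

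\emph{Area, negative case} ($P<0$, $y_\ell>0$). Staying in $\{P<0\}$ means comparing $x_1+a$ with $x_2^{q}\approx y_\ell^{q}$, not with $x_2$. Since $y_\ell^b=\beta_\ell+x_\ell^2<\frac54\beta_\ell$, $y_\ell^q$ is comparable to $x_\ell$. Hence $|P|$ changes by $2ax_1+a^2$, which is of order $\beta_\ell$ for $a\sim x_\ell$. Even infinitesimally, $\partial_{x_1}\bigl(x_1(x_2^b-x_1^2)\bigr)=x_2^b-3x_1^2$ can be negative on part of the loop. So neither the area increase nor the intersection is established.

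\emph{The missing idea.} Use the size of the multiplier instead of a comparison curve. Lemma~\ref{lem:newlambdabeta} gives $|\lambda|(\beta_\ell x_\ell)^2\geq \pi^2 2^{-12}\varepsilon^b$, and since $\beta_\ell<\varepsilon^M$ with $M>b$, this forces $|\lambda|$ to be huge. Under the contradiction hypothesis $x_\ell<|y_\ell|$. Hence on $I=[t_\ell,t_\ell+\frac{x_\ell}{2}]$ you have $\omega_1\geq\frac{x_\ell}{2}$ and $|\omega_2-y_\ell|\leq\frac{x_\ell}{2}$. It follows that $|P\circ\omega|\geq 2^{-b}\beta_\ell$ on $I$, with constant sign; in the positive case $y_\ell<0$ and $P=x_1^2+|x_2|^b$ there. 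Therefore
\[
\int_I|\dot\theta|\,dt=4|\lambda|\int_I\omega_1\,|P\circ\omega|\,dt\geq 2^{-b}|\lambda|\,x_\ell^2\beta_\ell\geq\frac{\pi^2}{2^{b+12}}\,\frac{\varepsilon^b}{\beta_\ell}\longrightarrow\infty .
\]
On the other hand, the total curvature on $I$ is at most $6\pi$ by Lemma~\ref{lem:fortissimo}. This holds because $I$ lies in a single $I_i$, where the curvature has constant sign, and contains only the loop $\ell$.

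You did record $\pi/L(\ell)\leq 8|\lambda|x_\ell\beta_\ell$. What you did not do is combine it with the area-based upper bound on $L(\ell)$ from Lemma~\ref{lem:newbound_L(loop)}. That combination is exactly what produces the lower bound on $|\lambda|$, and it closes the argument with no cut-and-paste at all. The paper runs this argument in the positive case. It treats the negative case by the analogous argument from its predecessor, with Lemma~\ref{lem:geometric_w2>e2} as input.
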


\begin{proof}
    Let $\e_0>0$ be given by Lemma \ref{lem:geometric_w2>e2}. Take $(\e,s)\in \I_{\e_0}$ and $\w\in\CO$ with first simple loop $\ell$. We distinguish the two cases where $\ell$ is either positive or negative. 

    \noindent \textsc{Case 1:} $\ell$ \textsc{negative}. The loop $\ell$ must be contained in the region $\{P<0\}\cap\{x_2>0\}$, hence relying on Lemma \ref{lem:geometric_w2>e2}, we can repeat verbatim the proof of \cite[Lem.\ 3.11]{notall}. 
    
    \noindent \textsc{Case 2:} $\ell$ \textsc{positive}. Assume by contradiction that $x_\ell<\frac12\sqrt{\beta_\ell}$. Then, since $x_\ell^2-y_\ell^b=\beta_\ell$ (recall that $\ell$ is positive) and $b$ is odd, we must have $y_\ell<0$. Hence, we deduce that
    \begin{equation}
        |y_\ell|^b=\beta_\ell-x_\ell^2 > \frac34\beta_\ell.
    \end{equation}
    From the latter inequality and $x_\ell<\frac12\sqrt{\beta_\ell}$ we have
    \begin{equation}
        \label{eq:xell<<yell}
        x_\ell<\frac12\beta_\ell^{(\frac12-\frac1b)}\beta_\ell^{\frac1b}<\frac12\Big(\frac43\Big)^{\frac1b}\beta_\ell^{(\frac12-\frac1b)}|y_\ell|<|y_\ell|,
    \end{equation}
    having used that $\beta_\ell<1$. Since $y_\ell<0$ and $\w$ is parametrized by arc-length, \eqref{eq:xell<<yell} implies that
    \begin{equation}
        \w_2([t_\ell-x_\ell,t_\ell+x_\ell])\subset\{x_2<0\},
    \end{equation}
    and thus, since $\w_1>0$ by Lemma \ref{lem:w1>0}, we also have $(P\circ\w)|_{[t_\ell-x_\ell,t_\ell+x_\ell]}>0$. Therefore, $\w|_{[t_\ell-x_\ell,t_\ell+x_\ell]}\subset I_0$ and, since $L(\ell)<x_\ell$ by Lemma \ref{lem:newbound_L(loop)}, $\w|_{[t_\ell-x_\ell,t_\ell+x_\ell]}$ contains the loop $\ell$. By Lemma \ref{lem:fortissimo}\ref{item:lem_fortissimo_ii}, $\ell$ must be the unique loop of $\w|_{[t_\ell-x_\ell,t_\ell+x_\ell]}$. Consider now the interval $I\ceq [t_\ell,t_\ell+\frac{x_\ell}{2}]$ and the integral
    \begin{equation}
        J\ceq \int_I |\kappa(t)|dt= 4\int_I |\la|\w_1(t)P(\w(t))dt.
    \end{equation}
    We aim to reach a contradiction by estimating from above and from below the integral $J$. On the one hand, by Lemma \ref{lem:fortissimo}\ref{item:lem_fortissimo_i} and the Gauss-Bonnet theorem, we have $J\leq 6\pi$. On the other hand, the unit speed parametrization of $\w$ implies that 
    \begin{equation}
        \w(I)\subset \Big[\frac{x_\ell}{2},\frac{3x_\ell}{2}\Big]\times\Big[y_\ell-\frac{x_\ell}{2},y_\ell+\frac{x_\ell}{2}\Big].
    \end{equation}
    Thus, for every $t\in I$, we have $\w_1(t)\geq \frac{x_\ell}{2}$ and
    \begin{equation}
    \label{eq:lower_bound_P_|I}
        P(\w(t))\geq P\Big(\frac{x_\ell}{2},y_\ell+\frac{x_\ell}{2}\Big)=\frac{x_\ell^2}{4}-y_\ell^b\Big(1+\frac{x_\ell}{2y_\ell}\Big)^b\geq \frac{x_\ell^2}{4}-\frac{y_\ell^b}{2^b}\geq \frac{\beta_\ell}{2^b},
    \end{equation}
    where in the second-to-last inequality, we used \eqref{eq:xell<<yell} (recall that $b$ is odd and $y_\ell<0$). Then, using \eqref{eq:lower_bound_P_|I} and Lemma \ref{lem:newlambdabeta} for estimating from below $|\lambda|$, we obtain
    \begin{equation}
        J\geq |\la|\frac{x_\ell^{2}\beta_\ell}{2^{b}}\geq\frac{\pi^2}{2^{b+12}}\frac{\e^{b}}{\beta_\ell}.
    \end{equation}
    This is in contradiction with $J\leq6\pi$, since, as $\e\to 0$, $\beta_\ell=o(\e^M)$ with $M>b$, by Corollary \ref{cor:newLS}. 
\end{proof}

\begin{lemma}
	\label{lem:new_lb2_smaller1}
	There exist $C,\e_0>0$ such that, for all $(\e,s)\in \I_{\e_0}$ and $\w\in\CO$, the first simple loop $\ell$ of $\w$ satisfies
	\begin{equation}
		\label{eq:new_lb2_smaller1}
		|\la| \beta_\ll^2\leq C.
	\end{equation} 
\end{lemma}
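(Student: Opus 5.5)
The idea is to argue by contradiction, localizing around $t_\ell$ as in the proof of Lemma \ref{lem:svolta}. A large value of $|\la|\beta_\ell^2$ would force the curvature $|\dot\theta| = 4|\la|\w_1|P\circ\w|$ to be large on a whole time-window around $t_\ell$. The curve would then turn by much more than $6\pi$ inside a single interval $I_i$, where the curvature has constant sign, and this is incompatible with Lemma \ref{lem:fortissimo}.

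\textbf{Step 1: comparability of $|y_\ell|^q$ and $x_\ell$.} I first show that $|y_\ell|^q\leq C_0 x_\ell$ for a universal $C_0$, distinguishing three cases.
If $\ell$ is positive and $y_\ell>0$, then $x_\ell^2 = \beta_\ell + y_\ell^b > y_\ell^b$, so $|y_\ell|^q< x_\ell$.
If $\ell$ is positive and $y_\ell< 0$, then $|y_\ell|^b\leq \beta_\ell\leq 4x_\ell^2$ by Lemma \ref{lem:svolta}.
If $\ell$ is negative, then $y_\ell^b=\beta_\ell+x_\ell^2\leq 5x_\ell^2$, again by Lemma \ref{lem:svolta}.
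In all cases, since $x_\ell<1$ for $\e$ small, we get $|y_\ell|^{b-1}\lesssim x_\ell^{2-2/b}\leq x_\ell$.

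\textbf{Step 2: a window where $P$ and $\w_1$ stay large.} I set
\[
r\ceq \min\Big\{\frac{x_\ell}{2},\, c\,\frac{\beta_\ell}{x_\ell}\Big\},
\]
with a small constant $c>0$, and let $I\ceq[t_\ell-r,t_\ell+r]\cap I_\w$. Since $\w$ is parametrized by arc-length, on $I$ we have $\w_1\geq x_\ell/2$ and $|\w_2-y_\ell|\leq r\leq x_\ell$. By Step 1, this gives
\[
|\nabla P|\leq 2(x_\ell+r)+b(|y_\ell|+r)^{b-1}\lesssim x_\ell
\]
along $\w(I)$, with a universal constant. Choosing $c$ small then yields $|P(\w(t))|\geq \beta_\ell/2$ on $I$. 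Consequently $P\circ\w$ does not vanish on $I$, so $I$ is contained in the interval $I_i$ containing $J_\ell$, and the curvature has constant sign on $I$.

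There is one possible issue: $I$ could be cut by an endpoint of $I_\w$. This is harmless. If $\w$ is truncated at time $0$, the positive-$P$ region near the start still gives a one-sided window of length $r$, since $L(\ell)<x_\ell$ by Lemma \ref{lem:newbound_L(loop)}. If $\w$ is truncated at time $L(\w)$, this contradicts $P(\w(L(\w)))=0$.

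\textbf{Step 3: comparing the two bounds on the turning.} From below,
\[
\int_I|\dot\theta|\,dt\ \geq\ 4|\la|\cdot\frac{x_\ell}{2}\cdot\frac{\beta_\ell}{2}\cdot r .
\]
From above, I bound the total turning of $\w$ on $I$ by $6\pi$, exactly as in the proof of Lemma \ref{lem:svolta}. Indeed, $\w|_I$ has curvature of constant sign and contains at most the one loop $\ell$ (Lemma \ref{lem:fortissimo}\ref{item:lem_fortissimo_ii}, or \ref{item:lem_fortissimo_iii} together with the injectivity away from the loop). Since $\int_{J_\ell}|\dot\theta|<2\pi$, each of the two arcs of $I\setminus J_\ell$ is an embedded constant-sign arc that does not close up, and so each turns by less than $2\pi$.

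Combining the two bounds, $|\la|x_\ell\beta_\ell r\leq 3\pi$. There are two cases for $r$.
If $r=c\beta_\ell/x_\ell$, this reads $|\la|\beta_\ell^2\leq 3\pi/c$.
If $r=x_\ell/2$, this reads $|\la|x_\ell^2\beta_\ell\leq 6\pi$, and then Lemma \ref{lem:svolta} ($x_\ell^2\geq\beta_\ell/4$) gives $|\la|\beta_\ell^2\leq 24\pi$.

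I expect the main obstacle to be the gradient control in Step 2. A naive bound $|\partial_{x_2}P|\lesssim \e^{b-1}$ is useless when $x_\ell\ll\e^{b-1}$. The key point is the comparability $|y_\ell|^{b-1}\lesssim x_\ell$ of Step 1, which relies on the lower bound $x_\ell\geq\frac12\sqrt{\beta_\ell}$ from Lemma \ref{lem:svolta}. A second delicate point is the rigorous justification of the $6\pi$ turning bound on the window. For this I would follow the proof of Lemma \ref{lem:svolta} verbatim.
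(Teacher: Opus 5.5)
Your Steps 1 and 2 match the paper's proof in substance. The paper also uses Lemma \ref{lem:svolta} to show that $x_\ell$ is comparable to $N=\max\{|y_\ell|^q,\sqrt{\beta_\ell}\}$. It then bounds the derivative of $P\circ\omega$ by a multiple of $N$ and obtains $|P\circ\omega|\ge\beta_\ell/2$ on a window of length $c\beta_\ell/x_\ell$ at $t_\ell$.

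The gap is in Step 3, and it comes from making the window two-sided. Your upper bound on $\int_I|\dot\theta|\,dt$ needs control of $\omega$ on $[s_\ell^+,t_\ell+r]$. When $\ell$ is negative, nothing available at this stage provides it. The uniqueness clause of Lemma \ref{lem:fortissimo}\ref{item:lem_fortissimo_iii} requires $\omega_2(s_\ell^-)\ge K\varepsilon$. That is only proved in Proposition \ref{prop:loops>eps/2}, whose proof goes through Proposition \ref{prop:L(1stloop)>dist2}, and hence through the very lemma you are proving. The only injectivity you actually have is that of $\omega|_{[0,s_\ell^+)}$, because $\ell$ is the first simple loop. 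So further negative loops inside $[s_\ell^+,t_\ell+r]$ are not excluded, each contributing additional turning, and there is no uniform bound such as $6\pi$.

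Moreover, the principle you invoke for the arcs of $I\setminus J_\ell$ is false as stated. An embedded arc with curvature of constant sign can turn by an arbitrarily large angle without closing up; think of a logarithmic spiral. Embeddedness plus constant sign of the curvature bounds nothing by itself.

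The repair is to keep only the backward half $[t_\ell-r,t_\ell]$. This is what the paper does, with $I_c=[t_\ell-c\beta_\ell/x_\ell,t_\ell]\subset[0,s_\ell^+]$. On that window only the arc preceding the first loop and part of $\ell$ occur, and the paper bounds the turning of $\omega|_{I_c}$ by $4\pi$ via Lemma \ref{lem:fortissimo}\ref{item:lem_fortissimo_i}.

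Your lower bound in Step 3 already uses only length $r$, so nothing is lost. Your truncation discussion also becomes unnecessary. The endpoint $L(\omega)$ is no longer involved, and truncation at $0$ never occurs, because $t_\ell\ge\omega_1(t_\ell)-\omega_1(0)=x_\ell>r$. Your appeal to $L(\ell)<x_\ell$ is not the relevant reason.

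With the one-sided window, and with the turning bound taken from the first-loop structure rather than from your embeddedness principle, your two cases for $r$ go through and give $|\lambda|\beta_\ell^2\le C$.
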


\begin{proof} 
    Let $\e_0>0$ be given by Lemma \ref{lem:geometric_w2>e2}. Take $(\e,s)\in \I_{\e_0}$ and $\w\in\CO$ with first simple loop $\ell$. Set $N\ceq  \max \{|y_\ll|^q , \sqrt{\beta_\ll}\}$. We claim that  
	\begin{equation}
		\label{eq:bound_xbeta}
		\frac {N}{4}\leq x_\ll  \le 4 N.
	\end{equation}
    To prove \eqref{eq:bound_xbeta}, we distinguish two cases, according to the sign of $P(x_\ll,y_\ll)$.

    \noindent \textsc{Case 1: $P(x_\ell,y_\ell)>0$.} We have $0<x_\ll^2 = y_\ll^b+\beta_\ll$. If $N=|y_\ll|^q$, then $|y_\ell|^b\geq \beta_\ell$ and, in particular, $y_\ll^b> 0$. Hence, we deduce that
	\[
	N^2= y_\ll^b \leq y_\ll^b +\beta_\ll =      x_\ll^2\leq 2 y_\ll^b = 2N^2,
	\]
    which implies $N\leq x_\ell\leq \sqrt{2}N$.
	If instead $N= \sqrt{\beta_\ll}$, then, by Lemma \ref{lem:svolta}, we have $x_\ll \geq  \frac12\sqrt{\beta_\ll}=\frac12 N$. While the upper bound is given by $x_\ll^2 = y_\ll^b+\beta_\ll\leq  |y_\ll^b|+\beta_\ll\leq 2 \beta_\ll=2N^2$. Thus, $2^{-1}N\leq x_\ell\leq \sqrt 2 N$.

	\noindent \textsc{Case 2: $P(x_\ell,y_\ell)\leq0$.} We have $0<x_\ll^2 = y_\ll^b-\beta_\ll$, which implies that $0\leq \beta_\ll<y_\ll^b$ and thus $ N= y_\ll^q$. Therefore, we easily see that $x_\ll = (y_\ll^b-\beta_\ll)^{\frac12}\leq y_\ll^q=N$. For the lower bound, we distinguish two cases: either $2 \beta_\ll<y_\ll^b$ or $2\beta_\ll\geq y_\ll^b$. In the first case, we have	$x_\ll^2 = y_\ll^b-\beta_\ll\geq \frac 12 y_\ll^b =\frac 12 N^2$, or equivalently, $x_\ell\geq 2^{-2}N$. In the other case, we use Lemma \ref{lem:svolta} to obtain
	\[
	 x_\ll \geq \frac1{2}  \sqrt{\beta_\ll} \geq  \frac1{2\sqrt 2}y_\ell^q = \frac1{2\sqrt 2} N.
	\] 
    Claim  \eqref{eq:bound_xbeta} is proved. A direct consequence of it is that  \begin{equation}\label{eq:betal-x1}
	\frac{\beta_\ll}{4N}\le  \frac{\beta_\ll}{x_\ll}	\le    4 N.
    \end{equation}
    Consider the interval $\dsy I_c\ceq \Big[t_\ll-c\frac{\beta_\ll}{x_\ll},t_\ll\Big]$, where $c\in \left(0,\frac14\right]$ has to be chosen later. Observe that, by Lemma \ref{lem:svolta}, and since $\w$ is parametrized by arc-length, we have
    \begin{equation}
        t_\ell\geq x_\ell  \geq \frac{\beta_\ell}{4x_\ell} \geq c\frac{\beta_\ll}{x_\ll},    
    \end{equation}
    and hence $I_c\subset [0,L(\w)]$. By $|\dot\w_1|\leq1$, \eqref{eq:bound_xbeta} and \eqref{eq:betal-x1}, for $t\in I_c$ we have
    \begin{equation}
    \label{eq:bound_omega1beta}
	\w_1(t)\leq x_\ell+|\w_1(t) - x_\ll| \leq x_\ell+c\frac{\beta_\ll}{x_\ll} \leq 4(1+c)N,
    \end{equation}
    and, similarly,
    \begin{equation}
    \label{eq:bound_omega1beta_below}
	\w_1(t)\geq x_\ell-|\w_1(t) - x_\ll| \geq x_\ell-c\frac{\beta_\ll}{x_\ll} \geq \left(\frac{1}{4}-4c\right)N.
    \end{equation}
    Furthermore, using $|\dot\w_2|\leq 1$ and \eqref{eq:betal-x1}, and since $0<N<1$ is small, we also have for $t\in I_c$
    \begin{equation}
	\label{eq:bound_omega2beta}
	|\w_2(t)| \leq |y_\ll| + |\w_2(t)-y_\ell| \leq |y_\ll| +  c\frac{\beta_\ll}{x_\ll} \leq N^{\frac{1}{q}} +  4c N \leq (1+ 4c) N^{\frac{1}{q}}.
    \end{equation}
    Using \eqref{eq:bound_omega1beta} and \eqref{eq:bound_omega2beta}, on $I_c$ we can bound the derivative of $P(t)=P(\omega(t))$, for every $t\in I_c$:
    \begin{equation}
        |\dot P(t)| \leq 2\w_1(t)+b |\w_2(t)|^{b-1}\leq 8(1+c) N + b(1+4c)^{b-1} N^{2-\frac{1}q} \leq C_1 N, \label{dotP}
    \end{equation}
	where $C_1$ is a constant only depending on $b$.	We now fix $c\in \left(0,\frac14\right]$ such that 
	\[
    \frac1{4}-4c\ge \frac1{8}\qquad \mbox{and}\qquad  4cC_1  \leq \frac12.
    \]
	From
	\eqref{dotP} and \eqref{eq:betal-x1}, we deduce that, for every $t\in I_c$,
	\begin{equation}
		\label{boundP}
		|P(t)| \geq \beta_\ll - c C_1\frac{\beta_\ll}{x_\ll} N
		\geq \beta_\ll - 4cC_1 \beta_\ll
		\geq \frac{\beta_\ll}2.
	\end{equation}
	In particular, the sign of $P(t)$ does not change for every $t\in I_c$, and thus we can apply Lemma \ref{lem:fortissimo}\ref{item:lem_fortissimo_i} to bound the total curvature of $\w|_{I_c}$: 
    \begin{equation}
        \label{eq:bound_total_curvature_aux}
        \int_{I_c}|\kappa(t)|dt=\int_{I_c}|\dot\theta(t)|dt\leq 4\pi, 
    \end{equation}
    where $\theta$ is the angle map of $\w$. Then, using the expression \eqref{eq:normalpolar} for the angle map, and combining \eqref{eq:bound_xbeta}, \eqref{eq:bound_omega1beta_below}, \eqref{boundP} and \eqref{eq:bound_total_curvature_aux}, we conclude that
    \begin{equation}
    \begin{split}
        4\pi\geq \int_{I_c} |\dot\0(t)| dt  &= \int_{I_c} 4|\lambda| \w_1(t) |P(t)| dt \geq        2|\lambda|c\frac{\beta_\ll^2}{x_\ll} \left(\frac14-4c\right)N \geq \frac1{16}|\lambda|c\beta_\ll^2, 
    \end{split}
	\end{equation} 
    which proves the lemma with $C\ceq 2^6 c^{-1}\pi$.
\end{proof} 

We are now ready to show that the length of the first simple loop is lower bounded by $\delta_\ell$, up to the multiplication by a positive constant.

\begin{proposition}
	\label{prop:L(1stloop)>dist2}
	There exist $C,\e_0>0$ such that, for all $(\e,s)\in \I_{\e_0}$ and all $\w\in\CO$, the first simple loop $\ell$ of $\w$ satisfies $L(\ll)\geq C\de_\ell$.
\end{proposition}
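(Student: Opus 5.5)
We compare the total curvature of the loop with the pointwise bound on the curvature along it. By Lemma \ref{lem:fortissimo}\ref{item:lem_fortissimo_i}, the loop $\ell$ has curvature of constant sign and total curvature $\int_{J_\ell}|\dot\theta|\,dt>\pi$. By \eqref{eq:normalpolar},
\begin{equation}
    |\dot\theta(t)|=4|\lambda|\w_1(t)|P(\w(t))|\leq 4|\lambda|\beta_\ell\max_{J_\ell}\w_1\leq 4|\lambda|\beta_\ell(x_\ell+L(\ell))\leq 8|\lambda|\beta_\ell x_\ell ,
\end{equation}
where the last step uses the second inequality of Lemma \ref{lem:newbound_L(loop)}. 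This gives the lower bound $L(\ell)\geq \pi/(8|\lambda|\beta_\ell x_\ell)$. We then eliminate $|\lambda|$ using Lemma \ref{lem:new_lb2_smaller1}, namely $|\lambda|\leq C\beta_\ell^{-2}$, which yields
\begin{equation}
    L(\ell)\geq \frac{\pi}{8C}\,\frac{\beta_\ell}{x_\ell}.
\end{equation}

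It remains to compare $\beta_\ell/x_\ell$ with $\delta_\ell=\beta_\ell\min\{x_\ell^{-1},|y_\ell|^{-q}\}$. Since $\delta_\ell\leq \beta_\ell x_\ell^{-1}$ by definition, the displayed inequality directly gives $L(\ell)\geq \frac{\pi}{8C}\delta_\ell$. This closes the argument without using the two-sided estimate \eqref{eq:bound_xbeta}. If a sharper constant or the other branch of the minimum were needed, one would invoke \eqref{eq:bound_xbeta} from the proof of Lemma \ref{lem:new_lb2_smaller1}, namely $x_\ell\le 4N$ with $N=\max\{|y_\ell|^q,\sqrt{\beta_\ell}\}$, but this is not required here.

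The main point needing care is the pointwise curvature bound. Along $J_\ell$ we have $|P\circ\w|\leq\beta_\ell$ by the definition of $\beta_\ell$. We also need $\w_1\le x_\ell+L(\ell)$, which follows from the arc-length parametrization because $t_\ell\in J_\ell$. Both facts are immediate, so the genuine input is Lemma \ref{lem:new_lb2_smaller1}, which is exactly the upper bound on $|\lambda|\beta_\ell^2$ needed to convert the curvature lower bound into a length lower bound. Finally, we take $\e_0$ to be the minimum of the thresholds in Lemmas \ref{lem:newbound_L(loop)} and \ref{lem:new_lb2_smaller1}, and set the constant $C_{\rm new}=\pi/(8C)$.
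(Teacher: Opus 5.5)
Your proof is correct and is essentially the paper's argument: you bound $\pi<\int_{J_\ell}|\dot\theta|\,dt\leq 8|\lambda|\beta_\ell x_\ell L(\ell)$ using Lemma~\ref{lem:fortissimo}\ref{item:lem_fortissimo_i} and Lemma~\ref{lem:newbound_L(loop)}, then apply Lemma~\ref{lem:new_lb2_smaller1} and the inequality $\beta_\ell/x_\ell\geq\delta_\ell$. As you note, the aside about \eqref{eq:bound_xbeta} is not needed.
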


\begin{proof}
	Let $\e_0>0$ be given by Lemma \ref{lem:new_lb2_smaller1}. Take $(\e,s)\in \I_{\e_0}$ and $\w\in\CO$ with first simple loop $\ell$. Let $J_\ell$ the interval of $\ell$ and recall that $L(\ell)=|J_\ell|$. By Lemma \ref{lem:newbound_L(loop)}, $L(\ll)\leq x_\ll$, while by Lemma \ref{lem:fortissimo}\ref{item:lem_fortissimo_i}, the total curvature of $\ell$ is at least $\pi$. Therefore, using \eqref{eq:normalpolar} for the angle map, we obtain
	\[
	\begin{split}
	\pi & \le  \int_{J_\ell}|\dot\theta(t)|dt 
	= |\lambda|
	\int_{J_\ell}  |Q(\omega(t))|dt\le |\lambda| |J_\ell| \max_{t\in J_\ell}|Q (\omega(t))|
	\\
	& \leq 4 |\lambda| L(\ll)\beta_\ll   \max_{t\in J_\ell} \w_1(t)\leq  4 |\lambda| L(\ll)\beta_\ll  (x_\ll +L(\ll)) \\
    &\leq 8|\la|L(\ell)\beta_\ell x_\ell,
	\end{split}
	\]
	and thus we have
    \begin{equation}
        L(\ell)\geq \frac{\pi}{8|\la|\beta_\ell x_\ell} = \frac{\pi}{8}\frac{1}{|\la|\beta_\ell^2}\frac{\beta_\ell}{x_\ell} \geq \frac{\pi}{8}\frac{1}{|\la|\beta_\ell^2} \de_\ell\geq C\de_\ell.
    \end{equation}
    where in the last inequality, we have used Lemma~\ref{lem:new_lb2_smaller1}.	 
\end{proof}

We can finally complete the proof of Proposition \ref{prop:loops>eps/2}.

\begin{proof}[Proof of Proposition \ref{prop:loops>eps/2}]
Let $C,\e_0$ be the constants given by Proposition \ref{prop:L(1stloop)>dist2}. Take $(\e,s)\in \I_{\e_0}$ and $\w\in\CO$ with first simple loop $\ell$. By Lemma \ref{lem:geometric_w2>e2}, there exists $K'>0$ such that $y_\ell=\w_2(t_\ell)\geq K'\e$. Then, since $|\dot \w_2|\leq 1$, by Corollaries \ref{cor:newLS} and \ref{cor:rough_bound_loops}, it follows that
\begin{equation}
\label{eq:lower_bound_w_2_loop}
    \w_2(t)\geq \w_2(t_\ell) - L(\ell) \geq K'\e - K\e^M \geq \frac{K'}2 \e,\qquad \forall\, t\in J_\ell,
\end{equation}
up to shrinking $\e_0>0$, if necessary. The claim for $\w_1(t)$, $t\in J_\ell$, follows from the estimate
\begin{equation}
	\w_1(t)^2 = \w_2(t)^b + P(\w(t)) \geq \w_2(t)^b - \beta_\ell \geq \frac{K'}{4}\e^b,
\end{equation}
where the last inequality is a consequence of \eqref{eq:lower_bound_w_2_loop}, together with Corollary \ref{cor:newLS}.
\end{proof}

\subsection{Consequences of Proposition \ref{prop:loops>eps/2}}

\label{sec:consequence_prop_61}

We collect here some consequences of Proposition \ref{prop:loops>eps/2}, showing, in particular, that any optimal competitor admits a unique loop. 

\begin{proposition} 
	\label{prop:lambda>0}
	There exists $\e_0>0$ such that, for all $(\e,s)\in \I_{\e_0}$ and all $\w\in\CO$, $\lambda_\w<0$.
\end{proposition}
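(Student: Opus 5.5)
We argue by contradiction: assume $\lambda_\w>0$ for some optimal competitor (recall $\lambda_\w\neq0$ by Lemma \ref{lem:preliminary}\ref{lem_prelim_item_iii}). The plan is to locate the first simple loop $\ell$ of $\w$ using Proposition \ref{prop:loops>eps/2}, to read off the sign of its curvature from \eqref{eq:normalpolar}, and then to contradict either Lemma \ref{lem:sign_of_intervals}\ref{item:sign_of_int_ii} or one of the cut-and-paste obstructions of Lemmas \ref{lem:cut_and_paste1} and \ref{lem:cut_and_paste2}.

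By Lemma \ref{lem:1stloop_is_+_or_-}, either $J_\ell\subset I_0$ or $J_\ell\subset I_1$, and in the second case $1\in\I_-$ and $\w|_{I_0}$ is injective. By Proposition \ref{prop:loops>eps/2}, $\w_2\geq C\e$ on $J_\ell$. Hence Lemma \ref{lem:optimal} gives $\w_2\geq \frac C2\e$ and $\w_1\geq(\frac C4\e)^q$ on $[s_\ell^-,L(\w)]$, and Lemma \ref{lem:fortissimo} (items \ref{item:lem_fortissimo_ii} and \ref{item:lem_fortissimo_iii}, with $K=C$) gives that $\ell$ is the unique loop of the corresponding $\w|_{I_i}$. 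Lemma \ref{lem:sign_of_intervals}\ref{item:sign_of_int_ii} then yields $\lambda(\w_2(\tau_{i+1})-\w_2(\tau_i))\le 0$.

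The two cases proceed as follows.
\begin{itemize}
\item If $i=0$, then $\w_2(\tau_0)=-s<0$. If $\tau_1<L(\w)$, then $\w_2(\tau_1)>0$ by Remark \ref{rmk:relative_position_first_times}. If $\tau_1=L(\w)$, then $\w_2(\tau_1)=\e>0$. In both subcases $\w_2(\tau_1)-\w_2(\tau_0)>0$, so $\lambda\le0$, and since $\lambda\neq0$ we conclude $\lambda<0$.
\item If $i=1$, then $I_1$ is negative, and we need $\w_2(\tau_2)\ge\w_2(\tau_1)$ to conclude in the same way. I would try to rule out $\w_2(\tau_2)<\w_2(\tau_1)$ when $\lambda>0$. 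In that situation the curvature on $I_1$ is $\lambda Q<0$. Together with $\w|_{I_0}$ entering $\{P<0\}$ at $\w(\tau_1)$ from the positive side, this should produce a configuration forbidden by Lemma \ref{lem:cut_and_paste1}\ref{c_a_p_1_item_iii} or \ref{c_a_p_1_item_v}, or by Lemma \ref{lem:cut_and_paste2}\ref{c_a_p_2_item_i}. Concretely, I would compare the loop $\ell$, which lies in the region $\{x_2\ge \frac C2\e\}$ and in which $|Q|\lesssim x_\ell\beta_\ell$, with a point $t^*$ at height of order $\e$ outside the loop where $Q(\w(t^*))<0$ is comparatively large, and then apply Lemma \ref{lem:cut_and_paste2}\ref{c_a_p_2_item_i}.
\end{itemize}

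The main obstacle is the negative-loop case $i=1$. There, Lemma \ref{lem:sign_of_intervals}\ref{item:sign_of_int_ii} alone only relates the sign of $\lambda$ to the ordering of $\w_2(\tau_1)$ and $\w_2(\tau_2)$, and this ordering is not fixed a priori. I expect to need the Martinet geometry, specifically the convexity of $\{P\ge0\}$ and the direction in which a curve of negative curvature must turn after exiting $\{P<0\}$, to show that a loop with $\lambda>0$ forces the curve to re-enter $\{P>0\}$ below its entry point. Such a re-entry would then contradict Lemma \ref{lem:cut_and_paste1}\ref{c_a_p_1_item_iii} via an enclosed-region argument like Case 1 of Lemma \ref{lem:sign_of_intervals}\ref{item:sign_of_int_ii}.
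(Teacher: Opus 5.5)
Your case $J_\ell\subset I_0$ is correct and coincides with the first step of the paper's proof. The case $J_\ell\subset I_1$, however, is left open, and the routes you sketch do not close it.

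The configuration you hope to turn into a contradiction is re-entry into $\{P>0\}$ below the entry point, i.e.\ $\w_2(\tau_2)\le\w_2(\tau_1)$. This is exactly what Lemma~\ref{lem:sign_of_intervals}\ref{item:sign_of_int_ii} \emph{forces} when $\lambda>0$. The enclosed-region argument in the proof of that lemma (the negative-interval case) only excludes the opposite ordering, so an argument of that type cannot exclude it.

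Likewise, Lemma~\ref{lem:cut_and_paste2}\ref{c_a_p_2_item_i} does not depend on the sign of $\lambda$, so it cannot be where the hypothesis $\lambda>0$ enters. You also do not exhibit a point outside the loop at which $-Q$ dominates $\max_{J_\ell}|Q|$. In the paper, negative loops are excluded only later (Proposition~\ref{prop:P>0}), by an argument that itself relies on $\lambda<0$. Finally, you look for the contradiction on $I_1$, whereas what $\lambda>0$ actually buys you lives on $I_0$.

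The missing idea is to use $\lambda>0$ on $I_0$, quantitatively.

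In this case $\w|_{I_0}$ is injective and has positive curvature $\lambda Q$. Together with the arc of $\{\wt P=0\}$ from $(0,-s)$ through $(0,0)$ to $\w(\tau_1)$, it bounds a convex region $\Omega$. Since $\wt P$ is convex in $x_1$ along horizontal lines, its maximum on $\overline\Omega$ is attained on $\partial\Omega$. Hence $\wt P\le\wt\beta<\e^M$ on $\overline\Omega$ by Corollary~\ref{cor:newLS}.

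By convexity, the midpoint $\frac12\w(\tau_1)$ of $(0,0)$ and $\w(\tau_1)$ lies in $\overline\Omega$. Using $\w_1(\tau_1)^2=\w_2(\tau_1)^b$, you get $(\frac14-2^{-b})\w_2(\tau_1)^b\le\e^M$, i.e.\ $\w_2(\tau_1)=O(\e^{M/b})=o(\e)$ since $M>b$.

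On the other hand, you already have $\w_2\ge\frac C2\e$ on $[s_\ell^-,L(\w)]$, which contains $\tau_2$. Hence $\w_2(\tau_2)>\w_2(\tau_1)$ for $\e$ small. This contradicts $\w_2(\tau_2)\le\w_2(\tau_1)$, which Lemma~\ref{lem:sign_of_intervals}\ref{item:sign_of_int_ii} gives for $\lambda>0$.
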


\begin{proof}
    Let $\e_0$ be as in Proposition \ref{prop:loops>eps/2}, and fix an optimal competitor $\w \in \CO$, with $(\e,s)\in \I_{\e_0}$. Assume by contradiction that $\lambda=\lambda(\w)>0$. If $\w|_{I_0}$ admits a first simple loop $\ell$, then by Lemma \ref{lem:fortissimo}\ref{item:lem_fortissimo_ii} $\ell$ is the unique loop of $\w|_{I_0}$. Thus, by Lemma \ref{lem:sign_of_intervals}\ref{item:sign_of_int_ii}, $\w_2(\tau_1)\leq \w_2(\tau_0)$, which is impossible by Remark \ref{rmk:relative_position_first_times}. Therefore, $\w|_{I_0}$ is injective and the open set $\Omega$ enclosed by $\w|_{I_0}$ and the arc of $\{\widetilde P=0\}$ from $\w(\tau_0)$ to $\w(\tau_1)$ is convex. Hence, the maximum of $\widetilde P$ in $\Omega$ is attained at a boundary point and 
    \begin{equation}
        \max_{(x,y)\in \overline\Omega} \widetilde P(x,y) = \max_{\tau \in I_0} \widetilde P(\w(\tau)) \leq \widetilde \beta \leq \e^{M},  
    \end{equation}
    where, in the last inequality, we used Corollary \ref{cor:newLS}. On the other hand, by convexity of $\Omega$, we have $\frac12\w(\tau_1)\in\Omega$. Since $\w_1(\tau_1)^2=\w_2(\tau_1)^b$, we deduce that
    \begin{equation}
        \label{eq:massa_del_sole}
        \e^M \geq \max_{(x,y)\in \overline\Omega} \widetilde P(x,y) \geq P\left(\frac12\w(\tau_1)\right) = \left(\frac14-\frac1{2^b}\right)\w_2(\tau_1)^b.
    \end{equation}    
    By Lemma \ref{lem:sign_of_intervals}\ref{item:sign_of_int_ii}, we have $\w_2(\tau_2)\leq \w_2(\tau_1)$. Moreover, as $\w|_{I_0}$ is injective, the first simple loop $\ell$ of $\w$ is negative and contained in $I_1$, as a consequence of Lemma \ref{lem:1stloop_is_+_or_-}. Thus, by Proposition \ref{prop:loops>eps/2} and Lemma \ref{lem:optimal}, we deduce that, for $t\in J_\ell$, $\w_2(\tau_2)\geq \w_2(t) \geq C\e$. Since $M>b$, this is in contradiction with \eqref{eq:massa_del_sole}, up to shrinking $\e_0$.
\end{proof}

\begin{corollary}[Uniqueness of the loop]
\label{cor:uniqueness_of_massa_del_sole}
    There exists $\e_0>0$ such that for all $(\e,s)\in \I_{\e_0}$ and all $\w\in\CO$, $\w$ has a unique loop $\ell$. Moreover, only one of the following situations may occur:
    \begin{enumerate}[(i)]
        \item\label{cor:uniqueness_of_massa_del_sole_item_i} $J_\ell\subset I_0$ and $\I=\I_+=\{0\}$, i.e., $I_0=I_\w$;
        \item\label{cor:uniqueness_of_massa_del_sole_item_ii} $J_\ell\subset I_1$ and $\I=\{0,1\}$, i.e., $I_\w=I_0\cup I_1$, with $0\in\I_+$ and $1\in\I_-$;
        \item\label{cor:uniqueness_of_massa_del_sole_item_iii} $J_\ell\subset I_1$ and $\I=\{0,1,2\}$, i.e., $I_\w=I_0\cup I_1\cup I_2$, with $0,2\in\I_+$ and $1\in\I_-$.
    \end{enumerate}
\end{corollary}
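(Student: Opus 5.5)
We take $\e_0$ to be the minimum of the thresholds in Propositions \ref{prop:loops>eps/2} and \ref{prop:lambda>0}, Lemma \ref{lem:optimal} (with the constant $C$ of Proposition \ref{prop:loops>eps/2}), Lemma \ref{lem:fortissimo}\ref{item:lem_fortissimo_iii} and Lemma \ref{lem:cut_and_paste2}\ref{c_a_p_2_item_ii} (with $K=C/2$). Fix $(\e,s)\in\I_{\e_0}$ and $\w\in\CO$, so that $\lambda_\w<0$. Let $\ell$ be the first simple loop of $\w$. By Lemma \ref{lem:1stloop_is_+_or_-}, $J_\ell\subset I_0$ or $J_\ell\subset I_1$. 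By Proposition \ref{prop:loops>eps/2}, $\w_2\geq C\e$ on $J_\ell$, and Lemma \ref{lem:optimal}\ref{lem:optimal_item_i} then gives $\w_2(t)\geq \frac C2\e$ for all $t\geq s_\ell^-$.

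\emph{Uniqueness of the loop.} Lemma \ref{lem:cut_and_paste2}\ref{c_a_p_2_item_ii} forbids a second loop $\ell'$ with $\w_2(s_{\ell'}^-)\geq \frac C2\e$ and $\mathrm{int}(J_{\ell'})\cap \mathrm{int}(J_\ell)=\emptyset$. Hence $\w|_{[s_\ell^+,L(\w)]}$ is injective. A loop starting before $s_\ell^-$ and ending after $s_\ell^+$, or one nested with $\ell$, would be excluded exactly as in the two cases of Lemma \ref{lem:fortissimo}\ref{item:lem_fortissimo_ii}. Those arguments use only the Gauss--Bonnet lemmas and Lemma \ref{lem:cut_and_paste1}\ref{c_a_p_1_item_iii}, together with the fact that $\w|_{[0,s_\ell^+)}$ is injective by the definition of first simple loop. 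The step that needs care is a loop $\ell'$ with $J_{\ell'}$ straddling several intervals $I_i$. In that case I would first show that no index $i\geq 1$ lies in $\I_-$ with $I_i$ disjoint from $J_\ell$. Indeed, by Lemma \ref{lem:fortissimo}\ref{item:lem_fortissimo_iii} such an $I_i$ contains a loop with interior disjoint from $J_\ell$ and (since $i\geq1$ lies after $\ell$ or, if before, contradicts first-loop injectivity) starting at height at least $\frac C2\e$, contradicting Lemma \ref{lem:cut_and_paste2}\ref{c_a_p_2_item_ii}. Once the sign structure below is fixed, a straddling loop would have to cross $\{P=0\}$ back and forth inside a region where $\w$ is injective, which is impossible. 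So $\ell$ is the unique loop.

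\emph{Structure of $\I$.} Suppose $J_\ell\subset I_0$. If $N\geq 2$, then $1\in\I_-$ by Lemma \ref{lem:sign_of_intervals}\ref{item:sign_of_int_i}. By Lemma \ref{lem:fortissimo}\ref{item:lem_fortissimo_iii}, $\w|_{I_1}$ then contains a loop, contradicting uniqueness. Thus $\I=\{0\}$, which is case \ref{cor:uniqueness_of_massa_del_sole_item_i}. Suppose instead $J_\ell\subset I_1$. Then $\w|_{I_0}$ is injective, $0\in\I_+$ and $1\in\I_-$. Any further negative interval would carry a second loop, again by Lemma \ref{lem:fortissimo}\ref{item:lem_fortissimo_iii}. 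Positive intervals cannot be adjacent, by Lemma \ref{lem:sign_of_intervals}\ref{item:sign_of_int_i}. Hence the only possibilities are $\I=\{0,1\}$ or $\I=\{0,1,2\}$ with $2\in\I_+$, which are cases \ref{cor:uniqueness_of_massa_del_sole_item_ii} and \ref{cor:uniqueness_of_massa_del_sole_item_iii}.

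\emph{Main obstacle.} I expect the hardest step to be excluding loops that meet the region before $\ell$, or that are not contained in a single $I_i$. To handle these, I would push the localization of Lemma \ref{lem:optimal} as far as possible and reuse the Case 1 and Case 2 arguments of Lemma \ref{lem:fortissimo}\ref{item:lem_fortissimo_ii}.
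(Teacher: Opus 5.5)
Your skeleton is the paper's. You localize the first simple loop at height $\geq C\e$ (Proposition \ref{prop:loops>eps/2}) and propagate that height with Lemma \ref{lem:optimal}\ref{lem:optimal_item_i}. Later loops are killed by Lemma \ref{lem:cut_and_paste2}\ref{c_a_p_2_item_ii}, and loops inside the interval carrying $\ell$ are handled by Lemma \ref{lem:fortissimo}\ref{item:lem_fortissimo_ii}--\ref{item:lem_fortissimo_iii}. The shape of $\I$ then follows from Lemma \ref{lem:sign_of_intervals}\ref{item:sign_of_int_i} and Lemma \ref{lem:fortissimo}\ref{item:lem_fortissimo_iii}. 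Your derivation of the structure of $\I$ is correct. There, however, cite Lemma \ref{lem:cut_and_paste2}\ref{c_a_p_2_item_ii} directly, as the paper does and as you already do in your uniqueness paragraph, rather than ``contradicting uniqueness''. Your uniqueness argument for straddling loops is supposed to use that structure, so the wording is circular.

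The gap is the straddling step, which is the one place where you go beyond the paper. Most straddles are harmless:
\begin{itemize}
\item If a loop has endpoints in adjacent intervals $I_i,I_{i+1}$, the double point lies on $\{\wt P=0\}$. Both endpoints are then among the $\tau_j$, and the indexing of the $\tau_j$ (or Lemma \ref{lem:fortissimo}\ref{item:lem_fortissimo_iii} applied to $\w|_{I_1}$) disposes of it.
\item A loop starting in $\mathrm{int}(J_\ell)$ and ending in $I_2$ is impossible, because $P<0$ on $(\tau_1,\tau_2)$ while $P\geq 0$ on $I_2$.
\end{itemize}
The genuinely new configuration occurs in situation \ref{cor:uniqueness_of_massa_del_sole_item_iii}. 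There a loop $\w|_{[a,b]}$ could have $a\in(0,\tau_1)$ and $b\in(\tau_2,L(\w))$, so that it contains all of $I_1$ and hence $\ell$.

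Your sentence about crossing $\{P=0\}$ back and forth ``inside a region where $\w$ is injective'' does not apply here. The arc $\w|_{I_1}$ is not injective, since it carries $\ell$, and two crossings of $\{P=0\}$ are not contradictory in themselves.

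Nor does Case 1 of Lemma \ref{lem:fortissimo}\ref{item:lem_fortissimo_ii} transfer ``exactly''. There, the closed curve $\w|_{[a,s_\ell^-]}\ast\w|_{[s_\ell^+,b]}$ has curvature of constant sign. That is what fixes its orientation via Lemma \ref{lem:gauss-bonnet_nerfed} and places $\D(\ell)$ inside it, which is the input for Lemma \ref{lem:cut_and_paste1}\ref{c_a_p_1_item_iii}. In the straddling case the curvature changes sign at $\tau_1$ and $\tau_2$, so the sign of the total curvature, and hence the orientation, is undetermined.

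You would need an additional argument that $\ell$ lies inside this closed curve. If it does, Lemma \ref{lem:cut_and_paste1}\ref{c_a_p_1_item_iii} with $k=-1$ concludes, because the upward vertical ray from a point of $\ell$ meets $\w$ outside $J_\ell$. Otherwise you need a different way to exclude the configuration.

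The paper's proof does not treat this configuration either. It passes directly from ``$\w|_{I_0}$ injective, $\ell$ the only loop of $\w|_{I_1}$, $\w|_{I_i}$ injective for $i\geq 2$'' to ``$\ell$ is the unique loop''. So you have correctly identified the delicate point, but the justification you give for it is not valid.
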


\begin{proof}
   Let $\e_0>0$ be as in Proposition \ref{prop:loops>eps/2}, and fix an optimal competitor $\w \in \CO$, with $(\e,s)\in \I_{\e_0}$. Then, the first simple loop $\ell$ of $\w$ is contained in the region $\{x_2\geq C\e\}$, for some constant $C=C(b)>0$. We distinguish two cases.
    
    \noindent\textsc{Case 1: $J_\ell\subset I_0$.} We claim that, in this case, $\ell$ is the unique loop of $\w$ and $I_0=I_\w$, so that we are in situation \ref{cor:uniqueness_of_massa_del_sole_item_i}. First, by Lemma \ref{lem:fortissimo}\ref{item:lem_fortissimo_ii}, we have that $\w|_{I_0}$ contains only the loop $\ell$. If by contradiction $I_0\neq I_\omega$, then $1\in\mc I_-\neq \emptyset$. By Lemma \ref{lem:fortissimo}\ref{item:lem_fortissimo_iii} we have that $\w|_{I_1}$ admits a loop $\bar\ell$, and by Lemma \ref{lem:optimal} $\w_2(s_{\bar\ell}^-)\geq \frac{C}{2}\e$. Thus, we obtain a contradiction by Lemma \ref{lem:cut_and_paste2}\ref{c_a_p_2_item_ii}, verifying the claim.
    
    \noindent\textsc{Case 2:} $\w|_{I_0}$ is injective. By Lemma \ref{lem:1stloop_is_+_or_-}, $J_\ell\subset I_1$. Since $\ell$ is contained in the region $\{x_2\geq C\e\}$, $\ell$ is the unique loop of $I_1$ by Lemma \ref{lem:fortissimo}\ref{item:lem_fortissimo_iii}. Moreover, $\w|_{I_i}$ must be injective for every $i\geq 2$ by Lemma \ref{lem:cut_and_paste2}\ref{c_a_p_2_item_ii}, which can be applied thanks to Lemma \ref{lem:optimal}. This implies that $\ell$ is the unique loop. Finally, Lemmas \ref{lem:fortissimo}\ref{item:lem_fortissimo_iii} and \ref{lem:sign_of_intervals}\ref{item:sign_of_int_i} imply $\mc I\subset\{0,1,2\}$, and thus we fall either in \ref{cor:uniqueness_of_massa_del_sole_item_ii} or \ref{cor:uniqueness_of_massa_del_sole_item_iii}.    
\end{proof}

For the next result, we introduce the following notation: for $\e,s>0$ and $\w\in\CO$, define
\begin{equation}
   \label{eq:timeT}
   T_0=T_0(\w)\ceq  \min\{t\in I_0 \mid \theta(t)\leq0 \}
\end{equation}
Note that the definition is well-posed since $\0$ is monotone decreasing in $I_0$, by Lemma \ref{lem_polarnormal} and Proposition \ref{prop:lambda>0}. Moreover, since $\dot\w_1=\cos\0$, $\w_1$ is monotone increasing on $[0,T_0]$.

\begin{corollary}
    \label{cor:unique_intersection_x1axis}
    There exists $\e_0>0$ such that for all $(\e,s)\in \I_{\e_0}$ and for all $\w \in \CO$, $T_0(\w)>0$ and there is a unique $t_\w\in(0,T_0(\w))$ such that $\w_2(t_\w)=0$. Moreover, $\theta(t_\w)\in(0,\frac\pi2)$.
\end{corollary}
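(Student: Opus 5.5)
We need to show: $T_0>0$, the competitor crosses $\{x_2=0\}$ exactly once before $T_0$, and it does so transversally with angle in $(0,\pi/2)$. The plan is to use the monotonicity of $\theta$ on $I_0$ (from $\lambda<0$ and $Q>0$ on $I_0$), the fact that $\w$ starts at $(0,-s)$, and the localization of the loop from Proposition \ref{prop:loops>eps/2}.

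First, $\theta(0)>0$. By Lemma \ref{lem:w1>0} we have $\theta(0)\in(-\frac\pi2,\frac\pi2)$. Suppose $\theta(0)\le0$. Since $\theta$ is non-increasing on $I_0$ (Lemma \ref{lem_polarnormal} with $\lambda<0$ by Proposition \ref{prop:lambda>0}, and $Q\circ\w>0$ on the interior of $I_0$), we get $\dot\w_2=\sin\theta\le0$ as long as $\theta\in[-\pi,0]$. So $\w_2\le -s<0$ until $\theta$ passes below $-\pi$. Take the first time $\theta$ reaches $-\pi$, if it does inside $I_0$. Before that time $\w_2<0$, so $\w$ stays in $\{x_2<0\}$. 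There $\wt P=x_1^2$ and $\wt P=0$ only on $x_1=0$, so $\tau_1$ cannot occur there unless $\w_1=0$, which Lemma \ref{lem:w1>0} excludes. Hence the whole of this arc lies in $I_0$. A curve in $\{x_2<0\}$ whose tangent angle runs from $\theta(0)\le0$ down past $-\pi$, with total turning at most $2\pi$, must eventually either close a loop or return upward. Either way it needs a loop in $\{x_2<0\}$, or a loop in $I_0$ reached before $\w_2$ ever becomes positive, i.e.\ a loop meeting $\{x_2\le 0\}$. This contradicts Proposition \ref{prop:loops>eps/2}, which puts the unique loop in $\{x_2\ge C\e\}$. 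If instead $\theta$ never reaches $-\pi$, then $\w_2$ is non-increasing on all of $I_0$. This contradicts $\w_2(\tau_1)>0$ (Remark \ref{rmk:relative_position_first_times}), or $\w_2(L(\w))=\e>0$ if $I_0=I_\w$. Hence $\theta(0)\in(0,\frac\pi2)$ and $T_0>0$.

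Next, existence and uniqueness of $t_\w$. On $[0,T_0)$ we have $\theta>0$ and $\theta\le\theta(0)<\frac\pi2$, so $\dot\w_2=\sin\theta>0$ and $\w_2$ is strictly increasing there. Any zero of $\w_2$ in $(0,T_0)$ is therefore unique, and at it $\theta(t_\w)\in(0,\theta(0)]\subset(0,\frac\pi2)$. It remains to show $\w_2(T_0)>0$, with $T_0$ taken as $\sup I_0$ if $\theta$ stays positive on $I_0$.

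If $\theta>0$ on all of $I_0$, then $\w_2$ increases on $I_0$ from $-s$ to $\w_2(\tau_1)>0$, and we are done. Otherwise $\theta(T_0)=0$ with $T_0$ interior to $I_0$. Suppose by contradiction that $\w_2(T_0)\le0$. After $T_0$ the angle keeps decreasing, and on the arc where $\w_2\le 0$ the region is $\{x_2<0\}$ with $\wt P=x_1^2>0$. For $\w$ to reach $\w_2>0$ later, $\theta$ must decrease from $0$ through $-\pi$ to below $-\frac{3\pi}{2}$, i.e.\ the curve must turn around. This forces a self-intersection of $\w|_{I_0}$ created near or below the $x_1$-axis. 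The first such loop would meet $\{x_2<C\e\}$, contradicting Proposition \ref{prop:loops>eps/2}. If no self-intersection occurs, a turning of more than $\pi$ in the half-plane combined with Lemma \ref{lem:optimal}\ref{lem:optimal_item_i} should give the contradiction: along such a detour $\w_2$ would leave $\{x_2\ge C\e/2\}$ after having been above it.

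The main obstacle is this last geometric step: ruling out that a non-self-intersecting curve with monotone angle, after reaching horizontal tangent below the axis, climbs back above the axis without a loop. I would handle it by a winding/Gauss--Bonnet argument on the closed curve formed with the segment of $\{x_1=0\}$. I would also use Lemma \ref{lem:cut_and_paste1}\ref{c_a_p_1_item_iii}: a convex arc lying below a later point of $\w$ gives a vertical-ray obstruction to optimality.
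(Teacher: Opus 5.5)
The reduction itself is correct: once you know $\theta(0)>0$ and $\w_2(T_0)>0$, strict monotonicity of $\w_2$ on $[0,T_0)$ gives existence, uniqueness and $\theta(t_\w)\in(0,\frac\pi2)$.

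\textbf{The gap.} Both facts rest on one geometric claim that you assert but do not prove. The arc of $\w$ after $T_0$ (or after $0$) turns clockwise and leaves tangentially to a ray pointing right or downward. You need that it cannot climb back above that ray without first forming a loop \emph{in the low region}.

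In Step 1 you support this with ``total turning at most $2\pi$''. That bound is unjustified, since monotone-angle arcs can spiral without self-intersecting, and it would not produce a self-intersection anyway.

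In Step 3 you say a self-intersection is forced, then hedge with a fallback through Lemma \ref{lem:optimal}\ref{lem:optimal_item_i}. That fallback cannot work: if $\w_2(T_0)\le 0$ then $\w_2\le 0$ on $[0,T_0]$, so $\w$ was never above $C\e$ and the lemma has nothing to act on. The loop-free scenario is exactly what must be excluded.

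Even a self-intersection is not enough. You need it to occur before $\w$ rises; otherwise it could simply be the high loop of Proposition \ref{prop:loops>eps/2}, and ``created near or below the $x_1$-axis'' is not justified. Closing up with a segment of $\{x_1=0\}$ is not the right auxiliary curve, because $\w(T_0)$ is off that axis. Lemma \ref{lem:cut_and_paste1}\ref{c_a_p_1_item_iii} presupposes a loop, so it cannot fill the gap either.

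\textbf{The missing ingredient} is the paper's barrier region. Let $\Omega$ be the component, not containing $(\e^q,\e)$, of the complement of the curve formed by $\{(0,x_2):x_2<-s\}$, $\w|_{[0,T_0]}$ and the tangent ray $r$ at $\w(T_0)$. This treats $T_0=0$ and $\theta(T_0)=0$ together, so your Step 1 is absorbed.

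Then $\w(T_0+\delta)\in\Omega$ for small $\delta>0$. Also $\wt P>0$ on $\Omega$, so $\w$ stays in $I_0$ and $\theta$ keeps decreasing. Finally $\overline\Omega\subset\{x_2\le\w_2(T_0)\}$.

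Since $\w_1>0$, $\w$ can leave $\Omega$ only through $\w|_{[0,T_0]}$, which gives a loop, or through $r$. In the latter case a simple arc is impossible by Gauss--Bonnet (this is \cite[Lem.~3.4]{notall}). The arc plus the segment of $r$ back to $\w(T_0)$ would be a positively oriented simple closed curve. Yet its total turning is at most $(\theta(t_*)-\theta(T_0))+\pi+\pi\le\pi<2\pi$, because hitting $r$ from below forces $\theta(t_*)-\theta(T_0)\le-\pi$.

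So the first loop lies in $\{x_2\le\w_2(T_0)\}$, and Proposition \ref{prop:loops>eps/2} gives $\w_2(T_0)\ge C\e$, hence $T_0>0$. If no loop lies in $I_0$, the same argument gives $\theta>0$ on $[0,\tau_1)$, and $\w_2(\tau_1)>0$.

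The paper also uses the localization of the loop, with Lemma \ref{lem:optimal}\ref{lem:optimal_item_i}, to show that $t_\w$ is the only zero of $\w_2$ on all of $I_\w$. That is the form needed later in Lemma \ref{lem:loc_max_P}, and positivity of $\w_2(T_0)$ alone would not give it.
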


\begin{proof}
    Let $\e_0>0$ be such that Proposition \ref{prop:loops>eps/2} and Corollary \ref{cor:uniqueness_of_massa_del_sole} holds, and fix an optimal competitor $\w \in \CO$, with $(\e,s)\in \I$. Let $\ell$ be the unique loop of $\w$, with loop interval $J_\ell$. We start by proving the following claim: 
    \begin{equation}
        \label{eq:claim_T_0}
        T_0<\tau_1 \qquad\text{if and only if} \qquad J_\ell\subset I_0.
    \end{equation}
    Assume that $J_\ell\subset I_0$ and assume by contradiction that $T_0=\tau_1$. By the definition of $T_0$, $\theta(t)>0$ for all $t\in [0,\tau_1)$. Since $\theta(0)<\frac\pi2$ by Lemma \ref{lem:w1>0} and recalling that $\theta$ is decreasing, $\theta(t)\in \left(0,\frac\pi2\right)$ for every $t\in [0,\tau_1)$, which implies $\dot\w_1(t)=\cos\0(t)>0$, for all $t\in [0,\tau_1)$. Therefore, $\w_1$ is strictly monotone in $[0,\tau_1)$ and thus, $\w|_{I_0}$ cannot have self-intersections. This is in contradiction with $J_\ell\subset I_0$. Conversely, assume that $T_0<\tau_1$. Note that either $T_0=0$ or $\theta(T_0)=0$. Consider the half-line $r(t)\ceq \w(T_0)+t\dot\w(T_0)$, $t\in[0,+\oo)$ and the curve $\G:\R\to\R^2$ defined by
    \begin{equation}
    \Gamma(t)\ceq 
    \begin{cases}
        (0,t-s), \quad & t\in (-\oo,0),\\
        \w(t), \quad & t\in[0,T_0],\\
        r(t-T_0), \quad &t\in[T_0,+\oo).
    \end{cases}
    \end{equation}
    By construction, $\R^2\setminus\spt(\Gamma)$ has two unbounded connected components and we let $\Omega$ be the one not containing $(\e^q,\e)$. Observe that, since $\w_1$ is strictly increasing on $[0,T_0]$ and $[0,T_0]\subsetneq I_0$, $\spt\left(\pr(\gse)\right)\cap \spt(\Gamma)=\{(0,-s)\}$ and $\wt P|_{\Omega}>0$. Since $\theta$ is monotone decreasing in $I_0$, it follows that $\w(T_0+\de)\in\Omega$, for small $\de>0$. Consequently, there is $t\in (T_0,\tau_1]$ such that $\w(t)\in\partial\Omega$, and we let $t_*\ceq \min\{t\in (T_0,\tau_1]\mid \ \w(t)\in\partial\Omega\}$. By Lemma \ref{lem:w1>0}, either $\w(t_*)\in\spt(\w|_{[0,T_0]})$ or $\w(t_*)\in\spt(r)$. If $\w(t_*)\in\spt(\w|_{[0,T_0]})$, $\w|_{I_0}$ has a loop $\ell$, with $s_\ell^+=t_*$. If, instead $\w(t_*)\in\spt(r)$, by \cite[Lem.\ 3.4]{notall}, $\w|_{[0,t_*]}$ has a loop $\ell$ with $s_\ell^+\leq t_*$. In both cases, $J_\ell\subset I_0$ and the claim \eqref{eq:claim_T_0} is proved. We now conclude the proof: we distinguish the two cases, where either $J_\ell\subset I_0$ or $J_\ell\subset I_1$, cf.\ Lemma \ref{lem:1stloop_is_+_or_-}.
    
    \noindent\textsc{Case 1: $J_\ell\subset I_0$}. From the proof of the claim \eqref{eq:claim_T_0} it follows that $\spt(\ell)\subset\overline{\Omega}\subset \{x_2\leq \w_2(T_0)\}$ and, in particular, $\w_2(T_0)\geq \w_2(t)\geq C\e$, for every  $t\in J_\ell$, where $C>0$ is given by Proposition \ref{prop:loops>eps/2}. Since $\w_2(T_0)\geq C\e$, then $T_0>0$ (indeed, if $T_0=0$, then $\w_2(T_0)=-s$) and, by Lemma \ref{lem:optimal}, $\w_2(t)\geq \frac{C}2\e>0$, for every $t\in [T_0,L(\w)]$. Therefore, recalling that $\w_1|_{[0,T_0]}$ is strictly increasing, $\w$ intersects exactly once the positive $x_2$-axis at a time $t_\w\in (0,T_0)$.

    \noindent\textsc{Case 2: $J_\ell\subset I_1$}. By \eqref{eq:claim_T_0}, $T_0=\tau_1>0$ and, since $\w_1|_{[0,T_0]}$ is strictly increasing, then $\w|_{[0,T_0]}$ has exactly one intersection with the $x_2$-axis. Then, on $I_1$, $\w$ cannot intersect the positive $x_2$-axis, since $\spt(\w|_{I_1})=\spt(\w|_{[T_0,\tau_2]})$ is contained in the region $\{P<0\}$. Finally, as $J_\ell\subset I_1$, combining Proposition \ref{prop:loops>eps/2} with Lemma \ref{lem:optimal}, we further deduce that $\w|_{[\tau_2,L(\w)]}$ cannot intersect the $x_2$-axis.
\end{proof}

\section{Estimates of \texorpdfstring{$\protect\widetilde{P}$}{\~P} along optimal competitors}
\label{sec:length_estimates}

This section is devoted to collect some estimates involving the functions $P$ and $\wt P$ when they are evaluated on optimal competitors. These estimates play a double role: on the one hand, they lead us to prove that $\wt P$ is positive on optimal competitors; on the other hand, they will be used in the next section to complete the proof of Theorem \ref{thm:main}.

\begin{lemma}
	\label{lem:lb2_larger_1}
    There exists $\e_0>0$ such that, for every $(\e,s)\in\I_{\e_0}$ and every $\w\in\CO$, the first simple loop $\ell$ of $\w$ satisfies 
	\begin{align}
		\label{eq:lb2_larger1} |\lambda_\w|\beta_\ll^2\geq \frac14.
	\end{align}
\end{lemma}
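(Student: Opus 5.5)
The plan is to compare two lengths: the width of the strip of level sets of $P$ in which the loop $\ell$ is confined, and the minimal normal displacement of a curve with bounded curvature that turns by a large angle. The first is of order $\beta_\ell/x_\ell$. The second is of order $(|\lambda|x_\ell\beta_\ell)^{-1}$. Comparing them gives $|\lambda|\beta_\ell^2\gtrsim 1$, and the constant $\frac14$ should appear once the constants are tracked sharply.

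First I localize. By Proposition \ref{prop:loops>eps/2} and Corollary \ref{cor:uniqueness_of_massa_del_sole}, the loop $\ell$ is the unique loop and lies in $\{x_1\geq C\e^q,\ x_2\geq C\e\}$. By Corollaries \ref{cor:newLS} and \ref{cor:rough_bound_loops}, $L(\ell)\leq K\e^{M(1-1/b)}$, which is much smaller than $\e^q$ once $M$ is large. On $\spt(\ell)$ we then have $\partial_1P=2x_1=2x_\ell(1+o(1))$ and $|\partial_2P|=b\,x_2^{b-1}=O(\e^{b-1})=o(x_\ell)$, since $b-1>q$. Hence $|\nabla P|=2x_\ell(1+o(1))$ uniformly on the loop. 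The Hessian of $P$ is bounded by $O(1)$, so along the loop the level sets of $P$ are, to first order, straight lines orthogonal to $\nabla P(x_\ell,y_\ell)$. Let $\nu\ceq\nabla P(\w(t_\ell))/|\nabla P(\w(t_\ell))|$. Since $\ell$ is positive or negative (Lemma \ref{lem:1stloop_is_+_or_-}), $P\circ\w$ has constant sign on $J_\ell$ and $|P|\leq\beta_\ell$. Integrating $\nabla P$ along segments in direction $\nu$ then gives
\[
\sup_{t,t'\in J_\ell}\langle \w(t)-\w(t'),\nu\rangle\leq \frac{\beta_\ell}{2x_\ell}(1+o(1)),
\]
where the $o(1)$ also absorbs the Hessian error $O(L(\ell)^2)$. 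This error is negligible because $L(\ell)^2\ll \beta_\ell/x_\ell$. To see this, note that Lemma \ref{lem:newbound_L(loop)} gives $L(\ell)\lesssim x_\ell\beta_\ell\e^{-b}$, and $x_\ell\lesssim\e^q$ by Lemma \ref{lem:preliminary}.

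Next I bound the normal displacement from below. At $t_\ell$ the function $|P\circ\w|$ attains its maximum over $J_\ell$. If $t_\ell$ is interior, this forces $\dot\w(t_\ell)\perp\nu$. If $t_\ell$ is the endpoint of the loop, I use instead the direction at the corner and adapt the argument. By Lemma \ref{lem:fortissimo}\ref{item:lem_fortissimo_i}, the total turning of $\ell$ lies in $(\pi,2\pi)$. Therefore, going forward or backward from $t_\ell$ inside $J_\ell$, the tangent turns by an angle of at least $\pi/2$; I would try to extract the full angle $\pi$ by using both sides. The curvature satisfies $|\kappa|=4|\lambda|\w_1|P|\leq 4|\lambda|x_\ell\beta_\ell(1+o(1))=:\bar K$. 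If $\phi$ denotes the angle from the initial tangent, then $ds\geq d\phi/\bar K$, and the displacement in direction $\nu$ is
\[
\int \sin\phi\, ds\geq \frac{1}{\bar K}\int_0^{\varphi}\sin\phi\,d\phi=\frac{1-\cos\varphi}{\bar K}.
\]
Combining this with the width bound, and letting $\varphi$ be the total turning angle, gives $\frac{1-\cos\varphi}{4|\lambda|x_\ell\beta_\ell}\leq\frac{\beta_\ell}{2x_\ell}(1+o(1))$. This rearranges to $|\lambda|\beta_\ell^2\geq\frac{1-\cos\varphi}{2}(1-o(1))$. With $\varphi$ close to $\pi$ this is essentially $1$, which comfortably yields $\frac14$ for small $\e_0$.

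The main obstacle is this turning step, together with the constant. I need a clean way to guarantee that, starting from the point where $|P|$ is maximal with tangent parallel to the level set, the loop turns by enough, between $\pi/2$ and $\pi$, on one side while staying inside $J_\ell$. If only $\pi/2$ is available, the argument gives $|\lambda|\beta_\ell^2\geq\frac12(1-o(1))$, which is still enough for $\frac14$. The case where $t_\ell$ coincides with $s_\ell^\pm$, where the tangent need not be parallel to the level set, is handled by the convexity of $\D(\ell)$. In that case I would take instead the point of $\ell$ extremal in direction $\nu$: there the tangent is orthogonal to $\nu$, and the loop still turns by more than $\pi$ in total.
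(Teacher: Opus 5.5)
Your argument is correct, but it is not the route the paper takes.

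\textbf{The paper's route.} The paper never looks at the geometry of the level sets of $P$. It uses the almost-conserved quantity $F(t)=\lambda P(\omega(t))^2-\sin\theta(t)$, which is, up to sign, the momentum $p_2$ of the Hamiltonian system in Lemma~\ref{lem_polarnormal}. This quantity would be exactly conserved if $P^2$ depended on $x_1$ only. Using $\dot\theta=4\lambda\omega_1P$, one computes
\[
F'=-\dot\theta\,\tfrac b2\,\tfrac{\omega_2^{b-1}}{\omega_1}\sin\theta .
\]
The localization $\omega_1\ge C\varepsilon^q$, $\omega_2\le 2\varepsilon$ and total turning $\le 2\pi$ then give $|F(t_2)-F(t_1)|\le C\varepsilon^{q-1}$ on the loop. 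The paper then picks $t_1,t_2\in J_\ell$ with $|\sin\theta(t_2)-\sin\theta(t_1)|\ge 1$, which is possible because the turning exceeds $\pi$. This gives $|\lambda|\,|P^2(\omega(t_2))-P^2(\omega(t_1))|\ge \frac12$ at once.

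\textbf{Comparison.} Your comparison of strip width against the displacement forced by bounded curvature is the integrated, geometric form of the same mechanism. On the loop $\partial_2P\ll\partial_1P$, so $P$ behaves like a function of the coordinate normal to its level sets. The paper's identity is exact and needs only the localization of the loop. Your version additionally needs $L(\ell)=o(x_\ell)$ and a Taylor control of $P$ over the loop, i.e.\ the inputs from Lemma~\ref{lem:newbound_L(loop)} and Lemma~\ref{lem:preliminary}\ref{lem_prelim_item_i_bis}. In exchange you get the slightly better constant $\frac12(1-o(1))$ and a transparent picture.

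\textbf{Two loose ends to tidy.}
\begin{enumerate}
\item Since $\mathrm{Hess}\,P=O(1)$, the Taylor error is $O(L(\ell)^2)$ in $P$. In the normal displacement it is therefore $O(L(\ell)^2/x_\ell)$, so what you actually need is $L(\ell)^2=o(\beta_\ell)$, not $L(\ell)^2\ll\beta_\ell/x_\ell$. This does hold by the lemmas you cite: $L(\ell)^2\lesssim x_\ell^2\beta_\ell^2\varepsilon^{-2b}\lesssim \beta_\ell\,\varepsilon^{M-b}$.
\item The case $t_\ell=s_\ell^\pm$ needs no separate treatment, because the width bound holds for every pair of points of $\ell$. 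You may therefore start from any $t_0\in(s_\ell^-,s_\ell^+)$ with $\dot\omega(t_0)\perp\nu$. Such a $t_0$ exists because $\theta(J_\ell)$ is an interval of length $>\pi$, so its interior contains one of the two directions orthogonal to $\nu$. Splitting the total turning ($>\pi$) at $t_0$ gives turning $>\pi/2$ on one side, which is all your estimate uses.
\end{enumerate}
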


\begin{proof}
    Let $\e_0>0$ be as in Corollary \ref{cor:uniqueness_of_massa_del_sole}, and fix an optimal competitor $\w \in \CO$, with $(\e,s)\in \I_{\e_0}$. Let $\ell$ be the first loop of $\w$, with loop interval $J_\ell$. For every $t\in I_\w$, define $F(t)\ceq  \lambda P(\w(t))^2-\sin(\theta(t))$, where $\theta$ is the angle map of $\w$. Using \eqref{eq:teta} and \eqref{eq:normalpolar}, we compute  
    \begin{align*}
        F'(t) &= 2\lambda P(\w(t)) \left(2\w_1(t)\cos(\theta(t))-b \w_2(t)^{b-1}\sin(\theta(t))\right) - \dot\theta(t)\cos(\theta(t))\\
        &= -\dot\theta(t)\frac{b}2\frac{\w_2(t)^{b-1}}{\w_1(t)}\sin(\theta(t)). 
    \end{align*}
    Hence, for every time interval $[t_1,t_2]\subset J_\ell$, with $t_1<t_2$, we deduce that
    \begin{equation}
        \left|F(t_2)-F(t_1)\right| \leq \int_{t_1}^{t_2} \left| \dot\theta(t)\frac{b}2   \frac{\w_2(t)^{b-1}}{\w_1(t)}\sin(\theta(t))\right| \leq \pi b  \frac{(2\e)^{b-1}}{C\e^q},
    \end{equation}
    where, in the last inequality, we used Proposition \ref{prop:loops>eps/2} and Lemma \ref{lem:optimal}\ref{lem:optimal_item_ii} to estimate $\w_1\geq C\e^q$, for a suitable $C>0$, and we also used Lemma \ref{lem:preliminary}\ref{lem_prelim_item_i} to estimate $\w_2\leq 2\e$ and Lemma \ref{lem:fortissimo}\ref{item:lem_fortissimo_i} to estimate $\int_{t_1}^{t_2}|\dot\theta|dt\leq 2\pi$.  Therefore, for every time interval $[t_1,t_2]\subset J_\ell$, we have
    \begin{equation}
        \left|F(t_2)-F(t_1)\right| \leq C_0 \e^{q-1},
    \end{equation}
    where $C_0=\pi b2^{b-1}C^{-1}$. By definition of $F$, the latter inequality implies that
    \begin{equation}
        \left|\lambda \left(P^2(\w(t_2))-P^2(\w(t_1))\right)\right|\geq |\sin(\theta(t_2))-\sin(\theta(t_1))|-C_0\e^{q-1}.	    
    \end{equation}
	By \eqref{eq:gauss-bonnet-thm}, the times $t_1,t_2\in J_\ell$ can be chosen so that $|\sin(\theta(t_2))-\sin(\theta(t_1))|\geq 1$. Thus, up to shrinking $\e_0$, we get 
	\[
	\Big|\lambda \Big(P^2(\w(t_2))-P^2(\w(t_1))\Big)\Big|\geq 1/2.
	\]
	The conclusion now follows by observing that $|P^2(\w(t_2))-P^2(\w(t_1))|\leq 2\beta_\ell^2$.
\end{proof}

\begin{lemma}
	\label{lem:loc_max_P} 
    There exists $\e_0>0$ such that, for all $(\e,s)\in\I_{\e_0}$ and all $\w\in\CO$, with loop $\ell$, the following holds. Let $t_*\in I_\w\setminus J_\ell$ is a local maximum for $t\mapsto \wt P(\w(t))$ satisfying $\wt P(\w(t_*))>0$. Assume that either $t_*\in I_0$, or $\w_2(\tau_i)\leq\e$, for all $i\in \I$. Then, we have
	\begin{equation}
		\label{eq:locmaxP}
		|\lambda| \wt P(\w(t_*))^{\frac{q+1}{q}}\leq \frac{b(b-1)}{8}.
	\end{equation}
\end{lemma}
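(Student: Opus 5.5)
The plan is a second-order condition at the interior maximum $t_*$, combined with the curvature equation \eqref{eq:normalpolar} and the sign $\lambda<0$ from Proposition \ref{prop:lambda>0}.

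\emph{Reduction to $P$ in $\{x_2\ge 0\}$.} Since $\wt P(\w(0))=\wt P(\w(L(\w)))=0$ and $\wt P(\w(t_*))>0$, the time $t_*$ is interior. I claim that $\w_2(t_*)\ge 0$.
\begin{itemize}
\item By Corollary \ref{cor:unique_intersection_x1axis}, $\w_2<0$ only on $[0,t_\w)$.
\item On $[0,t_\w)\subset[0,T_0)$ we have $\theta\in(0,\tfrac\pi2)$, so $\w_1$ is strictly increasing there.
\item Hence $t\mapsto\wt P(\w(t))=\w_1(t)^2$ is strictly increasing on $[0,t_\w)$ and has no local maximum there.
\end{itemize}
So $\w_2(t_*)\ge 0$, and near $t_*$ we have $\wt P\circ\w=P\circ\w$, with $P(\w(t_*))>0$. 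The hypothesis ($t_*\in I_0$, or $\w_2(\tau_i)\le\e$ for all $i$) is used only to place $t_*$ in this region. In particular it rules out $t_*$ sitting in a part of $I_\w$ where the identification $\wt P=P$, or the control of $\w_2$, fails. Lemma \ref{lem:optimal} and the localization of the loop (Proposition \ref{prop:loops>eps/2}) should give this directly.

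\emph{Second-order condition.} Write $(x_1,x_2)=\w(t_*)$, so that $\nabla P=(2x_1,-bx_2^{b-1})$ and $\mathrm{Hess}\,P=\mathrm{diag}(2,-b(b-1)x_2^{b-2})$.
\begin{itemize}
\item First order: $\nabla P\cdot\dot\w=0$. Hence $\dot\w$ is a unit vector tangent to the level set, and $\dot\w_2^2=4x_1^2/|\nabla P|^2$.
\item Second order: $\frac{d^2}{dt^2}P(\w)\le0$.
\item By Lemma \ref{lem_polarnormal}, $\ddot\w=\kappa\,\dot\w^\perp$ with $\kappa=4\lambda x_1P$.
\item Orienting $\dot\w^\perp$ with the correct sign gives $\nabla P\cdot\ddot\w=-\kappa|\nabla P|$; this sign has to be checked carefully against $\dot\theta=\lambda Q$.
\end{itemize}
Since $\lambda<0$, the term $-\kappa|\nabla P|=4|\lambda|x_1P|\nabla P|$ is positive. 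The second-order condition therefore reads
\begin{equation}
4|\lambda|x_1P|\nabla P|\le -\dot\w^T\mathrm{Hess}\,P\,\dot\w\le b(b-1)x_2^{b-2}\dot\w_2^2=\frac{4b(b-1)x_2^{b-2}x_1^2}{|\nabla P|^2}.
\end{equation}
Here I dropped the term $-2\dot\w_1^2\le 0$ and used $x_2\ge0$. Using $|\nabla P|^3\ge 8x_1^3$ (and $x_1>0$ by Lemma \ref{lem:w1>0}), this becomes
\begin{equation}
|\lambda|P\le \frac{b(b-1)}{8}\,\frac{x_2^{b-2}}{x_1^2}.
\end{equation}

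\emph{Eliminating $x_1,x_2$.} On $\{x_2\ge0\}$ we have $x_1^2=x_2^b+P$, so
\begin{equation}
\frac{x_2^{b-2}}{x_1^2}\le\frac{(x_2^b+P)^{\frac{b-2}{b}}}{x_2^b+P}=(x_2^b+P)^{-\frac2b}\le P^{-\frac2b}=P^{-\frac1q}.
\end{equation}
Since $1+\tfrac1q=\tfrac{q+1}{q}$, this yields \eqref{eq:locmaxP}.

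The main obstacle I expect is the first step, namely making rigorous that $t_*$ lies where $\wt P=P$ and $x_2\ge0$, using the stated hypothesis together with the structure of $I_\w$ from Corollary \ref{cor:uniqueness_of_massa_del_sole}. A secondary point is the sign bookkeeping for $\nabla P\cdot\ddot\w$. The degenerate case $x_2=0$ is harmless, since then the Hessian term vanishes and the inequality forces a contradiction unless $\lambda P=0$.
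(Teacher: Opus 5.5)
The step you call ``sign bookkeeping'' is a genuine gap, and it is exactly where the hypothesis of the lemma is needed. You have no freedom in orienting the normal, because Lemma~\ref{lem_polarnormal} fixes $\ddot\omega=\dot\theta\,(-\sin\theta,\cos\theta)$ with $\dot\theta=4\lambda x_1P$. Since $\nabla P\cdot\dot\omega(t_*)=0$ and $x_1>0$, we get $(\cos\theta,\sin\theta)(t_*)=\sigma\,(bx_2^{b-1},2x_1)/|\nabla P|$, where $\sigma=\mathrm{sgn}\,\sin\theta(t_*)\in\{-1,1\}$. Therefore
\begin{equation*}
\nabla P\cdot\ddot\omega(t_*)=-\sigma\,\dot\theta(t_*)\,|\nabla P|=4\sigma|\lambda|\,x_1P\,|\nabla P|.
\end{equation*}
So your identity $\nabla P\cdot\ddot\omega=-\kappa|\nabla P|$ is equivalent to $\sin\theta(t_*)>0$.

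Now suppose instead $\sin\theta(t_*)<0$, so that also $\cos\theta(t_*)<0$. Then the curvature term is negative, and $(P\circ\omega)''(t_*)\le0$ holds with no constraint on $|\lambda|P$. No local consideration can exclude this case. A curve running down-left along the level set $\Gamma_\varrho$ with negative curvature bends into $\{P<\varrho\}$, exactly as it should at a local maximum. If $\sin\theta(t_*)>0$ is granted, your second-order computation and the elimination of $x_1,x_2$ are correct; this is the step for which the paper simply cites \cite[Lem.~3.14]{notall}.

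You have also misread the role of the hypothesis ``$t_*\in I_0$ or $\omega_2(\tau_i)\le\e$''. The localization $\omega_2(t_*)\ge0$, with $\wt P=P$ near $t_*$, follows from Corollary~\ref{cor:unique_intersection_x1axis} alone, as your own bullets show. The hypothesis is needed instead for the global argument that excludes $\sin\theta(t_*)\le0$, which is the bulk of the paper's proof.

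That argument runs as follows. Let $t_*\in I_i$ with $i\in\I_+\subset\{0,2\}$. Close up three pieces: $\pr(\gamma)$ up to $\omega(\tau_i)$, the arc $\omega|_{[\tau_i,t_*]}$, and the down-left tangent half-line at $\omega(t_*)$ up to its first hit with this path. This gives a simple closed curve $\bar\eta$ of non-positive curvature. Since $\lambda<0$, $\omega$ enters $\mathcal D(\bar\eta)$ right after $t_*$, so it must leave at a first time $t_2$. Each exit route is excluded:
\begin{itemize}
\item through $\omega|_{[\tau_i,t_*]}$, it would give $t_*\in J_\ell$;
\item through the half-line, it forces the unique loop into $\mathcal D(\bar\eta)$, which is excluded by Lemma~\ref{lem:cut_and_paste1}\ref{c_a_p_1_item_i} or \ref{c_a_p_1_item_iii};
\item through the $x_2$-axis, it contradicts $\omega_1>0$;
\item through the arc of $\{P=0\}$, which is possible only when $i=2$, it forces $t_2=L(\omega)$ and $\e=\omega_2(t_2)<\omega_2(\tau_2)$, which is exactly what $\omega_2(\tau_2)\le\e$ forbids.
\end{itemize}
You need to supply this global argument, or an equivalent one, before your second-order computation applies.
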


\begin{proof}
    By Corollary \ref{cor:unique_intersection_x1axis}, there is a unique $\bar t\in I_0$ such that $\w_2(\bar t)=0$ and $\w_1|_{(0,\bar t]}$ is strictly increasing and positive, and thus the same holds for $\wt P\circ \w|_{(0,\bar t]}$. Therefore, it must be $t_*\geq \bar t$, which implies $\wt P(\w(t_*))=P(\w(t_*))$, and $t_*$ is a local maximum for $P\circ \w$. If we show that, under our assumptions,  
    \begin{equation}
        \label{eq:sin(teta(t_*))>0}
        \sin(\0(t_*))>0,
    \end{equation}
    then, the conclusion follows by \cite[Lem.\ 3.14]{notall}. We now prove \eqref{eq:sin(teta(t_*))>0}. Assume by contradiction that $\sin(\0(t_*))\leq 0$. Since $\w$ is tangent at $t_*$ to the graph of
$\Gamma_\z$, with $\z\ceq P(\w(t))$, cf.\ Section \ref{sec:level_sets}, then $\sin(\theta(t_*))<0$ and $\cos(\0(t_*))<0$ as well. Let $i\in\I_+$ be such that $t_*\in I_i$ and define the curve
\[
\eta:(-\oo,t_*]\to\R^2, \qquad \eta(t)\ceq 
\begin{cases}
    \pr(\gamma)(t), \quad &t\in(-\oo,\tau_i],
    \\
    \w(t), \quad &t\in(\tau_i,t_*],
\end{cases}
\]
and the half-line $r(t)\ceq   \w(t_*)+t\dot\w(t_*)$, for $t\geq0$. Since $\sin(\0(t_*)),\cos(\0(t_*))<0$, it follows that $\spt(\eta)\cap\spt(r)\neq\emptyset$. Then, the minimum $t_1\ceq \min\{t>0\mid r(t)\in \spt(\eta)\}$ is well-defined and is strictly positive since $\w$ is injective at $t_*$ (recall that, by hypothesis, $t_*\notin J_\ell$). Define the curve
\begin{equation}
    \bar \eta:[t_0,t_*+t_1]\to\R^2, \qquad 
    \bar\eta(t)\ceq 
\begin{cases}
    \eta(t), \quad &t\in [t_0,t_*],
    \\
    r(t-t_*), \quad &t\in(t_*,t_*+t_1 ],
\end{cases}
\end{equation}
where $t_0\in (-\oo,t_*)$ is such that $\eta(t_0)=r(t_1)$. By construction, $\bar\eta$ is closed, simple, piecewise smooth, and with non-positive curvature. Moreover, $\w(t_*+\delta)\in \D(\bar\eta)$ for sufficiently small $\delta>0$, so that $t_2\ceq \min\{t\in(t_*,\tau_{i+1}]\mid \w(t)\in \spt(\bar\eta)\}$ is well-defined. Firstly, by Lemma \ref{lem:w1>0}, $\w_1(t_2)>0$, hence $\w(t_2)\notin \spt(\pr(\g)|_{(-\infty,0]})$. Secondly, $\w(t_2)\notin \spt(\w|_{[\tau_i,t_*]})$, because otherwise $t_*$ would belong to $J_\ell$. Thirdly, we claim that $\w(t_2)\notin \spt(r|_{[0,t_1]})$. Indeed, if $\w(t_2)\in \spt(r|_{[0,t_1]})$ then, by the Gauss-Bonnet Theorem \eqref{eq:gauss-bonnet-thm} and Proposition \ref{prop:lambda>0}, $\w|_{[t_*,t_2)}$ must have a loop, which is then the unique loop $\ell$ of $\w$. By the minimality of $t_2$, $\ell$ is such that $\spt(\ell)\subset \D(\bar\eta)$. Now, if $\ell_1(t)\leq\w_1(t_*)$ for all $t\in J_\ell$, then we reach a contradiction using Lemma \ref{lem:cut_and_paste1}\ref{c_a_p_1_item_i}. Thus, there must exist  $\hat t\in J_\ell$ such that $\ell_1(\hat t)\in \D(\bar\eta) \cap\{x\in\R^2\mid x_1> \w_1(t_*)\}$. Since $\cos(\0(t_*)),\sin(\0(t_*))<0$, there exists $\mu>0$ such that $\ell(\hat t)-(0,\mu )\in \spt(\w|_{[\tau_i,t_*)})$, which is again a contradiction by Lemma \ref{lem:cut_and_paste1}\ref{c_a_p_1_item_iii}. 

We are now in position to conclude the proof of \eqref{eq:sin(teta(t_*))>0}. Since $i\in \I_+$, by Corollary \ref{cor:uniqueness_of_massa_del_sole}, either $i=0$ or $i=2$. If $i=0$, the argument above shows that $\w(t_2)\notin \spt(\bar \eta)$ which is a contradiction. If instead $i=2$ (and thus $t_*\in I_2$), then by hypothesis, $\w_2(\tau_2)\leq \e$. If, by contradiction, $\w(t_2)\in \spt(\pr(\g)|_{[0,\w_2(\tau_2))})$, then, $t_2=\tau_3=L(\w)$ (note that by Corollary \ref{cor:unique_intersection_x1axis}\ref{cor:uniqueness_of_massa_del_sole_item_iii}, $\I=\{0,1,2\}$), and thus $\e=\w_2(L(\w))=\w_2(t_2)<\w_2(\tau_2)\leq \e$, which is a contradiction.
\end{proof}

The next proposition shows that $\wt P$ is positive along an optimal competitor. This implies that the only case of Corollary \ref{cor:uniqueness_of_massa_del_sole} that can happen is item \ref{cor:uniqueness_of_massa_del_sole_item_i}, i.e., the one where $J_\ell\subset I_0=I_\w$. 

\begin{proposition}
	\label{prop:P>0}
	There exists $\e_0>0$ such that, for all $(\e,s)\in \mc I_{\e_0}$ and for all $\w\in\CO$, $\wt P(\w(t))>0$, for every $t\in (0,L(\w))$. 
\end{proposition}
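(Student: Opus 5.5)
We argue by contradiction, assuming $\wt P\circ\w$ vanishes somewhere in $(0,L(\w))$. By Corollary \ref{cor:uniqueness_of_massa_del_sole}, this means we are in situation \ref{cor:uniqueness_of_massa_del_sole_item_ii} or \ref{cor:uniqueness_of_massa_del_sole_item_iii}. In both, $J_\ell\subset I_1$, $1\in\I_-$, and $I_0$ is an injective positive arc from $(0,-s)$ to $\w(\tau_1)\in\{P=0,x_1>0\}$. By Proposition \ref{prop:loops>eps/2} and Lemma \ref{lem:optimal}, the negative loop lies in $\{x_2\geq C\e\}$. By Lemma \ref{lem:sign_of_intervals}\ref{item:sign_of_int_ii} with $\la<0$ (Proposition \ref{prop:lambda>0}), $\w_2(\tau_2)\geq\w_2(\tau_1)$. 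Also, by Corollary \ref{cor:unique_intersection_x1axis}, $T_0=\tau_1$, so $\w_1$ is increasing on $I_0$.

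The plan is to produce a lower bound $|\la|\beta_\ell^2\geq\frac14$ from Lemma \ref{lem:lb2_larger_1} and contradict it with an upper bound on $|\la|$ coming from the positive arc $I_0$, exactly as in \cite{notall}. First we show $\w_2(\tau_1)\leq\e$ (and, in case \ref{cor:uniqueness_of_massa_del_sole_item_iii}, $\w_2(\tau_2)\leq\e$), so that Lemma \ref{lem:loc_max_P} applies to all positive arcs. For case \ref{cor:uniqueness_of_massa_del_sole_item_ii}, $\tau_2=L(\w)$ and $\w_2(\tau_2)=\e$, hence $\w_2(\tau_1)\leq\e$. For case \ref{cor:uniqueness_of_massa_del_sole_item_iii}, we would argue that $I_2$ is a positive injective arc ending at $(\e^q,\e)$ with negative curvature. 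If $\w_2(\tau_2)>\e$, concatenating it with $\pr(\g)$ and using a Gauss--Bonnet/orientation argument as in Lemma \ref{lem:sign_of_intervals} should contradict Lemma \ref{lem:cut_and_paste1}\ref{c_a_p_1_item_iii} or \ref{c_a_p_1_item_iv}. Next, since $\w|_{I_0}$ goes from $\wt P=0$ to $\wt P=0$ with $\wt P>0$ in between, it has an interior maximum $t_*\in I_0\setminus J_\ell$. Lemma \ref{lem:loc_max_P} then gives $|\la|\,\wt P(\w(t_*))^{\frac{q+1}{q}}\leq\frac{b(b-1)}{8}$.

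The key step is a lower bound on $\wt P(\w(t_*))$ in terms of the geometry. On $I_0$ the curvature is $\la Q<0$, so $\w|_{I_0}$ together with the arc of $\{\wt P=0\}$ encloses a convex region $\Omega$, as in Proposition \ref{prop:lambda>0}. The convex region contains $\frac12\w(\tau_1)$, which yields $\max_{I_0}\wt P\geq c_b\,\w_2(\tau_1)^b$. We then need $\w_2(\tau_1)\gtrsim\e$. This should follow because the negative arc must reach the loop at height $\geq C\e$ while staying in $\{P<0\}$, which is a thin region near $\spt\g$. More precisely, $\w_2(\tau_1)$ small would force a large length excess, as in Lemma \ref{lem:geometric_w2>e2}, or would contradict Lemma \ref{lem:cut_and_paste2}\ref{c_a_p_2_item_i}, comparing $|Q|$ on the loop (of size $\approx\beta_\ell\e^q$) with $|Q|$ at a point of $I_0$. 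Combining $\wt P(\w(t_*))\geq c\e^b$ with Lemma \ref{lem:loc_max_P} gives $|\la|\lesssim\e^{-b(q+1)/q}$.

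Finally, Lemma \ref{lem:lb2_larger_1} gives $|\la|\geq\frac14\beta_\ell^{-2}$. The comparison of these two bounds is where I expect the main obstacle. With $\beta_\ell<\e^M$ from Corollary \ref{cor:newLS} and $M$ large, $\beta_\ell^{-2}\gg\e^{-b(q+1)/q}$, which contradicts the upper bound. If $M$ is insufficient (it was only fixed $>3q-1$), I would instead compare $\beta_\ell$ with $\max_{I_0}\wt P$ directly. Applying Lemma \ref{lem:cut_and_paste1}\ref{c_a_p_1_item_v} or Lemma \ref{lem:cut_and_paste2}\ref{c_a_p_2_item_i} at $t_*$ gives $\beta_\ell\e^q\lesssim\wt P(\w(t_*))\,\e^q$, hence $\beta_\ell\lesssim\wt P(\w(t_*))$. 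Then $\frac14\leq|\la|\beta_\ell^2\lesssim|\la|\wt P(\w(t_*))^{2}\leq |\la|\wt P(\w(t_*))^{\frac{q+1}q}\cdot\wt P(\w(t_*))^{\frac{q-1}{q}}\to0$, since $q>1$ and $\wt P\leq\e^M$. This would give the contradiction and hence $\wt P\circ\w>0$ on $(0,L(\w))$.
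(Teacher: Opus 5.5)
\textbf{There is a genuine gap at your key step, the lower bound $\wt P(\w(t_*))\gtrsim\e^b$ on the positive arc $I_0$.} It breaks in two places, and your fallback does not repair it.

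\emph{Convexity.} The convexity you borrow from Proposition~\ref{prop:lambda>0} is not available. There the contradiction hypothesis was $\la>0$, so $\w|_{I_0}$ turned counterclockwise and, together with the arc of $\{\wt P=0\}$, bounded a convex set. Here $\la<0$, so $\w|_{I_0}$ turns clockwise, i.e.\ it bends away from $\Omega$. Nothing in your argument prevents $\w|_{I_0}$ from hugging $\spt(\pr(\g))$ from the right; note that the abnormal curve itself turns clockwise. In that situation $\max_{I_0}\wt P$ can be tiny whatever $\w_2(\tau_1)$ is, and $\frac12\w(\tau_1)$ need not lie in $\Omega$.

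\emph{Height of $\w(\tau_1)$.} The claim $\w_2(\tau_1)\gtrsim\e$ is unjustified. With $\la<0$, Lemma~\ref{lem:sign_of_intervals}\ref{item:sign_of_int_ii} gives $\w_2(\tau_2)\ge\w_2(\tau_1)$, so, unlike in Proposition~\ref{prop:lambda>0}, the height $C\e$ of the loop cannot be transferred back to $\tau_1$. The negative arc may climb from a point $\w(\tau_1)$ near the origin inside the thin cusp $\{P<0\}$ along $\spt(\pr(\g))$, which costs no length excess. Lemma~\ref{lem:geometric_w2>e2} concerns the point $t_\ell$ of a loop, not $\w(\tau_1)$. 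Lemma~\ref{lem:cut_and_paste2}\ref{c_a_p_2_item_i} only concerns points where $|Q|$ is at least its maximum on the loop. Observe also that any bound $\max_{I_0}\wt P\ge c\e^b$ would contradict $\wt\beta<\e^M$ with $M>b$ at once, without Lemmas~\ref{lem:lb2_larger_1} and~\ref{lem:loc_max_P}. That is a sign this bound is exactly what cannot be obtained cheaply.

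\emph{Fallback.} Your fallback uses the cut-and-paste obstructions in the wrong direction. Lemma~\ref{lem:cut_and_paste1}\ref{c_a_p_1_item_v} and Lemma~\ref{lem:cut_and_paste2}\ref{c_a_p_2_item_i} say that $\max_{J_\ell}|Q|\le|Q(\w(t^*))|$ is incompatible with optimality. So optimality yields $Q(\w(t_*))<\max_{J_\ell}|Q|$, which is an upper bound on $\wt P$ off the loop, not $\beta_\ell\lesssim\wt P(\w(t_*))$. In fact Lemmas~\ref{lem:lb2_larger_1} and~\ref{lem:loc_max_P} together give $\wt P(\w(t_*))\le C\beta_\ell^{b/(q+1)}$, which is much smaller than $\beta_\ell$ because $b/(q+1)>1$.

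\textbf{The missing idea is to exploit this smallness through a length comparison instead of fighting it.} Let $\wt\beta_+$ be the maximum of $\wt P\circ\w$ off $I_1$ if $\w_2(\tau_2)\le\e$, or over $[0,\tau_2]$ otherwise.
\begin{enumerate}
\item Lemmas~\ref{lem:lb2_larger_1} and~\ref{lem:loc_max_P} give $\wt\beta_+\le C\beta_\ell^{b/(q+1)}$. The case split $\w_2(\tau_2)\le\e$ or $>\e$ is exactly what makes Lemma~\ref{lem:loc_max_P} applicable at the maximizer.
\item The loop-excised curve $\w|_{[0,s_\ell^-]}*\w|_{[s_\ell^+,L(\w)]}$ (truncated at $\tau_2$ in the second case) lies in $D_{\wt\beta_+}$, since $\wt P\le0$ on $I_1\setminus J_\ell$.
\item Proposition~\ref{prop:sublevel_length_estimate} and minimality of $\w$ then give $L(\ell)\le C\wt\beta_+^{1-1/b}\le C\beta_\ell^{(b-1)/(q+1)}\le C\beta_\ell$. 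In the second case one compares with $\g_{s,\w_2(\tau_2)}$ instead of $\g_{s,\e}$.
\item Proposition~\ref{prop:L(1stloop)>dist2}, together with Lemma~\ref{lem:preliminary}\ref{lem_prelim_item_i}--\ref{lem_prelim_item_i_bis}, gives $L(\ell)\ge C\beta_\ell\e^{-q}$, a contradiction for small $\e$.
\end{enumerate}
Because the case $\w_2(\tau_2)>\e$ is handled directly in this way, your attempt to rule it out in situation~\ref{cor:uniqueness_of_massa_del_sole_item_iii} is unnecessary.
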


\begin{proof}
    For the sake of simplicity, in this proof, $C>0$ denotes a generic constant depending only on $b$, whose value may change from line to line. Let $\e_0>0$ be small enough such that all the previous statements hold. Take $(\e,s)\in\I_{\e_0}$ and $\w\in\CO$. From Lemma \ref{lem:1stloop_is_+_or_-} and Corollary \ref{cor:uniqueness_of_massa_del_sole}, $\w$ admits a unique loop $\ell$, and, either $J_\ell\subset I_0$ (and then $I_0=I_\w$) or $J_\ell\subset I_1$. In particular, $\wt P(\w(t))>0$ for all $t\in(0,L(\w))$ if and only if $J_\ell\subset I_0$. Suppose, by contradiction, $J_\ell\subset I_1$ and we distinguish the two cases where $\w_2(\tau_2)\leq\e$ or $\w_2(\tau_2)>\e$.
    
    \noindent \textsc{Case 1: $\w_2(\tau_2)\leq\e$.} Set $\wt\beta_+\ceq \max_{t\in I_\w} \wt P(\w(t))$ and let $t_*\in [0,L(\w)]\setminus I_1$ be such that $\wt P(\w(t_*))=\wt \beta_+$.  Since $t_*$ is a local maximum of $\wt P\circ \w$ with $\wt P(\w(t_*))>0$ and $t_*\notin J_\ell$, Lemma \ref{lem:loc_max_P} implies that
    \begin{equation}
        |\lambda|\wt\beta_+^{\frac{q+1}{q}}\leq C.
    \end{equation}
    Combining the latter inequality with Lemma \ref{lem:lb2_larger_1}, we conclude that
    \begin{equation}
    \label{eq:beta_+_beta_ell}
        \wt\beta_+\leq C\beta_\ell^\frac{b}{q+1}.
    \end{equation}   
    We apply Proposition \ref{prop:sublevel_length_estimate} to the curve $\eta\ceq \w|_{[0,s_\ell^-]}\ast\w|_{[s_\ell^+,L(\w)]}$, with $\varrho\ceq \wt \beta_+$, to deduce that 
    \begin{equation}
        L(\w)= L(\eta)+L(\ell) \geq 
        L(\gamma_{s,\e}) - C \wt\beta_+^{1-\frac{1}{b}} +L(\ell) \geq 
        L(\w) - C \beta_\ell^{\frac{b-1}{q+1}} +L(\ell),   
    \end{equation}
    where, in the last inequality, we used \eqref{eq:beta_+_beta_ell} and the fact $\w \in\CO$. This, in turn, implies that 
    \begin{equation}
        L(\ell) \leq  C \beta_\ell^{\frac{b-1}{q+1}}\leq C \beta_\ell, 
    \end{equation}
    where in the last inequality we used that $b\geq4$. Conversely, we can find a lower bound on $L(\ell)$, combining Proposition \ref{prop:L(1stloop)>dist2}, together with Lemma \ref{lem:preliminary}\ref{lem_prelim_item_i}-\ref{lem_prelim_item_i_bis}, obtaining
    \begin{equation}
        L(\ell) \geq C \beta_\ell\e^{-q}.
    \end{equation}
    The last two inequalities lead to the contradiction $\e^{-q}\leq C$.

    \noindent \textsc{Case 2: $\w_2(\tau_2)>\e$.} In this case, we set $\wt\beta_+\ceq \max_{t\in [0,\tau_2]} \wt P(\w(t))$. Let $t_*\in [0,\tau_2]\setminus I_1=I_0$ be such that $\wt P(\w(t_*))=\wt \beta_+$. As before, $t_*\in I_0\setminus J_\ell$ is a local maximum of $\wt P\circ \w$ with $\wt P\circ \w(t_*)>0$. Then, reasoning as in the previous case, Lemmas \ref{lem:lb2_larger_1} and \ref{lem:loc_max_P} imply 
    \begin{equation}
        \label{eq:75primo}
        \wt\beta_+\leq C\beta_\ell^\frac{b}{q+1}.
    \end{equation}
    We apply Proposition \ref{prop:sublevel_length_estimate} to the curve $\eta\ceq \w|_{[0,s_\ell^-]}\ast\w|_{[s_\ell^+,\tau_2]}$, to deduce that 
    \begin{equation}
        L(\eta)\geq L(\gamma_{s,\w_2(\tau_2)}) - C \wt\beta_+'^{1-\frac{1}{b}} \geq L(\w) - C \beta_\ell^{\frac{b-1}{q+1}}, 
    \end{equation}
    where, in the last inequality, we used \eqref{eq:75primo} and the estimate $L(\gamma_{s,\w_2(\tau_2)})\geq L(\gse)\geq L(\w)$ which holds since $\w(\tau_2)>\e$ and $\w\in\CO$. Therefore, we get
    \begin{equation}
        L(\w)= L(\eta)+L(\ell) +L(\w|_{I_2})
        \geq
        L(\eta) + L(\ell)
        \geq
        L(\w)-C \beta_\ell^{\frac{b-1}{q+1}} + L(\ell).  
    \end{equation}
    We reach a contradiction reasoning as in the previous case.
\end{proof}

\section{Proof of the main theorem}
\label{sec:proof}

Along this section $C>0$ denotes a 
generic constant depending only on $b$, and its value may change from line to line. Let $\e_0>0$ be such that all the statements in the previous sections hold, and fix $(\e,s)\in\I_{\e_0}$, and $\w\in\CO$. Recall that $\w$ has a unique loop $\ell$ and, by Proposition \ref{prop:P>0}, $J_\ell\subset I_0=[0,L(\w)]$, and thus, $\wt P\circ\w\geq0$ and $\wt\beta=\max_{t\in I_\w}\wt P(\w(t))$. Define the two quantities
\begin{equation}
\label{eq:zeta} 
\zeta \ceq  \sqrt{\wt\bb  \e^{-b}} \qquad \textrm{and}\qquad 
\z \ceq  \left(\wt\bb \e^\1\right)^{\frac{b}{b-1}}. 
\end{equation}
Note that, since $\wt\beta\e^{-b}<\e^{M-b}<1$ by Corollary \ref{cor:newLS} (recall that $M>b$ fixed), $\varrho<\wt\beta$. In addition, if $t_*\in I_0\setminus J_\ell$ is a local maximum for $P\circ\w$, then combining Lemmas \ref{lem:lb2_larger_1} and \ref{lem:loc_max_P}, we obtain that
\begin{equation}
    \label{eq:locmaxP_final}
    \wt P(\w(t_*))\leq C\wt \beta^{\frac{b}{q+1}} \leq \z <\wt\beta.
\end{equation}
This fact implies, in particular,
\begin{equation}
    \label{eq:beta=betaloop}
    \wt\beta=\beta_\ell.
\end{equation}
At this stage, let us define the curve
\begin{equation}
    \nu\ceq \spt(\w|_{[0,s_\ell^-]})\cup\spt(\w|_{[s_\ell^+,L(\w)]}),
\end{equation}
whose length, by optimality of $\w$, satisfies
\begin{equation}
    \label{eq:start}
    L(\g_{s,\e})\geq L(\w)=L(\ell)+L(\nu).
\end{equation}
We divide the proof into the two cases where $\wt P(\w(s_\ell^\pm))\leq\z$ and $\wt P(\w(s_\ell^\pm))>\z$.

\noindent\textsc{Case 1: $\wt P(\w(s_\ell^-))\leq \z$.} In this case, since $\varrho<\wt\beta$, we may apply Proposition \ref{prop:sublevel_length_estimate} to $\nu$ and estimate $L(\nu)\geq L(\g_{s,\e})-C\z^{1-\frac1b}$. Therefore, by \eqref{eq:start} we get
\begin{equation}
    L(\ell)\leq C\z^{1-\frac1b}= C\wt\beta\e^\1.
\end{equation}
However, by Propositions \ref{prop:L(1stloop)>dist2} and \ref{prop:loops>eps/2}, $L(\ell)\geq C\wt\beta\e^{-q}$, which is a contradiction for small $\e>0$.

\noindent\textsc{Case 2: $\wt P(\w(s_\ell^-))> \z$.} Define $t^-, t^+\in I_\w$ by
\begin{equation}
\label{eq:t-andt+}
    t^-\ceq  \max \Bigl\{t\in [0,s_\ell^-] \, \vert \, \wt P(\omega(t)) =\z \Bigr\} \quad \mbox{and} \quad  t^+\ceq  \min \Bigl\{t\in [s_\ell^+,L(\omega)] \, \vert \, \wt P(\omega(t)) =\z \Bigr\}.
\end{equation}
Then, by construction, $0<t^-<s_\ell^-<t_\ell<s_\ell^+<t^+<L(\w)$. Moreover, by \eqref{eq:locmaxP_final}, we deduce that
\begin{equation}
    \label{eq:liso_loops_trick} 
    \wt P(\w(t))< \z \qquad \mr{for \; all \;} t\in I_0 \setminus(t^-, t^+).
\end{equation}

\begin{lemma}
\label{lem:liso_loops_trick} 
Under the assumptions and notations above, we have $\w_2(t^-)<\w_2 (t^+)$ and
\begin{equation}
    \label{eq:liso_loops_trick_times} t_\ell\left(1-\zeta\right)<t^- \qquad \text{and} \qquad t^+<t_\ell\left(1+\zeta\right).
\end{equation}
\end{lemma}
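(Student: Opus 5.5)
The plan is to obtain \eqref{eq:liso_loops_trick_times} from a length comparison along the two "non-loop" arcs $\w|_{[0,t^-]}$ and $\w|_{[t^+,L(\w)]}$, which by \eqref{eq:liso_loops_trick} lie in the sublevel set $D_\z$. The inequality $\w_2(t^-)<\w_2(t^+)$ will come from the geometry of the loop. For the latter I would argue as in Lemma \ref{lem:sign_of_intervals}\ref{item:sign_of_int_ii}. We have $\lambda<0$ by Proposition \ref{prop:lambda>0}, and $\wt P\circ\w>0$ on $(0,L(\w))$, so $\theta$ is strictly decreasing. The arc $\w|_{[t^-,t^+]}$ is the only part of $\w$ entering $\{\wt P>\z\}$; it contains the unique, strictly convex, negatively oriented loop (Lemma \ref{lem:fortissimo}\ref{item:lem_fortissimo_i}), and the portions $[t^-,s_\ell^-]$ and $[s_\ell^+,t^+]$ do not intersect each other. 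The closed curve formed by $\w|_{[t^-,s_\ell^-]}\ast\w|_{[s_\ell^+,t^+]}$ and the arc of $\mc P_\z$ between $\w(t^+)$ and $\w(t^-)$ then has negative orientation by Lemma \ref{lem:gauss-bonnet_nerfed}\ref{item:gb2}. If instead $\w_2(t^+)\le\w_2(t^-)$, this orientation would put $\D(\ell)$ on the wrong side, and Lemma \ref{lem:cut_and_paste1}\ref{c_a_p_1_item_iii} with $k=-1$ would contradict optimality, exactly as in Case 1 of Lemma \ref{lem:sign_of_intervals}\ref{item:sign_of_int_ii}.

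Next come the time bounds. Since $\w$ is parametrized by arc length, $t^-\ge |\w(t^-)-\w(0)|$ and $L(\w)-t^+\ge|\w(t^+)-\w(L(\w))|$. I would run the construction of the minimizer of \eqref{MIN} at level $\z$: the concatenation of $\w|_{[0,t^-]}$, an arc of $\mc P_\z$ from $\w(t^-)$ to $\w(t^+)$, and $\w|_{[t^+,L(\w)]}$ lies in $D_\z$. The arc of $\mc P_\z$ is legitimate since $\w_2(t^-)<\w_2(t^+)$ and $\wt P=P$ there (Proposition \ref{prop:loops>eps/2} and Lemma \ref{lem:optimal} give $\w_2\ge C\e$ near the loop). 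Proposition \ref{prop:sublevel_length_estimate} then yields
\[
t^-+(L(\w)-t^+)+L(\Gamma_\z|_{[\w_2(t^-),\w_2(t^+)]})\ge L(\gse)-C\z^{1-\frac1b}\ge L(\w)-C\wt\beta\e^{-1}.
\]
Hence
\[
t^+-t^-\le \big(\w_2(t^+)-\w_2(t^-)\big)(1+O(\e^{b-2}))+C\wt\beta\e^{-1}.
\]
On the other hand, $t^+-t^-\ge L(\ell)\ge C\wt\beta\e^{-q}$ by Proposition \ref{prop:L(1stloop)>dist2}, so the $\z$-error is negligible compared with $L(\ell)$. This bounds the portion of $[t^-,t^+]$ outside the loop by roughly the vertical displacement between the two crossings.

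The key step, and the main obstacle, is to show that this displacement, and therefore $t_\ell-t^-$ and $t^+-t_\ell$, is at most $\zeta t_\ell\approx \zeta\e$, using $\wt P(\w(t^\pm))=\z$ and $\wt P(\w(t_\ell))=\wt\beta$. I would first try a pure length comparison. By the unit speed, $t_\ell\ge|\w(t_\ell)-\w(0)|\ge y_\ell+s$. Near the loop, $\w$ stays close to $\mc P_{\wt\beta}$, and the level sets $\mc P_\z$ and $\mc P_{\wt\beta}$ are horizontally separated by about $(\wt\beta-\z)/(2x_\ell)\sim\wt\beta\e^{-q}$. Convexity of the pieces of $\w$ (curvature of constant sign) then forces any excursion of horizontal width $w$ to cost extra length of order $w^2/(\text{vertical run})$. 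Setting this extra cost equal to the available slack $L(\gse)-L(\w)+C\wt\beta\e^{-1}$ should give a vertical run $\lesssim\sqrt{\wt\beta\e^{-b}}\,\e=\zeta\e$, matching the definition \eqref{eq:zeta}. If this direct route fails, I would instead follow the analogous argument of \cite{notall} (the ``lasso'' estimate), replacing $P$ by $\wt P$. This substitution is valid because, by Corollary \ref{cor:unique_intersection_x1axis} and Proposition \ref{prop:loops>eps/2}, all the relevant points lie in $\{x_2>0\}$, where $\wt P=P$.
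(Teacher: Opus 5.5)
\textbf{The gap: the time bounds.} These bounds are the substance of the lemma, and your length comparison cannot deliver them. It is correct, but it only gives $t^+-t^-\lesssim (s^+-s^-)+C\widetilde\beta\varepsilon^{-1}$ with $s^\pm=\omega_2(t^\pm)$. That trades the unknown time for an equally unknown vertical run.

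The heuristic you propose for bounding the run points the wrong way. An excursion of width $w$ over a vertical run $h$ costs about $w^2/h$, which \emph{decreases} in $h$. Comparing it with the slack can therefore only give a lower bound on $h$: a curve hugging a level set near $\{P=0\}$ over a long stretch is essentially as short as $\gamma$.

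In fact, combine your inequality with $t^+-t^-\ge L(\ell)+|\omega(t^+)-\omega(t^-)|$, with \eqref{eq:ls-segm}, and with $L(\ell)\ge C\widetilde\beta\varepsilon^{-q}$. This forces $\varepsilon^{b-1}(1-s^-/s^+)^2\gtrsim\widetilde\beta\varepsilon^{-q}$, i.e.\ a \emph{long} run. That is exactly the inequality the paper contradicts after the lemma, using the lemma's upper bound $1-s^-/s^+\le C\zeta$, so it cannot also produce that bound. Moreover, at $h=\zeta\varepsilon$ one gets $w^2/h\approx\zeta\,\widetilde\beta\varepsilon^{-1}$, far below the slack, so $\zeta$ does not come out of this balance. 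Deferring to ``the analogous argument'' of the earlier work does not fill this step.

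\textbf{The missing idea: use the normal equation.} The upper bound comes from $\dot\theta=4\lambda\,\omega_1P(\omega)$ together with the huge multiplier. First, Corollary \ref{cor:rough_bound_loops}, $\beta_\ell=\widetilde\beta$ and $t_\ell\ge y_\ell\ge C\varepsilon$ give $t_\ell-s_\ell^-\le L(\ell)<\frac12 t_\ell\zeta$. Now suppose $t^-<t_\ell(1-\zeta)$, and set $I=[t_\ell(1-\zeta),s_\ell^-]$, whose length is at least $\frac12 t_\ell\zeta$. On $I$ one has $P\circ\omega>\varrho$, and $\omega_1\ge C\varepsilon^q$ by Proposition \ref{prop:loops>eps/2} and Lemma \ref{lem:optimal}. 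With $|\lambda|\ge\frac14\widetilde\beta^{-2}$ from Lemma \ref{lem:lb2_larger_1}, this gives
\[
\int_I|\dot\theta|\,dt\ \ge\ 4|\lambda|\,|I|\,\varrho\,\min_{I}\omega_1\ \ge\ C\,\widetilde\beta^{\frac{b}{b-1}-\frac32}\,\varepsilon^{-\frac{1}{b-1}}\longrightarrow\infty
\]
for $b\ge5$. This contradicts the bounded total curvature of $\omega|_I$; the bound on $t^+$ is symmetric.

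\textbf{The ordering $\omega_2(t^-)<\omega_2(t^+)$.} Your route closes the curve by an arc of $\mathcal P_\varrho$, uses its orientation, and then applies Lemma \ref{lem:cut_and_paste1}\ref{c_a_p_1_item_iii}. The paper instead uses an exit argument from the region bounded by $\omega|_{[0,t^-]}$, $\{\widetilde P=\varrho\}$ and the segment at height $-s$. Your route is workable, with two corrections:
\begin{itemize}
\item You need $k=+1$, since $P\circ\ell>0$. You want a point of $\omega$ directly below the loop, found by shooting a ray downward; it cannot leave $\mathcal D(\eta)$ through $\mathcal P_\varrho$, because $x_2\mapsto\sqrt{x_2^b+\varrho}$ is increasing.
\item The equality case must be handled separately: it gives $\omega(t^-)=\omega(t^+)$, hence a second loop.
\end{itemize}
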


\begin{proof}
    Combining Corollary \ref{cor:rough_bound_loops} with \eqref{eq:beta=betaloop} and the estimate $t_\ll\geq y_\ll$, we get
\begin{equation}
	\label{ST1}
	\frac{t_\ell -s_\ell^-}{t_\ll\zeta}
	\leq \frac{L(\ell)}{t_\ll\zeta}\leq C \wt\beta  
	^{\frac 12-\frac 1 b} y_\ell^{\frac b 2 -1}<\frac12,
\end{equation}
because $\wt\bb, y_\ll\to0$ as 
$\e\to0$. This shows that $s_\ell^->t_\ell\left(1-\frac\zeta2\right)$. Assume now by contradiction that $t^-<t_\ell(1-\zeta)$ and set $I\ceq [t_\ell(1-\zeta),s_\ell^-]$. Then, by \eqref{eq:t-andt+} and \eqref{eq:liso_loops_trick}, it holds
\begin{equation}
\label{eq:lower_bound_aux_Pw}
	P(\w(t))>\z, \quad \text{for all } t\in I.
\end{equation}
The contradiction is reached by estimating from above and from below the total curvature of $\w$ on $I$. By the Gauss-Bonnet Theorem \eqref{eq:gauss-bonnet-thm}, we can estimate from above
\begin{equation}
\label{eq:total_curvature_upper_bd}
	\int_I |\dot\0(t)| dt  \leq C.
\end{equation}
On the other hand, exploiting \eqref{eq:lower_bound_aux_Pw} and \eqref{ST1}, we can estimate from below
\begin{equation}
	\label{mancante}
	\int_I |\dot\0(t)| dt = 4|\la|\int_I  P(\w(t))\w_1(t) dt \geq 4|\la| |I|\z \min_{t\in I} \w_1(t) \geq 2|\la| t_\ell\,\zeta\,\z \min_{t\in I} \w_1(t). 
\end{equation}
We claim that
\begin{equation}
    \label{eq:stima_min_w1}
    \min_{t\in I} \w_1(t) \geq C\e^q.
\end{equation}
Indeed, by Proposition \ref{prop:loops>eps/2}, we have $\w_2(s_\ell^-)\geq C\e$, and, recalling that $s_\ell^-<t_\ell<\e$, it holds $|I|\leq t_\ell\zeta\leq \e^{1+\frac{M-b}{2}}$, having used Corollary \ref{cor:newLS} and  \eqref{eq:zeta}. Since $M>b$ and $|\dot\w_2|\leq1$, we estimate
\begin{equation}
    \w_2\big(t_\ell(1-\zeta)\big)= \w_2(s_\ell^-)- \int^{s_\ell^-}_{t_\ell(1-\zeta)}\dot\w_2(\tau)d\tau\geq C\e-\e^{1+\frac{M-b}{2}}\geq C\e,
\end{equation}
provided that $\e>0$ is small. The latter inequality, together with Lemma \ref{lem:optimal}(ii), implies \eqref{eq:stima_min_w1}. By \eqref{eq:stima_min_w1} and Lemma \ref{lem:lb2_larger_1}, and recalling \eqref{eq:zeta} and \eqref{eq:beta=betaloop}, we can continue from \eqref{mancante} to deduce that 
\begin{equation}
\label{eq:total_curvature_lower_bd}
	\int_I |\dot\0(t)| dt \geq C|\la| t_\ell\,\zeta\,\z\, \e^q  \geq C\wt\beta^{-2} t_\ell\,\zeta\,\z \,\e^q = C t_\ell \wt\beta^{\frac{b}{b-1}-\frac32}  \e^{-\frac{b}{b-1}}   \geq C \wt\beta^{\frac{b}{b-1}-\frac32}  \e^{-\frac{1}{b-1}},
\end{equation}
where, in the last inequality, we used that $t_\ell\geq y_\ell\geq C\e$, by Proposition \ref{prop:loops>eps/2}. Finally, combining the estimates \eqref{eq:total_curvature_upper_bd} and \eqref{eq:total_curvature_lower_bd}, we reach a contradiction for small $\e>0$, since, for all $b\geq 5$,
\begin{equation}
    \frac{b}{b-1}-\frac32<0 \qquad\text{and}\qquad \wt\beta^{\frac{b}{b-1}-\frac32}  \e^{-\frac{1}{b-1}}\leq C. 
\end{equation} 
We finally prove the inequality $\w_2 (t^-) < \w_2( t^+)$. Assume by contradiction that $\w_2 (t^-)\geq \w_2( t^+)$.
If  $\w_2 (t^-)= \w_2( t^+)$ then it holds that $\w (t^-)= \w( t^+)$, contradicting the uniqueness of the loop. If instead $\w_2 (t^-)> \w_2( t^+)$, let $U$ be the open and bounded region with boundary 
\begin{equation*}
	\Gamma \ceq  \spt(\w|_{[0, t^-]})
    \cup 
    \{x\in\R^2\mid \wt P(x)=\z, x_1>0, -s \leq x_2 \leq \w_2( t^-)\}\cup
    \{(x_1,-s)\in\R^2 \mid 0\leq x_1 \leq \sqrt\z\}.
\end{equation*}
Then, by \eqref{eq:t-andt+} and \eqref{eq:liso_loops_trick}, $\w( t^+ +\delta)\in U$ for $\delta>0$ sufficiently small. Since $\w(L(\w))=\pi(\g(\e))\notin U$, there exists $ \bar t\in ( t^+,L(\w))$ such that $\w(\bar t)\in \dd U$. 
If $\w(\bar t)\in \spt(\w|_{[0,t^-]})$, we contradict the uniqueness of the loop; if $\w(\bar t)\in \{x\in\R^2\mid \wt P(x)=\z, \, x_1>0, \, -s \leq x_2 \leq \w_2( t^-)\}$, we contradict \eqref{eq:liso_loops_trick}; and if $\w(\bar t)\in \{x\in\R^2 \mid x_2=-s, \, 0\leq x_1 \leq \z^{\frac1a}\}$, then $\w_2(\bar t)=-s$ and we reach a contradiction combining Proposition \ref{prop:loops>eps/2} and Lemma \ref{lem:optimal}\ref{lem:optimal_item_i}. 
\end{proof}

Define $s^\pm\ceq \w_2(t^\pm)$, and let $\bar \nu:[0,L(\bar\nu)]\to\R^2$ be the curve given by the concatenation 
\begin{equation}
\bar\nu = \w|_{[0,t^-]}* \G_\z|_{[s^-,s^+]} * \w|_{[t^+,L(\nu)]},
\end{equation}
where $\Gamma_\z(t) =(f_\z(t),t)=\big(\sqrt{t^b+\z},t\big)$, cf.\ Section \ref{sec:level_sets}.
By \eqref{eq:liso_loops_trick}, it holds $\spt(\bar\nu) \subset E_\z$ and thus, applying Proposition~\ref{prop:sublevel_length_estimate}, we estimate 
\begin{equation} \label{D2}
L(\bar\nu)\geq L(\gamma_{s,\e}) -  C \z^{1-\frac 1 b} \geq L(\ell)+L(\nu)-  C \z^{1-\frac 1 b},
\end{equation} 
where we used \eqref{eq:start} in the second inequality. Hence, we obtain the following lower bound on $\varrho$: 
\begin{equation}
    \label{D201}
    C\z^{1-\frac1b}\geq L(\ell) + L(\nu) - L(\bar\nu)\geq L(\ell) + L([\Gamma_\z(s^-),\Gamma_\z (s^+)]) - L(\Gamma_\z|_{[s^-,s^+]}),
\end{equation}
where $[\Gamma_\z(s^-),\Gamma_\z (s^+)]$ is the segment joining $\Gamma_\z(s^-)$ to $\Gamma_\z (s^+)$. To estimate from below the latter quantity, we use \cite[Prop.\ 2.1(ii)]{notall}, whose proof is unchanged in our setting: for all $0<\bar t<t$, and $\z>0$, we have
\begin{equation}
	\label{eq:ls-segm}
	L\left(\G_\z|_{[\bar t,t]}\right)-L([\G_\z(\bar t),\G_\z(t)])\leq \frac 12 q^2(q-1) t^{b-1}\left(1-\frac{\bar t}{t}\right)^ 2 .
\end{equation}
In particular, we apply \eqref{eq:ls-segm} with $t=s^+$ and $\bar t=s^-$, obtaining
\begin{equation}\label{D4}
	L([\Gamma_\z(s^-),\Gamma_\z (s^+)]) - L(\Gamma_\z|_{[s^-,s^+]})  
	\geq - C  (s^+)^{b-1}\Big(1-\frac{s^-}{s^+}   \Big) ^2.
\end{equation}
As $t_+>t_\ell$, by Lemma \ref{lem:optimal}\ref{lem:optimal_item_i} and Proposition \ref{prop:loops>eps/2}, it holds $s^+\geq C\e$. Moreover, by Lemma \ref{lem:liso_loops_trick}, we deduce that $0\leq s^+-s^- =\omega_2( t^+)-\omega_2( t^-)\leq     
t^+- t^-\leq 2  t_\ell \zeta\leq 4 \e\zeta$. Thus, we obtain 
\begin{equation}
\label{eq:boundxi}
0\leq 1-\frac{s^-}{s^+}\leq C\zeta.
\end{equation}
From \eqref{D4} and \eqref{eq:boundxi}, and from the definition of $\zeta$ in \eqref{eq:zeta}, we finally deduce 
\begin{equation}
\label{eq:bound_ls-segm}
L([\Gamma_\z(s^-),\Gamma_\z (s^+)]) - L(\Gamma_\z|_{[s^-,s^+]}) \geq -C\e^{b-1}\zeta^2=-C\e^{-1}\wt\bb.
\end{equation}
We are in position to complete the proof of Theorem \ref{thm:main}. From \eqref{D2} and \eqref{eq:bound_ls-segm}, we get  
\begin{equation}
L(\ell)\leq  C\left(\z^{1-\frac 1b} +\e^{-1} \wt\bb \right)=
2C\e^{-1}\wt\bb.
\end{equation}
From the latter estimate and Proposition \ref{prop:L(1stloop)>dist2}, we derive the inequality
\begin{equation}
\label{D5}
\e^{-q} \wt\bb  \leq  C\e^{-1}\wt\bb,
\end{equation}
which is a contradiction for small $\e>0$.

\section{Proof of Theorem \ref{thm:lift}}
\label{sec:lift}

Recall that $\mc M$ is the \sr manifold $(\R^3,\De,g)$, where $\De=\vspan\{X_1,X_2\}$ is defined in \eqref{eq:horvf} and $g$ is the metric obtained by declaring $\{X_1,X_2\}$ orthonormal. Fix $s,\varepsilon>0$ such that the curves $\gse$ and $\bar\g_{s,\e}$ are minimizing geodesics in $\mc M$, see Theorem \ref{thm:main}. The proof of Theorem \ref{thm:lift} consists in the construction of an explicit Carnot group $G$ (see \cite[Ch.\ 11]{libro_Enrico} for an introduction to Carnot groups) and a submetry $\phi:G\to\mc M$ (see \cite[Def. 3.1.23]{libro_Enrico}). It follows from~\cite[Sec.~3.1.7.1]{libro_Enrico} that $\gse$ and $\bar\g_{s,\e}$ admit a lift to some geodesic in $G$. As the lift preserves the regularity, this shows the existence of non-smooth geodesics in Carnot groups. Then, we show that the lifted curves coincide on $[-s,0]$, proving that branching of geodesics may occur in Carnot groups as well.  

In $\R^3$ with coordinates $z=(z_1,z_2, z_3)$, we consider the vector fields
\begin{equation}
    \begin{split}
        &Z_1(z)\ceq\partial_1\\
        &Z_2(z)\ceq\partial_2 + z_1(z_1^3 - 2z_1z_2^b)\partial_3,
    \end{split}
\end{equation}
and we define the sub-Riemannian manifold $\mc N\ceq(\R^3,\bar\De,\bar g)$, where $\bar\De\ceq\vspan\{Z_1,Z_2\}$ and $\bar g$ is the metric obtained by declaring $\{Z_1,Z_2\}$ orthonormal. The smooth diffeomorphism
\begin{equation}
    F:\R^3\to\R^3, \quad F(z)=\Big(z_1,z_2, z_3+\frac{z_2^{2b+1}}{2b+1}\Big),
\end{equation}
satisfies $F_*Z_i=X_i$, for $i=1,2$, and thus it is an isometry between $\mc M$ and $\mc N$. In particular, the curves $\G\ceq F^{-1}(\gse)$ and $\bar\G\ceq F^{-1}(\bar\g_{s,\e})$ are minimizing geodesics in $\mc N$ and they satisfy 
\begin{equation}
    \G(t)=\bar\G(t)=(0,t,0), \quad \text{for all } t\in[-s,0].
\end{equation} 
Therefore, it is enough to construct a submetry from a suitable Carnot group $G$ to $\mc N$ such that the lifts of $\G,\bar\G$ coincide on $[-s,0]$.

Consider the set of multi-indices $\Lambda_b\ceq \{\a=(\alpha_1,\alpha_2)\in\N^2\mid \alpha_1+\alpha_2 \leq b+2\}$ of cardinality $d= \frac{(b+4)(b+3)}{2}$. In $\R^{d+2}$, fix the coordinates $y=\big(y_1,y_2,(y_\a)_{\a\in\Lambda_b}\big)$, where $\Lambda_b$ is equipped with the lexicographic order. Let $G=(\R^{d+2},\cdot)$ be the Carnot group equipped with the left-invariant orthonormal frame which, in exponential coordinates of the second type, takes the form:
\begin{equation}
    \begin{split}
        &Y_1(y)\ceq\partial_1,\\
        &Y_2(y)\ceq\partial_2 + y_1\sum_{\alpha\in \Lambda_b} c_{\a} y_1^{\a_1} y_2^{\a_2} \ \partial_{\alpha},
    \end{split}
\end{equation}
where $\partial_\alpha := \partial_{y_{\alpha}}$, and where $c_{\a}=1$ for every $\a\neq (1,b)$ and $c_{(1,b)}=-2$.
% Denote by $\Upsilon$ the set of multi-indices $\Upsilon\ceq \{\mu=(i_1,\ldots,i_k,1,2)\mid k\geq0 \ i_1,...,i_k=1,2\}$ and, for $\mu\in\Upsilon$, let us denote $Y_{\mu}\ceq[Y_{i_1},[Y_{i_2},...,[Y_{i_{k-1}},Y_{i_k}]]]$. In particular if $\mu\in\Upsilon$ counts $k_1+1$ ones and $k_2+1$ twos, with $k_1,k_2\geq0$ and $k_1+k_2=k$, then we have $Y_\mu(0)=(k_1+1)!\, k_2!\, \partial_{(k_1,k_2)}$. It follows that the Lie algebra of $G$, denoted by $\mf g$, is generated (as a linear vector space) by
% \begin{equation}
%     \vspan\{\partial_1,\partial_2,(\partial_\a)_{\a\in\Lambda_b}\}=\vspan\{Y_1(0),Y_2(0),(Y_\mu(0))_{\mu\in\Upsilon}\}.
% \end{equation}
% In particular, $\{\partial_1,\partial_2,(\partial_\a)_{\a\in\Lambda_b}\}$ is a linear basis for $\mf g$.
Let $\mf g$ be the Lie algebra of $G$, and note that $\{\partial_1,\partial_2,(\partial_\a)_{\a\in\Lambda_b}\}$ is a basis for $\mf g$. Define $\Lambda_b^0\ceq \{(3,0),(1,b)\}\subset \Lambda_b$ and  
\begin{equation}
    \mf h:=\vspan\{(\partial_\a)_{\a\in\Lambda_b\setminus \Lambda_b^0}, \partial_{(3,0)}-\partial_{(1,b)}\}\subset \mf g.
\end{equation} 
Since $[\mf g,\mf g]$ is abelian, we have that $\mf h$ is a sub-algebra of $\mf g$ contained in $[\mf g, \mf g]$, and thus the set $H\ceq \exp(\mf h)\subset G$ is a closed subgroup of $G$. We note that every element $h\in H$, in the $y$-coordinates, has the form (recall that $\Lambda_b$ has the lexicographic order)
\begin{equation}
    h=\big( 0,0,h_{(0,0)}, \ldots , h_{(1,b-1)}, t , h_{(1,b+1)}, \ldots, h_{(2,b)}, -t, h_{(3,1)}, \ldots, h_{(b+2,0)} \big), \quad h_\a,t\in\R, \  \a\in\Lambda_b\setminus\Lambda_b^0,
\end{equation}
and an element $yH\in G\slash H$, $y=(y_1,y_2,(y_\a)_{\a\in\Lambda_b})\in G$, is represented by
\begin{equation}
    yH = \Big[\Big(y_1,y_2, 0, \ldots , 0, \frac12(y_{(1,b)}+y_{(3,0)}), 0, \ldots, 0, \frac12(y_{(1,b)}+y_{(3,0)}), 0, \ldots 0 \Big)\Big]. 
\end{equation}
Thus, we have the identification $G\slash H \ni yH \mapsto (y_1,y_2, y_{(3,0)}+y_{(1,b)})\in \R^3$ of the quotient manifold $G\slash H$ with $\R^3$, and the projection $\bar\pi: G\to \R^3$ defined by 
\begin{equation}
    \bar\pi(y)=(y_1,y_2, y_{(3,0)}+y_{(1,b)}).
\end{equation}
A direct computation shows that, for every $z\in\R^3$ and every $y\in \pi^{-1}(z)$, we have
\begin{equation}
    \label{eq:dpi_costante_fibre}
    \begin{split}
        &\df_y\bar\pi(Y_1(y))=Z_1(z),
        \\
        &\df_y\bar\pi(Y_2(y))=Z_2(z).
    \end{split}
\end{equation}
By \cite[Prop. 7.1.9]{libro_Enrico}, we conclude that the map $\bar \pi$ is a submetry from $G$ to $\mc N$. Fix $p=(0,-s,\ast)\in\bar\pi^{-1}(\G(-s))=\bar\pi^{-1}(\bar\G(-s))\subset G$. By \cite[Cor. 3.1.25]{libro_Enrico}, there are two geodesics $\Psi,\bar\Psi:[-s,\e]\to G$ such that $\Psi(-s)=\bar\Psi(-s)=p$, $\bar\pi(\Psi)=\G$ and $\bar\pi(\bar\Psi)=\bar\G$. The regularity of the curves $\Psi$ and $\bar\Psi$ is the same of their projections. We claim that they also branch, namely
\begin{equation}
    \Psi(t)=\bar\Psi(t), \quad \text{for all } t\in [-s,0].
\end{equation}
Indeed, let $\psi$ be the control of $\Psi$ and let $v=(v_1,v_2)\equiv(0,1)$ be the control of $\G$, on $[-s,0]$. By differentiating the identity $\bar\pi(\Psi)=\G$ on $[-s,0]$ and using \eqref{eq:dpi_costante_fibre}, we get
\begin{equation}
    \partial_2=\dot\G(t)=\frac{\df}{\df t}\bar\pi(\Psi(t))=\df_{\Psi(t)}\bar\pi(\dot\Psi(t)) = \psi_1(t)Z_1(\G(t)) + \psi_2(t)Z_2(\G(t)) = \psi_1\partial_1+\psi_2\partial_2.
\end{equation}
This gives $\psi=(\psi_1,\psi_2)\equiv(0,1)$ on $[-s,0]$. Similarly, we have $\bar\psi=(\bar\psi_1,\bar\psi_2)\equiv(0,1)$ on $[-s,0]$, and thus the geodesics $\Psi$ and $\bar\Psi$ coincide on $[-s,0]$. This completes the proof of Theorem \ref{thm:lift}.

\bibliographystyle{abbrv}
\bibliography{biblio}

\end{document}

%% file: preambolo_gg.tex
\usepackage[utf8]{inputenc}
\usepackage[english]{babel}

\usepackage{mathrsfs}
\usepackage{amsthm}
\usepackage{amsmath}
\usepackage{amsfonts}
\usepackage{mathtools}
\usepackage{amssymb}

\usepackage{textcomp}
\usepackage{xcolor}

\usepackage{calrsfs}
\usepackage{csquotes}

\usepackage{tikz-cd}

\usepackage{comment}
\usepackage[shortlabels]{enumitem}

\DeclareMathAlphabet{\mathcal}{OMS}{cmsy}{m}{n}

\theoremstyle{plain}
\newtheorem {theorem}{Theorem}[section]
\newtheorem {lemma}[theorem]{Lemma}
\newtheorem {corollary} [theorem]{Corollary}

\newtheorem {proposition} [theorem]{Proposition}
\theoremstyle{definition}
\newtheorem{definition}[theorem]{Definition}
\newtheorem{remark}[theorem]{Remark}

 \numberwithin{equation}{section} 
 
\newcommand{\Z}{\mathbb{Z}}

\newcommand{\G}{\Gamma}
\newcommand{\R}{\mathbb{R}}

\newcommand{\N}{\mathbb{N}}

\newcommand{\D}{\mathcal{D}}

\newcommand{\I}{\mathcal{I}}

\newcommand{\E}{\mathcal E}

\newcommand{\oo}{\infty}

\newcommand{\e}{\varepsilon}
\newcommand{\s}{\sigma}
\newcommand{\0}{\theta}
\newcommand{\w}{\omega}

\newcommand{\dd}{\partial}
\renewcommand{\a}{\alpha}
\newcommand{\bb}{\beta}
\renewcommand{\k}{\kappa}
\newcommand{\g}{\gamma}
\newcommand{\la}{\lambda}
\newcommand{\de}{\delta}
\newcommand{\De}{\Delta}

\newcommand{\z}{\zeta}

\newcommand{\1}{{-1}}

\newcommand{\sgn}{\text{sgn}}
\renewcommand{\ll}{\ell}

\newcommand{\vspan}{\mathrm{span}}

\newcommand{\mc}{\mathcal}
\newcommand{\mr}{\mathrm}

\newcommand{\wt}{\widetilde}

\newcommand{\be}{\begin{equation}}
\newcommand{\ee}{\end{equation}}

\renewcommand{\z}{{\varrho}}
\renewcommand{\k}{\mathrm k}

\newcommand{\spt}{{\mr{spt}}}